\newtheorem{thm}{Theorem}
\newtheorem{prop}[thm]{Proposition}
\newtheorem{lem}[thm]{Lemma}
\newtheorem{cor}[thm]{Corollary}
\newtheorem{comp}[thm]{Complement}
\theoremstyle{remark}
\newtheorem{rem}[thm]{Remark}
\theoremstyle{definition}
\newcommand{\C}{\mathbb C}
\newcommand{\Z}{\mathbb Z}
\newcommand{\HH}{\mathbb H}
\newcommand{\CP}{\mathbb {CP}^1}
\newcommand{\Id}{\mathrm{Id}}
\newcommand{\End}{\mathrm{End}}
\newcommand{\T}{\mathcal T}
\newcommand{\SSS}{\mathcal S^A}
\newcommand{\TT}{\mathcal T^\omega}
\newcommand{\RR}{\mathcal R_{\SL(\C)}}
\newcommand{\RP}{\mathcal R_{\PSL(\C)}}
\newcommand{\ZZ}{\mathcal Z^\omega}
\newcommand{\ZZZ}{\mathcal Z^\iota}
\newcommand{\SL}{\mathrm{SL}_2}
\newcommand{\PSL}{\mathrm{PSL}_2}
\newcommand{\Tr}{\mathrm{Tr}}
\newcommand{\E}{\mathrm{e}}
\newcommand{\I}{\mathrm{i}}
\newcommand{\db}{/\kern -4pt/}
\renewcommand{\leq}{\leqslant}
\renewcommand{\geq}{\geqslant}
\renewcommand{\phi}{\varphi}
\renewcommand{\epsilon}{\varepsilon}
\title
[Representations of the skein algebra: closed surfaces]
{Representations\\ of the Kauffman bracket  skein algebra III:  \\ closed surfaces and naturality}
\author{Francis Bonahon}
\address {Department
of Mathematics,  University of
Southern California, Los Angeles
CA~90089-2532, U.S.A.}
\email{fbonahon@math.usc.edu}
\author{Helen Wong}
\address{Department
of Mathematics, Carleton College, Northfield MN 55057, U.S.A.}
\email{hwong@carleton.edu}
\thanks{This research was partially supported by grants DMS-0604866, DMS-1105402, DMS-1105692, DMS-1406559 from the U.S. National Science Foundation, and by a mentoring grant from the Association for Women in Mathematics. In addition, parts of this article were written while the first author was a Simons  Fellow (grant 301050 from the Simons Foundation) in 2014-15, as well as a Simons Visiting Professor at the Mathematical Sciences Research Institute in Berkeley, California, (NSF grant 09032078000) in the Spring 2015 semester.}
\date{\today}
\begin{document}

\begin{abstract}
This is the third article in the series begun with \cite{BonWon3, BonWon4}, devoted to finite-dimensional representations of the Kauffman bracket skein algebra of an oriented surface $S$. In \cite{BonWon3} we associated a \emph{classical shadow} to an irreducible representation $\rho$ of the skein algebra, which is a character $r_\rho \in \RR(S)$ represented by a group homomorphism $\pi_1(S) \to \SL(\C)$. The main result of the current article is that, when the surface $S$ is closed, every character $r\in \RR(S)$ occurs as the classical shadow of an irreducible representation of the Kauffman bracket skein algebra. We also prove that the construction used in our proof is natural, and associates to each group homomorphism $r\colon \pi_1(S) \to \SL(\C)$ a representation of the skein algebra $\SSS(S)$ that is uniquely determined up to isomorphism. 
\end{abstract}

 \maketitle

This article is the third  in the series begun with \cite{BonWon3, BonWon4},  devoted to the analysis and construction of  finite-dimensional representations of the skein algebra of a surface. See also \cite{BonWon2} for a description of the corresponding program. 

The Kauffman bracket skein algebra $\SSS(S)$ of an oriented surface $S$ of finite topological type takes its origins in the construction of the Jones polynomial invariant \cite{Jones1, Jones2, Kauffman1, Kauffman2} of knots and links. It can be interpreted  \cite{Tur, BFK1, BFK2, PrzS} as a quantization of the character variety
$$
\RR(S) = \{ \text{group homomorphisms }r\colon  \pi_1(S) \to \SL(\C) \}\db \SL(\C)
$$
with respect to its Atiyah-Bott-Goldman \cite{AtiBott, Gold1, Gold2} Poisson structure. More accurately, the points of such a quantization are representations of the algebra $\SSS(S)$. 

When the parameter $A=\mathrm e^{-\pi\mathrm i \hbar}$ is a root of unity, a celebrated example of finite-dimensional representation of the skein algebra $\SSS(S)$ arises from Witten's quantum field interpretation of the Jones polynomial \cite{Witten}, and more precisely from  the Witten-Reshetikhin-Turaev topological quantum field theory associated to the fundamental representation of the quantum group $\mathrm U_q (\mathfrak{sl}_2)$ \cite{Witten, ReshTur, BHMV, TuraevBook, BonWon5}. In the current article, we construct a large family of new finite-dimensional representations of $\SSS(S)$, while providing a converse to the results of \cite{BonWon3}. 

This article is mostly concerned with the case where the surface $S$ is closed. The  case where $S$ has at least one puncture is much easier (at least assuming the results of \cite{BonWon1} and \cite{BonWon3}), and was treated in \cite{BonWon4}. The current closed surface case require many more ideas, and also involves several very surprising properties. 

More specifically, when $A^2$ is a primitive $N$--root of unity with $N$ odd, we identified in  \cite{BonWon3} certain  invariants for irreducible  representations $\rho \colon \SSS(S) \to \End(E)$. It is easier to restrict attention to the case where $A^N=-1$; this is no loss of generality, as \cite[\S 5]{BonWon4} indicates how to deduce the case $A^N=+1$ from this one, by using spin structures on the surface. When the surface is closed, there is only one invariant, consisting of a point in the character variety $\RR(S)$. By definition \cite{Mum} of the geometric invariant theory quotient  involved in the definition of $\RR(S)$, two homomorphisms $r$, $r'\colon \pi_1(S) \to \SL(\C)$ represent the same point of $\RR(S)$ if and only if they induce the same trace functions, namely if and only if $\Tr\,r(\gamma) = \Tr\,r'(\gamma) $ for every $\gamma \in \pi_1(S)$.

\begin{thm}[\cite{BonWon3}]
\label{thm:InvariantsExist}
Let $S$ be a closed oriented surface,  let $A$ be a primitive $N$--root of $-1$ with $N$ odd, and let $T_N(x)$ be the $N$--th normalized Chebyshev polynomial of the first kind, characterized by the trigonometric identity that $2\cos N\theta = T_N(2\cos \theta)$. For every irreducible finite-dimensional representation  $\rho \colon \SSS(S) \to \End(E)$  of the Kauffman bracket skein algebra, there exists a unique  character $r_\rho \in \RR(S)$ such that
$$
T_N \bigl( \rho ([K]) \bigr) =- \bigl( \Tr\, r_\rho(K) \bigr) \Id_E
$$
for every framed knot $K\subset S \times [0,1]$ whose projection to $S$ has no crossing and whose framing is vertical.  \qed
\end{thm}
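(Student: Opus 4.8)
The plan is to derive the statement from the \emph{Chebyshev homomorphism} of \cite{BonWon1}, using the irreducibility of $\rho$ through Schur's Lemma. Recall from \cite{BonWon1} that, because $A$ is a primitive $N$--root of $-1$ with $N$ odd (so that the relevant sign $A^{N^2}$ equals $(-1)^N=-1$), ``threading'' framed knots with the Chebyshev polynomial $T_N$ defines an algebra homomorphism $\Phi_A\colon \mathcal S^{-1}(S)\to \SSS(S)$ with the property that $\Phi_A([\gamma]) = T_N([\gamma])$ for every simple closed curve $\gamma$ on $S$ equipped with the vertical framing; here $T_N([\gamma])$ may be read either as the result of threading $\gamma$ with $T_N$, or equivalently as the polynomial $T_N$ evaluated at the element $[\gamma]\in\SSS(S)$, since for a simple closed curve the $k$--th power $[\gamma]^k$ is computed by $k$ disjoint parallel copies. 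Recall also from \cite{BonWon1} that each $\Phi_A([\gamma])$ is transparent, so that the image of $\Phi_A$ is contained in the center of $\SSS(S)$. The first step is simply to record these facts and to form the composition $\rho\circ\Phi_A$.

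Since $\rho$ is irreducible and $E$ is a finite-dimensional vector space over the algebraically closed field $\C$, Schur's Lemma shows that $\rho$ carries every central element of $\SSS(S)$ to a scalar multiple of $\Id_E$. Applied to the elements $\Phi_A(x)$, this yields an algebra homomorphism $\psi\colon\mathcal S^{-1}(S)\to\C$ determined by the equation $\rho\bigl(\Phi_A(x)\bigr)=\psi(x)\,\Id_E$. Next, by the theorems of Bullock and of Przytycki-Sikora, there is a surjective algebra homomorphism from the classical skein algebra $\mathcal S^{-1}(S)$ onto the coordinate ring $\C[\RR(S)]$ of the character variety, with nilpotent kernel, sending the class $[\gamma]$ of a simple closed curve to the trace function $r\mapsto -\Tr\, r(\gamma)$. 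Since $\C$ has no nonzero nilpotents, $\psi$ factors through this surjection and so is evaluation at a well-defined point $r_\rho\in\RR(S)$. For a framed knot $K$ whose projection to $S$ has no crossing and whose framing is vertical, the underlying curve is simple closed, and unwinding the constructions gives
\[
T_N\bigl(\rho([K])\bigr)=\rho\bigl(\Phi_A([K])\bigr)=\psi([K])\,\Id_E=-\bigl(\Tr\, r_\rho(K)\bigr)\,\Id_E,
\]
where the first equality holds because $\rho$ is an algebra homomorphism.

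For uniqueness, recall from the discussion preceding the statement that a point of $\RR(S)$ is determined by its collection of trace functions $\gamma\mapsto\Tr\, r(\gamma)$, $\gamma\in\pi_1(S)$. The displayed identity pins down $\psi([K])$, hence $\Tr\, r_\rho(K)$, for every simple closed curve $K$ on $S$; and since $\mathcal S^{-1}(S)$ is generated as an algebra by the classes of simple closed curves, this determines $\psi$, hence the point $r_\rho$, completely.

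I expect the real difficulty to lie entirely in the input from \cite{BonWon1}, namely the ``miraculous'' compatibility of threading by $T_N$ with the Kauffman bracket skein relations at a root of unity, together with the transparency of the threaded curves. Granting these, the argument above is essentially formal; the only points requiring genuine care are the identification of $\mathcal S^{-1}(S)$ with $\C[\RR(S)]$ and the attendant sign conventions, and the observation that the crossingless, vertically framed knots in the statement are exactly the simple closed curves for which $\Phi_A$ was described. When $S$ has punctures the homomorphism $\Phi_A$ has a strictly larger central image and the analogous classical shadow carries additional invariants; it is the closedness of $S$ that makes the single character $r_\rho$ the entire invariant.
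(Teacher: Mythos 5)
Your proposal is correct and follows essentially the same route as the proof this paper cites: the Chebyshev threading map is a skein algebra homomorphism with transparent (hence central) image, Schur's Lemma turns its composition with the irreducible $\rho$ into a character of the classical skein algebra $\mathcal S^{-1}(S)$, and the Bullock--Przytycki--Sikora surjection onto $\C[\RR(S)]$ (with nilpotent kernel, killed by mapping to $\C$) identifies that character with a point $r_\rho\in\RR(S)$, uniqueness following since simple closed curves generate $\mathcal S^{-1}(S)$. The only correction needed is bibliographic: the threading homomorphism and miraculous cancellations are from \cite{BonWon3}, not \cite{BonWon1} (which is the quantum trace paper), as the paper itself indicates when attributing Theorem~\ref{thm:InvariantsExist}.
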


The character  $r_\rho \in \RR$ is the \emph{classical shadow} of the irreducible representation $\rho \colon \SSS(S) \to \End(E)$. In \cite{BonWon3} we prove a stronger version of Theorem~\ref{thm:InvariantsExist}, which  is valid for all framed links $K\subset S \times [0,1]$ and involves the element $[K^{T_N}]\in \SSS(S)$ defined by threading the Chebyshev polynomial $T_N$ along all components of $K$. The above version is easier to state and sufficient for our purposes. 

See also \cite{Le} for a different approach to  the key results underlying Theorem~\ref{thm:InvariantsExist}.

The main result of this article is the following converse statement. 

\begin{thm}[Realization Theorem]
\label{thm:RealizeInvariantIntro}
Let $S$ be a closed oriented surface,  and let $A$ be a primitive $N$--root of $-1$ with $N$ odd. 
Then, every character $r\in \RR(S)$ is the classical shadow of an irreducible representation $\rho_r \colon \SSS(S) \to \End(E)$.\end{thm}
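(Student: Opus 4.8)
The plan is to reduce the closed case to the punctured case treated in \cite{BonWon4}, by filling in a disk. Fix a point $v\in S$, let $S_v=S\setminus\{v\}$, and regard $S_v$ as the interior of the compact surface $S_D$ obtained by removing a small open disk around $v$; then $\SSS(S_v)=\SSS(S_D)$, and capping off the disk induces a surjective algebra homomorphism $q\colon\SSS(S_v)\to\SSS(S)$ whose kernel is the ideal generated by $[\delta_v]+A^2+A^{-2}$, where $\delta_v\subset S_v$ is a small loop encircling the puncture (in $S$ this loop bounds a disk, hence equals $-A^2-A^{-2}$ times the empty skein). On characters, the inclusion $S_v\hookrightarrow S$ presents $\pi_1(S_v)$ as a free group surjecting onto $\pi_1(S)$, so $r$ pulls back to a homomorphism $r_v\colon\pi_1(S_v)\to\SL(\C)$; since $\delta_v$ is a product of commutators in $\pi_1(S)$, we have $r_v(\delta_v)=\Id$. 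It therefore suffices to produce an \emph{irreducible} representation $\rho_v\colon\SSS(S_v)\to\End(E)$ whose classical shadow (for the one-punctured surface $S_v$) is $r_v$ and in which $[\delta_v]$ acts by the scalar $-(A^2+A^{-2})\,\Id_E$: such a $\rho_v$ kills $\ker q$, hence factors as $\rho_v=\rho_r\circ q$ for some $\rho_r\colon\SSS(S)\to\End(E)$ with the same (irreducible) image; and since any framed knot $K\subset S\times[0,1]$ with crossingless vertical projection can be isotoped into $S_v\times[0,1]$, the identity $\Tr\,r_v(K)=\Tr\,r(K)$ together with Theorem~\ref{thm:InvariantsExist} identifies the classical shadow of $\rho_r$ with the character of $r$.

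For the punctured surface $S_v$, the construction of \cite{BonWon4} produces, from $r_v$ together with a choice of \emph{puncture weight} $p$ at $\delta_v$ subject only to the compatibility condition $T_N(p)=-\Tr\,r_v(\delta_v)$, an irreducible representation of $\SSS(S_v)$ with classical shadow $r_v$ in which $[\delta_v]$ acts by $p\,\Id$. Here $-\Tr\,r_v(\delta_v)=-2$, and, writing $A=\E^{\I\arg A}$ with $N\arg A\equiv\pi\pmod{2\pi}$, the Chebyshev normalization gives
\[
T_N\bigl(-(A^2+A^{-2})\bigr)=2\cos\bigl(N(\pi-2\arg A)\bigr)=2\cos(N\pi)=-2,
\]
using that $2N\arg A\in2\pi\Z$ and $N$ is odd. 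Thus $p:=-(A^2+A^{-2})$ is an admissible puncture weight, and feeding this particular choice into the construction of \cite{BonWon4} yields exactly the representation $\rho_v$ required above.

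The main obstacle lies in the input invoked from \cite{BonWon4}: the homomorphisms $r_v$ arising here are never generic as representations of the free group $\pi_1(S_v)$, because they factor through $\pi_1(S)$ and so have peripheral holonomy precisely $\Id$ — the most degenerate possible value — and the quantum Teichm\"uller / local-representation machinery behind \cite{BonWon1, BonWon4} is at its most delicate in this regime. The genuine work is therefore to establish, for \emph{all} of $\RR(S)$ (including reducible characters), a form of the punctured construction valid when the peripheral holonomy is trivial and which pins $[\delta_v]$ to the single root $-(A^2+A^{-2})$ of $T_N(x)+2$, rather than to one of its other roots. I expect this to be carried out by first handling characters $r$ in a Zariski-dense ``good'' locus of $\RR(S)$ by an explicit triangulation-based construction (with the peripheral weight tracked throughout), and then deducing the general case by a specialization argument along the representation variety, checking that irreducibility and the value of the classical shadow survive in the limit; an alternative organization is an induction on genus, cutting $S$ along a non-separating simple closed curve and gluing two punctures whose holonomies are matched. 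The naturality part of the paper is what makes such a limiting argument well posed, by attaching a representation to the homomorphism $r$ itself rather than merely to its character.
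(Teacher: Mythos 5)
Your reduction stands or falls on the assertion that the kernel of the capping-off homomorphism $q\colon \SSS(S_v)\to\SSS(S)$ is the two-sided ideal generated by $[\delta_v]+A^2+A^{-2}$, and that assertion is both unproven in your write-up and, at these roots of unity, contradicted by the results of this paper. Indeed, the paper's construction applied to a one-vertex triangulation $\lambda$ (so $S_\lambda=S_v$) produces $\rho_\lambda=\mu_\lambda\circ\Tr_\lambda^\omega\colon\SSS(S_v)\to\End(E_\lambda)$ in which the peripheral skein already acts by the scalar you want: $\Tr_\lambda^\omega([\delta_v])$ is a Laurent polynomial in the central element $H_v$, $\mu_\lambda(H_v)=-\omega^4\,\Id$, and the constraint $T_N\bigl(\rho_\lambda([\delta_v])\bigr)=-\Tr\,r(\delta_v)\,\Id=-2\,\Id$ from Proposition~\ref{prop:ConstructRepCheFock} forces $\rho_\lambda([\delta_v])=-(A^2+A^{-2})\,\Id_{E_\lambda}$. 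Since $[\delta_v]$ is central, $\rho_\lambda$ therefore annihilates your ideal; yet $\rho_\lambda$ does \emph{not} factor through $\SSS(S)$: sweeping a strand across the puncture changes $\rho_\lambda([K])$, and the two endomorphisms agree only after restriction to the total off-diagonal kernel $F_\lambda$, which for generic $r$ and $g\geq 2$ is a proper subspace ($\dim F_\lambda=N^{3(g-1)}$ versus $\dim E_\lambda=N^{3g-2}$, Theorems~\ref{thm:OffDiagonalKernelIntro} and~\ref{thm:RepDimensionIntro}). So the scalar condition on $[\delta_v]$ does not encode the sliding relations, the kernel of $q$ strictly contains the ideal you name, and the step ``$\rho_v$ kills $\ker q$, hence factors'' collapses. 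This is precisely the difficulty the whole paper is organized around, not a technicality one can quotient away.

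Relatedly, you locate the ``main obstacle'' in the wrong place. Trivial peripheral holonomy is not the delicate point: Proposition~\ref{prop:ConstructRepCheFock} (Propositions~22--23 of \cite{BonWon4}) handles $r_v(\delta_v)=\Id$ without difficulty, and the choice $h_v=-\omega^4$ realizes exactly the puncture weight $p=-(A^2+A^{-2})$ you compute (your Chebyshev verification $T_N(-(A^2+A^{-2}))=-2$ is correct). The genuine work, which your proposal leaves untouched, is to identify the largest subspace on which the puncture-sweeping relations hold, to prove it is invariant under the skein action (surprising, since $\mu_\lambda$ is irreducible, so this is not automatic), to prove it is nonzero and estimate its dimension, and only then to pass to an irreducible subrepresentation. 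Your fallback suggestions inherit the same gap: a specialization argument along the representation variety needs a representation with the prescribed shadow already constructed on a dense locus by some mechanism other than the false ideal claim, and for degenerate $r$ (for instance the trivial homomorphism) a one-vertex triangulation admits no enhancement at all, which is why the paper must work with many punctures and prove invariance under triangulation moves.
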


For the classical example of the Witten-Reshetikhin-Turaev representation $\rho_{\mathrm{WRT}} \colon \SSS(S) \to \End(W_{\mathrm{WRT}})$, also defined when $A$ is a primitive $2N$--root of unity with $N$ odd, the classical shadow  of $\rho_{\mathrm{WRT}}$ is the trivial character \cite{BonWon5}. We therefore construct a much broader family of representations of the skein algebra $\SSS(S)$ than this historic example. 

As explained in Theorem~\ref{thm:IndependentChoicesIntro} below, our construction is natural in the sense that it provides a representation  $\rho_r \colon \SSS(S) \to \End(E)$ that  depends only on the homomorphism $r\colon \pi_1(S) \to \SL(\C)$, up to isomorphism and other symmetries of the data. We conjecture that, when the character belongs to a Zariski dense open subset of the character variety $\RR(S)$, the representation $\rho_r$ is the only irreducible representation of $\SSS(S)$ with classical shadow $r\in \RR(S)$. This conjecture is proved by Nurdin Takenov for small punctured surfaces, such as the one-puncture torus or the four-puncture sphere \cite{Takenov}; it would be definitely false without the genericity hypothesis, as can for instance be proved by combining the results of \cite{HavPost} with the techniques of \cite{Takenov}.

The strategy for proving Theorem~\ref{thm:RealizeInvariantIntro} is somewhat unconventional. In addition to using classical hyperbolic geometry as a guide for quantum topology constructions, it  relies on the fact that punctured surfaces are easier to deal with than closed surfaces, and follows the slogan ``drill, baby, drill''\footnote{Popularized during the 2008 United  States presidential campaign  \cite[\S3]{Palin}, when the ideas behind this article were beginning to take shape.}. Namely, we drill punctures from the closed surface $S$ to obtain a punctured surface $S_\lambda$, by removing  from $S$ the vertices of a triangulation $\lambda$, and the more punctures the better.  If we are given a homomorphism $r \colon \pi_1(S) \to \SL(\C)$ representing the character $r\in \RR(S)$ and if the triangulation $\lambda$ is complicated enough, we can then choose  additional data at the punctures of $S_\lambda$ (called a \emph{$\lambda$--enhancement} of the homomorphism $r$) and  apply the results of \cite{BonWon4} to the punctured surface $S_\lambda$. This provides a representation $\rho_\lambda \colon \SSS(S_\lambda) \to \End(E_\lambda)$ of the skein algebra of the punctured surface $S_\lambda$, whose classical shadow is equal to the character $r_\lambda \in \RR(S_\lambda)$ induced by $r\in \RR(S)$ in the sense that 
$$
T_N \bigl( \rho_\lambda ([K]) \bigr) =- \bigl( \Tr\, r_\lambda(K) \bigr) \Id_{E_\lambda} =- \bigl( \Tr\, r(K) \bigr) \Id_{E_\lambda}
$$
for every framed knot $K\subset S_\lambda \times [0,1]$ whose projection to $S_\lambda$ has no crossing and whose framing is vertical. This last property, proved in \cite{BonWon4}, heavily relies on the miraculous cancellations of \cite{BonWon3}. 

However, there is no reason for $\rho_\lambda$ to induce a representation of the skein algebra $\SSS(S)$ of the closed surface $S$. Namely, if the two framed links $K$, $K'\subset S_\lambda \times [0,1]$ are isotopic in $S\times [0,1]$, by an isotopy sweeping through the punctures of $S_\lambda$, the two endomorphisms $ \rho_\lambda \bigl( [K] \bigr)$,  $ \rho_\lambda \bigl( [K'] \bigr) \in \End(E_\lambda)$ will in general be different. 
Nevertheless, we are able to identify a subspace $F_\lambda \subset E_\lambda$ where $ \rho_\lambda \bigl( [K] \bigr) $ and $  \rho_\lambda \bigl( [K'] \bigr)$ do coincide. This subspace $F_\lambda \subset E_\lambda$ is called the \emph{total off-diagonal kernel}, for reasons that are explained in  \S\S \ref{subsect:ClassicalOffDiagonalTerm} and \ref{subsect:OffDiagTerm}.

\begin{thm}
\label{thm:OffDiagonalKernelIntro}
Let the punctured surface $S_\lambda$ be obtained from the closed surface $S$ by removing the vertices of the triangulation $\lambda$ of $S$, and let
 $F_\lambda \subset E_\lambda$ be the total off-diagonal kernel of the representation $\rho_\lambda \colon \SSS(S_\lambda) \to \End(E_\lambda)$ introduced above. Then 
\begin{enumerate}
\item
$F_\lambda$ is invariant under the image $\rho_\lambda \bigl( \SSS(S_\lambda) \bigr) \subset \End(E_\lambda)$;

\item if  the two framed links $K$, $K'\subset S_\lambda \times [0,1]$ are isotopic in $S\times [0,1]$, the induced endomorphisms 
$$ \rho_\lambda \bigl( [K] \bigr)_{|F_\lambda}=   \rho_\lambda \bigl( [K'] \bigr)_{|F_\lambda} \in \End(F_\lambda)
$$
are equal. 
\end{enumerate}
\end{thm}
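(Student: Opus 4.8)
The plan relies on the description of the total off-diagonal kernel that is worked out in \S\S\ref{subsect:ClassicalOffDiagonalTerm}--\ref{subsect:OffDiagTerm}, which I summarize as follows. For each puncture $p$ of $S_\lambda$, a small loop $\gamma_p$ encircling $p$ can be isotoped into an arbitrarily small neighborhood of $p$, hence pushed off any framed link; so $[\gamma_p]$ is central in $\SSS(S_\lambda)$ and $\rho_\lambda([\gamma_p])$ commutes with the whole image $\rho_\lambda(\SSS(S_\lambda))$. The $\lambda$--enhancement of $r$ singles out a distinguished eigenvalue $\mu_p$ of $\rho_\lambda([\gamma_p])$, and the off-diagonal term at $p$ is an operator $\mathcal O_p$ lying in the commutant of $\rho_\lambda(\SSS(S_\lambda))$ — for instance a suitable polynomial in $\rho_\lambda([\gamma_p])$ — whose kernel is exactly the genuine eigenspace $\ker\bigl(\rho_\lambda([\gamma_p]) - \mu_p\,\Id_{E_\lambda}\bigr)$; then $F_\lambda = \bigcap_p \ker\mathcal O_p$. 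The value of $\mu_p$ is forced by the classical picture of \S\ref{subsect:ClassicalOffDiagonalTerm}: filling $p$ back in kills $\gamma_p$ in $\pi_1(S)$, so $r(\gamma_p) = \Id$ and $\mu_p$ must be a root of $T_N(x) = -\Tr\, r(\gamma_p) = -2$; the canonical normalization takes $\mu_p = -A^2 - A^{-2}$, which (using $A^N = -1$ and $N$ odd, via $T_N(-A^2-A^{-2}) = -A^{2N} - A^{-2N} = -2$) is precisely the value a contractible loop takes in the skein algebra.

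Part~(1) is then immediate: since $[\gamma_p]$ is central in $\SSS(S_\lambda)$, the operator $\rho_\lambda([\gamma_p])$ commutes with every element of $\rho_\lambda(\SSS(S_\lambda))$, so each spectral subspace $\ker\mathcal O_p$ is $\rho_\lambda(\SSS(S_\lambda))$--invariant, and hence so is their intersection $F_\lambda$.

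For part~(2) I would first reduce to an elementary move. By general position — a $1$--submanifold and the arc $\{p\}\times[0,1]$ meet transversally, hence generically not at all, in the $3$--manifold $S\times[0,1]$ — any ambient isotopy of $S\times[0,1]$ carrying $K$ to $K'$, with $K$ and $K'$ both lying in $S_\lambda\times[0,1]$, decomposes into finitely many steps, each of which is either an isotopy supported in $S_\lambda\times[0,1]$, which leaves the class in $\SSS(S_\lambda)$ unchanged, or a \emph{finger move} pushing a single strand of the link across a single puncture $p$. So it is enough to treat the case where $K'$ arises from $K$ by a single finger move at $p$: then $K$ and $K'$ coincide outside a disk $D\ni p$, and inside $D$ they differ by a strand that winds once around $p$. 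Unlike part~(1), this is not formal: the difference $[K] - [K']$ does \emph{not} vanish modulo the peripheral relations (in the skein algebra of the once-punctured disk the two strands are distinct basis elements, not connected by the skein relations), so one must use the explicit construction of $\rho_\lambda$ in \cite{BonWon4} through the quantum trace homomorphism into the Chekhov--Fock algebra of the ideal triangulation $\lambda$, in which a strand winding around $p$ has a completely explicit image. The plan is then to localize the finger move in a punctured-disk neighborhood of $p$, to compute there the difference of the quantum traces of the two strands as an element of the Chekhov--Fock algebra, and to check that, after applying the chosen representation of that algebra, every resulting term acts through the off-diagonal operator $\mathcal O_p$, hence vanishes on $F_\lambda$. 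The Chebyshev polynomial $T_N$ is what makes this go through: threading by $T_N$ forces the ``diagonal part'' of the peripheral element to take its classical value — trivial, since $\gamma_p$ bounds a disk in $S$ — so that the ``off-diagonal part,'' killed on $F_\lambda$ by construction, is the sole obstruction to forgetting the puncture. Combined with part~(1), this shows that $K\mapsto\rho_\lambda\bigl([K]\bigr)_{|F_\lambda}$ depends only on the isotopy class of $K$ in $S\times[0,1]$, which is the assertion.

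I expect the main obstacle to be exactly this local computation: carrying out the quantum-trace bookkeeping near $p$ and extracting the factor acting through $\mathcal O_p$ is the technical heart of the matter, and is where the miraculous cancellations of \cite{BonWon3} — which already force the Chebyshev polynomial $T_N$ and the eigenvalue $\mu_p$ into the picture — do the real work. A second, subtler point is that $\rho_\lambda([\gamma_p])$ need not be diagonalizable: when $N\geq 3$ the value $\mu_p = -A^2-A^{-2}$ is a \emph{double} root of $T_N(x)+2$, so $\rho_\lambda([\gamma_p])$ may carry nontrivial Jordan blocks at $\mu_p$, and it is essential that $\ker\mathcal O_p$ be the genuine eigenspace rather than the generalized one — the phenomenon responsible for the name ``off-diagonal kernel.'' The remaining points — that the isotopy really does decompose into finger moves, and that the computation is insensitive to the auxiliary choices (which strand, which disk $D$, the direction of the push) — are routine.
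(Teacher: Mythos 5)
Your plan rests on the identification $F_\lambda=\bigcap_p\ker\bigl(\rho_\lambda([\gamma_p])-\mu_p\,\Id_{E_\lambda}\bigr)$, with the off-diagonal operator $\mathcal O_p$ lying in the commutant of $\rho_\lambda\bigl(\SSS(S_\lambda)\bigr)$ as a polynomial in the central peripheral skein $[\gamma_p]$. That identification is false, and it is exactly where the difficulty of the theorem lives. The quantum trace of a small loop around the puncture $v$ is a Laurent polynomial in the central element $H_v$ alone (the state sum around a closed cycle of left turns only retains the all-plus and all-minus states, giving $H_v+H_v^{-1}$ up to normalization), and since $\mu_\lambda(H_v)=-\omega^4\,\Id_{E_\lambda}$ one gets $\rho_\lambda([\gamma_p])=-(A^2+A^{-2})\,\Id_{E_\lambda}$ on \emph{all} of $E_\lambda$. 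So the ``genuine eigenspace'' at $\mu_p=-A^2-A^{-2}$ is the whole space, there are no Jordan blocks to worry about, and no polynomial in $\rho_\lambda([\gamma_p])$ can cut out the proper subspace $F_\lambda$, whose dimension is roughly $N^{-p_\lambda}\dim E_\lambda$ by Theorem~\ref{thm:RepDimensionIntro}. The actual off-diagonal term $Q_v$ of \S\ref{subsect:OffDiagTerm} is an element of $\ZZ(\lambda)$ that does not come from the skein algebra, and $\mu_\lambda(Q_v)$ does not commute with $\rho_\lambda\bigl(\SSS(S_\lambda)\bigr)$; indeed, if $F_\lambda$ admitted a description through central skein elements, its invariance would hold for an arbitrary irreducible representation of $\ZZ(\lambda)$ in place of $\mu_\lambda$, which the paper explicitly notes is false. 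Consequently your part~(1) argument proves invariance of the wrong subspace (namely all of $E_\lambda$) and establishes nothing about $F_\lambda$.

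What the paper actually does for~(1) is a term-by-term analysis of the state-sum expansion of $\Tr_\lambda^\omega([K])$ near the star of $v$ (Proposition~\ref{prop:OffDiagKernelInvariant}): each local factor either commutes with $Q_v$ or satisfies a twisted relation of the form $Q_v\,A'=\widetilde A\,Q_v+(\text{error})$, and the error terms vanish on $\ker\mu_\lambda(Q_v)$ precisely because of the normalization $\mu_\lambda(H_v)=-\omega^4$ — so the kernel, but not the eigenspaces, is preserved. For~(2), your reduction to a single sweep across one puncture does match the paper (Proposition~\ref{prop:VertexSweep}), and the idea of computing the two quantum traces locally and showing their difference is a multiple of the off-diagonal operator is the right shape of argument; but you defer that computation, and as framed it targets the wrong operator $\mathcal O_p$. (Also, the miraculous cancellations of \cite{BonWon3} are not what drives these two propositions; they enter only in the classical-shadow statements. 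And the paper must additionally pass from combinatorial to arbitrary triangulations via the moves of \S\ref{bigsect:ChangingTriangulation}, which your outline does not address.) As it stands, the proposal has a genuine gap at its foundation.
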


The definition of the total off-diagonal kernel $F_\lambda$ was devised by wishful thinking, as the largest subspace where the second conclusion of Theorem~\ref{thm:OffDiagonalKernelIntro} could hold. The really unexpected properties are that this subspace is non-trivial (see Theorem~\ref{thm:RepDimensionIntro} below) and that $F_\lambda$ is invariant under the image of $\rho_\lambda$. Indeed, although $\rho_\lambda = \mu_\lambda \circ \Tr_\lambda^\omega \colon \SSS(S_\lambda) \to \End(E_\lambda)$ is defined as a composition of the quantum trace homomorphism $\Tr_\lambda^\omega \colon \SSS(S_\lambda) \to \ZZ(\lambda)$ of \cite{BonWon1} with an \emph{irreducible} representation $\mu_\lambda \colon \ZZ(\lambda) \to \End(E_\lambda)$ of the balanced Chekhov-Fock algebra $\ZZ(\lambda)$ of the triangulation $\lambda$, the invariance of $F_\lambda$ shows that the representation $\rho_\lambda$ is  reducible. This reducibility property for $\rho_\lambda = \mu_\lambda \circ \Tr_\lambda^\omega$ would be false if we replaced $\mu_\lambda$ by an arbitrary irreducible representation of $\ZZ(\lambda)$. 

Theorem~\ref{thm:OffDiagonalKernelIntro} is proved in \S \ref{subsect:OffDiagKernelInvariant} and \S \ref{subsect:ConstructRepClosedSurface} when the triangulation $\lambda$ is sufficiently complicated, and in  \S \ref{subsect:ConstructSkeinRepArbitraryTriangulation} for general triangulations. 

A consequence of Theorem~\ref{thm:OffDiagonalKernelIntro} is that  the representation $\rho_\lambda \colon \SSS(S_\lambda) \to \End(E_\lambda)$ induces a representation $\check\rho_\lambda \colon \SSS(S) \to \End(F_\lambda)$ of the skein algebra of the closed surface $S$. This representation has a classical shadow equal to the character $r \in \RR(S)$, in the sense that 
$$
T_N \bigl( \check\rho_\lambda ([K]) \bigr) =- \bigl( \Tr\, r(K) \bigr) \Id_{F_\lambda}
$$
for every framed knot $K\subset S \times [0,1]$ whose projection to $S$ has no crossing and whose framing is vertical. 

The following property shows that our construction is very natural. For most homomorphisms $r \colon \pi_1(S) \to \SL(\C)$, this result states that the representation $\check\rho_\lambda $ is unique up to isomorphism. However, an additional ambiguity can arise for special characters that admit internal symmetries called sign-reversal symmetries. The cohomology group $H^1(S;\Z_2)$ acts on the character variety $\RR(S)$ and on the skein algebra $\SSS(S)$; see \S \ref{sect:signreversal}. A \emph{sign-reversal symmetry} for the character $r\in \RR(S)$ is a class $\epsilon \in H^1(S; \Z_2)$ that fixes $r$; characters that admit non-trivial sign-reversal symmetries are rare, and form a high codimension subset of the character variety $\RR(S)$. If $\rho \colon \SSS(S) \to \End(E)$ is a representation with classical shadow $r$, composing it with the action of a sign-reversal symmetry $\epsilon \in H^1(S;\Z_2)$ on $\SSS(S)$ gives another representation $\rho \circ \epsilon$ with classical shadow $\epsilon r = r \in \RR(S)$. Therefore, sign-reversal symmetries of $r\in \RR(S)$ are intrinsic symmetries of the problem of finding representations of $\SSS(S)$ with classical shadow $r$. 

\begin{thm}[Naturality Theorem]
\label{thm:IndependentChoicesIntro}
Up to isomorphism and up to sign-reversal symmetry of the character $r\in \RR(S)$ (if any exists), the representation $\check\rho_\lambda \colon \SSS(S) \to \End(F_\lambda)$ depends only on the group homomorphism $r \colon \pi_1(S) \to \SL(\C)$, not on the choice of the triangulation $\lambda$ or of the $\lambda$--enhancement $\xi$ used in the construction.
\end{thm}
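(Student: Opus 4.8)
The plan is to establish naturality in two independent directions: first showing that $\check\rho_\lambda$ is independent (up to isomorphism and sign-reversal symmetry) of the choice of $\lambda$--enhancement $\xi$ for a fixed triangulation $\lambda$, and then showing independence of the triangulation $\lambda$ itself. The key tool throughout will be the naturality properties of the quantum trace homomorphism $\Tr_\lambda^\omega$ under changes of triangulation from \cite{BonWon1}, combined with the classification of irreducible representations of the balanced Chekhov-Fock algebra $\ZZ(\lambda)$ from \cite{BonWon4}: such a representation is determined, up to isomorphism and a finite choice, by its central character, which is in turn pinned down by the enhanced character $r_\lambda \in \RR(S_\lambda)$ together with the puncture weights. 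The point is that on passing to the total off-diagonal kernel $F_\lambda$, the dependence on the auxiliary puncture data should wash out, because $F_\lambda$ was by construction the largest subspace on which skein elements isotopic across punctures agree.

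First I would handle the enhancement independence. Two $\lambda$--enhancements $\xi$, $\xi'$ of the same $r$ differ only in the extra data assigned at the punctures; the induced representations $\mu_\lambda$, $\mu_\lambda'$ of $\ZZ(\lambda)$ have central characters that agree on the ``bulk'' generators (those detected by $r_\lambda$) but may differ on the puncture-monomials. I would argue that the restrictions $\check\rho_\lambda$, $\check\rho_\lambda'$ to the respective off-diagonal kernels $F_\lambda$, $F_\lambda'$ become isomorphic, using that the quantum trace image $\Tr_\lambda^\omega\bigl(\SSS(S)\bigr)$ — more precisely the subalgebra detecting skein elements of the \emph{closed} surface — lies in the subalgebra of $\ZZ(\lambda)$ on which the two central characters agree, after a twist by a sign-reversal class in $H^1(S;\Z_2)$ accounting for the square-root ambiguities in solving for the puncture weights. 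Concretely, I would produce an explicit intertwiner between $E_\lambda$ and $E_\lambda'$ (or between suitable tensor factors) conjugating $\mu_\lambda$ to $\mu_\lambda'$ up to the sign-reversal twist, check that it carries $F_\lambda$ to $F_\lambda'$, and verify the intertwining relation on the generating skein elements $[K]$ using Theorem~\ref{thm:OffDiagonalKernelIntro}(2).

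Next I would handle the change of triangulation. Any two triangulations $\lambda$, $\lambda'$ of $S$ are connected by a finite sequence of diagonal exchanges (and, for punctured surfaces, re-indexings and the ``puncture-splitting'' moves needed to compare different vertex sets — here one may first pass to a common refinement, which is covered by the general-triangulation case of Theorem~\ref{thm:OffDiagonalKernelIntro} proved in \S\ref{subsect:ConstructSkeinRepArbitraryTriangulation}). For a single diagonal exchange $\lambda \to \lambda'$, \cite{BonWon1} provides a coordinate change isomorphism $\ZZ(\lambda) \to \widehat{\ZZ}(\lambda')$ between (localizations or fraction algebras of) the Chekhov-Fock algebras intertwining the quantum traces, and it induces an equivalence between irreducible representations with matching central characters, i.e. an intertwiner $L \colon E_\lambda \to E_{\lambda'}$ conjugating $\mu_\lambda$ to $\mu_{\lambda'}$ up to isomorphism (and possibly a sign-reversal symmetry). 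I would then show $L$ maps the total off-diagonal kernel $F_\lambda$ isomorphically onto $F_{\lambda'}$ — this follows because both are characterized intrinsically as the maximal subspace on which all pairs of skein elements isotopic in $S\times[0,1]$ act identically, a condition manifestly preserved by an intertwiner that commutes with $\rho_\lambda$ up to the triangulation-change identification. Finally, chaining these isomorphisms along a path of diagonal exchanges and invoking the enhancement-independence from the previous paragraph (to absorb the freedom in the enhancements chosen at each stage), I would conclude that $\check\rho_\lambda$ and $\check\rho_{\lambda'}$ are isomorphic up to sign-reversal symmetry.

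The main obstacle I expect is controlling the sign-reversal ambiguity precisely, rather than just up to ``some element of $H^1(S;\Z_2)$''. The square roots implicit in reconstructing the Chekhov-Fock central character from a point of $\RR(S)$ (the edge-weights of $\ZZ(\lambda)$ being square roots of products of the classical shear-bend coordinates), together with the non-canonical choices in the coordinate-change isomorphisms of \cite{BonWon1} under diagonal exchanges, mean that the intertwiners $L$ are only well-defined up to the sign-reversal action; one must check that the composite ambiguity accumulated along a loop of triangulation changes is always realized by a genuine sign-reversal symmetry \emph{of the character $r$} (so that it is an intrinsic symmetry of the problem and not an actual failure of uniqueness). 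I would address this by a careful cohomological bookkeeping of the sign choices, showing the ambiguity cocycle is exact modulo the stabilizer of $r$ in $H^1(S;\Z_2)$, which is exactly the group of sign-reversal symmetries of $r$. A secondary technical point is ensuring that the off-diagonal kernel behaves well under passage to a common refinement of two triangulations, so that triangulations with different vertex sets can be compared at all; this is where the general-triangulation construction of $F_\lambda$ from \S\ref{subsect:ConstructSkeinRepArbitraryTriangulation} is essential.
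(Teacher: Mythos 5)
There are two genuine gaps here, and they sit exactly where the paper has to work hardest. First, your reduction of triangulation-independence to diagonal exchanges plus ``passing to a common refinement'' hides the main difficulty rather than resolving it: diagonal exchanges preserve the vertex set, so two triangulations of the closed surface $S$ with different vertex sets can only be compared through moves that \emph{add} vertices, and the statement that $\check\rho_\lambda$ is unchanged when one subdivides a face is not ``covered by the general-triangulation case of Theorem~\ref{thm:OffDiagonalKernelIntro}''. That case (Theorem~\ref{thm:ConstructRepSkeinAlgebraClosedSurface}) only \emph{constructs} $\check\rho_\lambda$ for non-combinatorial $\lambda$, and its proof itself relies on subdivision invariance, so your appeal to it is circular; it never compares $\check\rho_\lambda$ with $\check\rho_{\lambda'}$ when $\lambda'$ refines $\lambda$. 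The paper's Proposition~\ref{prop:SubdivisionInvariance} needs real input: the embedding $\Phi\colon \ZZ(\lambda)\to\ZZ(\lambda')$, the dimension count $\dim F'_{v_0}=\dim E_\lambda$ of Lemma~\ref{lem:SubdivisionDimOffDiagKernel} (which uses the quantum binomial formula together with the classical identity of Lemma~\ref{lem:ClassicalOffDiagonal} to show $\mu_{\lambda'}(Q_{v_0}-1)^N=-\Id$), the comparison of $\Tr_\lambda^\omega$ and $\Tr_{\lambda'}^\omega$ on $F'_{v_0}$ (Lemma~\ref{lem:SubdivisSkein}), and the identification of the restricted representation with $\mu_\lambda$ via the uniqueness in Proposition~\ref{prop:ConstructRepCheFock}. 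None of this appears in your outline, and the paper itself flags this subdivision step as the most surprising part of the argument.

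Second, your enhancement-independence step rests on a false premise. For fixed $\lambda$, two enhancements $\xi,\xi'$ of the same $r$ generally assign \emph{different} crossratio weights $x_i$ to the edges (they are shear-bend parameters of two different pleated surfaces with the same holonomy), and since $\mu_\lambda(Z_i^{2N})=x_i\,\Id_{E_\lambda}$, the central characters of the two Chekhov--Fock representations differ on ``bulk'' edge monomials by much more than a sign-reversal twist; no intertwiner conjugating $\mu_\lambda$ to $\mu'_\lambda$ exists at the $\ZZ(\lambda)$ level. The paper obtains enhancement independence only indirectly, via Lemma~\ref{lem:ConnectTriangulations}: perturb so the vertex sets are disjoint and connect through a chain of subdivisions and diagonal exchanges carrying compatible enhancements. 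Two smaller points: your claim that a diagonal-exchange intertwiner carries $F_\lambda$ to $F_{\lambda'}$ because $F_\lambda$ is ``the maximal subspace on which isotopic skeins act identically'' invokes an intrinsic characterization that is never established ($F_\lambda$ is defined as $\bigcap_v\ker\mu_\lambda(Q_v)$); the paper proves this by the explicit computation $\Theta_{\lambda\lambda'}(Z_2^{\prime\,2}Q'_v)=Z_2^2Q_v$ in Lemma~\ref{lem:DiagExIsomRespectsOffDiagKernel}. And the cohomological bookkeeping of sign ambiguities you anticipate is unnecessary in the paper's setup: at each move the uniqueness statement of Proposition~\ref{prop:ConstructRepCheFock} already forces the ambiguity to be a sign-reversal symmetry of $r$ itself, because the loops around the punctures have trace $2\neq 0$.
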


In particular, although the dimension of $E_\lambda$  grows exponentially with  the number of punctures of the drilled surface $S_\lambda$, the dimension of the off-diagonal kernel $F_\lambda$ is independent of the topology  of $\lambda$. A consequence is that the construction is natural with respect to the action of the mapping class group of $S$. 

The proof of Theorem~\ref{thm:IndependentChoicesIntro}, given in \S \ref{subsect:IndependenceChoices}, relies on invariance under Pachner moves to go from one triangulation to another. It is a good illustration of the ``drill, baby, drill'' philosophy, as showing that two triangulations $\lambda$ and $\lambda'$  induce the isomorphic representations of $\SSS(S)$ usually involves surfaces with many more punctures than $S_\lambda$ and $S_{\lambda'}$. Here, the invariance under the face subdivision move considered in \S \ref{subsect:FaceSubdivisions}, which adds one vertex to the triangulation but does not change the representation, is probably the most surprising. 

Conjugating $r$ by an element of $\SL(\C)$ also leaves $\check\rho_\lambda$ unchanged, up to isomorphism. For a generic character $r\in\RR(S)$, two homomorphisms $\pi_1(S) \to \SL(\C)$ representing $r$ are always conjugate by an element of $\SL(\C)$ and therefore determine the same representation of $\SSS(S)$. However, for those special characters for which the property fails (namely reducible characters), we do not know if the representation $\check\rho_\lambda$ depends only on the character $r\in \RR(S)$, or on subtler properties of the specific homomorphism $\pi_1(S) \to \SL(\C)$ representing $r$ that we used in the construction. 

It is also quite possible that the need to consider sign-reversal symmetries is an artifact of our proof, and of its reliance on insights from the character variety $\RP(S)$. Indeed, the characters that admit non-trivial sign-reversal symmetries are precisely the branch points of the projection $\RR(S) \to \RP(S)$. It appears that composing our representation $\check\rho_\lambda$ with a sign-reversal symmetry of its classical shadow $r\in \RR(S)$ often produces a representation $\check\rho_\lambda \circ \epsilon$ that is isomorphic to $\check\rho_\lambda$, but we have not been able to confirm this fact in full generality. 

	At this point, we still have a major problem, which is that we do not know that the off-diagonal kernel $F_\lambda$ is different from 0. This property may even seem unlikely at first, as the off-diagonal kernel $F_\lambda$ is defined as an intersection of kernels of endomorphisms of the vector space $E_\lambda$. This question is addressed in \S \ref{bigsect:DimensionOffDiagonalKernel}, and provides another one of the surprising twists in this article. 

\begin{thm}
\label{thm:RepDimensionIntro}
If the closed oriented surface $S$ has genus $g$,  the representation $\check\rho_\lambda \colon \SSS(S) \to \End(F_\lambda)$ with classical shadow $r\in \RR(S)$ provided by Theorem~{\upshape \ref{thm:OffDiagonalKernelIntro}} has dimension
$$
\dim F_\lambda \geq
\begin{cases}
N^{3(g-1)} &\text{ if } g\geq 2\\
N &\text{ if } g=1\\
1 &\text{ if } g=0.
\end{cases}
$$ 
The above inequality is an equality for $r\in \RR(S)$ generic, namely for $r$ in an explicit Zariski dense open subset of $\RR(S)$.  
\end{thm}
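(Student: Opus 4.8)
The plan is to compute the dimension of $F_\lambda$ by reducing everything to linear algebra inside the irreducible representation $E_\lambda$ of the balanced Chekhov--Fock algebra $\ZZ(\lambda)$, and then to estimate the rank of an explicit family of "off-diagonal" operators whose common kernel is $F_\lambda$. First I would recall that $\dim E_\lambda = N^m$ where $m$ is determined by the combinatorics of the triangulation $\lambda$ (roughly, half the rank of the Weil--Petersson/Poisson form on the edge coordinates), and that the quantum trace image $\rho_\lambda(\SSS(S_\lambda))$ sits inside $\ZZ(\lambda)$ in a way governed by the characters $r_\lambda$ at the punctures. The total off-diagonal kernel $F_\lambda$ is by definition $\bigcap_K \ker\bigl(\rho_\lambda([K]) - \rho_\lambda([K'])\bigr)$, the intersection running over pairs $K, K'$ isotopic in $S\times[0,1]$ but not in $S_\lambda\times[0,1]$; concretely this reduces to the common kernel of the "off-diagonal parts" of the puncture-loop operators $\rho_\lambda([P_i])$, one for each puncture $P_i$ of $S_\lambda$. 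So the real content is: how much of $E_\lambda$ survives after simultaneously requiring each such operator to be "diagonal" in the appropriate sense.

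The key computational step is to diagonalize, puncture by puncture, the monodromy operators. For a single puncture $P_i$ the operator $\rho_\lambda([P_i]) \in \End(E_\lambda)$ decomposes $E_\lambda$ into eigenspaces indexed by the $N$ possible values of a "quantum peripheral holonomy"; the off-diagonal kernel condition at $P_i$ picks out exactly the eigenspace matching the classical peripheral trace $\Tr\, r(P_i)$ prescribed by the $\lambda$--enhancement $\xi$. I would show that these eigenspace conditions are "as transverse as possible": imposing the condition at one more puncture cuts the dimension by a factor of $N$ (generically), because the relevant operators generate a large enough subalgebra of $\End(E_\lambda)$ — this is where I expect to use the irreducibility of $\mu_\lambda$ together with an explicit presentation of $\ZZ(\lambda)$ in Fock--Goncharov-type coordinates, computing commutation relations $[P_i]$-vs-$[P_j]$ and $[P_i]$-vs-edge-variables. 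Counting: with $V$ vertices (= punctures of $S_\lambda$), $E$ edges and $F$ faces of $\lambda$, one has $V - E + F = 2 - 2g$, and $\dim E_\lambda = N^{\,(E+V)/\ldots}$ while each of the $V$ puncture conditions divides by $N$; the bookkeeping collapses, via Euler's formula, to $N^{3g-3} = N^{3(g-1)}$ for $g\ge 2$, with the degenerate low-genus cases $g=1$ ($N$) and $g=0$ ($1$) handled separately because the peripheral elements become dependent (the product of all puncture loops is trivial in $\pi_1$, and for small $g$ there are extra relations forcing the lower bound rather than equality).

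For the lower bound in general (not just generic $r$), I would argue by semicontinuity: the dimension of a common kernel of a family of operators is upper semicontinuous in parameters, so it suffices to exhibit \emph{one} enhanced homomorphism $r$ with a complicated triangulation $\lambda$ for which $\dim F_\lambda$ equals the claimed value and $\geq$ elsewhere — but since we want a \emph{lower} bound uniformly, I would instead produce, for every $r$, an explicit subspace of $F_\lambda$ of the asserted dimension by writing down vectors in $E_\lambda$ annihilated by all off-diagonal operators (using the tensor/shadow structure of $E_\lambda$ over the faces of $\lambda$ from \cite{BonWon4}), and separately show that for generic $r$ no larger subspace can occur by the transversality count above. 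Combined with Theorem~\ref{thm:IndependentChoicesIntro}, which guarantees $\dim F_\lambda$ is independent of $\lambda$, it is then enough to do the computation for one convenient highly-subdivided triangulation. The main obstacle I anticipate is the genericity analysis: proving that the $V$ eigenspace conditions are mutually transverse requires controlling the joint spectrum of the non-commuting peripheral operators $\rho_\lambda([P_i])$, which means understanding the image of the subalgebra they generate inside $\End(E_\lambda)$ precisely enough to see it acts with no "unexpected" common invariant subspaces beyond the ones forced by the $\pi_1$-relation $\prod_i [P_i] \sim 1$ — this is a representation-theoretic statement about $\ZZ(\lambda)$ that I would expect to consume the bulk of \S\ref{bigsect:DimensionOffDiagonalKernel}.
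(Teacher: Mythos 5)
There is a genuine gap, and it sits exactly where the real work of the paper lies. Your entire count rests on the claim that the off-diagonal condition at each of the $p_\lambda$ vertices cuts $\dim E_\lambda$ by exactly a factor of $N$ for generic $r$, because the conditions are ``mutually transverse''; you name this as the main obstacle but offer no argument, and it is not a routine transversality statement. The off-diagonal kernels are kernels of the specific operators $\mu_\lambda(Q_v)$ built from the edge generators around $v$; they are not eigenspace conditions on the puncture-loop skeins $\rho_\lambda([P_v])$ (whose classical peripheral traces are all equal to $2$ here), and irreducibility of $\mu_\lambda$ by itself forces nothing about how these kernels intersect. Even for a \emph{single} vertex, proving that $\ker\mu_\lambda(Q_v)$ has dimension $\frac1N\dim E_\lambda$ is the analytic core of the theorem. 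The paper handles it by the opposite strategy to your ``one condition per puncture'' scheme: for $g\geq 2$ it takes a one-vertex triangulation with a separating edge $e_{i_0}$, identifies $F_\lambda=F_v$ with $\ker\bigl(\rho_\lambda([K_1])-\rho_\lambda([K_2])\bigr)$ for the two pushoffs $K_1$, $K_2$ of $e_{i_0}$ (Lemma~\ref{lem:OffDiagKernelAndSweep}), splits $\ZZ(\lambda)\cong\ZZ(\lambda_1)\otimes\ZZ(\lambda_2)\otimes\C[H_v^{\pm1}]$ and hence $E_\lambda\cong E_1\otimes E_2$ (Lemma~\ref{lem:CheFockOneHole}), and reduces to matching eigenspaces of the two curve operators (Lemma~\ref{lem:EigenvaluesTensorProduct}). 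The decisive input, Lemma~\ref{lem:EigenspaceDimensions}, that for $\Tr\,r(e_{i_0})\neq\pm2$ each curve operator has $N$ simple eigenvalues with eigenspaces all of dimension $\frac1N\dim E_\lambda$, is itself proved by deforming over arbitrary edge-weight systems, degenerating to an operator of the form $u_0^{-2}A_t^{-1}+u_0^2A_t$, and a connectedness argument. Nothing in your sketch substitutes for this, so the generic equality --- and with it your Euler-characteristic bookkeeping, which is only numerology until the factor-of-$N$ claim is established --- is unsupported. Note also that the paper deliberately departs from the ``more punctures is better'' philosophy here and works with as few vertices as possible, precisely to avoid the multi-vertex transversality question you would need to answer.

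Two further points. Your treatment of the uniform lower bound is muddled: semicontinuity is exactly the right tool (kernel dimensions can only jump up under specialization, so generic equality plus density of generic characters, continuity of $\mu_\lambda$ in $(r,\xi)$ from Complement~\ref{comp:ConstructRepCheFockContinuous}, and independence of the triangulation from Theorem~\ref{thm:RepIndependentChoices} already give $\dim F_\lambda\geq N^{3(g-1)}$ for every $r$), yet you back away from it and propose instead to exhibit explicit vectors of $F_\lambda$ for arbitrary $r$ --- a direct nonvanishing statement that is not carried out and is precisely what the whole construction is designed to avoid having to prove by hand. Finally, the low-genus cases are waved at (``the peripheral elements become dependent'') rather than argued; in the paper they are settled by explicit computations on a one-vertex torus triangulation and the three-vertex sphere triangulation showing that the relevant $\mu_\lambda(Q_v)$ vanish identically, so that $F_\lambda=E_\lambda$ (Propositions~\ref{prop:DimensionOffDiagKernelTorusGenericCase} and \ref{prop:DimensionOffDiagKernelSphere}).
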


In particular, the representation $\check\rho_\lambda \colon \SSS(S) \to \End(F_\lambda)$ is non-trivial. It may be reducible.   In fact, although we conjecture that $\check\rho_\lambda$ is irreducible for generic $r\in \RR(S)$, it is definitely reducible for highly non-generic homomorphisms $r \colon \pi_1(S) \to \SL(\C)$ such as the trivial homomorphism. However, restricting $\check\rho_\lambda$ to an irreducible component proves our main Theorem~\ref{thm:RealizeInvariantIntro}. 

We suspect that the inequalities of Theorem~\ref{thm:RepDimensionIntro} are always equalities. 
Our proof of Theorem~\ref{thm:RepDimensionIntro} departs from the ``drill, baby, drill'' and ``more punctures is better'' philosophy, and is based on a careful analysis of explicit triangulations $\lambda$ with a very small number of vertices. 

The subsequent article \cite{BonWon6} in the same series  proves an analogue of the Naturality Theorem~\ref{thm:IndependentChoicesIntro} for punctured surfaces. 

The results and methodology of this article were announced in \cite{BonWon2}. See the recent preprints \cite{AbdFroh1, AbdFroh2} for another construction of representations of $\SSS(S)$ with a given classical shadow $r\in \RR(S)$, valid for $r$ in a Zariski dense open subset of $\RR(S)$. The construction of \cite{AbdFroh1, AbdFroh2} is simpler, but ours is more explicit. In the few cases where the dimension of the representations of \cite{AbdFroh1, AbdFroh2} can be computed, these dimensions are significantly larger than those arising in the current article. We also believe that many of the ideas introduced in this paper are susceptible to have further applications in other contexts. 

\section{The Kauffman bracket skein algebra}

Let $S$ be an oriented surface of finite topological type without boundary. 
The \emph{Kauffman bracket skein algebra} $\SSS(S)$  depends on a parameter $A=\E^{-\pi\I \hbar}\in \C-\{0\}$, and is defined as follows: One first considers the vector space freely generated by  all isotopy classes of framed links in the thickened surface $S \times [0,1]$, and then one takes the quotient of this space by two relations. The first and main relation is the  \emph{skein relation}, which states that  
$$
[K_1] = A^{-1} [K_0] + A [K_\infty]
$$
whenever the three links $K_1$, $K_0$ and $K_\infty\subset S\times [0,1]$ differ only in a little ball where they are as represented in Figure~\ref{fig:SkeinRelation}, and where $[K]$ denotes the class of $\SSS(S)$ represented by the framed link $K$. The second relation is the \emph{trivial knot relation}, which asserts that 
$$[K\cup O] = -(A^2 +A^{-2})[K]$$
 whenever $O$ is the boundary of a disk $D \subset K \times [0,1]$ disjoint from $K$, and is endowed with a framing transverse to $D$. 
 
\begin{figure}[htbp]

\SetLabels
( .5 * -.4 ) $K_0$ \\
( .1 * -.4 )  $K_1$\\
(  .9*  -.4) $K_\infty$ \\
\endSetLabels
\centerline{\AffixLabels{\includegraphics{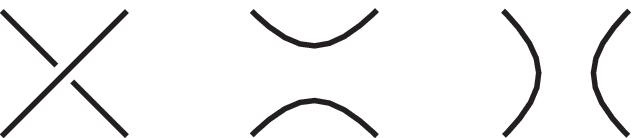}}}
\vskip 15pt
\caption{A Kauffman triple}
\label{fig:SkeinRelation}
\end{figure}

The algebra multiplication is provided by the operation of superposition, where the product $[K]\cdot [L]$ is represented by the union $[K'\cup L']$ where $K' \subset S\times [0,\frac 12]$ and $L' \subset S\times [\frac12, 1]$ are respectively obtained by rescaling the framed links $K \subset S\times [0,1]$ and $L' \subset S\times [0, 1]$.

\section{Sign-reversal symmetries}
\label{sect:signreversal}

The character variety $\RR(S)$ and the skein algebra $\SSS(S)$ both admit natural actions of the cohomology group $H^1(S;\Z_2)$. Indeed, for a character $r\in \RR(S)$ represented by a homomorphism  $r\colon \pi_1(S) \to \SL(\C)$ and a cohomology class $\epsilon \in H^1(S;\Z_2)$,  its image $\epsilon r \in \RR(S)$ is represented by the homomorphism $\epsilon r$ defined by
$$\epsilon r(\gamma) = (-1)^{\epsilon(\gamma)} r(\gamma) \in \SL(\C)$$
for every $\gamma \in \pi_1(S)$. The action of $H^1(S;\Z_2)$ on $\SSS(S)$ is similarly defined by the property that 
$$\epsilon[K] = [(-1)^{\epsilon(K)} K] \in \SSS(S)$$
  for every framed link $K \subset S\times[0,1]$ and $\epsilon \in H^1(S;\Z_2)$. 

If the character $r\in \RR(S)$ is fixed under the action of some  $\epsilon \in H^1(S;\Z_2)$, we say that  $\epsilon \in H^1(S;\Z_2)$ is a \emph{sign-reversal symmetry} for the character $r\in \RR(S)$. This is equivalent to the property that the trace $\Tr\,r(\gamma)$ is equal to $0$ for every $\gamma \in \pi_1(S)$ with $\epsilon(\gamma) \neq 0$.

Because of our assumption that $N$ is odd, the Chebyshev polynomial $T_N(x)$ is a sum of monomials of odd degree. It follows that, if the representation $\rho \colon \SSS(S) \to \End(E)$ has classical shadow $r\in \RR(S)$, its composition $\rho \circ \epsilon$ with the action of $\epsilon \in H^1(S;\Z_2)$ on $\SSS(S)$ has classical shadow $\epsilon r\in \RR(S)$. 
In particular, if the classical shadow $r\in \RR(S)$ of the representation $\rho \colon \SSS(S) \to \End(E)$ has a sign-reversal symmetry $\epsilon \in H^1(S;\Z_2)$, the representation $\rho \circ \epsilon$  also has classical shadow $\epsilon r=r$. Sign-reversal symmetries of a character $r\in\RR(S)$ are therefore intrinsic symmetries of the problem of finding representations of $\SSS(S)$ with classical shadow $r$, which explains why they will occur in  many statements of our article.

Characters with non-trivial sign-reversal symmetries exist, but are rare. For instance, they form an algebraic subset of complex dimension $2g-2$ in the $(6g-6)$--dimensional character variety $\RR(S)$, where $g$ is the genus of the surface $S$; see \cite[\S 5.1]{BonWon4}. 

\section{Constructing representations for punctured surfaces}

Throughout the article,  $A$ will be  a primitive $N$--root of $-1$ with $N$ odd. Namely, $A^N=-1$ and $N$ is the smallest positive integer with this property (and $N$ is odd). We  also use a choice of square root $\omega=\sqrt{A^{-1}}$. 

\subsection{The balanced Chekhov-Fock algebra of a triangulation}
\label{subsect:CheFock}

Let $\lambda$ be a triangulation of the closed oriented surface $S$. For most of the article, we are allowing an edge to go from one vertex to itself, as well as two edges to have the same endpoints. However,  we will always require that the sides of a face of $\lambda$ correspond to three distinct edges, for reasons that will become apparent in Remark~\ref{rem:EdgesUnivCoverDistinctEndpoints}. 

We will sometimes restrict attention to triangulations where each edge has distinct endpoints, and where distinct edges have distinct pairs of endpoints. In this case, we will say that the triangulation is \emph{combinatorial}, since this corresponds to the usual convention of combinatorial piecewise linear topology.  

Let $e_1$, $e_2$, \dots, $e_n$ be the edges of $\lambda$.   After choosing an auxiliary number $\omega$ such that $\omega^2 = A^{-1}$, the \emph{Chekhov-Fock algebra}  of $\lambda$ is the algebra $\TT(\lambda)$ defined by generators $Z_1^{\pm1}$, $Z_2^{\pm1}$, \dots, $Z_n^{\pm1}$ respectively associated to the edges  $e_1$, $e_2$, \dots, $e_n$ of $\lambda$, and by the relations
$$
Z_iZ_j = \omega^{2\sigma_{ij}} Z_jZ_i.
$$ 
where $\sigma_{ij}=a_{ij}-a_{ji}\in \{-2, -1, 0, 1, 2\}$ and where  $a_i \in \{0,1, 2\}$ is the number of times an end of the edge $e_j$ immediately succeeds an end of $e_i$ when going counterclockwise around a vertex of $\lambda$. 

An element of the Chekhov-Fock algebra $\TT(\lambda)$ is a linear combination of monomials $Z_{1}^{k_1}Z_{2}^{k_2} \dots Z_{n}^{k_n}$ in the generators $Z_i$, with $k_1$, $k_2$, \dots, $k_n \in \Z$. Such a monomial $Z_1^{k_1}Z_2^{k_2} \dots Z_n^{k_n}$ is \emph{balanced} if its exponents $k_i$  satisfy the following parity condition: for every triangle $T_j$ of the ideal triangulation $\lambda$, the sum $k_{i_1}+ k_{i_2}+ k_{i_3}$ of the exponents of the generators $Z_{i_1}$, $Z_{i_2}$, $Z_{i_3}$ associated to the sides of $T_j$ is even. 

 The \emph{balanced Chekhov-Fock algebra}  $\ZZ(\lambda)$ of the  triangulation $\lambda$ is the subalgebra of $\TT(\lambda)$ generated by all monomials satisfying this parity condition.
 
There are two reasons to be interested in the balanced Chekhov-Fock algebra $\ZZ(\lambda)$, whose combination is particularly useful for our purposes. The first one is the existence of an injective algebra homomorphism
$$
\Tr_\lambda^\omega \colon \SSS(S_\lambda) \to \ZZ(\lambda)
$$
from the skein algebra of the punctured surface $S_\lambda = S - V_\lambda$, obtained by removing from $S$ the set $V_\lambda$ of vertices of $\lambda$, to the algebra $\ZZ(\lambda)$; this \emph{quantum trace homomorphism} $\Tr_\lambda^\omega $ is constructed in \cite{BonWon1}. The second reason is that the algebraic structure of $\ZZ(\lambda)$ is fairly simple, so that its representations are easily classified (see \cite[\S 2]{BonWon4}, and the next section). This enables us to obtain representations of $\SSS(S_\lambda)$ by composing the quantum trace homomorphism $\Tr_\lambda^\omega $ with suitable representations of $\ZZ(\lambda)$. We will then show that these representations of the skein algebra $\SSS(S_\lambda)$ of the punctured surface $S_\lambda$ induce representations of the skein algebra $\SSS(S)$ of the closed surface $S$, which is the object of interest to us.

Because of the skew-commutativity relations $Z_iZ_j = \omega^{2\sigma_{ij}} Z_jZ_i$, the order of the variables in  a  monomial $Z_{i_1}^{n_1}Z_{i_2}^{n_2} \dots Z_{i_l}^{n_l}$ is quite important. We will make heavy use of the following symmetrization trick. 
The \emph{Weyl quantum ordering} for  $Z_{i_1}^{n_1}Z_{i_2}^{n_2} \dots Z_{i_l}^{n_l}$ is the monomial
$$
[Z_{i_1}^{n_1}Z_{i_2}^{n_2} \dots Z_{i_l}^{n_l}] = \omega^{-\sum_{u<v} n_un_v\sigma_{i_ui_v}} Z_{i_1}^{n_1}Z_{i_2}^{n_2} \dots Z_{i_l}^{n_l}. 
$$
The formula is specially designed that $[Z_{i_1}^{n_1}Z_{i_2}^{n_2} \dots Z_{i_l}^{n_l}]  \in \TT(\lambda)$ is invariant under any permutation of the~$Z_{i_u}^{n_u}$.

\subsection{Enhanced homomorphisms from $\pi_1(S)$ to $\SL(\C)$}
\label{subsect:Enhancements}
We are given a  character  $r\in \RR(S)$, represented by a homomorphism $ r \colon \pi_1(S) \to \SL(\C)$, and a triangulation $\lambda$ of the closed surface $S$.
Let $\widetilde S$ be the universal covering of $S$, and let $\widetilde \lambda$ be the triangulation of $\widetilde S$ induced by $\lambda$. Let $V_\lambda\subset S$ and $\widetilde V_\lambda \subset \widetilde S$ be the respective vertex sets of $\lambda$ and $\widetilde\lambda$. 

A \emph{$\lambda$--enhancement} for the group homomorphism $ r \colon \pi_1(S) \to \SL(\C)$ is a map $\xi \colon \widetilde V_\lambda \to \CP$ such that:
\begin{enumerate}
\item $\xi$ is $ r$--equivariant, in the sense that $\xi(\gamma \widetilde v) =  r (\gamma)\xi(\widetilde v)$ for every $\widetilde v \in \widetilde V_\lambda$ and every $\gamma \in \pi_1(S)$ (for the standard action of $\SL(\C)$ on the projective line $\CP$);
\item  for every edge $\widetilde e$  of $\widetilde \lambda$, the elements $\xi(\widetilde v)$ and $\xi(\widetilde v')\in \CP$ respectively associated to the end points $\widetilde v$ and $\widetilde v'$ of  $\widetilde e$ are distinct. 
\end{enumerate}

\begin{rem}
\label{rem:EdgesUnivCoverDistinctEndpoints}
Note an easy consequence of our assumption that the sides of each face of a triangulation $\lambda$ correspond to three distinct edges. If $e$ is an edge of $\lambda$ whose endpoints are equal to the same vertex $v$, an Euler characteristic argument shows that the closed loop formed by $e$ cannot bound a disk in $S$. As a consequence every edge $\widetilde e$ of $\widetilde\lambda$ has distinct endpoints, which makes Condition~(2) above more likely. Also, for the same reason, every edge $e$ of $\lambda$ whose endpoints are equal determines a non-trivial element of $\pi_1(S)$, well-defined up to conjugation. 
\end{rem}

\begin{lem}
\label{lem:EnhancementsExist}
Consider   a triangulation $\lambda$ of the surface $S$ and  a group homomorphism $r \colon \pi_1(S) \to \SL(\C)$ satisfying the following property: for every edge $e$ of $\lambda$ whose endpoints are equal to the same vertex $v$, the element $r(e)$ is different from $\pm \Id$  in $ \SL(\C)$.  Then the homomorphism  $ r \colon \pi_1(S) \to \SL(\C)$ admits a $\lambda$--enhancement. 
\end{lem}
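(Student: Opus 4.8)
The plan is to build the enhancement $\xi \colon \widetilde V_\lambda \to \CP$ by choosing values vertex-by-vertex in the quotient, propagating equivariantly, and verifying that the edge-nondegeneracy condition~(2) can be maintained at each step by avoiding finitely many bad points. First I would pick a set of representatives $v_1, \dots, v_k \in \widetilde V_\lambda$ for the $\pi_1(S)$--orbits in $\widetilde V_\lambda$, one for each vertex of $\lambda$; the stabilizer of $v_i$ in $\pi_1(S)$ is trivial unless the vertex has edges that are loops, so in general I must be careful about which $v_i$ have nontrivial stabilizer. Then I would assign projective points $\xi(v_i) \in \CP$ one at a time, and extend by the rule $\xi(\gamma v_i) = r(\gamma)\xi(v_i)$; the only subtlety in this extension being well-defined is at vertices whose stabilizer is nontrivial, which is exactly where the hypothesis $r(e) \neq \pm\Id$ enters.

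The key steps, in order: (1) Analyze the stabilizer $\Gamma_i \leqslant \pi_1(S)$ of a representative $v_i$. An Euler-characteristic argument as in Remark~\ref{rem:EdgesUnivCoverDistinctEndpoints} shows every loop-edge at $v_i$ is $\pi_1$--nontrivial, and in fact $\Gamma_i$ is generated by the (primitive classes of) the loop-edges at $v_i$; it is a free group, possibly trivial. For $\xi(\gamma v_i) = r(\gamma)\xi(v_i)$ to be consistent I need $\xi(v_i)$ to be a common fixed point in $\CP$ of $r(\Gamma_i)$. (2) Show such a fixed point exists. Each loop-edge $e$ at $v_i$ has $r(e) \neq \pm\Id$, so $r(e)$ has either one or two fixed points on $\CP$; but condition~(2) will force the two endpoints of the lift of $e$ — namely $v_i$ and $r(e)v_i$ — to be distinct, so in fact $\xi(v_i)$ must be a fixed point of $r(e)$ that... here one must be slightly careful: if $r(e)$ is parabolic with unique fixed point $p$, then choosing $\xi(v_i)=p$ gives $\xi(r(e)v_i)=p=\xi(v_i)$, violating~(2). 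So I actually need $\xi(v_i)$ to be a point moved by $r(e)$, which contradicts it being a fixed point of $\Gamma_i$ if $\Gamma_i$ contains $e$. The resolution is that distinct edges of a triangulation give distinct classes up to nothing — rather, the correct statement (and the place to be careful) is that condition~(2) only asks the two *endpoints* of each edge $\widetilde e$ to receive distinct values, and when $\widetilde e$ lifts a loop-edge $e$, these endpoints are $v_i$ and $\gamma v_i$ for the deck transformation $\gamma$ corresponding to $e$, with $\gamma \in \Gamma_i$; so I genuinely need, for each loop-edge generator $\gamma$, that $\xi(v_i) \neq r(\gamma)\xi(v_i)$, i.e. $\xi(v_i)$ is *not* fixed by $r(\gamma)$. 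Thus $\xi(v_i)$ cannot be a global fixed point of $r(\Gamma_i)$ unless $\Gamma_i$ is trivial, so the orbit $\pi_1(S)\cdot v_i$ being closed under the equivariance rule requires... the rule is automatically consistent: $\xi(\gamma v_i)$ is *defined* as $r(\gamma)\xi(v_i)$ and this is well-defined precisely because $v_i$ has trivial stabilizer once $\Gamma_i$ is trivial, and when $\Gamma_i$ is nontrivial the hypothesis $r(e)\neq\pm\Id$ is exactly what I will show prevents $\Gamma_i$ from being a problem — I need $r$ restricted to $\Gamma_i$ to have a point with trivial stabilizer, which holds as long as no $r(\gamma)$ for $\gamma\in\Gamma_i\setminus\{1\}$ equals $\Id$; but $r(\gamma)=-\Id$ acts trivially on $\CP$, so I need $r(\gamma)\neq\pm\Id$ for all such $\gamma$, and this is where one must check the hypothesis (stated only for edges) propagates to all of $\Gamma_i$.

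Carrying this out, the main obstacle is precisely reconciling equivariance (which wants $\xi(v_i)$ stabilizer-invariant under $r(\Gamma_i)$) with condition~(2) (which forbids $\xi(v_i)$ from being fixed by the loop-edge generators). The clean way out: since $\widetilde V_\lambda$ is infinite but locally finite and the values at different orbit-representatives are chosen successively, at the $i$-th stage the finitely many already-assigned values $\xi(w)$ for neighbors $w$ of $v_i$ (and the values $r(\gamma)\xi(v_i)$ forced by $\gamma\in\Gamma_i$) cut out only finitely many "forbidden" points of $\CP$ — the fixed points of the relevant $r(\gamma)$ (finitely many since each such $r(\gamma)\neq\pm\Id$) together with preimages under $r(\gamma)$ of already-placed neighbor values — and $\CP$ is infinite, so a valid choice of $\xi(v_i)$ exists. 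When $\Gamma_i$ is nontrivial, $r$ maps $\Gamma_i$ (a free group, each edge-generator going to something $\neq\pm\Id$) to a subgroup of $\PSL(\C)$ with trivial kernel on the relevant generators, so the orbit $r(\Gamma_i)\xi(v_i)$ avoids $\xi(v_i)$ for a cofinite set of choices of $\xi(v_i)$, and equivariant extension is then automatic and consistent. I would then assemble these choices over all $i = 1,\dots,k$ into the desired map $\xi$, and check conditions~(1) and~(2) hold globally — (1) by construction, and (2) because every edge of $\widetilde\lambda$ projects to an edge of $\lambda$, hence lies in the $\pi_1(S)$--orbit of an edge incident to some $v_i$, where nondegeneracy was arranged. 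The one genuinely delicate point to write carefully is the claim that the finitely-many-bad-points count really is finite — this uses that each $r(e)\neq\pm\Id$ has at most two fixed points on $\CP$, and that only finitely many edges are incident to each vertex of $\lambda$.
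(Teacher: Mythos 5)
Your overall plan --- choose $\xi$ orbit by orbit, extend $r$--equivariantly, and avoid a small set of forbidden values so that condition~(2) holds --- is the same as the paper's argument. But the middle of your proposal rests on a false premise: the stabilizer of a lift $\widetilde v_i$ under the action of $\pi_1(S)$ on $\widetilde S$ is \emph{always} trivial, because deck transformations of a universal cover act freely. A loop edge of $\lambda$ at $v$ does not give an element of the stabilizer of $\widetilde v_i$; it gives a deck transformation $\gamma$ (a conjugate of $e$) carrying $\widetilde v_i$ to a \emph{different} lift $\gamma\widetilde v_i$ of $v$, the two lifts being the endpoints of a lift of $e$ (compare Remark~\ref{rem:EdgesUnivCoverDistinctEndpoints}). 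Consequently the equivariant extension $\xi(\gamma\widetilde v_i)=r(\gamma)\xi(\widetilde v_i)$ is well defined for \emph{every} choice of $\xi(\widetilde v_i)$, there is no tension between equivariance and condition~(2), and the issue you leave open --- whether $r(\gamma)\neq\pm\Id$ ``propagates'' from the loop-edge generators to all of $\Gamma_i$ --- is vacuous; as written your argument has a genuine gap exactly there, since you flag that propagation as something to be checked and never check it, and nothing in the hypothesis (which concerns only edges) would give it for arbitrary products of loop-edge classes if $\Gamma_i$ really were nontrivial.

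Once the freeness of the action is acknowledged, your concluding paragraph is essentially the paper's proof: the hypothesis $r(e)\neq\pm\Id$ enters only to guarantee that, for each loop edge at $v$, the relevant conjugates of $r(e)$ act nontrivially on $\CP$ and hence have at most two fixed points, so the forbidden set for $\xi(\widetilde v_i)$ (these fixed points, together with already assigned values on earlier orbits that would create a coincidence along some edge) can be avoided. One point to state carefully: by equivariance and the injectivity of each $r(\gamma)$ on $\CP$, the inequality required by condition~(2) for an edge of $\widetilde\lambda$ depends only on its $\pi_1(S)$--orbit, so it suffices to impose it on one lift, incident to $\widetilde v_i$, of each edge of $\lambda$ adjacent to $v$; this is what legitimizes your ``finitely many bad points'' count (the paper uses the cruder bound of countably many forbidden values, which also suffices since $\CP$ is uncountable).
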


Note that, in particular, the hypotheses of Lemma~\ref{lem:EnhancementsExist} are automatically satisfied  if every edge of $\lambda$ has distinct endpoints, or if $r$ is injective by Remark~\ref{rem:EdgesUnivCoverDistinctEndpoints}. 

\begin{proof}
To construct an $ r$--equivariant map $\xi \colon \widetilde V_\lambda \to \mathbb{CP}^1$, we  proceed orbit by orbit for the action of $\pi_1(S)$ on the vertex set $\widetilde V_\lambda$. 

For a vertex $v\in V_\lambda$, pick a point $\widetilde v \in \widetilde V_\lambda$ in its preimage. As a first approximation, define $\xi(\widetilde v)$ to be an arbitrary point of the projective line $\mathbb{CP}^1$. Then there is a unique way to $r$--equivariantly extend $\xi$ to the whole preimage of $v$, namely to the orbit $\pi_1(S) \widetilde v$ of $\widetilde v$ under the action of $\pi_1(S)$: define $\xi$ on this orbit by the property that $\xi(\gamma \widetilde v) =  r (\gamma)\xi(\widetilde v)$ for every $\gamma \in \pi_1(S)$. 

Performing this operation for each vertex $v$ of $\lambda$ defines an $r$--equivariant map  $\xi \colon \widetilde V_\lambda \to \mathbb{CP}^1$. 

In addition, we can require that, at each step, the initial point $\xi(\widetilde v) \in \mathbb{CP}^1$ is chosen to satisfy the following two conditions: $\xi(\widetilde v)$ is not in the image under $\xi$ of the orbits considered in earlier steps; for every edge $e$ of $\lambda$ whose endpoints are both equal to $v$, the point $\xi(\widetilde v)$ is not fixed by the image under $r$ of any conjugate of $e\in \pi_1(S)$. Because of our hypothesis that $r(e)\neq \pm \Id$ in the second case, these two conditions are easily satisfied by suitably choosing $\xi(\widetilde v) \in \mathbb{CP}^1$ outside of a countable number of forbidden values. 

It is then immediate that the map $\xi \colon \widetilde V_\lambda \to \mathbb{CP}^1$ so constructed in a $\lambda$--enhancement.
\end{proof}

A $\lambda$--enhancement $\xi \colon \widetilde V_\lambda \to \mathbb{CP}^1$ for the homomorphism $ r \colon \pi_1(S) \to \SL(\C)$ assigns a non-zero complex weight $x_i \in \C^*= \C-\{0\}$ to the $i$--th edge $e_i$ of $\lambda$ as follows. Lift $e_i$ to an edge $\widetilde e_i$ of the triangulation $\widetilde \lambda$ of the universal covering $\widetilde S$. Arbitrarily orient $\widetilde e_i$, and let $\widetilde v_i^+$ and $\widetilde v_i^-$ be the positive and negative endpoints of $\widetilde e_i$. Consider the two faces of $\widetilde\lambda$ that are adjacent to $\widetilde e_i$, let $\widetilde v_i^{\,\mathrm{left}}$ be the third vertex of the face to the left, and let $\widetilde v_i^{\,\mathrm{right}}$ be the third vertex of the face to the right. Then, $x_i$ is defined as minus the crossratio of the four points $\xi(\widetilde v_i^+)$, $\xi(\widetilde v_i^-)$, $\xi(\widetilde v_i^{\,\mathrm{left}})$, $\xi(\widetilde v_i^{\,\mathrm{right}})\in \mathbb{CP}^1$. More precisely, for the standard identification $\mathbb{CP}^1 \cong \C \cup \{ \infty \}$,
$$
x_i = - \frac
{\bigl(\xi(\widetilde v_i^{\,\mathrm{left}}) - \xi(\widetilde v_i^+)   \bigr)
\bigl(\xi(\widetilde v_i^{\,\mathrm{right}}) - \xi(\widetilde v_i^-)   \bigr)}
{\bigl(\xi(\widetilde v_i^{\,\mathrm{left}}) - \xi(\widetilde v_i^-)   \bigr)
\bigl(\xi(\widetilde v_i^{\,\mathrm{right}}) - \xi(\widetilde v_i^+)   \bigr)}
. $$
Note that reversing the orientation of $\widetilde e_i$  leaves $x_i$ unchanged. Also, the two conditions in the definition of $\lambda$--enhancements guarantee that  $x_i$ is a well-defined element of $\C^*$ and is independent of the choice  of the lift $\widetilde e_i$ of $e_i$, by invariance of crossratios under the action of $\SL(\C)$ on $\mathbb {CP}^1$.  

\subsection{Representations of the balanced Chekhov-Fock algebra}

We will use the results of \cite{BonWon4} to associate to each  group homomorphism $r\colon \pi_1(S) \to \SL(\C)$, endowed with a $\lambda$--enhancement $\xi \colon \widetilde V_\lambda \to \mathbb{CP}^1$, a representation  $\mu_\lambda \colon \ZZ(\lambda)\to  \End(E)$ of the balanced Chekhov-Fock algebra  $\ZZ(\lambda)$. 

This representation $\mu_\lambda$ will be uniquely determined up to isomorphism, but also up to sign-reversal symmetry of the character $r\in \RR(S)$. To make sense of this property, note that a monomial $Z_{\mathbf k} = Z_1^{k_1} Z_2^{k_2} \dots Z_n^{k_n} \in \ZZ(\lambda)$ uniquely determines a homology class $[\mathbf k] \in H_1(S_\lambda; \Z_2)$ in the punctured surface $S_\lambda$, by the property that the algebraic intersection number of $[\mathbf k]$ with each edge $e_i$ has the same parity as the exponent $k_i$ of the corresponding generator $Z_i$; see \cite[Lemma~9]{BonWon4}. A cohomology class $\epsilon \in H^1(S_\lambda; \Z_2)$ then acts on $\ZZZ(\lambda)$ by sending each $Z_{\mathbf k}$ to $(-1)^{\epsilon([\mathbf k])} Z_{\mathbf k}$. By restriction, this defines an action of $H^1(S;\Z_2)$ on $\ZZ(\lambda)$. 

Also, a vertex $v$ of $\lambda $ determines an element
 $$
 H_v =[Z_1^{k_1} Z_2^{k_2} \dots Z_n^{k_n}] = \omega^{-\sum_{i<j} k_i k_j \sigma_{ij}} Z_1^{k_1} Z_2^{k_2} \dots Z_n^{k_n} \in \ZZ(\lambda)
 $$
 where $k_i \in \{0,1,2\}$ is the number of endpoints of the edge $e_i$ that are equal to $v$, and where $[\phantom{M}]$ denotes the Weyl quantum  ordering defined in \S \ref{subsect:CheFock}.  This element $H_v$ is central in $\ZZ(\lambda)$, as proved in \cite[\S3]{BonLiu} or \cite[\S2.2]{BonWon4}. 
 
 A final observation is that the generator $Z_i \in \TT(\lambda)$ associated to the edge $e_i$ of $\lambda$ does not belong to the balanced Chekhov-Fock algebra $\ZZ(\lambda)$, as it does not satisfy the required exponent parity condition. However, its square $Z_i^2 \in \ZZ(\lambda)$ does. 
 
 We will make repeated use of the following result, borrowed from \cite{BonWon4}. 

\begin{prop}
\label{prop:ConstructRepCheFock}
For a  triangulation $\lambda$ of the surface $S$, consider a group homomorphism $r\colon \pi_1(S) \to \SL(\C)$ endowed with a $\lambda$--enhancement $\xi \colon \widetilde V_\lambda \to \mathbb{CP}^1$. Then,  up to isomorphism and up to the action of a sign-reversal symmetry of $r\in \RR(S)$ (if $r$ admits any), there exists a unique  representation $\mu_\lambda \colon \ZZ(\lambda) \to \End(E_\lambda)$ of the balanced Chekhov-Fock algebra $\ZZ(\lambda)$ with the following properties. 
\begin{enumerate}
\item The dimension of $E_\lambda$ is equal to $N^{3g+p_\lambda-3} $, where $g$ is the genus of the surface  $S$ and where $p_\lambda$ is the number of vertices of the triangulation $\lambda$.
\item For every edge $e_i$ of $\lambda$, let  $x_i \in \C^*$ be the crossratio weight associated to  $e_i$ by the enhancement $\xi$ as above, and let $Z_i$ be the corresponding generator of the Chekhov-Fock algebra $ \TT(\lambda)$. Then,
$$
\mu_\lambda (Z_i^{2N}) = x_i\, \Id_{E_\lambda}.
$$
\item For every vertex $v$ of $\lambda$ with associated central element $H_v \in \ZZ(\lambda)$, 
$$
\mu_\lambda(H_v) = -\omega^4 \,\Id_{E_\lambda}.  
$$
\item
The representation $\rho_\lambda = \mu_\lambda \circ \Tr_\lambda^\omega \colon \SSS(S_\lambda) \to \End({E_\lambda})$ has classical shadow $r \in \RR(S)$, in the sense that 
$$
T_N \bigl( \rho_\lambda ([K]) \bigr) = - \Tr\, r(K)\, \Id_{E_\lambda}
$$
for every framed knot $K \subset S_\lambda\times [0,1]$ whose projection to $S_\lambda$ has no crossing and whose framing is vertical. 
\end{enumerate}
In addition, $\mu_\lambda$ is irreducible.
\end{prop}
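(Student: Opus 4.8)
The plan is to deduce Proposition~\ref{prop:ConstructRepCheFock} from the representation theory of the balanced Chekhov-Fock algebra $\ZZ(\lambda)$ at a root of unity, worked out in \cite[\S2]{BonWon4}, together with the compatibility between the quantum trace homomorphism $\Tr_\lambda^\omega$ and shear coordinates established in \cite{BonWon3, BonWon4}. The starting observation is that $\ZZ(\lambda) \subset \TT(\lambda)$ is a finitely generated module over its center. Since $A$ is a primitive $N$--root of $-1$ with $N$ odd, the chosen square root $\omega$ has order $4N$, so each $Z_i^{2N}$ is central in $\TT(\lambda)$; and, using the identification of a balanced monomial $Z_{\mathbf k}$ with a class $[\mathbf k] \in H_1(S_\lambda;\Z_2)$ from \cite[Lemma~9]{BonWon4} together with $\omega^{2N}=-1$, the $N$--th power $Z_{\mathbf k}^N$ lies in the center of $\ZZ(\lambda)$ precisely when $[\mathbf k]$ is in the radical of the mod~$2$ intersection form of $S_\lambda$, i.e.\ when it is a sum of loops around punctures. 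The center of $\ZZ(\lambda)$ is generated by these $Z_{\mathbf k}^N$ together with the vertex elements $H_v$ (central for the same parity reason, their class being a single puncture loop). A rank count for the Weyl form $\sigma_{ij}$ on the balanced exponent lattice --- using that a triangulation of a genus~$g$ surface with $p_\lambda$ vertices has $6g+3p_\lambda-6$ edges, and that the corank of $\sigma_{ij}$ on that lattice is $p_\lambda$, exhausted by the puncture classes --- then yields the classification: every irreducible representation of $\ZZ(\lambda)$ on which all $Z_i^{2N}$ and all $H_v$ act by nonzero scalars has dimension exactly $N^{3g+p_\lambda-3}$, and two such representations with the same central character are isomorphic.

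Existence then follows by specializing this classification. Given the $\lambda$--enhancement $\xi$, the crossratio weights $x_i \in \C^*$ of \S\ref{subsect:Enhancements} are nonzero; choosing, for each edge, a square root of $x_i$ to be the scalar by which the square-root class ``$Z_i^N$'' acts, and prescribing $\mu_\lambda(H_v) = -\omega^4\,\Id$, determines a character of the center of $\ZZ(\lambda)$, hence an irreducible $\mu_\lambda \colon \ZZ(\lambda) \to \End(E_\lambda)$ with $\dim E_\lambda = N^{3g+p_\lambda-3}$. This gives properties~(1), (2), (3) and irreducibility. Property~(4), that $\rho_\lambda = \mu_\lambda \circ \Tr_\lambda^\omega$ has classical shadow $r$, is where the miraculous cancellations of \cite{BonWon3} enter: for a crossingless, vertically framed knot $K \subset S_\lambda \times [0,1]$, the element $\Tr_\lambda^\omega([K^{T_N}]) \in \ZZ(\lambda)$ is, after the central corrections carried by the $H_v$, a Laurent polynomial in the $Z_i^N$ whose value under $\mu_\lambda$ is the classical trace function of the monodromy evaluated at the shear coordinates $x_i$; by the very definition of $x_i$ as a crossratio of the values of $\xi$, this equals $\Tr\, r(K)$, and threading $T_N$ converts this into $T_N(\rho_\lambda([K])) = -\Tr\, r(K)\,\Id_{E_\lambda}$. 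One must check that for a suitable choice of signs of the $Z_i^N$ this holds exactly; the existence of such a choice is guaranteed by the freedom discussed next.

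For uniqueness, note that the possible sign choices for the square roots $Z_i^N$ of the fixed scalars $x_i$ form a torsor under $H^1(S_\lambda;\Z_2)$, acting on $\ZZ(\lambda)$ as recalled before the proposition. The normalization $\mu_\lambda(H_v) = -\omega^4$ for every vertex $v$ restricts this torsor to the subgroup fixing all $H_v$, which is exactly the image of $H^1(S;\Z_2) \hookrightarrow H^1(S_\lambda;\Z_2)$ (the classes vanishing on every puncture loop). Two admissible sign choices differing by $\epsilon \in H^1(S;\Z_2)$ give representations $\mu_\lambda$ and $\mu_\lambda \circ \epsilon$, whose compositions with $\Tr_\lambda^\omega$ have classical shadows $r$ and $\epsilon r$; these are the same solution of conditions~(1)--(4) precisely when $\epsilon r = r$, i.e.\ when $\epsilon$ is a sign-reversal symmetry of $r$. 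Combined with the isomorphism-uniqueness of irreducible representations with a given central character, this yields the asserted uniqueness up to isomorphism and sign-reversal symmetry, and simultaneously confirms that some admissible sign choice realizes property~(4) on the nose.

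The main obstacle is the structural analysis of $\ZZ(\lambda)$ underlying the classification: precisely identifying the center --- in particular the interaction of the $H_v$ with the square-root classes $Z_i^N$ and with the radical of the mod~$2$ intersection form --- and computing the corank of the Weyl form $\sigma_{ij}$ on the balanced lattice so as to pin the dimension at $N^{3g+p_\lambda-3}$, while checking that the constraints~(2)--(3) single out one central character up to the $H^1(S;\Z_2)$ ambiguity. The verification of property~(4) is technically substantial as well, but it is essentially imported from \cite{BonWon3, BonWon4}; the remaining delicate point there is the sign bookkeeping matching the $Z_i^N$ with the classical trace functions, which is exactly the source of the sign-reversal ambiguity in the statement.
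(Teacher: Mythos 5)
Your overall strategy---classify the irreducible representations of $\ZZ(\lambda)$ by central characters, prescribe the character via the crossratio weights and the normalization $\mu_\lambda(H_v)=-\omega^4$, and import property~(4) from the miraculous cancellations---is workable, but be aware that it amounts to re-deriving the machinery that the paper simply invokes: the actual proof translates the $\lambda$--enhancement into an $r$--equivariant pleated surface $\widehat S_\lambda \to \HH^3$ whose shear-bend parameter along $e_i$ is exactly the crossratio weight $x_i$, and then quotes Propositions~22 and 23 of \cite{BonWon4} wholesale, including their uniqueness clauses. Measured against that, your write-up has one genuine gap and one inaccuracy. The gap is at the existence of a sign choice realizing~(4) with the given lift $r$. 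You treat $\mu_\lambda(H_v)=-\omega^4$ as a bare normalization and then assert that a suitable sign choice realizing the shadow $r$ exists because of ``the freedom discussed next''; as written this is circular, since that freedom is only the subgroup of $H^1(S_\lambda;\Z_2)$ fixing the $H_v$, i.e.\ $H^1(S;\Z_2)$, while the shadows you can a priori reach are characters of $\pi_1(S_\lambda)$. The missing step---which is the crux of the paper's short proof---is that the admissible puncture datum in \cite[Prop.~23]{BonWon4} is an $N$--root $h_v$ subject to $h_v^N+h_v^{-N}=-\Tr\,r_\lambda(P_v)$; since $P_v$ is nullhomotopic in $S$ one has $\Tr\,r_\lambda(P_v)=2$, so $h_v^N=-1$, and $h_v=-\omega^4$ qualifies because $N$ is odd and $\omega^{4N}=1$. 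It is this computation that forces every shadow compatible with~(3) to satisfy $r'(P_v)=\Id$ (trace $2$ plus $\PSL(\C)$--triviality at the puncture), hence to be a lift of $\bar r$ over $\pi_1(S)$; only then does transitivity of $H^1(S;\Z_2)$ on such lifts let you hit $r$ itself, and only then does your uniqueness argument close up, exactly as the paper converts the sign-reversal ambiguity over $S_\lambda$ into one over $S$ using $\Tr\,r_\lambda(P_v)=2\neq 0$.

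The inaccuracy concerns the center of $\ZZ(\lambda)$. Since balanced monomials commute through powers of $\omega^{4\Omega}$ (Lemma~10 of \cite{BonWon4}, as recalled in \S\ref{subsect:FaceSubdivisions}) and $\omega^{4N}=1$, \emph{every} $N$--th power $Z_{\mathbf k}^N$ of a balanced monomial is central---the paper itself uses this in the proof of Lemma~\ref{lem:EigenspaceDimensions}---not only those whose class is a sum of puncture loops; your ``precisely when $[\mathbf k]$ lies in the radical'' criterion is therefore wrong as stated. The slip does not sink your final counts: values on monomials with all exponents even are pinned by~(2), values on the $H_v$ by~(3), and the residual freedom is dual to $H_1(S_\lambda;\Z_2)/\langle P_v\rangle\cong H_1(S;\Z_2)$, so the $H^1(S;\Z_2)$--torsor picture and the dimension $N^{3g+p_\lambda-3}$ survive. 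But this structural analysis, together with the statement that the shadow of $\mu_\lambda\circ\Tr_\lambda^\omega$ is computed from the central character by the classical trace polynomials, is precisely the content of \cite[\S2, \S4 and Props.~15, 22, 23]{BonWon4}; you should either cite those statements outright, as the paper does, or actually supply the lattice and Frobenius-type arguments you currently defer as ``the main obstacle.''
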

\begin{proof}
This is a special case of the combination of Propositions~22 and 23 of \cite{BonWon4}. The only minor difference is that these results are expressed in terms of pleated surfaces instead of $\lambda$--enhancements. 

To connect the two viewpoints, note that the triangulation $\lambda$ can also be interpreted as an ideal triangulation of the punctured surface $S_\lambda =S - V_\lambda$, obtained by removing from $S$ the vertex set $V_\lambda$ of $\lambda$. Similarly, the lift $\widetilde \lambda$ of $\lambda$ to  the universal covering $\widetilde S$ of $S$ gives an ideal triangulation of  the preimage $\widetilde S_\lambda= \widetilde S - \widetilde V_\lambda $ of $S_\lambda$ in  $\widetilde S$.  The $\lambda$--enhancement $\xi \colon \widetilde V_\lambda \to \mathbb{CP}^1$  then determines an $r$--equivariant pleated surface $\widetilde f_\lambda \colon \widetilde S_\lambda \to \HH^3$ that sends each face $\widetilde T$ of $\widetilde \lambda$ with vertices $\widetilde v_1$, $\widetilde v_2$, $\widetilde v_3 \in \widetilde V_\lambda$ to  the ideal triangle $\widetilde f_\lambda(\widetilde T) \subset \HH^3$ with vertices $\xi(\widetilde v_1)$, $\xi(\widetilde v_2)$, $\xi(\widetilde v_3) \in \CP = \partial_\infty \HH^3$. We can then lift $\widetilde f_\lambda$ to an $r_\lambda$--equivariant pleated surface $\widehat f_\lambda \colon \widehat S_\lambda \to \HH^3$, where $\widehat S_\lambda$ is the universal cover of the punctured surface $S_\lambda$ and where  $r_\lambda \colon \pi_1(S_\lambda) \to \SL(\C)$  is the composition of $r\colon \pi_1(S) \to \SL(\C)$  with the homomorphism $\pi_1(S_\lambda) \to \pi_1(S)$ induced by the inclusion map.

The pleated surface $(\widehat f_\lambda, r_\lambda)$ is exactly the setup needed to apply Proposition~23 of \cite{BonWon4} to the punctured surface $S_\lambda$. By construction, the shearbend parameter associated by this pleated surface to the edge $e_i$ of $\lambda$ is exactly the crossratio weight $x_i$ defined as above by the $\lambda$--enhancement $\xi$. 

Proposition~23 of \cite{BonWon4} has an additional degree of freedom for each puncture $v$ of $S_\lambda$. Specifically, the hypotheses of that statement require that we choose an $N$--root $h_v = \mu_\iota(H_v)^{\frac1N}$ for a certain number $ \mu_\iota(H_v)\in \C^*$ provided by \cite[Proposition~22]{BonWon4} (using the notation of \cite{BonWon4}). In addition, this number $ \mu_\iota(H_v)$ is such that $ \mu_\iota(H_v) +  \mu_\iota(H_v)^{-1} = - \Tr\,r(P_v)$, where $P_v$ is a small loop going around the puncture $v$ of $S_\lambda$.  In our case $r_\lambda(P_v)$ is the identity and consequently has trace equal to 2, so that $ \mu_\iota(H_v)=-1$. 

We can therefore apply \cite[Proposition~23]{BonWon4} to the $N$--root  $h_v=-\omega^4$ of $ \mu_\iota(H_v)=-1$, since $N$ is odd and $\omega^{4N} = A^{-2N}=1$. This provides a representation  $\mu_\lambda \colon \ZZ(\lambda) \to \End(E_\lambda)$  satisfying the conclusions of Proposition~\ref{prop:ConstructRepCheFock}. 

The uniqueness parts of  Propositions~22 and 23 of \cite{BonWon4}  show that $\mu_\lambda$ is unique up to isomorphism and up to the action of a sign-reversal symmetry $\epsilon_\lambda \in H^1(S_\lambda; \Z_2)$ of the restriction $r_\lambda \in \RR(S_\lambda)$ of $r\in \RR(S)$. For every puncture $v$ of $S_\lambda$, $\Tr\,r_\lambda(P_v)=2\neq 0$ and the sign-reversal symmetry $\epsilon_\lambda$ is consequently trivial on the loop $P_v$ going around $v$. It follows that $\epsilon_\lambda$ is the restriction of a sign-reversal symmetry $\epsilon \in H^1(S; \Z_2)$ of  $r \in \RR(S_\lambda)$. This proves the uniqueness statement for the representation  $\mu_\lambda \colon \ZZ(\lambda) \to \End(E_\lambda)$.
\end{proof}

\begin{rem}
As indicated in the above discussion, we could have replaced Conclusion~(3) of Proposition~\ref{prop:ConstructRepCheFock} by the property that $\mu_\lambda(H_v) = h_v \, \Id_{E_\lambda}$ for an arbitrary $N$--root $h_v$ of $-1$. However our subsequent applications of Proposition~\ref{prop:ConstructRepCheFock} require that $h_v=-\omega^4$ in a crucial way. 
\end{rem}

\begin{comp}
\label{comp:ConstructRepCheFockContinuous}
The representation $\mu_\lambda$ of Proposition~{\upshape\ref{prop:ConstructRepCheFock}} continuously depends on the enhanced homomorphism $(r, \xi)$ as follows. For each edge $e_i$ of $\lambda$, consider the corresponding crossratio weight $x_i\in \C-\{0\}$ as a function of the pair $(r, \xi)$, and let  $u_i=\sqrt[2N]{x_i}$ be a local determination of the $2N$--root of $x_i$ defined for $(r, \xi)$ in an  open subset $\mathcal U$ of the space of all such pairs.  Then the representation $\mu_\lambda \colon \ZZ(\lambda) \to \End({E_\lambda})$ can be chosen so that, for every monomial $Z_1^{k_1}Z_2^{k_2} \dots Z_n^{k_n} \in \ZZ(\lambda)$, 
$$
\mu_\lambda (Z_1^{k_1}Z_2^{k_2} \dots Z_n^{k_n}) = u_1^{k_1}u_2^{k_2} \dots u_n^{k_n} \, A_{k_1k_2\dots k_n} 
$$
for some linear isomorphism $A_{k_1k_2\dots k_n} \in \End({E_\lambda})$ independent of $(r,\xi)\in \mathcal U$. 
\end{comp}
\begin{proof}
This is an immediate consequence of the proofs of  Propositions~15, 22 and 23 in \cite{BonWon4} (where Proposition~15 is a key step in the proof of Proposition~23).
\end{proof}

\subsection{Representing the skein algebra of the punctured surface $S_\lambda$}
\label{subsect:RepresentingPunctSurfSkein}
We now begin our construction of an irreducible representation of the skein algebra $ \SSS(S)$ whose classical shadow is equal to the character  $r\in \RR(S)$.

Represent the character $r\in \RR(S)$ by a group homomorphism $r\colon \pi_1(S) \to \SL(\C)$. Let $\lambda$ be a triangulation of $S$ for which this homomorphism $r$ admits a $\lambda$--enhancement $\xi$. For instance, any combinatorial triangulation has this property by Lemma~\ref{lem:EnhancementsExist}. Let $S_\lambda = S- V_\lambda$ be the punctured surface obtained by removing the vertex set of $\lambda$ from $S$. 

We can then consider the representation $\mu_\lambda \colon \ZZ(\lambda) \to \End({E_\lambda})$ associated to the enhanced homomorphism $(r, \xi)$ by Proposition~\ref{prop:ConstructRepCheFock}. Composing $\mu_\lambda$ with the quantum trace homomorphism $\Tr_\lambda^\omega \colon \SSS(S_\lambda) \to \ZZ(\lambda)$ of \cite{BonWon1} now defines a representation
$$
\rho_\lambda = \mu_\lambda \circ \Tr_\lambda^\omega \colon \SSS(S_\lambda) \to \End({E_\lambda}).
$$

This is only a representation of the skein algebra $\SSS(S_\lambda)$ of the punctured surface $S_\lambda$, whereas we want to represent the skein algebra $\SSS(S)$ of the closed surface $S$. The rest of the article is devoted to showing how $\rho_\lambda$ induces  a non-trivial representation of $\SSS(S)$. 

\section{The off-diagonal kernels}

\subsection{The classical off-diagonal term of a vertex}
\label{subsect:ClassicalOffDiagonalTerm}

This section is intended to motivate the definition of the next section.

Consider a vertex $v$ of the triangulation $\lambda$. Let $e_{i_1}$, $e_{i_2}$, \dots, $e_{i_u}$  be the edges of $\lambda$ that emanate from $v$, indexed in counterclockwise  order around $v$, and with possible repetitions when the two endpoints of an edge are equal to $v$. As in \S \ref{subsect:Enhancements}, let $x_i\in \C^*$ be the crossratio weight associated to the edge $e_i$ of $\lambda$ by the enhancement $\xi$.

\begin{lem}
\label{lem:ClassicalOffDiagonal}
$$
1+ x_{i_1} + x_{i_1} x_{i_2} + \dots + x_{i_1} x_{i_2} \dots x_{i_{u-1}} =0
$$
\end{lem}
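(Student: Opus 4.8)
The plan is to reduce the identity to a purely projective-geometric statement about the crossratio weights $x_i$ and then verify it by a direct telescoping computation. Recall that each $x_i$ is defined (up to sign) as a crossratio of four points on $\CP$ determined by the enhancement $\xi$, associated to the two faces of $\widetilde\lambda$ adjacent to a lift $\widetilde e_i$. When we go around the vertex $v$ through the consecutive edges $e_{i_1}, e_{i_2}, \dots, e_{i_u}$, I would first lift the whole ``star'' of $v$ to the universal cover: choose a lift $\widetilde v \in \widetilde V_\lambda$ and lift the edges $e_{i_1}, \dots, e_{i_u}$ to edges $\widetilde e_{i_1}, \dots, \widetilde e_{i_u}$ of $\widetilde\lambda$ all having $\widetilde v$ as an endpoint, arranged counterclockwise around $\widetilde v$ so that $\widetilde e_{i_{k}}$ and $\widetilde e_{i_{k+1}}$ bound a common face $\widetilde T_k$ of $\widetilde\lambda$. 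Let $w_0, w_1, \dots, w_u \in \CP$ be the $\xi$-images of the endpoints of $\widetilde e_{i_1}, \dots, \widetilde e_{i_u}$ other than $\widetilde v$ (so $w_{k-1}$ and $w_k$ are the two ``outer'' vertices of the face $\widetilde T_k$), and let $z = \xi(\widetilde v)$. The point of the argument is that the star may not close up — the edges $e_{i_1}$ and $e_{i_u}$ need not have the same image under $\xi$ after going once around — but the combinatorial relation among the faces still forces an algebraic identity.

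Next I would normalize by the $\SL(\C)$-action: using invariance of crossratios, conjugate so that $z = \xi(\widetilde v) = \infty$. With this normalization the crossratio weight of the edge $\widetilde e_{i_k}$ (whose outer endpoint has $\xi$-value $w_k$, say, with the face $\widetilde T_{k-1}$ on one side and $\widetilde T_k$ on the other, i.e. third vertices $w_{k-1}$ and $w_{k+1}$) becomes simply a ratio of differences of the finite points; a short computation shows $x_{i_k} = \dfrac{w_{k-1} - w_k}{w_k - w_{k+1}}$ (with the appropriate orientation convention, possibly up to relabeling $w_{k\pm1}$). Hence the partial product $x_{i_1} x_{i_2} \cdots x_{i_{k}}$ telescopes to $\dfrac{w_0 - w_1}{w_k - w_{k+1}}$, and the left-hand side of the claimed identity becomes
$$
1 + \frac{w_0 - w_1}{w_1 - w_2} + \frac{w_0 - w_1}{w_2 - w_3} + \dots + \frac{w_0 - w_1}{w_{u-1} - w_u}
$$
— wait, this is not quite telescoping to zero yet, which signals that the correct normalization of $x_{i_k}$ is $x_{i_k} = -\dfrac{w_{k-1}-w_k}{w_k-w_{k+1}}\cdot\dfrac{(\text{something})}{(\text{something})}$; the right bookkeeping is that $x_{i_1}x_{i_2}\cdots x_{i_{k-1}} = \dfrac{w_0 - w_1}{w_{k-1}-w_k}$ up to a sign coming from the ``$-$crossratio'' convention, so that the sum becomes $(w_0-w_1)\sum_{k=1}^{u}\dfrac{1}{w_{k-1}-w_k}$ plus the constant $1$, and the constraint that going around $v$ returns to a coherent configuration collapses the whole thing. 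Concretely, with $z=\infty$ each outer vertex $w_k$ is finite, and the statement $1 + x_{i_1} + \dots + x_{i_1}\cdots x_{i_{u-1}} = 0$ becomes, after multiplying through by $w_0 - w_1$ (nonzero by Condition~(2) of the enhancement), a rational identity in $w_0, \dots, w_u$ that is verified by induction on $u$: the inductive step absorbs the last term $x_{i_1}\cdots x_{i_{u-1}}$ into the running sum exactly because $x_{i_{u-1}}$ is built from $w_{u-2}, w_{u-1}, w_u$ and the three-term relation for a single triangle is the base case $u = 2$, which reads $1 + x_{i_1} = 0$ — i.e. $x_{i_1} = -1$ for the edge of a triangle viewed with $\widetilde v$ opposite — no, the base case should be $u$ equal to the valence of a vertex in the minimal situation; I would instead run the induction on the number of edges in the star, peeling off the face $\widetilde T_{u-1}$.

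The main obstacle, and the step deserving the most care, is getting the sign and orientation conventions exactly right: the weight $x_i$ is \emph{minus} a crossratio, the faces to the left and right of an oriented edge must be tracked consistently as one rotates around $v$, and the indices $i_k$ can repeat when an edge has both endpoints at $v$. I expect the cleanest route is to avoid choosing signs edge-by-edge and instead observe that, after sending $\xi(\widetilde v)$ to $\infty$, the quantity $-x_i$ for an edge with outer vertices $a$ (left) and $b$ (right) and far endpoint $c$ is simply $\dfrac{c-a}{c-b}$ — a ratio of two affine functions vanishing at the neighbors — so the partial products telescope cleanly and the identity $1 + \sum_{k} \prod_{j<k} x_{i_j} = 0$ is equivalent to the elementary partial-fraction identity $\sum_{k=0}^{u-1} \prod_{j \ne k}\frac{1}{w_j - w_k}\big|_{\text{suitably interpreted}}$, or more simply to a direct induction peeling one face at a time. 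Once the conventions are pinned down the computation is routine; all the content is in the bookkeeping of how consecutive crossratio weights around $v$ share their defining data.
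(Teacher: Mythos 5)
Your route (send $\xi(\widetilde v)$ to $\infty$, express each $x_{i_k}$ through the outer vertices of the star of $\widetilde v$, and telescope the partial products) can be completed, but as written the proposal misses --- and in fact asserts the negation of --- the one geometric fact that makes the lemma true. The enhancement $\xi$ is defined on $\widetilde V_\lambda\subset\widetilde S$, the universal cover of the \emph{closed} surface $S$; there the link of $\widetilde v$ is a circle, so the star of $\widetilde v$ is a closed fan of exactly $u$ faces (the $u$ edges have $u$ outer endpoints, not $u+1$), and with $w_k$ denoting the $\xi$--image of the outer endpoint of $\widetilde e_{i_k}$ one has $w_0=w_u$ (indices mod $u$). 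Your sentence ``the star may not close up'' would be appropriate in the universal cover of the punctured surface $S_\lambda$, but not in $\widetilde S$: the closure is exactly the translation of the fact that the loop $P_v$ around $v$ bounds a disk in $S$, i.e.\ that $r_\lambda(P_v)=r(P_v)=\Id$, which is what the paper's own proof uses directly (it computes $r_\lambda(P_v)$ as a product of elementary upper-triangular and diagonal matrices with entries $z_{i_j}=\sqrt{x_{i_j}}$ and reads the lemma off the vanishing of the off-diagonal entry of the identity matrix). With the paper's left/right conventions and $\xi(\widetilde v)=\infty$ one finds $x_{i_k}=-\frac{w_{k+1}-w_k}{w_{k-1}-w_k}$, hence $x_{i_1}\cdots x_{i_k}=\frac{w_k-w_{k+1}}{w_0-w_1}$, and the left-hand side of the lemma telescopes to $\frac{w_0-w_u}{w_0-w_1}$, which vanishes precisely because of the closure; for an ``unrolled'' fan it does not vanish, so no purely combinatorial identity can replace this input.

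The second gap is the sign and orientation bookkeeping you postpone: it is not an afterthought, it is the proof. The formula you write, $x_{i_k}=\frac{w_{k-1}-w_k}{w_k-w_{k+1}}$, is the reciprocal of the correct one (it is what you get by exchanging the left and right faces and dropping the minus sign in the definition of $x_i$), and that is exactly why your attempted telescoping stalls mid-argument: plugging it into the sum with $u=3$ and $(w_1,w_2,w_3)=(0,1,t)$ gives $1+x_{i_1}+x_{i_1}x_{i_2}=\frac{1-t+t^2}{1-t}\neq 0$, so no amount of bookkeeping rescues that version --- the counterclockwise order and the precise crossratio convention must match for the partial products to telescope. The fallback you then sketch, an induction ``peeling one face at a time,'' is never pinned down (you discard your own proposed base case) and nothing replaces it. In short, the crossratio/telescoping argument is a legitimate alternative to the paper's holonomy computation, but to complete it you must (i) establish and use the closure of the star of $\widetilde v$ in $\widetilde S$, and (ii) derive the correct formula $x_{i_k}=-\frac{w_{k+1}-w_k}{w_{k-1}-w_k}$; once both are in place the identity follows in two lines.
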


\begin{proof}
Let $P_v$ be a small  loop going around the vertex $v$, oriented counterclockwise. A standard computation (see for instance Exercises~8.5--8.7 and 10.14 in \cite{BonBook}) enables us to compute the image of any element of $ \pi_1(S_\lambda)$ under the homomorphism $ r_\lambda \colon \pi_1(S_\lambda) \to \SL(\C)$ induced by $r$, namely the homomorphism $ r_\lambda$ obtained by composing  $r\colon \pi_1(S) \to \SL(\C)$  with the homomorphism $\pi_1(S_\lambda)  \to \pi_1(S)$ induced by the inclusion map. For $P_v$, this gives that, up to conjugation,
\begin{align*}
 r_\lambda (P_k) &= \pm
\begin{pmatrix}
1&1\\0&1
\end{pmatrix}
\begin{pmatrix}
z_{i_1} & 0 \\ 0 & z_{i_1}^{-1}
\end{pmatrix}
\begin{pmatrix}
1&1\\0&1
\end{pmatrix}
\begin{pmatrix}
z_{i_2} & 0 \\ 0 & z_{i_2}^{-1}
\end{pmatrix}
\dots
\begin{pmatrix}
1&1\\0&1
\end{pmatrix}
\begin{pmatrix}
z_{i_u} & 0 \\ 0 & z_{i_u}^{-1}
\end{pmatrix}\\
&= \pm
\begin{pmatrix}
z_{i_1}z_{i_2}\dots z_{i_u} & \sum_{j=1}^{u}  z_{i_1} z_{i_2} \dots z_{i_{j-1}}z_{i_j}^{-1}z_{i{j+1}}^{-1} \dots z_{i_u}^{-1} \\
0 & z_{i_1}^{-1}  z_{i_2}^{-1} \dots z_{i_u}^{-1}
\end{pmatrix}
\in \SL(\C)
\end{align*}
for arbitrary choices of square roots $z_i = \sqrt{x_i}$. The $\pm$ sign depends on these choices of square roots. 

Since $P_v$ is homotopic to 0 in $S$, $ r_\lambda (P_v) =  r(P_v) = \Id \in \SL(\C)$.  Therefore, $z_{i_1} z_{i_2} \dots z_{i_u} = \pm 1$ and the off-diagonal term
\begin{align*}
 \sum_{j=1}^{u}  z_{i_1} z_{i_2} \dots z_{i_{j-1}}  z_{i_j}^{-1}  z_{i{j+1}}^{-1} \dots z_{i_u}^{-1}
&= z_{i_1}^{-1}  z_{i_2}^{-1} \dots z_{i_u}^{-1} 
\biggl(
 \sum_{j=1}^{u}  z_{i_1}^2 z_{i_2}^2 \dots z_{i_{j-1}}^2
\biggr)\\
&= \pm
\biggl(
1+ z_{i_1}^2 + z_{i_1}^2z_{i_2}^2 + \dots + z_{i_1}^2z_{i_2}^2 \dots z_{i_{u-1}}^2
\biggr)\\
&= \pm
\biggl(
1+ x_{i_1} + x_{i_1} x_{i_2} + \dots + x_{i_1} x_{i_2} \dots x_{i_{u-1}}
\biggr)
\end{align*}
is equal to 0. 
\end{proof}

We will consider a quantum analogue of the equation
$$
1+ x_{i_1} + x_{i_1} x_{i_2} + \dots + x_{i_1} x_{i_2} \dots x_{i_{u-1}} =0
$$
or, equivalently,
$$
1+ z_{i_1}^2 + z_{i_1}^2z_{i_2}^2 + \dots + z_{i_1}^2z_{i_2}^2 \dots z_{i_{u-1}}^2
=0
$$
for the representation $\mu_\lambda \colon \ZZ(\lambda) \to \End({E_\lambda})$ of Proposition~\ref{prop:ConstructRepCheFock}. The major difference is that this equation will not be realized everywhere, but only on a subspace $F_v$ of ${E_\lambda}$.

\subsection{The off-diagonal term and kernel of a vertex}
\label{subsect:OffDiagTerm}

As in the previous section, we consider a vertex $v$ of the triangulation $\lambda$, and we  index the edges of $\lambda$ emerging from $v$ as $e_{i_1}$, $e_{i_2}$, \dots, $e_{i_u}$ in counterclockwise order around $v$.

Note that the indexing of the $e_{i_j}$ depends on our choice of the first edge $e_{i_1}$. For this choice of indexing, the \emph{off-diagonal term} of the puncture $v$ is the element
\begin{align*}
Q_v &=
 \sum_{j=0}^{u-1} \omega^{-4j} Z_{i_1}^2 Z_{i_2}^2 \dots Z_{i_{j}}^2\\
 &=
1+ \omega^{-4} Z_{i_1}^2 + \omega^{-8} Z_{i_1}^2Z_{i_2}^2 + \dots + \omega^{-4(u-1)} Z_{i_1}^2Z_{i_2}^2 \dots Z_{i_{u-1}}^2
\end{align*}
of $\ZZ(\lambda)$. 

For the representation $\mu_\lambda \colon \ZZ(\lambda) \to \End({E_\lambda})$ of Proposition~\ref{prop:ConstructRepCheFock}, the  \emph{off-diagonal kernel} of the vertex $v$ for the representation $\mu_\lambda$ is the subspace $F_v = \ker \mu_\lambda(Q_v)$ of ${E_\lambda}$. To relate this definition to the relation of Lemma~\ref{lem:ClassicalOffDiagonal}, observe  that the off-diagonal kernel $F_v$ is the set of vectors $w \in E_\lambda$ such that
$$
\mu_\lambda \bigl(1+ \omega^{-4} Z_{i_1}^2 + \omega^{-8} Z_{i_1}^2Z_{i_2}^2 + \dots + \omega^{-4(u-1)} Z_{i_1}^2Z_{i_2}^2 \dots Z_{i_{u-1}}^2 \bigr)(w)=0.
$$
Note the analogy with the last displayed equation of \S \ref{subsect:ClassicalOffDiagonalTerm}.

The \emph{total off-diagonal kernel} of $\mu_\lambda$ is the intersection ${F_\lambda}= \bigcap_{v\in V_\lambda} F_v$ of the off-diagonal kernels of all  vertices of $\lambda$. 

The off-diagonal term $Q_v\in \ZZ(\lambda)$ clearly depends of the indexing of the edges of $\lambda$  around $v$. We will show in Lemma~\ref{lem:OffDiagKernelWellDefined} below that, on the contrary,  the off-diagonal kernel $F_v \subset {E_\lambda}$   depends only on the vertex $v$. 
As a first step towards the proof of that statement, we begin with a preliminary lemma. 

By invariance of the Weyl quantum ordering under permutation, the central element $H_v \in \ZZ(\lambda)$ associated to the vertex $v$ can  be written as  $H_v = [Z_{i_1}Z_{i_2}\dots Z_{i_u}]$. We want to compute the precise quantum ordering coefficient in this expression. 

\begin{lem}
\label{lem:QuantumOrderH}
 Let the edges of $\lambda$ emerging from the vertex $v$ be indexed as $e_{i_1}$, $e_{i_2}$, \dots, $e_{i_u}$ in counterclockwise order around $v$. Then, the central element $H_v \in \ZZ(\lambda)$ associated to $v$ is equal to  
$$H_v = \omega^{-u+2} Z_{i_1}Z_{i_2}\dots Z_{i_u}.$$
\end{lem}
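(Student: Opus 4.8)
The plan is to compute the Weyl quantum ordering coefficient directly from its definition. Recall that $H_v = [Z_{i_1}Z_{i_2}\dots Z_{i_u}]$, where the bracket denotes Weyl quantum ordering, so by the formula in \S\ref{subsect:CheFock} we have $H_v = \omega^{-\sum_{a<b}\sigma_{i_a i_b}} Z_{i_1}Z_{i_2}\dots Z_{i_u}$, where all exponents $n_a = 1$. Thus the content of the lemma is the combinatorial identity $\sum_{a<b}\sigma_{i_a i_b} = u-2$, where the sum is over pairs of edges emanating from $v$, listed in counterclockwise order around $v$ (with repetition for edges having both endpoints at $v$).

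First I would recall the meaning of $\sigma_{ij} = a_{ij} - a_{ji}$, where $a_{ij}$ counts the number of times an end of $e_j$ immediately succeeds an end of $e_i$ going counterclockwise around a vertex. The key observation is that only the vertex $v$ itself contributes to $\sigma_{i_a i_b}$ when we restrict attention to the edges $e_{i_1},\dots,e_{i_u}$ around $v$: going counterclockwise around $v$, the ends of these edges appear in the cyclic order $e_{i_1}, e_{i_2}, \dots, e_{i_u}, e_{i_1}, \dots$ (an end appearing twice exactly when the corresponding edge has both endpoints at $v$). So the "immediately succeeds" relation at $v$ holds precisely for the consecutive pairs in this cyclic sequence of ends. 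I would set up the computation by labelling the $u$ (possibly not all distinct) edge-ends around $v$ as slots $1, 2, \dots, u$ in cyclic order and carefully tallying, for each ordered pair $(a,b)$ with $a<b$, the contribution $a_{i_a i_b} - a_{i_b i_a}$ coming from adjacencies of slots at $v$.

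The main obstacle — and the step requiring the most care — is the bookkeeping when an edge has both its endpoints at $v$, so that it occupies two slots in the cyclic sequence, and the resulting self-contributions and double-counting. A clean way to handle this is to work "slot by slot": consider the cyclic sequence of $u$ ends, and for each consecutive pair (slot $k$, slot $k+1$ with indices mod $u$), the end in slot $k+1$ succeeds the end in slot $k$. If these ends belong to edges $e_{i_a}$ and $e_{i_b}$, this contributes $+1$ to $a_{i_a i_b}$, hence $+1$ to $\sigma_{i_a i_b}$ if $a<b$ and $-1$ to $\sigma_{i_b i_a} = -\sigma_{i_a i_b}$ counted as $\sigma_{i_a i_b}$ if $a>b$; one must check that the $u$ cyclic adjacencies, combined with the ordering convention $a<b$ in $\sum_{a<b}$, telescope to give exactly $u-2$. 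Concretely, among the $u$ cyclic adjacencies, the $u-1$ "forward" adjacencies ($i_1 \to i_2$, $i_2 \to i_3$, \dots, $i_{u-1}\to i_u$) each contribute $+1$ to $\sum_{a<b}\sigma_{i_a i_b}$, while the single "wrap-around" adjacency $i_u \to i_1$ contributes $-1$; this yields $(u-1) - 1 = u-2$, as desired. The edge-with-both-endpoints-at-$v$ case must be verified not to disturb this count: such an edge contributes a diagonal term $\sigma_{ii} = 0$ which drops out of $\sum_{a<b}$, and its two slots are handled correctly by the same forward/wrap-around dichotomy.

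Finally I would assemble these pieces: $\sum_{a<b}\sigma_{i_a i_b} = u-2$, hence $H_v = \omega^{-(u-2)}Z_{i_1}Z_{i_2}\dots Z_{i_u} = \omega^{-u+2}Z_{i_1}Z_{i_2}\dots Z_{i_u}$, which is the claimed formula. As a consistency check one can verify against the definition $H_v = [Z_1^{k_1}\dots Z_n^{k_n}]$ with $k_i$ the number of endpoints of $e_i$ at $v$, noting $\sum k_i = u$, and confirm that the two Weyl-ordering coefficients agree; this also reconciles the statement with the centrality of $H_v$ established in \cite{BonLiu, BonWon4}.
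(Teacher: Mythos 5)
Your reduction of the lemma to the identity $\sum_{k<l}\sigma_{i_k i_l}=u-2$ is right, and your telescoping count does settle the easy case where the edges $e_{i_1},\dots,e_{i_u}$ are pairwise distinct and any two of them are adjacent only at $v$ — which is the case the paper dismisses in one line. The genuine gap is your ``key observation'' that only the vertex $v$ contributes to $\sigma_{i_a i_b}$: this is false in general. By definition, $a_{ij}$ counts adjacencies of ends of $e_i$ and $e_j$ around \emph{every} vertex of $\lambda$ (equivalently, over all angular sectors of faces locally bounded by the two edges), so two edges emanating from $v$ can contribute to $\sigma_{i_a i_b}$ through corners at other vertices, or through several corners when edges are repeated. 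In addition, in the slot-pair sum $\sum_{k<l}\sigma_{i_k i_l}$ an edge with both endpoints at $v$ occupies two slots, so a single value $\sigma_{i_a i_b}$ is counted once for each pair of slots carrying those two edges; your ``one $\pm1$ per consecutive pair of ends'' bookkeeping does not reflect this multiplicity. For example, for the one-vertex triangulation of the torus used in \S 6.2 one has $Z_iZ_{i+1}=\omega^4Z_{i+1}Z_i$, so the relevant $\sigma$'s are $\pm2$ and each is counted for four slot pairs; the total $u-2=4$ does come out, but only after cancellations that your argument never establishes. Note that the lemma is needed in \S 6 precisely for one-vertex triangulations, where every edge is a loop at $v$, so the non-combinatorial case is the essential one — observing that $\sigma_{ii}=0$ does not touch it.

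What is missing is exactly the content of the paper's proof: write $\sum_{k<l}\sigma_{i_k i_l}$ as a signed sum of terms $\epsilon(k,l,a)$ over all angular sectors $a$ bounded by $e_{i_k}$ and $e_{i_l}$, then show that the contributions of the sectors not among $a_1,\dots,a_{u-1}$ (those opposite $e_{i_l}$ in a face, and the extra ones created by repeated edges) cancel in pairs, using the standing convention that the three sides of each face are distinct; the surviving terms give $u-1+n_1-n_2$ with $n_2=n_1+1$, hence $u-2$. To repair your proposal you would need to supply this pairing/cancellation analysis (or an equivalent argument), rather than assume that all contributions are the single adjacencies at $v$.
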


\begin{proof}
The proof is straightforward when the edges $e_{i_k}$ are all distinct, and in particular when the triangulation $\lambda$ is combinatorial. Indeed, in this case, $Z_{i_k}Z_{i_{k+1}} = \omega^2 Z_{i_{k+1} } Z_{i_k}$ whenever $1\leq k <u$, $Z_{i_1}Z_{i_u} = \omega^2 Z_{i_u} Z_{i_1}$, and all other pairs of generators $Z_{i_k}$, $Z_{i_l}$ commute. 
The general case could be deduced from this one with the change of triangulation techniques developed in \S \S \ref{subsect:FaceSubdivisions} and \ref{subsect:DiagonalExchanges}, but we prefer to give a combinatorial proof right away. See also the very indirect argument that we used in the proof of  \cite[Lemma~18]{BonWon4}.

By definition of the Weyl quantum ordering,
$$
H_v=
[Z_{i_1}Z_{i_2}\dots Z_{i_u}]  = \omega^{-\sum_{1\leq k<l \leq u} \sigma_{i_ki_l}}
Z_{i_1}Z_{i_2}\dots Z_{i_u}
$$
where the skew commutativity coefficient $\sigma_{ij} \in \{0, \pm1, \pm2\}$ is defined as in \S \ref{subsect:CheFock}, and in particular is such that $Z_iZ_j = \omega^{2\sigma_{ij}} Z_jZ_i$.

By definition of $\sigma_{ij}$, we can write
$$
\sum_{1\leq k<l\leq u} \sigma_{i_ki_l} =
\sum_{l=2}^{u}  \sum_{\substack{1\leq k<  l\text{ and}\\
a\text{ angular sector}\\
\text{from }e_{i_k} \text{ to } e_{i_l}}}
\kern -0pt  \epsilon(k,l,a)
$$
where, for every angular sector $a$ of a triangle $T_j$ that is locally bounded by the edges $e_{i_k}$ and $e_{i_l}$ near the vertex of $a$, $\epsilon(k,l,a)$ is equal to $+1$ if one goes from  $e_{i_k}$ to $e_{i_l}$ counterclockwise at $a$, and is equal to $-1$ otherwise. The angular sectors $a$ contributing to this sum include the angular sectors $a_{1}$, $a_{2}$, \dots, $a_{u}$ that are adjacent to $v$, indexed  in such a way that $a_{k}$ is locally bounded by the edges $e_{i_k}$ and $e_{i_{k+1}}$ near $v$.  There may be contributions from additional angular sectors when the edges $e_{i_k}$ are not distinct.

Fixing an index $l$, we want to analyze its contribution $\sigma(l)=\sum_{k, a}  \epsilon(k,l,a)$  to the above sum.  
If an index $k$  contributes to $\sigma(l)$, then the edge $e_{i_k}$  is contained in a face of $\lambda$ that also contains $e_{i_l}$, and one of the two edges $e_{i_{k\pm1}}$ is also contained in the same face. Analyzing the possible configurations in the union of the two faces of $\lambda$ containing $e_{i_l}$, we see that most of the couples $(k, a)$ contributing to the sum can be grouped into pairs
\begin{enumerate}
\item $\{ (k, a_k'), (k+1, a_k'') \}$ when the angular sector $a_k$ is opposite $e_{i_l}$ in a face  of $\lambda$, and where $a_k'$ and $a_k''$ are the other two angular sectors of this face;
\item $\{( k-1, a_{k-1}), (k+1, a_k) \}$ when $e_{i_k} = e_{i_l}$.
\end{enumerate}
The first type of pair $\{ (k, a_k'), (k+1, a_k'') \}$ contributes $ \epsilon(k,l,a_k') +  \epsilon(k+1,l,a_k'')=+1-1 =0$. The second type $\{( k-1, a_{k-1}), (k+1, a_k) \}$ contributes $ \epsilon(k,l-1,a_{l-1}) +  \epsilon(k,l+1,a_l)=1-1 =0$. In particular, the corresponding terms cancel out. 

The only terms that do not cancel out in such a pair are those where the potential pair would involve an index that is not in the interval $[1, l-1]$. This always occurs for $\epsilon(l-1, l, a_{l-1})=+1$, and for $\epsilon(1, u, a_u)=-1$ when $l=u$. A more special instance arises when the angular sector $a_u$ is opposite $e_{i_l}$ in a face of $\lambda$, in which case $\epsilon(1,l, a_u'')=1$ cannot be cancelled by a term $\epsilon(u,l, a_u')=-1$. Similarly, when $e_{i_l}=e_{i_1}$ or $e_{i_l} =e_{i_u}$ with $l<u$, the terms $\epsilon(2, l, a_1)=-1$ or $\epsilon(1, l, a_u) =-1$, respectively, are not cancelled by another term.

Using our convention that the three sides of each face of the triangulation $\lambda$ are all distinct, one easily sees that these are the only terms that do not cancel out. Note that, outside of $\epsilon(l-1, l, a_{l-1})=+1$, all the other exceptions occur precisely when the face of $\lambda$ containing the angular sector $a_u$ also contains the edge $e_{i_l}$. Summing over $l$ and combining the above observations, it follows that
$$
\sum_{1\leq k<l\leq u} \sigma_{i_ki_l} = u-1 + n_1 - n_2
$$
where $n_1 \in \{0,1,2\}$ is the number of indices $l\in [2, u]$ for which the edge $e_{i_l}$ is opposite the angular sector $a_u$ in the face that contains it, and $n_2 \in \{1,2, 3 \}$ is the number of indices $l\in [2, u]$ for which the edge $e_{i_l}$ is adjacent to $a_u$. 

Now, consider the face of $\lambda$ that contains the angular sector $a_u$. There are three cases to consider, according to whether 1, 2 or 3 of the vertices of this face are equal to  $v$. An immediate count gives that $n_2=n_1+1$ in all three cases.  This proves that 
$$
\sum_{1\leq k<l\leq u} \sigma_{i_ki_l} =  u-2
$$
and completes the proof  of Lemma~\ref{lem:QuantumOrderH}.
\end{proof}

This combinatorial proof of Lemma~\ref{lem:QuantumOrderH} also enables us to compute the Weyl quantum ordering of monomials similar to the central element $H_v$. These computations will be used in \S \ref{bigsect:DimensionOffDiagonalKernel}. 

\begin{lem}
\label{lem:QuantumOrderAtVertex}
Let the edges of $\lambda$ emerging from the vertex $v$ be indexed as $e_{i_1}$, $e_{i_2}$, \dots, $e_{i_u}$ in counterclockwise order around $v$. Then, for every $k_0$ with $1<k_0<u$, 
$$
[Z_{i_1} Z_{i_2} \dots, Z_{i_{k_0}}]=
\begin{cases}
\omega^{-k_0+1} Z_{i_1} Z_{i_2} \dots Z_{i_{k_0}} &\text{ if } e_{i_{k_0}}\neq e_{i_1} \text { and } e_{i_{k_0+1}}\neq e_{i_u}\\
\omega^{-k_0+2} Z_{i_1} Z_{i_2} \dots Z_{i_{k_0}} &\text{ if } e_{i_{k_0}}= e_{i_1} \\
\omega^{-k_0} Z_{i_1} Z_{i_2} \dots Z_{i_{k_0}} &\text{ if }  e_{i_{k_0+1}}= e_{i_u}.
\end{cases}
$$

In particular, $[Z_{i_1} Z_{i_2} \dots, Z_{i_{k_0}}]= \omega^{-k_0+1} Z_{i_1} Z_{i_2} \dots, Z_{i_{k_0}}$ if the triangulation $\lambda$ is combinatorial. 
\end{lem}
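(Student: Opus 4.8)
By definition of the Weyl quantum ordering (\S\ref{subsect:CheFock}),
$$
[Z_{i_1} Z_{i_2} \dots Z_{i_{k_0}}] = \omega^{-E(k_0)}\, Z_{i_1} Z_{i_2} \dots Z_{i_{k_0}}, \qquad E(k_0)=\sum_{1\leq k<l\leq k_0}\sigma_{i_ki_l},
$$
so the statement reduces to showing that $E(k_0)=k_0-1$ in the generic case, $E(k_0)=k_0-2$ when $e_{i_{k_0}}=e_{i_1}$, and $E(k_0)=k_0$ when $e_{i_{k_0+1}}=e_{i_u}$. The plan is to rerun, almost verbatim, the combinatorial bookkeeping from the proof of Lemma~\ref{lem:QuantumOrderH}: that argument computed $E(u)=u-2$ by expanding each $\sigma_{i_ki_l}$ as a signed sum over angular sectors and cancelling terms in pairs, and the only structural change here is that the cyclic sum is truncated after the first $k_0$ edges. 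Consequently the role played in Lemma~\ref{lem:QuantumOrderH} by the ``wrap-around'' sector $a_u$ between $e_{i_u}$ and $e_{i_1}$ --- which was the source of all the boundary corrections there --- is now played by the sector $a_{k_0}$ between $e_{i_{k_0}}$ and $e_{i_{k_0+1}}$.

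First I would expand $\sigma_{i_ki_l}=\sum_a\epsilon(k,l,a)$ over the angular sectors $a$ of faces of $\lambda$ locally bounded by $e_{i_k}$ and $e_{i_l}$, with the sign convention of Lemma~\ref{lem:QuantumOrderH}; the contributing sectors are $a_1,\dots,a_{k_0-1}$ adjacent to $v$ (with $a_m$ between $e_{i_m}$ and $e_{i_{m+1}}$), together with extra sectors coming from faces all of whose sides lie in $\{e_{i_1},\dots,e_{i_{k_0}}\}$, a phenomenon that can only occur when some of the $e_{i_m}$ coincide, i.e.\ are loops at $v$. Next, fixing the larger index $l$ and grouping its contributions $\epsilon(k,l,a)$, $k<l$, I would invoke the same two cancelling pairings as in Lemma~\ref{lem:QuantumOrderH}: sectors opposite $e_{i_l}$ in a face cancel in pairs $\{(k,a_k'),(k+1,a_k'')\}$, and a repetition $e_{i_k}=e_{i_l}$ produces a cancelling pair $\{(k-1,a_{k-1}),(k+1,a_k)\}$. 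The only survivors are the terms whose would-be partner carries an index outside $[1,k_0]$; the term $\epsilon(l-1,l,a_{l-1})=+1$ always survives, contributing $\sum_{l=2}^{k_0}1=k_0-1$, and every other survivor is attached to the single face $F^{\ast}$ of $\lambda$ through the sector $a_{k_0}$, together with the behaviour of $e_{i_1}$ relative to $F^{\ast}$, in exact parallel with the face through $a_u$ in Lemma~\ref{lem:QuantumOrderH}.

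The crux, which I expect to be the main obstacle, is then the finite case analysis of the leftover contribution of $F^{\ast}$, organised (as at the end of the proof of Lemma~\ref{lem:QuantumOrderH}) by how many of the three vertices of $F^{\ast}$ equal $v$, refined by whether $e_{i_{k_0}}=e_{i_1}$ or $e_{i_{k_0+1}}=e_{i_u}$. In the generic case one checks that no side of $F^{\ast}$ other than $e_{i_{k_0}}$ lies among $e_{i_1},\dots,e_{i_{k_0}}$, so $F^{\ast}$ produces no leftover term and $E(k_0)=k_0-1$; a separate count of the loop configurations should then show that the coincidence $e_{i_{k_0}}=e_{i_1}$ introduces exactly one net leftover term of sign $-1$, giving $E(k_0)=k_0-2$, and that the coincidence $e_{i_{k_0+1}}=e_{i_u}$ introduces exactly one of sign $+1$ --- here $F^{\ast}$ turns out to have sides $e_{i_{k_0}}$, $e_{i_u}=e_{i_{k_0+1}}$ and $e_{i_1}$, so that $e_{i_{k_0}}$ and $e_{i_1}$ meet at the third vertex of $F^{\ast}$ --- giving $E(k_0)=k_0$. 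The combinatorial-triangulation statement follows at once, since then no $e_{i_m}$ is repeated, both exceptional coincidences are vacuous, and $E(k_0)=k_0-1$. Throughout, as in Lemma~\ref{lem:QuantumOrderH}, the delicate point is to use the standing convention that the three sides of each face of $\lambda$ are distinct in order to rule out degenerate configurations of $F^{\ast}$ (in particular to prevent $F^{\ast}$ from degenerating when $k_0$ is small) and to be certain the enumeration of uncancelled terms is exhaustive; if one prefers, one could alternatively first reduce to the combinatorial case using the change-of-triangulation moves of \S\S\ref{subsect:FaceSubdivisions}--\ref{subsect:DiagonalExchanges}, though the direct combinatorial treatment above is cleaner.
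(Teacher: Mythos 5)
Your overall strategy is the paper's: reduce to computing $E(k_0)=\sum_{1\le k<l\le k_0}\sigma_{i_ki_l}$ and rerun the cancellation bookkeeping of Lemma~\ref{lem:QuantumOrderH}. But your identification of where the truncation produces boundary effects is wrong, and this is the crux of the proof. With the grouping you chose (fix the larger index $l$ and sum over $k<l$), the inner sum $\sum_{k=1}^{l-1}\sigma_{i_ki_l}$ does not depend on $k_0$ at all: it is literally the quantity analyzed in Lemma~\ref{lem:QuantumOrderH} for that $l$, so a term fails to cancel exactly when its would-be partner has index outside $[1,l-1]$ --- not outside $[1,k_0]$ as you wrote --- and the surviving corrections remain attached to the sector $a_u$ and the face containing it, not to $a_{k_0}$ and $F^{\ast}$. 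This is precisely how the paper argues: truncation only shortens the outer sum over $l$, giving $E(k_0)=k_0-1+n_1-n_2$ with $n_1$, $n_2$ counting the indices $l\in[2,k_0]$ for which $e_{i_l}$ is respectively opposite or adjacent to $a_u$ in its face, and then a short case analysis of that face (using the distinct-sides convention, and the fact that in the two exceptional cases this face actually coincides with the face through $a_{k_0}$) gives $n_2-n_1\in\{0,1,-1\}$ in the three cases.

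Your generic-case check is moreover false as stated: the hypotheses $e_{i_{k_0}}\neq e_{i_1}$ and $e_{i_{k_0+1}}\neq e_{i_u}$ do not prevent a side of $F^{\ast}$ other than $e_{i_{k_0}}$ from lying among $e_{i_1},\dots,e_{i_{k_0}}$. For instance $e_{i_{k_0+1}}$ can be a loop at $v$ whose other end sits at a position $p$ with $2\le p\le k_0-2$ (such configurations occur already for the one-vertex triangulations of \S\ref{bigsect:DimensionOffDiagonalKernel}); then $F^{\ast}$ has sides $e_{i_{k_0+1}}$, $e_{i_{k_0}}$, $e_{i_{p+1}}$, two of which meet the first $k_0$ edges, yet $E(k_0)=k_0-1$ still holds because the corner of $F^{\ast}$ at $a_{k_0}$ contributes $-1$ while its corner between $e_{i_{p+1}}$ and $e_{i_{k_0}}$ contributes a compensating $+1$. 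So the correct generic statement is a balance of signed contributions, not the absence of incidences, and the case analysis you outline would be neither exhaustive nor correct as planned. Your instinct that $a_{k_0}$ should matter can be salvaged by a dual bookkeeping in which one fixes the smaller index $k$ and sums over $l\in[k+1,k_0]$, so that the range boundary really is at $k_0$; but as written you mix the two groupings, and the argument would not go through without this repair.
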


\begin{proof}
By definition of the Weyl quantum ordering,
$$
[Z_{i_1}Z_{i_2}\dots Z_{i_{k_0}}]  = \omega^{-\sum_{1\leq k<l \leq k_0} \sigma_{i_ki_l}}
Z_{i_1}Z_{i_2}\dots Z_{i_{k_0}}.
$$
The same arguments as in the proof of Lemma~\ref{lem:QuantumOrderH} then give that
$$
\sum_{1\leq k<l \leq k_0} \sigma_{i_ki_l} = k_0 -1 + n_1 - n_2
$$
where $n_1 \in \{0,1,2\}$ is the number of indices $l\in [2, k_0]$ for which the edge $e_{i_l}$ is opposite the angular sector $a_u$ in a face of $\lambda$, and $n_2 \in \{0,1, 2 \}$ is the number of indices $l\in [2, k_0]$ for which the edge $e_{i_l}$ is adjacent to $a_u$. 

The fact that indices are   truncated at $k_0$  introduces minor differences with  the case of  Lemma~\ref{lem:QuantumOrderH}. More precisely, the case-by-case analysis now gives that
$$
n_2 =
\begin{cases}
n_1 &\text{ if } e_{i_{k_0}}\neq e_{i_1} \text { and } e_{i_{k_0+1}}\neq e_{i_u}\\
n_1 +1  &\text{ if } e_{i_{k_0}}= e_{i_1} \\
n_1 -1 &\text{ if }  e_{i_{k_0+1}}= e_{i_u}.
\end{cases}
$$
The stated computation immediately follows.
\end{proof}

%Lemma~XXX enables us to give another very useful expression for $Q_v$. \marginpar{This lemma is not true any more?}

%\begin{lem}
%$$
%Q_v = 1 + \omega^{-u-2} H_v \sum_{j=1}^{u-1} Z_{i_1}Z_{i_2} \dots Z_{i_j} Z_{i_{j+1}}^{-1} Z_{i_{j+2}}^{-1} \dots Z_{i_u}^{-1}
%$$
%\end{lem}
%\begin{proof}
%Since $Z_{i_1}^2 Z_{i_2}^2 \dots Z_{i_{j}}^2$ and $Z_{i_1}^{-1} Z_{i_2}^{-1} \dots Z_{i_{j}}^{-1}$ commute, 
%$$
%[Z_{i_1} Z_{i_2} \dots Z_{i_{j}}] =
%[Z_{i_1}^2 Z_{i_2}^2 \dots Z_{i_{j}}^2] [Z_{i_1}^{-1} Z_{i_2}^{-1} \dots Z_{i_{j}}^{-1}] 
%$$
%and an application of Lemma~XXX then gives that 
%$$
%Z_{i_1} Z_{i_2} \dots Z_{i_{j}} =\omega^{-4j+4}  Z_{i_1}^2 Z_{i_2}^2 \dots Z_{i_{j}}^2Z_{i_1}^{-1} %Z_{i_2}^{-1} \dots Z_{i_{j}}^{-1}.
%$$
%Applying Lemma~XXX again,
%$$
%Z_{i_1} Z_{i_2} \dots Z_{i_{j}}Z_{i_{j+1}}^{-1} Z_{i_{j+2}}^{-1} \dots Z_{i_u}^{-1}
% =\omega^{u+2-4j}  Z_{i_1}^2 Z_{i_2}^2 \dots Z_{i_{j}}^2 H_v^{-1}.
%$$

%Therefore,
%\begin{align*}
%\sum_{j=1}^{u-1} Z_{i_1}Z_{i_2} \dots Z_{i_j} Z_{i_{j+1}}^{-1} Z_{i_{j+2}}^{-1} \dots Z_{i_u}^{-1}
%&= \omega^{u+2}
%\sum_{j=1}^{u-1} \omega^{-4j}Z_{i_1}^2 Z_{i_2}^2 \dots Z_{i_{j}}^2 H_v^{-1}\\
%&=\omega^{u+2} (Q_v-1) H_v^{-1}.
%\end{align*}
%Remembering that $H_v$ is central, the result immediately follows. 
%\end{proof}

We are now ready to prove the promised result, that the off-diagonal kernel $F_v \subset E_\lambda$ depends only on the vertex $v$. 

\begin{lem}
\label{lem:OffDiagKernelWellDefined}
The off-diagonal kernel $F_v = \ker  \mu_\lambda(Q_v) $ of  $v$ is independent of the counterclockwise  indexing of the edges $e_{i_1}$, $e_{i_2}$, \dots, $e_{i_u}$, $e_{i_{u+1}} = e_{i_1}$ of $\lambda$ around the vertex $v$. 
\end{lem}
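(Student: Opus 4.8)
The goal is to show that changing the first edge $e_{i_1}$ in the cyclic indexing of the edges around $v$ does not change $F_v = \ker\mu_\lambda(Q_v)$. Since any cyclic reindexing is obtained by iterating ``shift by one'', it suffices to compare the off-diagonal term $Q_v$ for the indexing $(e_{i_1}, e_{i_2}, \dots, e_{i_u})$ with the off-diagonal term $Q_v'$ for the indexing $(e_{i_2}, e_{i_3}, \dots, e_{i_u}, e_{i_1})$, and to prove that the two kernels $\ker\mu_\lambda(Q_v)$ and $\ker\mu_\lambda(Q_v')$ coincide. The plan is to find an explicit invertible element of $\ZZ(\lambda)$ (a monomial, up to scalar) conjugating one to the other, or more precisely to exhibit an identity in $\ZZ(\lambda)$ of the form $Z_{i_1}^2 \cdot Q_v = c\, Q_v' \cdot Z_{i_1}^2$ for a nonzero scalar $c$, or an additive relation $Q_v - Q_v'$ expressible through $Z_{i_1}^2$ and $H_v$; either way this will force the two kernels to agree once we invoke the normalizations of Proposition~\ref{prop:ConstructRepCheFock}.

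First I would write out $Q_v'$ explicitly. Using Lemma~\ref{lem:QuantumOrderAtVertex} (and Lemma~\ref{lem:QuantumOrderH} for the total product), I can express the monomial $Z_{i_2}^2 Z_{i_3}^2 \dots Z_{i_k}^2$ appearing in $Q_v'$ in terms of the ``standard'' monomials $Z_{i_1}^2 Z_{i_2}^2 \dots Z_{i_j}^2$ appearing in $Q_v$: the key point is that, up to an explicit power of $\omega$, multiplying $Q_v$ on the left by $Z_{i_1}^{-2}$ and shifting gives back (a scalar multiple of) $Q_v'$, except that the ``wrap-around'' term reintroduces the full product $Z_{i_1}^2 Z_{i_2}^2 \dots Z_{i_u}^2$. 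That full product is, by Lemma~\ref{lem:QuantumOrderH}, a scalar multiple of $H_v^2$, which by Conclusion~(3) of Proposition~\ref{prop:ConstructRepCheFock} acts on $E_\lambda$ as $(-\omega^4)^2 \Id = \omega^8 \Id$. So the image $\mu_\lambda(Z_{i_1}^2 Z_{i_2}^2 \dots Z_{i_u}^2)$ is a nonzero scalar times the identity, and therefore $\mu_\lambda(Z_{i_1}^2)$ is invertible (consistent with Conclusion~(2), $\mu_\lambda(Z_{i_1}^{2N}) = x_{i_1}\Id$ with $x_{i_1}\neq 0$).

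With that in hand the computation should close: I expect an identity of the shape
$$
\mu_\lambda(Q_v') = \omega^{a}\, \mu_\lambda(Z_{i_1})^{-2}\,\mu_\lambda(Q_v)\,\mu_\lambda(Z_{i_1})^{2}
$$
for some explicit integer $a$ (absorbing the quantum-ordering powers of $\omega$ from Lemma~\ref{lem:QuantumOrderAtVertex} and the scalar $\omega^8$ coming from the $H_v^2$ substitution), valid on all of $E_\lambda$. Since $\mu_\lambda(Z_{i_1})^2$ is invertible, conjugation by it is a linear automorphism of $E_\lambda$, so $\ker\mu_\lambda(Q_v')$ and $\ker\mu_\lambda(Q_v)$ have the same dimension; and in fact the identity shows $w \in \ker\mu_\lambda(Q_v)$ iff $\mu_\lambda(Z_{i_1})^{2}w \in \ker\mu_\lambda(Q_v')$ — but to get literal equality of subspaces I would instead massage the relation into the form $\mu_\lambda(Z_{i_1})^{2}\,\mu_\lambda(Q_v) = \omega^{-a}\,\mu_\lambda(Q_v')\,\mu_\lambda(Z_{i_1})^{2}$ and observe that the left side has the same kernel as $\mu_\lambda(Q_v)$ (as $\mu_\lambda(Z_{i_1})^2$ is invertible) while the right side has the same kernel as $\mu_\lambda(Q_v')$; hence $F_v$ for the two indexings coincide. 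Iterating over all cyclic shifts finishes the lemma.

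The main obstacle I anticipate is purely bookkeeping: pinning down the exact powers of $\omega$. The quantum-ordering coefficients in Lemma~\ref{lem:QuantumOrderAtVertex} split into three cases depending on whether $e_{i_{k_0}} = e_{i_1}$ or $e_{i_{k_0+1}} = e_{i_u}$, i.e.\ on the non-combinatorial degeneracies where an edge has both endpoints at $v$; these degeneracies also affect which terms of $Q_v'$ genuinely correspond to shifted terms of $Q_v$ and which wrap around. One must check that after the shift all the case-dependent $\omega$-powers reorganize consistently — in particular that the geometric-series structure ``$1 + t + t^2 + \dots$'' with the uniform ratio built from $\omega^{-4}$ and $Z^2$'s is preserved. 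In the combinatorial case this is a short and clean calculation; the general case requires carefully rerunning the angular-sector analysis from the proof of Lemma~\ref{lem:QuantumOrderH}, but introduces no new ideas.
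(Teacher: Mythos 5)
Your middle paragraph contains the correct key computation, and it is essentially the paper's argument: a purely formal telescoping gives
$Q_v \;=\; 1 + \omega^{-4} Z_{i_1}^2\, Q_v' \;-\; \omega^{-4u} Z_{i_1}^2 Z_{i_2}^2 \cdots Z_{i_u}^2$,
where $Q_v'$ is the off-diagonal term for the indexing starting at $e_{i_2}$; by Lemma~\ref{lem:QuantumOrderH} the wrap-around monomial satisfies $\omega^{-4u} Z_{i_1}^2\cdots Z_{i_u}^2 = \omega^{-8}H_v^2$, which $\mu_\lambda$ sends to $\Id_{E_\lambda}$ because $\mu_\lambda(H_v)=-\omega^4\,\Id_{E_\lambda}$. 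Hence $\mu_\lambda(Q_v) = \omega^{-4}\,\mu_\lambda(Z_{i_1}^2)\,\mu_\lambda(Q_v')$, a \emph{one-sided} relation with the invertible factor on the left, and the kernels coincide on the nose. (The paper does exactly this, shifting to start at $e_{i_u}$ instead of $e_{i_2}$.) Note also that your anticipated bookkeeping through the three cases of Lemma~\ref{lem:QuantumOrderAtVertex} is not needed: the telescoping requires no reordering of the $Z_{i_k}^2$ whatsoever, and the only quantum-ordering input is the single identification of the full product with $H_v^2$.

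Where your write-up goes wrong is the closing step. First, the conjugation identity you ``expect,'' $\mu_\lambda(Q_v') = \omega^{a}\mu_\lambda(Z_{i_1})^{-2}\mu_\lambda(Q_v)\mu_\lambda(Z_{i_1})^{2}$, equivalently $Z_{i_1}^2 Q_v = c\, Q_v' Z_{i_1}^2$ up to the $H_v$--normalization, is false: conjugating $Q_v$ by $Z_{i_1}^2$ only multiplies each monomial $Z_{i_1}^2\cdots Z_{i_j}^2$ with $j\geq 2$ by a fixed power of $\omega$ and does not shift the starting edge (compare $Z_{i_1}$--degrees: $Z_{i_1}^2Q_v$ contains $Z_{i_1}^4$--monomials while $Q_v'Z_{i_1}^2$ does not, and the normalization only scalarizes the single full product). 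Second, even granting such an identity, your kernel argument is incorrect: for an invertible $B$ one has $\ker(AB)=B^{-1}\ker A$, not $\ker A$, so ``the right side has the same kernel as $\mu_\lambda(Q_v')$'' fails; your massaged relation would only yield $\ker\mu_\lambda(Q_v)=\mu_\lambda(Z_{i_1}^2)^{-1}\ker\mu_\lambda(Q_v')$, i.e.\ an isomorphism of the two kernels rather than the equality the lemma asserts (and $F_v$ is genuinely not invariant under $\mu_\lambda(Z_{i_1}^2)$, since $\mu_\lambda$ is irreducible). The repair is simply to stop at the one-sided relation displayed above, from which equality of the kernels is immediate because only a left invertible factor intervenes.
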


\begin{proof}
We can clearly restrict attention to the case where we shift the indexing by 1, and start at the last edge $e_{i_u}$ instead. Then the off-diagonal term 
$$
Q_v = 1+ \omega^{-4} Z_{i_1}^2 + \omega^{-8} Z_{i_1}^2Z_{i_2}^2 + \dots + \omega^{-4(u-1)} Z_{i_1}^2Z_{i_2}^2 \dots Z_{i_{u-1}}^2
$$ 
gets replaced by
\begin{align*}
Q_v' &= 1+ \omega^{-4} Z_{i_u}^2 + \omega^{-8} Z_{i_u}^2Z_{i_1}^2 +\omega^{-12} Z_{i_u}^2Z_{i_1}^2 Z_{i_2}^2 +  \dots + \omega^{-4(u-1)} Z_{i_u}^2 Z_{i_1}^2Z_{i_2}^2 \dots Z_{i_{u-2}}^2\\
&=  1+ \omega^{-4} Z_{i_u}^2 Q_v -  \omega^{-4u} Z_{i_u}^2 Z_{i_1}^2Z_{i_2}^2 \dots Z_{i_{u-2}}^2 Z_{i_{u-1}}^2.
\end{align*}

We now have to remember that $\mu_\lambda(H_v) = -\omega^4 \, \Id_{E_\lambda}$ by choice of the representation $\mu_\lambda$ in Proposition~\ref{prop:ConstructRepCheFock},
so that
$$
\mu_\lambda(\omega^{-4u} Z_{i_u}^2 Z_{i_1}^2Z_{i_2}^2 \dots Z_{i_{u-2}}^2 Z_{i_{u-1}}^2) = \mu_\lambda(\omega^{-8} H_v^2) = \Id_{E_\lambda}
$$
by Lemma~\ref{lem:QuantumOrderH}. 
It follows that 
$$
\mu_\lambda(Q_v') = \Id_{E_\lambda} + \omega^{-4} \mu_\lambda(Z_{i_u}^2) \circ \mu_\lambda(Q_v) -Id_{E_\lambda} =  \omega^{-4} \mu_\lambda(Z_{i_u}^2) \circ \mu_\lambda(Q_v) .
$$

The element $Z_{i_u}^2$ is invertible in $\ZZ(\lambda)$. Therefore $\mu_\lambda(Z_{i_u}^2)$ is invertible in $\End({E_\lambda})$ and it follows that $\ker \mu_\lambda(Q_v') = \ker \mu_\lambda(Q_v)$, as desired.
\end{proof}

\subsection{Invariance under the action of  the skein algebra}
\label{subsect:OffDiagKernelInvariant}

The off-diagonal kernel $F_v\subset {E_\lambda}$ cannot be invariant under $\mu_\lambda \bigl( \ZZ(\lambda) \bigr)$, since the representation $\mu_\lambda \colon \ZZ(\lambda) \to \End({E_\lambda})$ is irreducible. However, it is invariant under the image of the representation  $\rho_\lambda = \mu_\lambda \circ \Tr_\lambda^\omega \colon \SSS(S_\lambda) \to \End({E_\lambda}) $. 

In this section, we restrict attention to the case where the triangulation $\lambda$ is combinatorial. We will later see, in \S \ref{subsect:ConstructSkeinRepArbitraryTriangulation}, that the property holds without this hypothesis. 

\begin{prop}
\label{prop:OffDiagKernelInvariant}
Suppose that the triangulation $\lambda$  is combinatorial, in the sense that every edge has distinct endpoints and that no two distinct edges have the same endpoints. Then, the off-diagonal kernel $F_v$ of each vertex $v$ of $\lambda$ is invariant under $\rho_\lambda \bigl( \SSS(S_\lambda) \bigr ) \subset \End({E_\lambda})$. 
\end{prop}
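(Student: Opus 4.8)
\textbf{Proposal for the proof of Proposition~\ref{prop:OffDiagKernelInvariant}.}

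The plan is to show that $F_v = \ker\mu_\lambda(Q_v)$ is invariant under $\rho_\lambda([K])$ for every framed knot $K\subset S_\lambda\times[0,1]$; since such classes generate $\SSS(S_\lambda)$, this suffices. The key geometric input is that the quantum trace homomorphism $\Tr_\lambda^\omega$ is \emph{local}: it is built by gluing contributions over the faces of $\lambda$, so the only part of $\Tr_\lambda^\omega([K])$ that can interact nontrivially with the generators $Z_{i_1},\dots,Z_{i_u}$ surrounding $v$ is the part coming from the faces incident to $v$. Concretely, I would first reduce, using isotopy in $S_\lambda\times[0,1]$ and the skein relations, to the case where $K$ meets the star of $v$ in a controlled standard position — a union of arcs each crossing one edge $e_{i_j}$ — so that $\Tr_\lambda^\omega([K])$ is, up to conjugation by an invertible monomial, a Laurent polynomial in the $Z_{i_j}^{\pm1}$ (together with generators attached to edges away from $v$, which commute with everything relevant after taking squares into account). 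This is where the combinatorial hypothesis on $\lambda$ is used: it guarantees the edges $e_{i_1},\dots,e_{i_u}$ are distinct, that $Z_{i_j}Z_{i_{j+1}} = \omega^2 Z_{i_{j+1}}Z_{i_j}$ around $v$ with all other pairs commuting, and hence that the relevant local algebra is a clean quantum torus to which Lemma~\ref{lem:QuantumOrderH} and the classical identity of Lemma~\ref{lem:ClassicalOffDiagonal} apply verbatim.

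The heart of the matter is then a purely algebraic statement inside $\ZZ(\lambda)$, or rather inside the subalgebra generated by $Z_{i_1}^2,\dots,Z_{i_u}^2$ and the edge-variables disjoint from $v$: I must show that for each such $K$, the operator $\rho_\lambda([K])$ maps $\ker\mu_\lambda(Q_v)$ into itself, i.e. that $\mu_\lambda(Q_v)\circ\rho_\lambda([K])$ vanishes on $F_v$. The strategy is to commute $\mu_\lambda(Q_v)$ past $\rho_\lambda([K])$ and show that the result differs from $(\text{invertible})\cdot\rho_\lambda([K])\circ\mu_\lambda(Q_v)$ by terms that are themselves proportional to $\mu_\lambda(Q_v)$, using the centrality of $H_v$ and the normalization $\mu_\lambda(H_v) = -\omega^4\,\Id$ in exactly the way they were used in the proof of Lemma~\ref{lem:OffDiagKernelWellDefined}. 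In that lemma one saw that cyclically shifting the indexing replaces $Q_v$ by $\omega^{-4}Z_{i_u}^2 Q_v$ modulo the relation $\omega^{-8}H_v^2 = \Id$; the general computation here should be the same phenomenon iterated: conjugating $Q_v$ by a monomial in the $Z_{i_j}$ produces, after collecting terms with the $H_v$-relation, a left multiple of $Q_v$ by an invertible element. I would organize this by treating separately the effect of a single ``elementary move'' of $K$ across one edge $e_{i_j}$, and then composing.

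The step I expect to be the main obstacle is controlling the \emph{crossing} contributions — when $K$ has crossings in the star of $v$, so that $\Tr_\lambda^\omega([K])$ involves the state-sum over the triangles with nontrivial off-diagonal matrix entries rather than a single diagonal monomial. Resolving a crossing via the skein relation increases complexity and one must check that the two resolutions $K_0$, $K_\infty$ individually (not just their combination) respect $F_v$, which is plausible since $F_v$ is a subspace but requires care with the $\omega^{\pm1}$ weights. A possible way around this is to first prove invariance for knots $K$ whose projection to $S_\lambda$ is embedded and transverse to $\lambda$ (no crossings), where the local picture of $\Tr_\lambda^\omega([K])$ is a genuine monomial and the $H_v$-bookkeeping of Lemma~\ref{lem:QuantumOrderH} applies directly, and then bootstrap to all of $\SSS(S_\lambda)$ using that such ``simple'' knots, together with the algebra structure and the fact that $F_v$ is a subspace invariant under a generating set, force invariance under products — here one uses part~(1) of the desired conclusion being stable under composition. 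If that bootstrap is not immediate, the fallback is a direct but longer check that each Kauffman-triple resolution near $v$ preserves $F_v$, reducing everything to finitely many local verifications in the quantum torus on $Z_{i_1}^2,\dots,Z_{i_u}^2$.
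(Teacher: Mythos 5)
Your plan follows essentially the same route as the paper: isotop $K$ into a standard position relative to the star $\mathcal N(v)$, use the locality of the state sum computing $\Tr_\lambda^\omega\bigl([K]\bigr)$, and exploit $\mu_\lambda(H_v)=-\omega^4\,\Id_{E_\lambda}$ together with the freedom in the choice of $Q_v$ from Lemma~\ref{lem:OffDiagKernelWellDefined}. However, the obstacle you single out as the main one --- crossings of $K$ inside the star of $v$ --- is not where the difficulty lies. Since $\lambda$ is combinatorial, $\mathcal N(v)$ is an embedded disk and $K$ can be isotoped so that $K\cap\bigl(\mathcal N(v)\times[0,1]\bigr)$ consists of disjoint horizontal arcs at distinct elevations, each running from one boundary edge of $\mathcal N(v)$ to the next and crossing a single internal edge $e_{i_{j_l}}$; no skein resolutions are required there. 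Moreover, crossingless links span $\SSS(S_\lambda)$, so your fallback bootstrap is immediate anyway, because the set of skeins whose image under $\rho_\lambda$ preserves $F_v$ is a linear subspace closed under products.

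What your proposal does not engage with are the two points that carry the actual proof. First, the individual state-sum contributions of the arcs (and of the rest of $K$) are elements of $\TT(\lambda)$ that violate the parity condition, so $\mu_\lambda$ cannot be applied to them factor by factor; one must factor out the balanced monomial $C$ built from the inverses of all edge variables crossed by $K$ and regroup each state-sum term as $A_1'A_2'\cdots A_t'B'C$ with every factor in $\ZZ(\lambda)$, checking that the triples $Z_{k_{j_l}}^{-1}Z_{i_{j_l}}^{-1}Z_{k_{j_l+1}}^{-1}$ $\omega$--commute with what they must pass. Your phrase ``up to conjugation by an invertible monomial'' does not substitute for this step. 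Second, the contribution of an arc with mixed boundary states is the binomial $A_l'=\omega^4 Z_{i_{j_l}}^2 Z_{k_{j_l+1}}^2 + Z_{k_{j_l+1}}^2$, which involves a boundary edge variable of $\mathcal N(v)$ that does not commute with $Q_v$ and is not a monomial, so your heuristic that ``conjugating $Q_v$ by a monomial produces a left multiple of $Q_v$'' does not apply to it. The actual verification rotates the off-diagonal term to the $Q_v'$ starting at $e_{i_{j_l+1}}$, computes $Q_v'Z_{k_{j_l+1}}^2=\omega^8 Z_{k_{j_l+1}}^2 Q_v'+(1-\omega^8)Z_{k_{j_l+1}}^2$ and $Q_v'Z_{i_{j_l}}^2Z_{k_{j_l+1}}^2=Z_{i_{j_l}}^2Z_{k_{j_l+1}}^2 Q_v'+(1-\omega^{-8})\omega^{-4}Z_{k_{j_l+1}}^2H_v^2$, and only after applying $\mu_\lambda$ and using $\mu_\lambda(H_v)=-\omega^4\,\Id_{E_\lambda}$ do the leftover terms cancel on $\ker\mu_\lambda(Q_v')$. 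This cancellation, which is exactly where the normalization $-\omega^4$ is indispensable, is asserted in your plan (``the same phenomenon iterated'') but not established, and it is the heart of the proposition.
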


\begin{proof}
Let $\mathcal N(v)\subset S$ be the  neighborhood of the vertex $v$ that is the union  of the faces of $\lambda$ containing $v$. Because of our hypothesis that $\lambda$ is combinatorial,  there are no identifications on the boundary of $\mathcal N(v)$, and $\mathcal N(v)$ is homeomorphic to a disk.  We   already indexed the edges of $\lambda$ emanating from $v$ as $e_{i_1}$, $e_{i_2}$, \dots, $e_{i_u}$, going counterclockwise around $v$. Let $e_{k_1}$, $e_{k_2}$, \dots, $e_{k_u}$ denote the edges forming the boundary of the star neighborhood $\mathcal N(v)$, in such a way that $e_{i_j}$, $e_{i_{j-1}}$ and $e_{k_j}$ cobound a face of $\lambda$. See Figure~\ref{fig:VertexStar}.

\begin{figure}[htbp]

\SetLabels
(.665 *.22 ) $ e_{i_1}$ \\
( .48* .22) $ e_{i_u}$ \\
( .78* .37) $e_{i_2} $ \\
( .5* -.05) $ e_{k_1}$ \\
( .85* .07) $ e_{k_2}$ \\
( .17* .07) $ e_{k_u}$ \\
( .36*.73 ) $ e_{i_{j_l}}$ \\
( .2* .7) $a_l $ \\
(.32 *.99 ) $ e_{k_{j_l}}$ \\
( .95*.78 ) $ K$ \\
( -.04* .7) $e_{k_{j_l+1}} $ \\
\endSetLabels
\centerline{\AffixLabels{\includegraphics{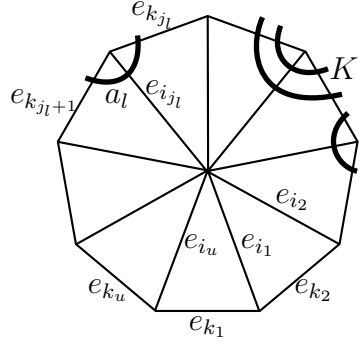}}}
\caption{The star neighborhood $\mathcal N(v)$ of the vertex $v$}
\label{fig:VertexStar}
\end{figure}

Let $K$ be a framed link in $S_\lambda \times [0,1]$. We want to show that $\rho_\lambda\bigl ([K]\bigr) = \mu \circ \Tr_\lambda^\omega\bigl ([K]\bigr)$ respects $F_v$ and, for this, we first need to understand the quantum trace $\Tr_\lambda^\omega\bigl ([K]\bigr)\in \ZZ(\lambda)$. The precise construction of  $\Tr_\lambda^\omega\bigl ([K]\bigr)$ in \cite{BonWon1} can be somewhat elaborate, and we first isotop $K$ to a position where the quantum trace is easier to analyze.

Because there are no identifications on the boundary of the neighborhood $\mathcal N(v)$, we can arrange by an isotopy that the intersection of $K$ with $\mathcal N(v) \times [0,1]$ consists of finitely many horizontal arcs $a_1\times \{*\}$, $a_2\times \{*\}$, \dots, $a_t\times \{*\}$, where each $a_l\subset \mathcal N(v)$ is an embedded arc that  turns around one of the boundary vertices of $\mathcal N(v)$, in the sense that $a_l$  goes from some boundary edge $e_{k_{j_l}}$ of $\mathcal N(v)$ to the next $e_{k_{j_l+1}}$, crosses the internal edge $e_{i_{j_l}}$ in one point, and meets no other edge of $\lambda$. See Figure~\ref{fig:VertexStar}. 
In addition, we can arrange that these horizontal arcs $a_l \times \{*\}$ sit at increasing elevations $*\in [0,1]$ as $l$ goes from 1 to $t$.

Then the  State Sum Formula of \cite{BonWon1}  expresses $\Tr_\lambda^\omega\bigl ([K]\bigr)$ as a sum of terms of the form
$$
A_1 A_2 \dots A_t B \in \ZZ(\lambda)
$$
where each term corresponds to a state for the boundary of $K \cap \bigl( \mathcal N(v) \times [0,1]\bigr)$, where $A_l$ is the contribution of the arc $a_l$, and where $B$ is the contribution of the complement of $K \cap \bigl( \mathcal N(v) \times [0,1]\bigr)$ in $K$. In addition, each non-zero $A_l$ can be of only three types 
\begin{align*}
A_l &= \omega^{2} Z_{k_{j_l}} Z_{i_{j_l}} Z_{k_{j_l+1}}  \\
\text{ or } A_l & =    Z_{k_{j_l}}^{-1} Z_{i_{j_l}} Z_{k_{j_l+1}}  +  Z_{k_{j_l}}^{-1} Z_{i_{j_l}}^{-1} Z_{k_{j_l+1}} \\
\text{ or } A_l & =  \omega^{2} Z_{k_{j_l}}^{-1} Z_{i_{j_l}}^{-1} Z_{k_{j_l+1}}^{-1} 
\end{align*}
according to the state considered, and $B$ involves only generators $Z_i$ corresponding to edges $e_i$ of $\lambda$ that are \emph{not} in $\mathcal N(v)$.

This expression has the unfortunate feature  that, although the terms $A_l$ and $B$ are elements of the Chekhov-Fock algebra $\T^\omega (\lambda)$, they do not satisfy the exponent parity condition necessary to make them elements of the balanced Chekhov-Fock algebra $\ZZ(\lambda)$. In particular, we cannot directly apply the representation $\mu_\lambda$, since terms like $\mu_\lambda(A_l)$ and $\mu_\lambda(B)$ are not  defined. 

To circumvent  this problem, we  factor out of $A_1 A_2 \dots A_v B $ the product $C=Z_{j_1}^{-1}Z_{j_2}^{-1} \dots Z_{j_w}^{-1}$, where $e_{j_1}$, $e_{j_2}$, \dots, $e_{i_w}$ are the edges of $\lambda$ crossed by $K$ (with possible repetitions). Note that the arc $a_l$ contributes, up to permutation, a term $Z_{k_{j_l}}^{-1}Z_{i_{j_l}}^{-1} Z_{k_{j_l}+1}^{-1}$ to this product $C=Z_{j_1}^{-1}Z_{j_2}^{-1} \dots Z_{j_w}^{-1}$. Set 
\begin{align*}
A_l' &=   Z_{k_{j_l}}^2 Z_{i_{j_l}}^2 Z_{k_{j_l+1}}^2  \\
\text{ or } A_l' & = \omega^{4} Z_{i_{j_l}}^2 Z_{k_{j_l+1}}^2  + Z_{k_{j_l+1}}^2  \\
\text{ or } A_l' &= 1
\end{align*}
according to which of the above three types $A_l$ belongs to. This is specially designed so that $A_l = \omega^\alpha A_l' Z_{k_{j_l}}^{-1}Z_{i_{j_l}}^{-1} Z_{k_{j_l}+1}^{-1}$ for some $\alpha \in \Z$ depending on whether the edges $e_{k_{j_l}}$ and $e_{k_{j_l}+1}$ are contained in the same face of $\lambda$ (outside of $\mathcal N(v)$) or not.

In addition, the term $Z_{k_{j_l}}^{-1}Z_{i_{j_l}}^{-1} Z_{k_{j_l}+1}^{-1}$ $\omega$--commutes with each $A_{l'}'$, in the sense that $Z_{k_{j_l}}^{-1}Z_{i_{j_l}}^{-1} Z_{k_{j_l}+1}^{-1}A_{l'}' = \omega^{\beta} A_{l'}'Z_{k_{j_l}}^{-1}Z_{i_{j_l}}^{-1} Z_{k_{j_l}+1}^{-1}$ for some $\beta\in \Z$. (The only case that requires some thought is when $A_{l'}'  = \omega^{4} Z_{i_{j_{l'}}}^2 Z_{k_{j_{l'}+1}}^2  + Z_{k_{j_{l'}+1}}^2 $, in which case it suffices to note that  $Z_{k_{j_l}}^{-1}Z_{i_{j_l}}^{-1} Z_{k_{j_l}+1}^{-1}$ commutes with $Z_{i_{j_{l'}}}^2$.) This enables us to rewrite 
$$
A_1 A_2 \dots A_t B = 
A_1' A_2' \dots A_t' B' C.
$$
where $B'$ involves only generators $Z_i$ corresponding to edges $e_i$ of $\lambda$ that are outside of $\mathcal N(v)$.

By construction, $C$ belongs to the balanced Chekhov-Fock algebra $\ZZ(\lambda)$. Each $A_l'$ is also an element of $\ZZ(\lambda)$ since all its exponents are even. Since $A_1 A_2 \dots A_t B$ belongs to $\ZZ(\lambda)$ by construction of the quantum trace, it follows that $B'$ is also in  $\ZZ(\lambda)$. In particular, we can now consider the endomorphisms $\mu_\lambda(C)$, $\mu_\lambda(A_l')$, $\mu_\lambda(B')\in \End({E_\lambda})$. We want to show that all these endomorphisms respect the off-diagonal kernel $F_v $.

Since the expression of $B'$ involves only generators $Z_i$ corresponding to edges $e_i$ of $\lambda$ that are {not} in the neiborhood $\mathcal N(v)$ of $v$, it commutes with each of the generators $Z_{i_j}$ corresponding to edges emanating from the vertex $v$. As a consequence, $B'$ commutes with the off-diagonal term $Q_v=  \sum_{j=0}^{u-1} \omega^{-4j} Z_{i_1}^2 Z_{i_2}^2 \dots Z_{i_{j}}^2$. It follows that $\mu_\lambda(B')\in \End({E_\lambda})$ respects the off-diagonal kernel $F_v = \ker \mu_\lambda(Q_v)$. 

In $C=Z_{j_1}^{-1}Z_{j_2}^{-1} \dots Z_{j_w}^{-1}$, the contribution $Z_{k_{j_l}}^{-1}Z_{i_{j_l}}^{-1} Z_{k_{j_l+1}}^{-1}$ of each arc $a_l$ commutes with each of the generators $Z_{i_j}$ corresponding to edges emanating from the vertex $v$. The remaining terms of $C$ involve only generators $Z_i$ associated to edges of $\lambda$ that are not in $\mathcal N(v)$, and therefore also commute with the $Z_{i_j}$. It  follows that $C$ also commutes with the off-diagonal term $Q_v$. This again implies that $\mu_\lambda(C)\in \End({E_\lambda})$ respects the off-diagonal kernel $F_v $. 

For $\mu_\lambda(A_l')$, we need to distinguish cases. There is nothing to prove when $A_l'=1$, since $\mu(A_l')=\Id_{E_\lambda}$ clearly respects $F_v$. Also, when $A_l' = Z_{k_{j_l}}^2 Z_{i_{j_l}}^2 Z_{k_{j_l+1}}^2 $, it commutes with all generators $Z_{i_j}$ corresponding to edges $e_{i_j}$ emanating from the vertex  $v$, and therefore commutes with the off-diagonal term $Q_v=  \sum_{j=0}^{u-1} \omega^{-4j} Z_{i_1}^2 Z_{i_2}^2 \dots Z_{i_{j}}^2$; therefore $\mu_\lambda(A_l')$ respects the off-diagonal kernel $F_v$ in this case as well. 

The case 
 $A_l' = \omega^4 Z_{i_{j_l}}^2 Z_{k_{j_l+1}}^2 + Z_{k_{j_l+1}}^2  $  requires more work. Remember from Lemma~\ref{lem:OffDiagKernelWellDefined} that we have some flexibility in the choice of $Q_v$. In particular, $F_v$ is also the kernel of
\begin{align*}
 Q_v' &= 1+ \omega^{-4} Z_{i_{j_l+1}}^2 + \omega^{-8} Z_{i_{j_l+1}}^2Z_{i_{j_l+2}}^2 + \dots \\
 &\qquad \dots + \omega^{-4(u-2)}Z_{i_{j_l+1}}^2Z_{i_{j_l+2}}^2 \dots Z_{i_{j_l-2}}^2
 + \omega^{-4(u-1)}Z_{i_{j_l+1}}^2Z_{i_{j_l+2}}^2 \dots Z_{i_{j_l-2}}^2  Z_{i_{j_l-1}}^2
\end{align*}

 Because there are no identifications between the edges on the boundary of $\mathcal N(v)$,   we observe that $Z_{i_{j_l+1}}^2 Z_{k_{j_l+1}}^2 = \omega^8 Z_{k_{j_l+1}}^2 Z_{i_{j_l+1}}^2 $, and  that $Z_{k_{j_l+1}}^2 $ commutes with all $Z_{i_j}$ with $j \neq j_l$, $j_l+1$.  Therefore
 $$
( Q_v'-1) Z_{k_{j_l+1}}^2= \omega^8  Z_{k_{j_l+1}}^2 ( Q_v'-1)
 $$
 and
 $$
 Q_v'  Z_{k_{j_l+1}}^2= \omega^8  Z_{k_{j_l+1}}^2  Q_v' + (1-\omega^8) Z_{k_{j_l+1}}^2
 $$
 
 Similarly, $Z_{i_{j_l}}^2 Z_{k_{j_l+1}}^2$ commutes with all but the last term of $Q_v'$, and 
\begin{multline*}
( Q_v' -\omega^{-4(u-1)}Z_{i_{j_l+1}}^2Z_{i_{j_l+2}}^2 \dots  Z_{i_{j_l-1}}^2 ) Z_{i_{j_l}}^2 Z_{k_{j_l+1}}^2\\
  = Z_{i_{j_l}}^2 Z_{k_{j_l+1}}^2 ( Q_v'-\omega^{-4(u-1)}Z_{i_{j_l+1}}^2Z_{i_{j_l+2}}^2 \dots  Z_{i_{j_l-1}}^2)
\end{multline*}
Reordering terms, we conclude that
\begin{align*}
Q_v ' Z_{i_{j_l}}^2 Z_{k_{j_l+1}}^2  &=  Z_{i_{j_l}}^2 Z_{k_{j_l+1}}^2 Q_v' + (1-\omega^{-8})\omega^{-4(u-1)}Z_{k_{j_l+1}}^2  Z_{i_{j_l}}^2Z_{i_{j_l+1}}^2Z_{i_{j_l+2}}^2 \dots  Z_{i_{j_l-1}}^2 \\
&= Z_{i_{j_l}}^2 Z_{k_{j_l+1}}^2 Q_v' + (1-\omega^{-8})\omega^{-4}Z_{k_{j_l+1}}^2 H_v^2
\end{align*}
where $H_v= [Z_{i_{j_l}}Z_{i_{j_l+1}}Z_{i_{j_l+2}} \dots  Z_{i_{j_l-1}}]= \omega^{-u+2} Z_{i_{j_l}}Z_{i_{j_l+1}}Z_{i_{j_l+2}} \dots  Z_{i_{j_l-1}}$ is the central element of $\ZZ(\lambda)$ associated to the vertex $v$.

Therefore, for every  vector $w\in F_v = \ker \mu_\lambda(Q_v')$, 
\begin{align*}
\mu_\lambda(Q_v') \circ \mu_\lambda\bigl( A_l')(w) &= \omega^4 \mu_\lambda(Q_v') \circ \mu_\lambda\bigl( Z_{i_{j_l}}^2 Z_{k_{j_l+1}}^2 \bigr)(w)  +\mu_\lambda(Q_v') \circ  \mu_\lambda \bigl( Z_{k_{j_l+1}}^2 \bigr)(w)\\
&= \omega^4 \mu_\lambda\bigl( Z_{i_{j_l}}^2 Z_{k_{j_l+1}}^2 \bigr) \circ \mu_\lambda(Q_v') (w) \\
&\qquad
+ (1-\omega^{-8})  \mu_\lambda(Z_{k_{j_l+1}}^2) \circ \mu_\lambda(H_v^2) (w)\\
&\qquad +
\omega^8 \mu_\lambda(Z_{k_{j_l+1}}^2) \circ \mu_\lambda(Q_v') (w) + (1-\omega^8) \mu_\lambda(Z_{k_{j_l+1}}^2)(w)\\
&= (1-\omega^{-8})  \mu_\lambda(Z_{k_{j_l+1}}^2) (\omega^8  w)+ (1-\omega^8) \mu_\lambda(Z_{k_{j_l+1}}^2)(w) = 0
\end{align*}
since $\mu_\lambda(Q_v')(w)=0$ and $\mu_\lambda(H_v) = -\omega^4 \Id_V$. As a consequence, the image of $w\in F_v = \ker \mu_\lambda(Q_v')$ under $\mu_\lambda(A_l')$ is also in $F_v$. 

This proves that $\mu_\lambda(A_l')$ respects the off-diagonal kernel $F_v$ in all three cases.

As a summary, for every skein $[K] \in \SSS(S_\lambda)$, we showed that the linear map $\rho_\lambda\bigl([K] \bigr)\in \End({E_\lambda})$ is a sum of terms 
$$
\mu_\lambda(A_1' ) \circ\mu_\lambda(A_2')\circ \dots \circ\mu_\lambda(A_v')\circ\mu_\lambda( B') \circ \mu_\lambda(C )
$$
and that each factor in this composition respects the off-diagonal kernel $F_v$. This proves that the image $\rho_\lambda\bigl( \SSS(S_\lambda) \bigr) \subset \End(E_\lambda)$ respects $F_v$, and  completes the proof of Proposition~\ref{prop:OffDiagKernelInvariant}. 
\end{proof}

\begin{rem}
\label{rem:OffDiagKernelInvariantWeakerHyp}
Although the hypotheses of Proposition~\ref{prop:OffDiagKernelInvariant} require that the triangulation $\lambda$ be combinatorial, the proof shows that this statement is valid under the weaker hypothesis that no edge of $\lambda$ connects the vertex $v$ to itself, and that no two distinct edges connect $v$ to the same vertex of $\lambda$. We will use this observation in \S \ref{subsect:FaceSubdivisions}. 

\end{rem}

\subsection{Constructing a representation of the skein algebra $\SSS(S)$}
\label{subsect:ConstructRepClosedSurface}

We  now consider the total off-diagonal kernel
$
{F_\lambda} = \bigcap_{v\in V_\lambda} F_{v} \subset {E_\lambda}
$.

Let
$
\rho_\lambda = \mu_\lambda \circ \Tr_\lambda^\omega \colon \SSS(S_\lambda) \to \End({E_\lambda})
$
be the representation associated in \S \ref{subsect:RepresentingPunctSurfSkein} to a homomorphism $r\colon \pi_1(S) \to \SL(\C)$ endowed with a $\lambda$--enhancement $\xi$. Assuming that the triangulation $\lambda$ is combinatorial,  Proposition~\ref{prop:OffDiagKernelInvariant} shows that the  total off-diagonal kernel ${F_\lambda}$ is invariant under the image $\rho_\lambda\bigl( \SSS(S_\lambda) \bigr) \subset \End({E_\lambda})$. For every framed link $K \subset S_\lambda\times [0,1]$, we can therefore consider the restriction $\rho_\lambda \bigl ( [K] \bigr)_{|{F_\lambda}}\in \End(F_\lambda)$. 

We now show that $\rho_\lambda \bigl ( [K] \bigr)_{|{F_\lambda}}\in \End(F_\lambda)$ remains invariant if we modify $K$ by an isotopy in $S\times[0,1]$, not just in $S_\lambda \times [0,1]$.

\begin{prop}
\label{prop:VertexSweep}
Suppose that the triangulation $\lambda$ is combinatorial, in the sense that each edge has distinct endpoints and that no two distinct edges have the same endpoints. 
Let the two framed links $K$, $K' \subset S_\lambda \times [0,1]$ be isotopic in $S\times [0,1]$. Then $\rho_\lambda \bigl ( [K] \bigr)_{|{F_\lambda}} = \rho_\lambda \bigl ( [K'] \bigr)_{|{F_\lambda}}$ in $\End({F_\lambda})$. 
\end{prop}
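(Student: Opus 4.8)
The plan is to reduce the statement to a local analysis near a single vertex of $\lambda$, exploiting the fact that any isotopy in $S\times[0,1]$ between framed links lying in $S_\lambda\times[0,1]$ decomposes into isotopies supported in $S_\lambda\times[0,1]$ together with finitely many elementary moves, each of which pushes a single strand of $K$ across a single vertex $v$ of $\lambda$. By Proposition~\ref{prop:OffDiagKernelInvariant} the endomorphisms $\rho_\lambda\bigl([K]\bigr)_{|F_\lambda}$ are already invariant under isotopies in $S_\lambda\times[0,1]$, so it suffices to treat the case where $K'$ is obtained from $K$ by a single vertex-crossing move at some vertex $v$. Since we are working in the total off-diagonal kernel $F_\lambda\subset F_v$, we need only verify that $\rho_\lambda\bigl([K]\bigr)$ and $\rho_\lambda\bigl([K']\bigr)$ agree after restriction to $F_v=\ker\mu_\lambda(Q_v)$.

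Next I would set up the local picture exactly as in the proof of Proposition~\ref{prop:OffDiagKernelInvariant}: isotop $K$ and $K'$ so that outside the star neighborhood $\mathcal N(v)$ they literally coincide, and inside $\mathcal N(v)\times[0,1]$ they differ only by a single arc that, in $K$, turns around one boundary vertex passing on one side of $v$, while in $K'$ the corresponding arc passes on the other side of $v$ (equivalently, sweeps across $v$). Using the State Sum Formula of \cite{BonWon1} and the factorization $A_1A_2\dots A_tB = A_1'A_2'\dots A_t'B'C$ developed in the proof of Proposition~\ref{prop:OffDiagKernelInvariant}, I would express both $\rho_\lambda\bigl([K]\bigr)$ and $\rho_\lambda\bigl([K']\bigr)$ as sums of compositions $\mu_\lambda(A_1')\circ\dots\circ\mu_\lambda(A_t')\circ\mu_\lambda(B')\circ\mu_\lambda(C)$, where the only factors that differ between $K$ and $K'$ are those coming from the arc(s) near $v$. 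The combinatorial heart of the argument is then a purely local identity: the difference between the "left" and "right" contributions of a strand passing $v$ is expressed in terms of the central element $H_v$ and the off-diagonal term $Q_v$ (this is precisely the quantum analogue of Lemma~\ref{lem:ClassicalOffDiagonal}), using $\mu_\lambda(H_v)=-\omega^4\,\Id_{E_\lambda}$ and Lemma~\ref{lem:QuantumOrderH} in the same manner as in Lemma~\ref{lem:OffDiagKernelWellDefined}.

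Concretely, I expect the key computation to show that, when restricted to $F_v$, the two local contributions differ by a term that is a left-multiple of $\mu_\lambda(Q_v)$ (so it annihilates $F_v$), possibly after absorbing a correction coming from $\mu_\lambda(H_v)=-\omega^4\Id$ — exactly the telescoping/cancellation phenomenon already seen in the displayed computation at the end of the proof of Proposition~\ref{prop:OffDiagKernelInvariant} for the case $A_l'=\omega^4 Z_{i_{j_l}}^2 Z_{k_{j_l+1}}^2 + Z_{k_{j_l+1}}^2$. The point is that sweeping a strand across $v$ replaces a "partial sum up to $e_{i_{j}}$" contribution by the complementary "partial sum from $e_{i_{j}}$ onward", and the difference of these two is $\pm$ the full off-diagonal term $Q_v$ (times invertible monomial factors), up to a multiple of $H_v^2$; on $F_v=\ker\mu_\lambda(Q_v)$ this difference vanishes once we use $\mu_\lambda(H_v^2)=\omega^8\Id$.

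The main obstacle, and the part that will require genuine care rather than bookkeeping, is the precise combinatorial derivation of this local difference identity: tracking the State Sum states for the sweeping arc, keeping all the $\omega$-powers correct through the rebalancing by $C$, and verifying that the "boundary" terms that fail to cancel assemble exactly into $Q_v$ (not some variant of it) — including the degenerate situations where the combinatorial hypothesis on $\lambda$ is used to rule out identifications on $\partial\mathcal N(v)$. Once that local identity is in place, globalizing is routine: sum over all states, note that all the unchanged factors $\mu_\lambda(A_l')$, $\mu_\lambda(B')$, $\mu_\lambda(C)$ preserve $F_v\supseteq F_\lambda$ by Proposition~\ref{prop:OffDiagKernelInvariant}, and conclude that $\rho_\lambda\bigl([K]\bigr)_{|F_\lambda}=\rho_\lambda\bigl([K']\bigr)_{|F_\lambda}$.
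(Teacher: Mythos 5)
Your proposal follows essentially the same route as the paper's proof: reduce the isotopy to a single sweep across one vertex $v$, compute the quantum traces of the two local configurations via the State Sum Formula of \cite{BonWon1}, recognize the wrap-around contribution as an invertible multiple of $(Q_v-1)H_v^{-1}$ plus terms involving $H_v^{\pm1}$, and conclude on $F_\lambda\subset\ker\mu_\lambda(Q_v)$ using $\mu_\lambda(H_v)=-\omega^4\,\Id_{E_\lambda}$. The explicit computation you defer as the ``main obstacle'' is exactly what the paper carries out (its four-state expansion with coefficients $B_{\pm\pm}$, collapsing to $\omega^{-1}\mu_\lambda(B_{+-})-\omega^{-5}\mu_\lambda(B_{-+})$ on $F_\lambda$ for both $K$ and $K'$), so your outline is correct and matches the paper, though only sketched at that crucial step.
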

Again, the hypothesis that $\lambda$ is combinatorial is here only to simplify the proof, and the property holds without this condition; see Theorem~\ref{thm:ConstructRepSkeinAlgebraClosedSurface} in \S \ref{subsect:ConstructSkeinRepArbitraryTriangulation}. 

\begin{proof}
We can choose the isotopy from $K$ to $K'$ so that it sweeps through punctures of $S_\lambda$ at only finitely many times. This reduces the problem to the case where the isotopy sweeps only once through a puncture. Let $v$ be the vertex of the triangulation $\lambda$ corresponding to this puncture. 

We will be using the same labeling conventions as in the proof of Proposition~\ref{prop:OffDiagKernelInvariant}. In particular, $\mathcal N(v)$ denotes the union of the faces of $\lambda$ that contain $v$. The edges emanating from $v$ are indexed as $e_{i_1}$, $e_{i_2}$, \dots, $e_{i_u}$ in counterclockwise order around $v$. The edges of the boundary of $\mathcal N(v)$ are $e_{k_1}$, $e_{k_2}$, \dots, $e_{k_u}$, where $e_{k_j}$ is the third side of the face containing $e_{i_{j-1}}$ and $e_{i_j}$. See Figure~\ref{fig:VertexSweep}. 

Since the skeins $[K]$, $[K'] \in \SSS(S_\lambda)$ are invariant under isotopy in $S_\lambda \times [0,1]$, we can restrict attention to the case of  Figure~\ref{fig:VertexSweep}, where the two pieces of $K$ and $K'$ represented are endowed with the vertical framing, and where the remaining portions of $K$ and $K'$ coincide and are located  in $S_\lambda \times [0,1]$ at lower elevations  than the pieces represented. 

\begin{figure}[htbp]

\SetLabels
(.69 *.16 ) $ e_{i_1}$ \\
( .31* .17) $ e_{i_u}$ \\
( .16* .38) $e_{i_{u-1}} $ \\
( .84* .35) $e_{i_2} $ \\
( .5* -.01) $ e_{k_1}$ \\
( .86* .11) $ e_{k_2}$ \\
( .18* .11) $ e_{k_u}$ \\
( .57*.68 ) $ K$ \\
( .49*.31 ) $ K'$ \\
\endSetLabels
\centerline{\AffixLabels{\includegraphics{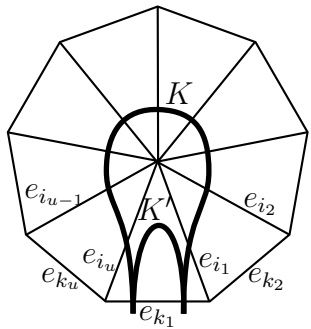}}}
\caption{Sweeping through a puncture}
\label{fig:VertexSweep}
\end{figure}

These elevation conventions greatly simplify the computation of the quantum trace $\Tr_\lambda^\omega\bigl([K]\bigr)$, because we do not have to worry about correction factors coming from biangles. The construction of the quantum trace in \cite{BonWon1} then gives that
\begin{align*}
\Tr_\lambda^\omega \bigl ( [K] \bigr)
=& B_{++} Z_{k_1} \biggl( \omega^{3-u} Z_{i_1} Z_{i_2} \dots Z_{i_u} + \omega^{3-u} \sum_{j=1}^{u-1} Z_{i_1}Z_{i_2}\dots Z_{i_j}Z_{i_{j+1}}^{-1}Z_{i_{j+2}}^{-1}\dots Z_{i_u}^{-1} 
\biggr) Z_{k_1} \\
&+ B_{+-} Z_{k_1} \biggl( \omega^{5-u} \sum_{j=1}^{u-1} Z_{i_1}Z_{i_2}\dots Z_{i_j}Z_{i_{j+1}}^{-1}Z_{i_{j+2}}^{-1}\dots Z_{i_u}^{-1} 
\biggr) Z_{k_1}^{-1}\\
&+ B_{-+} Z_{k_1}^{-1} \biggl( \omega^{1-u}Z_{i_1} Z_{i_2} \dots Z_{i_u} + \omega^{1-u}Z_{i_1}^{-1} Z_{i_2}^{-1} \dots Z_{i_u}^{-1}\\
&\qquad\qquad\qquad\qquad \qquad\qquad\qquad + \omega^{1-u}\sum_{j=1}^{u-1} Z_{i_1}Z_{i_2}\dots Z_{i_j}Z_{i_{j+1}}^{-1}Z_{i_{j+2}}^{-1}\dots Z_{i_u}^{-1} 
\biggr) Z_{k_1} \\
&+ B_{--} Z_{k_1}^{-1} \biggl( \omega^{3-u} Z_{i_1}^{-1} Z_{i_2}^{-1} \dots Z_{i_u}^{-1}  + \omega^{3-u}\sum_{j=1}^{u-1} Z_{i_1}Z_{i_2}\dots Z_{i_j}Z_{i_{j+1}}^{-1}Z_{i_{j+2}}^{-1}\dots Z_{i_u}^{-1} 
\biggr) Z_{k_1}^{-1}
\end{align*}
where the terms $B_{\pm\pm}$ are contributions of the parts of the link $K$ that are not represented on Figure~\ref{fig:VertexSweep}. The domino diagrams of Exercises~8.5--8.7 and 10.14 in \cite{BonBook} may be here useful to list all possible terms. The order of terms is dictated by our condition on relative elevations. 

We now move all terms $Z_{k_1}$ together. Because of our hypothesis that $\lambda$ is combinatorial, there are no extraneous identifications between the edges represented in Figure~\ref{fig:VertexSweep}. It follows that $Z_{k_1}$ commutes with all $Z_{i_j}$ with $1<j<u$, and that $Z_{k_1}Z_{i_1} = \omega^{-2} Z_{i_1}Z_{k_1}$ and $Z_{k_1}Z_{i_u}=\omega^2 Z_{i_u}Z_{k_1}$. This gives:
\begin{align*}
\Tr_\lambda^\omega \bigl ( [K] \bigr)
=& B_{++} Z_{k_1}^2 \biggl( \omega^{3-u} Z_{i_1} Z_{i_2} \dots Z_{i_u} + \omega^{7-u} \sum_{j=1}^{u-1} Z_{i_1}Z_{i_2}\dots Z_{i_j}Z_{i_{j+1}}^{-1}Z_{i_{j+2}}^{-1}\dots Z_{i_u}^{-1} 
\biggr) \\
&+ B_{+-} \biggl( \omega^{1-u} \sum_{j=1}^{u-1} Z_{i_1}Z_{i_2}\dots Z_{i_j}Z_{i_{j+1}}^{-1}Z_{i_{j+2}}^{-1}\dots Z_{i_u}^{-1} 
\biggr) \\
&+ B_{-+}  \biggl( \omega^{1-u}Z_{i_1} Z_{i_2} \dots Z_{i_u} + \omega^{1-u}Z_{i_1}^{-1} Z_{i_2}^{-1} \dots Z_{i_u}^{-1} \\
&\qquad\qquad\qquad\qquad \qquad\qquad\qquad+ \omega^{5-u}\sum_{j=1}^{u-1} Z_{i_1}Z_{i_2}\dots Z_{i_j}Z_{i_{j+1}}^{-1}Z_{i_{j+2}}^{-1}\dots Z_{i_u}^{-1} 
\biggr) \\
&+ B_{--} Z_{k_1}^{-2} \biggl( \omega^{3-u} Z_{i_1}^{-1} Z_{i_2}^{-1} \dots Z_{i_u}^{-1} + \omega^{-1-u}\sum_{j=1}^{u-1} Z_{i_1}Z_{i_2}\dots Z_{i_j}Z_{i_{j+1}}^{-1}Z_{i_{j+2}}^{-1}\dots Z_{i_u}^{-1} 
\biggr) .
\end{align*}

We now recognize several of the terms in this sum. For instance,  the central element $H_v\in \ZZ(\lambda)$ associated to the vertex $v$ is 
$$
H_v = [Z_{i_1} Z_{i_2} \dots Z_{i_u}] = \omega^{2-u} Z_{i_1} Z_{i_2} \dots Z_{i_u}.
$$
Similarly,
$$H_v^{-1} =  [Z_{i_1}^{-1} Z_{i_2}^{-1} \dots Z_{i_u}^{-1}] = \omega^{2-u} Z_{i_1}^{-1} Z_{i_2}^{-1} \dots Z_{i_u}^{-1}.
$$
Also.
\begin{align*}
\sum_{j=1}^{u-1} Z_{i_1}Z_{i_2}\dots Z_{i_j}Z_{i_{j+1}}^{-1}Z_{i_{j+2}}^{-1}\dots Z_{i_u}^{-1}  
&= \biggl(
\sum_{j=1}^{u-1} \omega^{-4(j-1)} Z_{i_1}^2 Z_{i_2}^2 \dots Z_{i_j}^2
\biggr)
Z_{i_1}^{-1} Z_{i_2}^{-1} \dots Z_{i_u}^{-1}\\
&=  \biggl(
\sum_{j=2}^{u} \omega^{-4(j-2)} Z_{i_1}^2 Z_{i_2}^2 \dots Z_{i_{j-1}}^2
\biggr)
\omega^{u-2} H_v^{-1}\\
&= \omega^{u+2} (Q_v-1)H_v^{-1}
\end{align*}
where $Q_v$ is the off-diagonal term of the vertex $v$ defined in \S \ref{subsect:OffDiagTerm}. 

This gives
\begin{align*}
\Tr_\lambda^\omega \bigl ( [K] \bigr)
=& B_{++} Z_{k_1}^2 \bigl( \omega H_v + \omega^{9} (Q_v-1) H_v^{-1}
\bigr) + B_{+-} \omega^{3} (Q_v-1) H_v^{-1} \\
&+ B_{-+}  \bigl( \omega^{-1}H_v+ \omega^{-1} H_v^{-1} + \omega^{7} (Q_v-1)H_v^{-1}
\bigr) \\
&+ B_{--} Z_{k_1}^{-2} \bigl( \omega H_v^{-1} + \omega (Q_v-1)H_v^{-1} 
\bigr) .
\end{align*}

Note that all the terms arising in this expression belong to the balanced Chekhov-Fock algebra $\ZZ(\lambda)$. We can therefore apply the representation $\mu_\lambda\colon \ZZ(\lambda) \to \End(E_\lambda)$. Remembering that $\mu_\lambda(H_v) = -\omega^4\, \Id_{E_\lambda}$, 
\begin{align*}
\rho_\lambda\bigl( [K] \bigr) =& \mu_\lambda \circ \Tr_\lambda^\omega \bigl ( [K] \bigr)\\
=& \mu_\lambda(B_{++} Z_{k_1}^2) \circ \bigl(- \omega^{5} \mu_\lambda(Q_v)
\bigr) +   \mu_\lambda(B_{+-}) \circ \bigl(-\omega^{-1} \mu_\lambda(Q_v)+\omega^{-1} \Id_{E_\lambda} \bigr) \\
&+ \mu_\lambda(B_{-+} )\circ  \bigl(- \omega^{-5}\,\Id_{E_\lambda} -  \omega^{3} \mu_\lambda(Q_v)
\bigr) +  \mu_\lambda(B_{--} Z_{k_1}^{-2}) \circ \bigl( - \omega^{-3}  \mu_\lambda (Q_v)
\bigr) .
\end{align*}

This expression greatly simplifies when we restrict it to the total off-diagonal kernel ${F_\lambda}\subset \ker \mu_\lambda(Q_v)$, and 
$$
 \rho_\lambda\bigl( [K] \bigr)_{|{F_\lambda}} = \omega^{-1}  \mu_\lambda(B_{+-})_{|{F_\lambda}} - \omega^{-5} \mu_\lambda(B_{-+} )_{|{F_\lambda}}.
$$

We now perform the same computations for the skein $[K']\in \SSS(S)_0)$. The principles are the same, but everything is much simpler because the framed knot $K'$ meets many fewer edges of $\lambda$. In particular, the expression of  $ \Tr_\lambda^\omega \bigl ( [K'] \bigr)$ is much less cumbersome, and gives
$$
\Tr_\lambda^\omega \bigl ( [K'] \bigr)
=  B_{+-} Z_{k_1}( -\omega^{-1} 
) Z_{k_1}^{-1} +  B_{-+} Z_{k_1}^{-1}( \omega^{-5}) Z_{k_1} 
= \omega^{-1}  B_{+-}  -  \omega^{-5} B_{-+} 
$$
where the terms $B_{\pm\pm}$ are contributions of the parts of the link $K'$ that are not represented on Figure~\ref{fig:VertexSweep}, and are the same as those that appeared in the computation of $ \Tr_\lambda^\omega \bigl ( [K] \bigr)$ since these ``hidden parts'' of the links $K$ and $K'$ coincide.

As a consequence,
$$
 \rho_\lambda\bigl( [K'] \bigr)_{|{F_\lambda}} =\mu_\lambda \circ \Tr_\lambda^\omega \bigl ( [K'] \bigr)_{|{F_\lambda}} = \omega^{-1}  \mu_\lambda(B_{+-})_{|F_\lambda} - \omega^{-5} \mu_\lambda(B_{-+} )_{|{F_\lambda}}.
$$

Comparing the two formulas, we see that $ \rho_\lambda\bigl( [K] \bigr)_{|{F_\lambda}} =  \rho_\lambda\bigl( [K'] \bigr)_{|{F_\lambda}} $, which completes the proof of Proposition~\ref{prop:VertexSweep}.
 \end{proof}
 
 \begin{rem}
 \label{rem:VertexSweepWeakerHyp}
As in Remark~\ref{rem:OffDiagKernelInvariantWeakerHyp}, the proof of  Proposition~\ref{prop:VertexSweep} is valid under a weaker hypothesis than the requirement that the triangulation $\lambda$ be combinatorial. Indeed, the following condition is sufficent for the statement to hold:  no edge of $\lambda$ connects the vertex $v$ to itself, and  no two distinct edges connect $v$ to the same vertex of $\lambda$. We will use this observation in \S \ref{subsect:FaceSubdivisions}. 
\end{rem}
 
 A consequence of Proposition~\ref{prop:VertexSweep} is that the representation $
\rho_\lambda  \colon \SSS(S_\lambda) \to \End({E_\lambda})
$ induces a representation $
\check \rho_\lambda  \colon \SSS(S) \to \End({F_\lambda})
$ by the property that 
$$
\check \rho_\lambda\bigl( [K] \bigr) =\rho_\lambda\bigl( [K] \bigr)_{|{F_\lambda}} \in  \End({F_\lambda}). 
$$
  
 \begin{prop}
\label{prop:ClassicalShadowOKCombinatorial}
The above representation $\check \rho_\lambda \colon \SSS(S) \to \End({F_\lambda})$ has classical shadow $r\in \RR(S)$, in the sense that 
 $$
T_N \bigl( \check \rho_\lambda ([K]) \bigr) = - \Tr\, r(K)\, \Id_{F_\lambda}
$$
for every framed knot $K \subset S_\lambda\times [0,1]$ whose projection to $S_\lambda$ has no crossing and whose framing is vertical.
\end{prop}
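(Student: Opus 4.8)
The plan is to deduce this statement directly from Conclusion~(4) of Proposition~\ref{prop:ConstructRepCheFock}, by restricting the identity it provides to the total off-diagonal kernel $F_\lambda$. The point is that everything substantial has already been proved: that $\rho_\lambda = \mu_\lambda \circ \Tr_\lambda^\omega$ has classical shadow $r$ on the punctured surface, that $F_\lambda$ is invariant under $\rho_\lambda(\SSS(S_\lambda))$, and that $\check\rho_\lambda$ is a well-defined representation of $\SSS(S)$. What remains is a formal observation about restricting an operator identity to an invariant subspace.

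First I would note that a framed knot $K\subset S_\lambda\times[0,1]$ whose projection to $S_\lambda$ has no crossing and whose framing is vertical determines an element $[K]\in\SSS(S_\lambda)$, and that by construction of $\check\rho_\lambda$ in \S\ref{subsect:ConstructRepClosedSurface} one has $\check\rho_\lambda([K]) = \rho_\lambda([K])_{|F_\lambda}\in\End(F_\lambda)$. By Proposition~\ref{prop:OffDiagKernelInvariant} applied to every vertex $v$ of $\lambda$, the subspace $F_\lambda = \bigcap_{v\in V_\lambda} F_v$ is invariant under $\rho_\lambda(\SSS(S_\lambda))$; in particular the endomorphism $\rho_\lambda([K])$ preserves $F_\lambda$. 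Then I would invoke the elementary fact that if $A\in\End(E_\lambda)$ preserves a subspace $F_\lambda$, then for any polynomial $p$ the endomorphism $p(A)$ also preserves $F_\lambda$ and satisfies $p(A)_{|F_\lambda} = p(A_{|F_\lambda})$. Applying this with $A=\rho_\lambda([K])$ and $p=T_N$ gives
$$
T_N\bigl(\check\rho_\lambda([K])\bigr) = T_N\bigl(\rho_\lambda([K])_{|F_\lambda}\bigr) = \bigl(T_N(\rho_\lambda([K]))\bigr)_{|F_\lambda}.
$$
Finally, Conclusion~(4) of Proposition~\ref{prop:ConstructRepCheFock} asserts that $T_N(\rho_\lambda([K])) = -\Tr\,r(K)\,\Id_{E_\lambda}$, and the restriction of this scalar operator to $F_\lambda$ is $-\Tr\,r(K)\,\Id_{F_\lambda}$, which is exactly the claimed identity.

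I do not expect a real obstacle here; the argument is a short deduction from Propositions~\ref{prop:ConstructRepCheFock}, \ref{prop:OffDiagKernelInvariant} and \ref{prop:VertexSweep}. The one point worth stating carefully is that, since $K$ is a knot with no crossings and vertical framing, the element obtained by ``threading $T_N$ along $K$'' coincides with the polynomial $T_N$ evaluated at $[K]$ in $\SSS(S_\lambda)$ (stacking parallel copies of such a $K$ realizes the powers $[K]^n$), so that the identity borrowed from Proposition~\ref{prop:ConstructRepCheFock}(4) really is the operator-polynomial statement used above. Nothing else in the proof depends on the combinatorial hypothesis on $\lambda$ beyond what is already needed to make $\check\rho_\lambda$ and $F_\lambda$ meaningful.
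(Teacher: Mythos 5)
Your argument is correct and is essentially the paper's own proof: one applies Conclusion~(4) of Proposition~\ref{prop:ConstructRepCheFock} to get $T_N\bigl(\rho_\lambda([K])\bigr)=-\Tr\,r(K)\,\Id_{E_\lambda}$ and then restricts to the invariant subspace $F_\lambda$, using that restriction to an invariant subspace commutes with evaluating the polynomial $T_N$. Your closing remark about threading versus polynomial evaluation is harmless but not needed, since Proposition~\ref{prop:ConstructRepCheFock}(4) is already stated in the polynomial form used here.
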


\begin{proof} Let $K \subset S_\lambda\times [0,1]$ be a framed knot whose projection to $S_\lambda$ has no crossing and whose framing is vertical. By definition of the representation $\mu_\lambda \colon \ZZ(\lambda) \to \End({E_\lambda})$ and of $\rho_\lambda =\mu_\lambda \circ \Tr_\lambda^\omega $, and in particular by Condition~(4) of Proposition~\ref{prop:ConstructRepCheFock}, 
 $$
T_N \bigl( \rho_\lambda ([K]) \bigr) = - \Tr\, r(K)\, \Id_{E_\lambda}.
$$
In particular, by restriction to the off-diagonal kernel $F_\lambda$, 
\begin{equation*}
T_N \bigl( \check \rho_\lambda ([K]) \bigr) = T_N \bigl( \rho_\lambda ([K]) \bigr)_{|{F_\lambda}} =- \Tr\, r(K)\, \Id_{F_\lambda}. \qedhere
\end{equation*}
\end{proof}

At this point, it looks like we are almost done with the proof of Theorem~\ref{thm:RealizeInvariantIntro}. The only problem is that we do not know that the total off-diagonal kernel ${F_\lambda}= \bigcap_{v\in V_\lambda} F_v$ is non-trivial. In fact, we don't even know that any of the off-diagonal kernels $F_v$ is non-trivial. The rest of the article is devoted to estimating the dimension of ${F_\lambda}$. At the same time, we will prove that the representation $\check \rho_\lambda$ is, up to isomorphism, independent of all the choices that we have made.

\section{Changing triangulations}
\label{bigsect:ChangingTriangulation}

In this section, we introduce two moves that modify the triangulation $\lambda$ without changing the isomorphism class of the representation $\check \rho_\lambda \colon \SSS(S) \to \End({F_\lambda})$ constructed above. We will then use these moves to prove that, up to isomorphism and sign-reversal symmetry, $\check \rho_\lambda$ is independent of the choice of $\lambda$ and of the $\lambda$--enhancement $\xi$. 

Unlike in the previous section, the triangulations that we are considering here are not assumed any more to be combinatorial. 

\subsection{Subdividing faces}
\label{subsect:FaceSubdivisions}

Let $\lambda$ be a triangulation of the surface $S$. Let $\lambda'$ be the triangulation obtained from $\lambda$ by subdividing the face $T$ into three triangles as in Figure~\ref{fig:TriangSubdivis}. In particular, the vertex set of $\lambda'$ consists of the vertices of $\lambda$ plus one vertex $v_0$ in the interior of $T$. 

\begin{figure}[htbp]
\centerline
{\SetLabels
( .5*-.08 ) $e_1$ \\
( .8*.5 ) $e_2$ \\
( 0.2* .5) $e_3$ \\
( 0.5* .38) $T$ \\
(0.5* -.25) (a)\\
\endSetLabels
\AffixLabels{\includegraphics{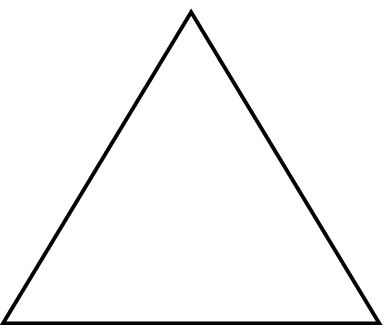}}
\hskip 1cm
\SetLabels
( .5* -.09 ) $e_1' $ \\
( .8*.5 ) $e_2' $ \\
( 0.19* .5) $e_3' $ \\
( .6* .55) $ e_{n+1}'$ \\
(.38 * .17) $e_{n+2}' $ \\
( .65* .18) $e_{n+3}' $ \\
(0.5* -.25) (b)\\
\endSetLabels
\AffixLabels{\includegraphics{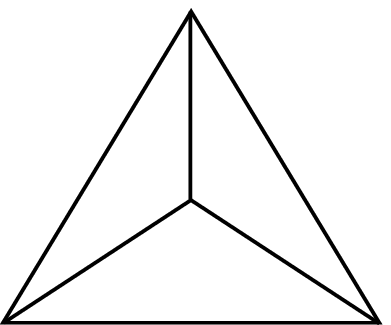}}}
\vskip 15pt
\caption{Subdividing a face}
\label{fig:TriangSubdivis}
\end{figure}

For convenience in the notation, index the edges $e_1$, $e_2$, \dots, $e_n$ of $\lambda$ and the edges $e_1'$, $e_2'$, \dots $e_{n+3}'$ of $\lambda'$  in such a way that:
\begin{enumerate}
\item the sides of the face $T$ of $\lambda$ are $e_1$, $e_2$, $e_3$, in this order as one goes counterclockwise around $T$;
\item for $i\leq n$, the edge $e_i'$  of $\lambda'$ coincides with  the edge $e_i$ of $\lambda$;
\item $e_{n+1}'$, $e_{n+2}'$, $e_{n+3}'$ are the ``new'' edges of $\lambda'$ that are not edges of $\lambda$, and each $e_{n+j}'$ is opposite $e_j'$ in $T$ as in Figure~\ref{fig:TriangSubdivis}(b), in the sense that no face of $\lambda'$ contains both $e_{n+j}'$ and $e_j'$. 
\end{enumerate}

We assume that we are given a $\lambda'$--enhancement $\xi' \colon \widetilde V_{\lambda'} \to \mathbb{CP}^1$  for  the  homomorphism $r\colon \pi_1(S) \to \SL(\C)$. By restriction, $\xi'$ defines a $\lambda$--enhancement $\xi \colon \widetilde V_{\lambda} \to \mathbb{CP}^1$ for $r$. 

We want to compare the two  irreducible representations  $\mu_{\lambda} \colon \ZZ(\lambda) \to \End({E_\lambda})$ and $\mu_{\lambda'} \colon \ZZ(\lambda') \to \End(E_{\lambda'})$  respectively associated to the enhanced homomorphisms $(r, \xi)$ and $(r, \xi')$  by Proposition~\ref{prop:ConstructRepCheFock}. For this, we first construct a natural algebra  homomorphism $\ZZ(\lambda) \to \ZZ(\lambda')$.

Let $\Phi \colon \ZZ(\lambda) \to \ZZ(\lambda')$ be the linear map defined by the property that
$$
\Phi \bigl( [Z_1^{k_1} Z_2^{k_2} \dots Z_n^{k_n} ]\bigr)
=
\bigl[ Z_1^{\prime\, k_1} Z_2^{\prime\, k_2} \dots Z_n^{\prime\, k_n} Z_{n+1}^{\prime\, \frac{k_2 + k_3 - k_1}2} Z_{n+2}^{\prime\, \frac{k_1 + k_3 - k_2}2}Z_{n+3}^{\prime\, \frac{k_1 + k_2 - k_3}2}  \bigr]
$$
for every monomial $[Z_1^{k_1} Z_2^{k_2} \dots Z_n^{k_n} ]\in \ZZ(\lambda) $ (where $[\phantom{m} ]$ denotes the Weyl quantum ordering). By definition of the Weyl quantum ordering, this is equivalent to the property that
$$
\Phi \bigl( Z_1^{k_1} Z_2^{k_2} \dots Z_n^{k_n} \bigr)
=\omega^{k_1k_3-k_1k_2 - k_2k_3} 
Z_1^{\prime\, k_1} Z_2^{\prime\, k_2} \dots Z_n^{\prime\, k_n} 
\bigl[ Z_{n+1}^{\prime\, \frac{k_2 + k_3 - k_1}2} Z_{n+2}'^{\prime\, \frac{k_1 + k_3 - k_2}2}Z_{n+3}^{\prime\, \frac{k_1 + k_2 - k_3}2}  \bigr],
$$
which is a little easier to handle since this second formula does not require us to consider the skew-commutativity properties of the $Z_i$ and $Z_i'$ with $1 \leq i \leq n$. The definition may look a little mysterious at this point, but will become clearer with the proof of Lemma~\ref{lem:SubdivisAlgebraHom} below.

Note that the definition  makes sense because, if the exponents $(k_1, k_2, \dots, k_n)$ satisfy the parity condition required for $ Z_1^{k_1} Z_2^{k_2} \dots Z_n^{k_n} $ to be in the balanced Chekhov-Fock algebra $\ZZ(\lambda)$, the  parity of the exponents  $$\textstyle (k_1, k_2, \dots, k_n, {\frac{k_2 + k_3 - k_1}2}, {\frac{k_1 + k_3 - k_2}2}, {\frac{k_1 + k_2 - k_3}2})$$ 
also guarantees that $Z_1^{\prime\, k_1} Z_2^{\prime\, k_2} \dots Z_n^{\prime\, k_n} Z_{n+1}^{\prime\, \frac{k_2 + k_3 - k_1}2} Z_{n+2}^{\prime\, \frac{k_1 + k_3 - k_2}2}Z_{n+3}^{\prime\, \frac{k_1 + k_2 - k_3}2}$ belongs to $\ZZ(\lambda')$.

\begin{lem}
\label{lem:SubdivisAlgebraHom}
The map $\Phi \colon \ZZ(\lambda) \to \ZZ(\lambda')$ is an algebra homomorphism. 
\end{lem}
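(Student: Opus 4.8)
The plan is to verify directly that $\Phi$ respects the defining skew-commutation relations of $\ZZ(\lambda)$ and $\ZZ(\lambda')$, and that it is multiplicative on monomials; since $\ZZ(\lambda)$ is generated by balanced monomials and $\Phi$ is defined linearly on all of them, it suffices to check $\Phi(Z_{\mathbf k}Z_{\mathbf l}) = \Phi(Z_{\mathbf k})\Phi(Z_{\mathbf l})$ for the Weyl-ordered generators, or, what is cleaner, that $\Phi$ carries the Weyl ordering to the Weyl ordering and matches the commutation coefficients. Using the second formula for $\Phi$ (the one with the explicit power of $\omega$ in front of $Z_1'^{k_1}\dots Z_n'^{k_n}$ times a Weyl-ordered word in the three new variables), the statement ``$\Phi$ is an algebra homomorphism'' reduces to a bookkeeping identity among the coefficients $\sigma_{ij}$ of $\lambda$, the coefficients $\sigma'_{ij}$ of $\lambda'$, and the exponent $k_1k_3 - k_1k_2 - k_2k_3$ appearing in the definition.

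Concretely, I would proceed as follows. First, record how the skew-commutation coefficients change under the subdivision: for $i,j\leq n$ the only edges whose local picture near a vertex is altered are $e_1,e_2,e_3$, and there one computes $\sigma'_{ij}$ in terms of $\sigma_{ij}$ plus correction terms coming from the angular sectors of the three new small triangles; one also computes $\sigma'_{i,n+a}$ and $\sigma'_{n+a,n+b}$ for the new edges $e_{n+1}',e_{n+2}',e_{n+3}'$ from Figure~\ref{fig:TriangSubdivis}(b). Second, since both $\Phi$-images are written in Weyl-ordered form, use the defining property of the Weyl quantum ordering (invariance under permutation, and the fact that $[XY]=[YX]$ with the coefficient absorbed) to reduce checking $\Phi(Z_{\mathbf k})\Phi(Z_{\mathbf l}) = \omega^{?}\,\Phi(Z_{\mathbf k}Z_{\mathbf l})$ to a single scalar identity; here $Z_{\mathbf k}Z_{\mathbf l} = \omega^{2\sum_{i<j}l_i k_j\sigma_{ij}}Z_{\mathbf{k+l}}$ in $\ZZ(\lambda)$, and correspondingly in $\ZZ(\lambda')$ with exponents $\mathbf k,\mathbf l$ augmented by the three linear combinations $\tfrac{k_2+k_3-k_1}2$, etc. Third, substitute and verify that everything cancels: the cross-terms between old and new variables in $\ZZ(\lambda')$, weighted by the $\sigma'$-values computed in step one, must reproduce exactly $\sum_{i<j}l_ik_j\sigma_{ij}$ together with the quadratic form $k_1k_3-k_1k_2-k_2k_3$ evaluated on $\mathbf k$, on $\mathbf l$, and on $\mathbf k+\mathbf l$ (the latter producing the bilinear cross term $k_1l_3+k_3l_1 - k_1l_2-k_2l_1-k_2l_3-k_3l_2$ that is precisely what the new-variable contributions supply).

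The main obstacle is the combinatorial computation of the $\sigma'_{ij}$ for the subdivided triangulation, i.e. correctly tracking all the angular sectors at the four relevant vertices (the three original vertices of $T$ and the new central vertex $v_0$) in the two faces of $\lambda'$ adjacent to each edge. This is the same kind of careful sector-by-sector count that appears in the proof of Lemma~\ref{lem:QuantumOrderH}, and it is where sign errors are easiest to make; once the $\sigma'_{ij}$ are pinned down, the remaining identity is a purely formal manipulation of quadratic forms in the exponents. One subtlety worth isolating first is that the definition of $\Phi$ only uses the quotient $\tfrac{k_2+k_3-k_1}2$ etc., which is an integer precisely because of the balancedness of $\mathbf k$ on the triangle $T$; I would note at the outset that $\mathbf k + \mathbf l$ is again balanced on $T$, so that $\Phi(Z_{\mathbf{k+l}})$ is defined, before doing the coefficient chase.
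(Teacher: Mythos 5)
Your strategy is genuinely different from the paper's, and it could be made to work, but as written it stops exactly where the content of the lemma begins. The paper does not compute any of the coefficients $\sigma'_{ij}$ at all: it invokes the description of $\ZZ(\lambda)$ from \cite[\S 2.2]{BonWon4}, in which balanced monomials are identified with integer edge-weight systems on a train track $\tau_\lambda$ and the product rule is $[Z_{\mathbf k}][Z_{\mathbf l}]=\omega^{2\Omega(\mathbf k,\mathbf l)}[Z_{\mathbf{k+l}}]$ for the Thurston intersection form $\Omega$. The subdivision induces an embedding $\tau_\lambda\hookrightarrow\tau_{\lambda'}$ whose effect on edge-weight systems is exactly the map $\phi$ defining $\Phi$, and the homological interpretation of $\Omega$ (Lemma~28 of \cite{BonWon4}) shows that this embedding carries the Thurston form of $\tau_\lambda$ to that of $\tau_{\lambda'}$; multiplicativity of $\Phi$ is then immediate. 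Your direct-verification route buys independence from that formalism, while the paper's route buys precisely the avoidance of the sector-by-sector count that you yourself identify as the main obstacle.

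The gap is that this count, and the cancellation identity it feeds, are described but never performed. Your reductions are correct and worth stating (Weyl ordering turns multiplicativity into the single bilinear identity $\sum_{i<j\le n}(k_il_j-k_jl_i)\sigma_{ij}=\sum_{i<j\le n+3}(\hat k_i\hat l_j-\hat k_j\hat l_i)\sigma'_{ij}$ with $\hat{\mathbf k}=\phi(\mathbf k)$, $\hat{\mathbf l}=\phi(\mathbf l)$, and balancedness of $\mathbf k+\mathbf l$ on $T$ guarantees that $\Phi(Z_{\mathbf{k+l}})$ is defined), but after them the entire lemma consists of checking that the three angular sectors of $T$ deleted from the $\sigma_{ij}$ with $i,j\le 3$ are exactly compensated by the new terms involving $e_{n+1}',e_{n+2}',e_{n+3}'$ weighted by the half-integer exponents $\frac{k_2+k_3-k_1}2$, etc. Nothing in your write-up certifies that this cancellation actually occurs, and the bookkeeping is exactly where slips happen: for instance, with the convention $Z_iZ_j=\omega^{2\sigma_{ij}}Z_jZ_i$ one has $Z_{\mathbf k}Z_{\mathbf l}=\omega^{-2\sum_{i<j}k_jl_i\sigma_{ij}}Z_{\mathbf{k+l}}$, whereas your intermediate formula has the opposite sign in the exponent. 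To close the gap, either carry the computation out (it is finite: only the three corners of $T$ and the central vertex $v_0$ need to be examined, in the spirit of the proof of Lemma~\ref{lem:QuantumOrderH}), or appeal to the train-track/Thurston-form description of \cite{BonWon4} as the paper does.
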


\begin{proof}
This is a simple consequence of the description, given in \cite[\S 2.2]{BonWon4},  of the algebraic structure of $\ZZ(\lambda)$ in terms of  the Thurston intersection form on a train track $\tau_\lambda$ associated to $\lambda$. The train track $\tau_\lambda \subset S$ is defined by the property that, on each face of $\lambda$, it consists of three edges as in Figure~\ref{fig:TrainTracks}(a). In particular, there is a one-to-one correspondence between the switches of $\tau_\lambda$ and the edges $e_i$ of $\lambda$. 

\begin{figure}[htbp]

\centerline{
\SetLabels
( .5* .48) $ \tau_\lambda$ \\
( .5* -.15) (a) \\
\endSetLabels
{\AffixLabels{\includegraphics{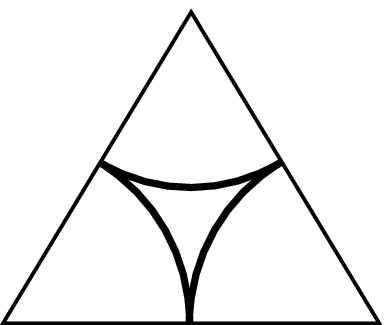}}}
\hskip 1cm
\SetLabels
(.4 *.5 ) $\tau_{\lambda'} $ \\
( .5* -.15) (b) \\
\endSetLabels
{\AffixLabels{\includegraphics{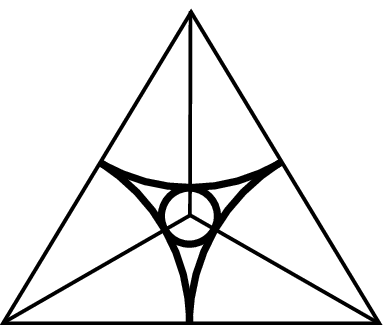}}}
}
\vskip 15pt

\caption{The train tracks $\tau_{\lambda} $ and $\tau_{\lambda'} $}
\label{fig:TrainTracks}
\end{figure}

In \cite[\S 2]{BonWon4}, we interpret the monomials $[Z_1^{k_1} Z_2^{k_2} \dots Z_n^{k_n} ]\in \ZZ(\lambda)$  as integer edge weight systems for $\tau_\lambda$ satisfying the usual switch conditions. Namely, at the switch of $\tau_\lambda$ located on the edge $e_i$ of $\lambda$, the weights of the two edges of $\tau_\lambda$ incoming on one side of that switch are required to add up to the same number $k_i$ as the weights of the two edges outgoing on the other side. The exponents $(k_1, k_2, \dots, k_n)$ of $Z_1^{k_1} Z_2^{k_2} \dots Z_n^{k_n}$ satisfy the parity condition required for $Z_1^{k_1} Z_2^{k_2} \dots Z_n^{k_n}$ to be in the balanced Chekhov-Fock algebra $\ZZ(\lambda)$ if and only if they are associated in this way to an integer edge weight systems for $\tau_\lambda$; in addition, the edge weight system is then uniquely determined. 

This enables us to identify the set $\mathcal W(\tau_\lambda; \Z)$ of  integer edge weight systems for $\tau_\lambda$ to the set of exponent $n$--tuples $\mathbf k = (k_1, k_2, \dots, k_n)$ satisfying the required parity condition, and therefore to the set of Weyl quantum ordered monomials $[Z_1^{k_1} Z_2^{k_2} \dots Z_n^{k_n} ]\in \ZZ(\lambda)$. 

The set $\mathcal W(\tau_\lambda; \Z)$ of edge weight systems for a train track carries a natural  bilinear form, the \emph{Thurston intersection form}, which provides an antisymmetric bilinear form 
$$
\Omega \colon\mathcal W(\tau_\lambda; \Z) \times \mathcal W(\tau_\lambda; \Z) \to \Z. 
$$
Lemma~10 of \cite{BonWon4} then describes the algebraic structure of $\ZZ(\lambda)$ by the property that
$$
[Z_1^{k_1} Z_2^{k_2} \dots Z_n^{k_n} ]\ [Z_1^{k_1'} Z_2^{k_2'} \dots Z_n^{k_n'} ]
= \omega^{2\Omega(\mathbf k, \mathbf {k'})} [Z_1^{k_1+k_1'} Z_2^{k_2+k_2'} \dots Z_n^{k_n+k_n'} ]
$$
for every $\mathbf k = (k_1, k_2, \dots, k_n)$ and $\mathbf {k'} = (k_1', k_2', \dots, k_n')\in \mathcal W(\tau_\lambda; \Z)$.

The key observation is now that there is a natural embedding $\tau_\lambda \to \tau_{\lambda'}$, identifying $\tau_\lambda$ to the complement in $\tau_{\lambda'}$ of the three edges that are adjacent to the central vertex $v_0 \in T$. This embedding provides a map $\phi \colon \mathcal \mathcal W(\tau_\lambda; \Z) \to\mathcal W(\tau_{\lambda'}; \Z)$, which is expressed  in terms of switch weights as
$$\textstyle
\phi(k_1, k_2, \dots, k_n) =\bigl (k_1, k_2, \dots, k_n, \frac{k_2+k_3-k_1}2,  \frac{k_1+k_3-k_2}2,  \frac{k_1+k_2-k_3}2\bigr).
$$
for every $(k_1, k_2, \dots, k_n)\in \mathcal W(\lambda)$. As a consequence, identifying each element $(k_1, k_2, \dots, k_n)\in\mathcal W(\tau_\lambda; \Z)$ to the corresponding monomial $[Z_1^{k_1} Z_2^{k_2} \dots Z_n^{k_n} ]\in \ZZ(\lambda)$, the map $\Phi\colon \ZZ(\lambda) \to \ZZ(\lambda')$ is the unique linear extension of $\phi$.

Because $\phi \colon \mathcal \mathcal W(\tau_\lambda; \Z) \to\mathcal W(\tau_{\lambda'}; \Z)$ is induced by the embedding $\tau_\lambda \to \tau_{\lambda'}$, the classical homological interpretation of the Thurston intersection form as a homological intersection number is an orientation covering (see for instance Lemma~28 of \cite{BonWon4}) shows that $\phi$ sends the Thurston form of $\tau_\lambda$ to the Thurston form of $\tau_{\lambda'}$. From the description of the algebraic structure of $\ZZ(\lambda)$ and $\ZZ(\lambda)$ in terms of  Thurston intersection forms, it follows that $\Phi$ is an algebra homomorphism. 
\end{proof}

Now that we know that $\Phi$ is an algebra homomorphism, we can consider the composition
$$
\mu_{\lambda'} \circ \Phi \colon \ZZ(\lambda) \to \End(E_{\lambda'})
$$
of $\Phi$ with the representation $\mu_{\lambda'}\colon \ZZ(\lambda') \to \End(E_{\lambda'})$ associated by Proposition~\ref{prop:ConstructRepCheFock} to the $\lambda'$--enhancement $\xi'$. 

Recall that $v_0$ is the vertex of $\lambda'$ that is not a vertex of $\lambda$, namely the one that was added in the interior of the face $T$ of $\lambda$.

\begin{lem}
\label{lem:SubdivRespectsOffDiagKernel}
The image  $\mu_{\lambda'} \circ \Phi\bigl( \ZZ(\lambda) \bigr) \subset \End(E_{\lambda'})$ respects the off-diagonal kernel $F_{v_0}' \subset E_{\lambda'}$ of the vertex $v_0 \in V_{\lambda'} - V_\lambda$. 
\end{lem}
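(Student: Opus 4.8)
The plan is to prove the stronger statement that every element of the subalgebra $\Phi\bigl(\ZZ(\lambda)\bigr)\subset\ZZ(\lambda')$ commutes with the off-diagonal term $Q_{v_0}\in\ZZ(\lambda')$ of the new vertex $v_0$; applying $\mu_{\lambda'}$ then shows that $\mu_{\lambda'}\circ\Phi\bigl(\ZZ(\lambda)\bigr)$ lies in the commutant of $\mu_{\lambda'}(Q_{v_0})$, and in particular preserves its kernel $F_{v_0}'$, which is the assertion. First I would make $Q_{v_0}$ explicit: the only edges of $\lambda'$ emanating from $v_0$ are the three new edges $e_{n+1}'$, $e_{n+2}'$, $e_{n+3}'$, they are pairwise distinct, and Figure~\ref{fig:TriangSubdivis}(b) shows that they occur in counterclockwise cyclic order $e_{n+1}', e_{n+2}', e_{n+3}'$ around $v_0$; by Lemma~\ref{lem:OffDiagKernelWellDefined} we may therefore use $Q_{v_0}=1+\omega^{-4}Z_{n+1}'^2+\omega^{-8}Z_{n+1}'^2Z_{n+2}'^2$. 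Since the monomials $Z_1^{k_1}Z_2^{k_2}\cdots Z_n^{k_n}$ with $(k_1,\dots,k_n)$ balanced span $\ZZ(\lambda)$, it suffices to check that each $\Phi\bigl(Z_1^{k_1}\cdots Z_n^{k_n}\bigr)$ commutes with $Q_{v_0}$ inside $\TT(\lambda')$.

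The heart of the argument is then a short computation. Writing $\ell_1=\tfrac{k_2+k_3-k_1}2$, $\ell_2=\tfrac{k_1+k_3-k_2}2$, $\ell_3=\tfrac{k_1+k_2-k_3}2$, so that $\ell_2+\ell_3=k_1$, $\ell_1+\ell_3=k_2$ and $\ell_1+\ell_2=k_3$, the defining formula for $\Phi$ gives
$$\Phi\bigl(Z_1^{k_1}\cdots Z_n^{k_n}\bigr)=\omega^{k_1k_3-k_1k_2-k_2k_3}\;Z_1'^{k_1}Z_2'^{k_2}Z_3'^{k_3}\,\bigl[Z_{n+1}'^{\ell_1}Z_{n+2}'^{\ell_2}Z_{n+3}'^{\ell_3}\bigr]\;Z_4'^{k_4}\cdots Z_n'^{k_n},$$
where I have pulled $Z_4'^{k_4}\cdots Z_n'^{k_n}$ to the right: this is legitimate because the edges $e_4,\dots,e_n$ share no face of $\lambda'$ with $e_{n+1}',e_{n+2}',e_{n+3}'$, so $Z_4',\dots,Z_n'$ commute with $Z_{n+1}',Z_{n+2}',Z_{n+3}'$, and for the same reason this last factor commutes with $Q_{v_0}$. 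It therefore remains to check that $M:=Z_1'^{k_1}Z_2'^{k_2}Z_3'^{k_3}\bigl[Z_{n+1}'^{\ell_1}Z_{n+2}'^{\ell_2}Z_{n+3}'^{\ell_3}\bigr]$ commutes with $Z_{n+1}'^2$ and with $Z_{n+2}'^2$. I would do this by recording the skew-commutation exponents $\sigma$ among the six generators $Z_1',Z_2',Z_3',Z_{n+1}',Z_{n+2}',Z_{n+3}'$, which are dictated entirely by the combinatorics of the three faces $F_1=\{e_1,e_{n+2}',e_{n+3}'\}$, $F_2=\{e_2,e_{n+3}',e_{n+1}'\}$, $F_3=\{e_3,e_{n+1}',e_{n+2}'\}$ of $\lambda'$ inside $T$ — obtained by the same corner count as in Lemma~\ref{lem:QuantumOrderH} and in the proofs of Propositions~\ref{prop:OffDiagKernelInvariant} and~\ref{prop:VertexSweep} (for instance the three edges at $v_0$ being consecutive gives $\sigma_{n+1,n+2}=\sigma_{n+2,n+3}=\sigma_{n+3,n+1}=1$, while $e_{n+j}'$ being opposite $e_j$ gives that $Z_j'$ commutes with $Z_{n+j}'$) — and then commuting $Z_{n+1}'^2$ and $Z_{n+2}'^2$ to the right through $M$. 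The resulting $\omega$-exponents collapse to zero identically once one substitutes $\ell_2+\ell_3=k_1$, $\ell_1+\ell_3=k_2$, $\ell_1+\ell_2=k_3$; this cancellation is exactly what the exponents in the definition of $\Phi$ were engineered to produce (and is, conceptually, a reflection of the fact that $\Phi$ comes from the inclusion $\tau_\lambda\hookrightarrow\tau_{\lambda'}$ of train tracks respecting Thurston intersection forms, as in Lemma~\ref{lem:SubdivisAlgebraHom}). Hence $M$ commutes with $Q_{v_0}$, so $\Phi\bigl(Z_1^{k_1}\cdots Z_n^{k_n}\bigr)$ does too, and applying $\mu_{\lambda'}$ completes the proof.

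The one genuinely delicate point is the sign bookkeeping for the exponents $\sigma$, which must be consistent with the orientation conventions already fixed for $\lambda$; everything else is forced. Two features are worth noting. First, the six edges in play all lie in the three ``fresh'' faces $F_1,F_2,F_3$, so the relevant corner counts are unaffected by identifications elsewhere in $\lambda$, which is precisely why this subsection can drop the combinatoriality hypothesis. Second — in contrast with the tricky case in the proof of Proposition~\ref{prop:OffDiagKernelInvariant} — the commutation here is \emph{exact}, so the relation $\mu_{\lambda'}(H_{v_0})=-\omega^4\,\Id_{E_{\lambda'}}$ from Proposition~\ref{prop:ConstructRepCheFock}(3) plays no role in this lemma; it will enter instead at the subsequent step, where one identifies the restriction of $\mu_{\lambda'}\circ\Phi$ to $F_{v_0}'$ with $\mu_\lambda$.
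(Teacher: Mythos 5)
Your proposal is correct and follows essentially the same route as the paper: both reduce the lemma to the statement that $\Phi$ of every balanced monomial commutes with $Z_{n+1}^{\prime\,2}$, $Z_{n+2}^{\prime\,2}$, $Z_{n+3}^{\prime\,2}$, hence with the off-diagonal term $Q_{v_0}$, and then apply $\mu_{\lambda'}$ to conclude that $F_{v_0}'=\ker\mu_{\lambda'}(Q_{v_0})$ is preserved. The only difference is bookkeeping: the paper factors each balanced monomial into elementary pieces $Z_iZ_jZ_4^{k_4}\cdots Z_n^{k_n}$ with $i,j\in\{1,2,3\}$, whose $\Phi$--images involve a single new generator, whereas you verify the vanishing of the skew-commutation exponents for a general monomial via the relations $\ell_2+\ell_3=k_1$, $\ell_1+\ell_3=k_2$, $\ell_1+\ell_2=k_3$ --- and the $\sigma$--values you record, together with the resulting cancellation, do check out.
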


\begin{proof}
By the exponent parity condition defining $\ZZ(\lambda)$, each monomial of $\ZZ(\lambda)$ is a product of constants, monomials of the form $Z_1Z_2Z_4^{k_4}Z_5^{k_5} \dots Z_n^{k_n}$, $Z_1Z_3Z_4^{k_4}Z_5^{k_5} \dots Z_n^{k_n}$, $Z_2Z_3Z_4^{k_4}Z_5^{k_5} \dots Z_n^{k_n} \in \ZZ(\lambda')$  and their inverses. It therefore suffices to show that the image of each of these elementary monomials respects $F_{v_0}'$. 

Consider for instance $Z=Z_1Z_2Z_4^{k_4}Z_5^{k_5} \dots Z_n^{k_n}$. Its image under $\Phi$ is
$$
\Phi(Z)= 
\Phi\bigl(
Z_1Z_2Z_4^{k_4}Z_5^{k_5} \dots Z_n^{k_n}
\bigr)
= \omega^{-1} Z_1'Z_2' Z_4^{\prime\, k_4}Z_5^{\prime\, k_5} \dots Z_n^{\prime\, k_n} Z_{n+3}'.
$$
As a consequence, $\Phi(Z)$ commutes with each of the elements $Z_{n+1}^{\prime\,2}$, $Z_{n+2}^{\prime\,2}\kern 5pt$ and $Z_{n+3}^{\prime\,2} \in \ZZ(\lambda')$ associated to the edges of $\lambda'$ emanating from the vertex $v_0$. In particular, $\Phi(Z)$ commutes with the off-diagonal term $Q_{v_0} = 1+\omega^{-4} Z_{n+1}^{\prime\, 2} + \omega^{-8} Z_{n+1}^{\prime\, 2} Z_{n+2}^{\prime\, 2}$ of the vertex $v_0$. It follows that $\mu_{\lambda'} \circ \Phi(Z)$ respects the kernel $F_{v_0}'$ of $\mu_{\lambda'}(Q_{v_0})$. 

The same argument holds for the other  two monomial types  $Z_1Z_3Z_4^{k_4}Z_5^{k_5} \dots Z_n^{k_n}$ and\linebreak   $Z_2Z_3Z_4^{k_4}Z_5^{k_5} \dots Z_n^{k_n}$, and proves the required result. 
\end{proof}

The following result plays a critical role in our arguments. Its proof uses the non-quantum context, and in particular the off-diagonal equality of Lemma~\ref{lem:ClassicalOffDiagonal}, in a crucial way.

\begin{lem}
\label{lem:SubdivisionDimOffDiagKernel}
The dimension of the off-diagonal kernel $F_{v_0}' \subset E_{\lambda'}$ is equal to the dimension $N^{3g+p_\lambda-3}$ of $E_\lambda$, where $g$ is the genus of the surface $S$ and $p_\lambda$ is the number of vertices of the triangulation $\lambda$. 
\end{lem}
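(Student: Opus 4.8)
The plan is to exploit the very simple local structure at the new vertex $v_0$: it is the endpoint of exactly three edges $e_{n+1}'$, $e_{n+2}'$, $e_{n+3}'$ of $\lambda'$, so its off-diagonal term is the short expression $Q_{v_0}=1+\omega^{-4}Z_{n+1}'^{\,2}+\omega^{-8}Z_{n+1}'^{\,2}Z_{n+2}'^{\,2}$ already used in Lemma~\ref{lem:SubdivRespectsOffDiagKernel}, and $\mu_{\lambda'}(Q_{v_0})$ lies in the small subalgebra of $\End(E_{\lambda'})$ generated by $\mathbf P=\mu_{\lambda'}(Z_{n+1}'^{\,2})$ and $\mathbf Q=\mu_{\lambda'}(Z_{n+2}'^{\,2})$. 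I would first describe this subalgebra. By Proposition~\ref{prop:ConstructRepCheFock}(2), $\mathbf P^N=\mu_{\lambda'}(Z_{n+1}'^{\,2N})=x_{n+1}'\,\Id_{E_{\lambda'}}$ and $\mathbf Q^N=x_{n+2}'\,\Id_{E_{\lambda'}}$, where $x_{n+1}',x_{n+2}'\in\C^*$ are the crossratio weights of $e_{n+1}',e_{n+2}'$ attached to $\xi'$; and $\mathbf P\mathbf Q=\omega^{8\sigma}\mathbf Q\mathbf P$ where $\sigma=\sigma_{n+1,n+2}$ is the skew--commutativity coefficient of these edges at $v_0$. Since $e_{n+2}'$ immediately succeeds $e_{n+1}'$ going counterclockwise around $v_0$ we have $\sigma\neq0$ ($\sigma=1$ when the three vertices of the subdivided face $T$ are distinct, which is the essential case; the general case only requires checking $\sigma\neq 0$ from the local picture), and because $N$ is odd, $\eta:=\omega^{8\sigma}=A^{-4\sigma}$ is a primitive $N$--th root of unity. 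Hence $\langle\mathbf P,\mathbf Q\rangle\subset\End(E_{\lambda'})$ is a nonzero quotient of a twisted group algebra of $\Z/N\times\Z/N$, so $\langle\mathbf P,\mathbf Q\rangle\cong M_N(\C)$; therefore $E_{\lambda'}\cong\C^N\otimes W$ as a $\langle\mathbf P,\mathbf Q\rangle$--module, with $\dim W=\dim E_{\lambda'}/N=N^{3g+p_\lambda-3}=\dim E_\lambda$, the generators acting as standard matrices tensored with $\Id_W$. Consequently $\mu_{\lambda'}(Q_{v_0})=A_0\otimes\Id_W$ where $A_0=\Id_{\C^N}+\omega^{-4}\mathbf P_0+\omega^{-8}\mathbf P_0\mathbf Q_0$ on the standard $N$--dimensional module, and $\dim F_{v_0}'=(\dim\ker A_0)(\dim W)$. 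So the lemma reduces to the assertion $\dim\ker A_0=1$.

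The second step is to compute $\ker A_0$ by hand. Choose an eigenbasis $w_0,\dots,w_{N-1}$ of $\mathbf P_0$ indexed so that $\mathbf P_0w_k=\alpha_0\eta^{-k}w_k$ and $\mathbf Q_0w_k=q\,w_{k+1}$ (indices mod $N$), with $\alpha_0^N=x_{n+1}'$ and $q^N=x_{n+2}'$. Then $\sum_k c_kw_k\in\ker A_0$ if and only if $c_j\bigl(1+\omega^{-4}\alpha_0\eta^{-j}\bigr)+c_{j-1}\,\omega^{-8}q\alpha_0\eta^{-j}=0$ for all $j\in\Z/N$. The denominators $1+\omega^{-4}\alpha_0\eta^{-j}$ never vanish: vanishing would force $(\omega^{-4}\alpha_0\eta^{-j})^N=\omega^{-4N}x_{n+1}'=x_{n+1}'$ to equal $(-1)^N=-1$, hence $x_{n+1}'=-1$, and then Lemma~\ref{lem:ClassicalOffDiagonal} applied at $v_0$ with $u=3$ (which reads $1+x_{n+1}'+x_{n+1}'x_{n+2}'=0$) would give $x_{n+2}'=0$, contradicting $x_{n+2}'\in\C^*$. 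So the recursion is equivalent to $c_j=-\frac{\omega^{-8}q\alpha_0\eta^{-j}}{1+\omega^{-4}\alpha_0\eta^{-j}}\,c_{j-1}$, and its solution space has dimension $1$ if the cyclic product of these ratios equals $1$, and dimension $0$ otherwise. Using $\prod_{\xi\in\mu_N}(1+c\xi)=1+c^N$ ($N$ odd), together with $\omega^{-4N}=\omega^{-8N}=1$, $\alpha_0^N=x_{n+1}'$, $q^N=x_{n+2}'$ and $\eta^{N(N-1)/2}=1$, the cyclic product evaluates to $\frac{-x_{n+1}'x_{n+2}'}{1+x_{n+1}'}$, which equals $1$ precisely by the classical off-diagonal relation $1+x_{n+1}'+x_{n+1}'x_{n+2}'=0$ of Lemma~\ref{lem:ClassicalOffDiagonal}. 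Thus $\dim\ker A_0=1$, and $\dim F_{v_0}'=\dim W=N^{3g+p_\lambda-3}$, as claimed.

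The main obstacle is the bookkeeping in the last step: one must carry the scalar $q^N=x_{n+2}'$ and the exponent of $\eta$ correctly through the cyclic product (omitting the $q^N$ factor spuriously turns the answer into $1/x_{n+2}'\neq1$), and then recognize that it is exactly the non--quantum off-diagonal identity of Lemma~\ref{lem:ClassicalOffDiagonal} that makes the product collapse to $1$ — this is the precise point where, as flagged before the statement, the classical off-diagonal equality enters in a crucial way. A secondary and more routine point is verifying $\sigma_{n+1,n+2}\neq0$, so that $\eta$ is a primitive $N$--th root of unity and the reduction to a single $M_N(\C)$--block is legitimate, in the possible degenerate configurations where the three vertices of the face $T$ are not all distinct.
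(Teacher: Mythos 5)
Your argument is correct, but it takes a genuinely different route from the paper. The paper never decomposes $E_{\lambda'}$ into blocks: it applies the Quantum Binomial Formula to $Q_{v_0}-1=\omega^{-4}Z_{n+1}^{\prime\,2}+\omega^{-8}Z_{n+1}^{\prime\,2}Z_{n+2}^{\prime\,2}$ to get $\mu_{\lambda'}(Q_{v_0}-1)^N=(x_{n+1}'+x_{n+1}'x_{n+2}')\,\Id=-\Id$ by Lemma~\ref{lem:ClassicalOffDiagonal}, so $\mu_{\lambda'}(Q_{v_0}-1)$ is diagonalizable with eigenvalues among the $N$--roots of $-1$, and then uses the $\omega^{-8}$--commutation of $Z_2^{\prime\,2}$ with $Q_{v_0}-1$ to cyclically permute the eigenspaces and conclude they are all of dimension $\frac1N\dim E_{\lambda'}$, with $F_{v_0}'$ the $(-1)$--eigenspace. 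You instead split $E_{\lambda'}\cong\C^N\otimes W$ under the $M_N(\C)$ generated by $\mu_{\lambda'}(Z_{n+1}^{\prime\,2})$ and $\mu_{\lambda'}(Z_{n+2}^{\prime\,2})$ and compute the kernel on the standard block by a cyclic recursion, the classical relation entering through the collapse of the cyclic product $-x_{n+1}'x_{n+2}'/(1+x_{n+1}')$ to $1$ (via $\prod_{\zeta^N=1}(1+c\zeta)=1+c^N$, which plays the role the quantum binomial plays in the paper). The paper's version is shorter, basis-free, and its eigenspace-permutation trick recurs elsewhere (e.g.\ in the proof of Lemma~\ref{lem:EigenspaceDimensions}); yours is more explicit, localizes everything in the two new edge variables without invoking the auxiliary operator $Z_2^{\prime\,2}$, and exhibits $F_{v_0}'$ concretely as one line per block, at the cost of the extra nonvanishing check $x_{n+1}'\neq-1$ (which you correctly dispose of with Lemma~\ref{lem:ClassicalOffDiagonal}). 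Two small points: your ``secondary'' worry is vacuous, since even when corners of $T$ are identified the ends of $e_{n+1}'$ and $e_{n+2}'$ are adjacent only at $v_0$ (at an identified corner they are separated by ends of the old sides), so $\sigma_{n+1,n+2}=\pm1$ always — this is the $\omega^{8}$--commutation the paper uses without comment; and your model $\mathbf P_0w_k=\alpha_0\eta^{-k}w_k$, $\mathbf Q_0w_k=qw_{k+1}$ actually realizes $\mathbf P_0\mathbf Q_0=\eta^{-1}\mathbf Q_0\mathbf P_0$ rather than $\eta$, a relabeling slip that is immaterial since the cyclic product runs over all residues mod $N$.
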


\begin{proof}
By construction of the representation $\mu_{\lambda'}$ by Proposition~\ref{prop:ConstructRepCheFock}, $\dim E_{\lambda'}= N^{3g+p_{\lambda'}-3}$. Since $p_\lambda = p_{\lambda'}-1$, it therefore suffices to show that  $ F_{v_0}' $ has dimension $ \frac1N \dim E_{\lambda'}$. 

 Consider the off-diagonal term
$$
Q_{v_0}  = 1+ \omega^{-4} Z_{n+1}^{\prime\,2} + \omega^{-8} Z_{n+1}^{\prime\, 2} Z_{n+2}^{\prime\, 2}.
$$
Because $Z_{n+1}^{\prime\, 2} $ $\omega^8$--commutes with $Z_{n+1}^{\prime\, 2} Z_{n+2}^{\prime\, 2}$ and because $\omega^8= A^{-4}$ is a primitive $N$--root of unity (we here use the fact that $N$ is odd), the Quantum Binomial Formula (see for instance \cite[\S IV.2]{Kassel}) gives that
$$
(Q_{v_0} -1)^N=  \bigl(\omega^{-4} Z_{n+1}^{\prime\, 2} + \omega^{-8} Z_{n+1}^{\prime\, 2} Z_{n+2}^{\prime\, 2} \bigr)^N = Z_{n+1}^{\prime\,2N}+  Z_{n+1}^{\prime\, 2N} Z_{n+2}^{\prime\, 2N}.
$$
Applying $\mu_{\lambda'}$ then gives that
$$
\mu_{\lambda'}(Q_{v_0} -1)^N=  \mu_{\lambda'}(Z_{n+1}^{\prime\, 2N})+  \mu_{\lambda'}(Z_{n+1}^{\prime\, 2N} Z_{n+2}^{\prime\, 2N}) = x_{n+1}'  \, \Id_{E_{\lambda'}} + x_{n+1}'  x_{n+2}' \Id_{E_{\lambda'}} =-\Id_{E_{\lambda'}}, 
$$
where the $x_i'\in \C^*$ are the crossratio weights associated by the enhancement $\xi'$ to the edges $e_i'$ of $\lambda'$, and where the last equality comes from Lemma~\ref{lem:ClassicalOffDiagonal}. 

It follows that  $\mu_{\lambda'}(Q_{v_0} -1) \in \End(E_{\lambda'})$ is diagonalizable, and that its eigenvalues are $N$--roots of $-1$, namely are all of the form $-\omega^{8k}$ with $k\in \Z$. 

Now consider the element $Z_2^{\prime\, 2}\in \ZZ(\lambda')$ associated to the edge $e_2'$ of $\lambda'$. Since $Z_2^{\prime\, 2}(Q_{v_0} -1) = \omega^{-8} (Q_{v_0} -1)Z_2^{\prime\, 2}$, the linear isomorphism $\mu_{\lambda'}(Z_2^{\prime\, 2} ) \in \End(E_{\lambda'})$ sends the $(-\omega^{8k})$--eigenspace of $\mu_{\lambda'}(Q_{v_0} -1)$ to the $(-\omega^{8k+8})$--eigenspace. It follows that all numbers $-\omega^{8k}$  occur as eigenvalues  of $\mu_{\lambda'}(Q_{v_0} -1)$, and that the corresponding eigenspaces all have the same dimension. Since there are $N$ such eigenspaces, their  dimension is 
$
\frac 1N \dim E_{\lambda'}
$.

In particular, $F_{v_0}' = \ker \mu_{\lambda'}(Q_{v_0} )$ has dimension $\frac1N \dim E_{\lambda'}$, since it is the $(-1)$--eigenspace of $\mu_{\lambda'}(Q_{v_0} -1)$. This concludes the proof.
\end{proof}

At this point, we have two representations $\SSS(S_{\lambda'}) \to \End(E_{\lambda'})$. The first one is our usual
$$
\rho_{\lambda'} = \mu_{\lambda'} \circ \Tr_{\lambda'}^\omega \colon \SSS(S_{\lambda'}) \to \End(E_{\lambda'}).
$$
The second representation comes from the composition
$$
 \SSS(S_{\lambda'}) 
 \stackrel{I}{\longrightarrow}
 \SSS(S_{\lambda}) 
 \stackrel{\Tr_\lambda^\omega\kern2pt}{\longrightarrow}
 \ZZ(\lambda)
 \stackrel{\Phi}{\longrightarrow}
 \ZZ(\lambda')
 \stackrel{\mu_{\lambda'}}{\longrightarrow}
 \End(E_{\lambda'}),
$$
where the first homomorphism $I \colon \SSS(S_{\lambda'}) \to \SSS(S_\lambda)$ is induced by the inclusion map $S_{\lambda'} \to S_\lambda$, which gives a new representation
$$
\widehat \rho_{\lambda'} = \mu_{\lambda'} \circ \Phi \circ \Tr_\lambda^\omega \circ I \colon \SSS(S_{\lambda'}) \to \End(E_{\lambda'}).
$$
Note that Lemma~\ref{lem:SubdivisAlgebraHom} is here required to guarantee that $\widehat \rho_{\lambda'} $ is an algebra homomorphism. 

The images of these two representations respect the off-diagonal kernel $F_{v_0}' \subset E_{\lambda'}$, by Lemma~\ref{lem:SubdivRespectsOffDiagKernel} for $\widehat \rho_{\lambda'} $ and by Proposition~\ref{prop:OffDiagKernelInvariant} for $\rho_{\lambda'} $. 
 Actually, because $\lambda'$ is not necessarily combinatorial, we need to refer here to the strengthened version of Proposition~\ref{prop:OffDiagKernelInvariant} provided by Remark~\ref{rem:OffDiagKernelInvariantWeakerHyp}. 

As a consequence, $\rho_{\lambda'}$ and $\widehat \rho_{\lambda'}$ induce two representations $\SSS(S_{\lambda'}) \to \End(F_{v_0}')$. We now show that these induced representations coincide. 

\begin{lem}
\label{lem:SubdivisSkein}
The two representations $\rho_{\lambda'}$, $\widehat\rho_{\lambda'} \colon \SSS(S_{\lambda'}) \to \End(E_{\lambda'})$ above are such that
$$
\rho_{\lambda'} \bigl( [K] \bigr) _{|F_{v_0}'} = \widehat \rho_{\lambda'} \bigl( [K] \bigr) _{|F_{v_0}'} 
$$
for every framed link $K \subset S_{\lambda'} \times [0,1]$. 
\end{lem}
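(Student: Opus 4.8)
The plan is to reduce the statement to a local computation near the subdivided face $T$, exactly as in the proofs of Propositions~\ref{prop:OffDiagKernelInvariant} and \ref{prop:VertexSweep}. By linearity and the multiplicativity of the quantum trace, it suffices to check the equality $\rho_{\lambda'}\bigl([K]\bigr)_{|F_{v_0}'} = \widehat\rho_{\lambda'}\bigl([K]\bigr)_{|F_{v_0}'}$ when $K$ is a framed link meeting the star neighborhood $\mathcal N(v_0)$ of $v_0$ (which, since $v_0$ is an interior vertex of the face $T$, is just the union of the three small triangles with vertex $v_0$) in a controlled way: finitely many horizontal arcs, each of which enters through one of the new boundary edges $e_{n+1}'$, $e_{n+2}'$, $e_{n+3}'$, turns around $v_0$, crosses one of the edges $e_{n+j}'$ emanating from $v_0$, and exits through the next boundary edge. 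The key point is that outside $\mathcal N(v_0)$ the triangulations $\lambda$ and $\lambda'$ literally agree, and the surface $S_{\lambda'}$ and $S_\lambda$ differ only by the puncture at $v_0$; an isotopy of $K$ in $S_{\lambda'}\times[0,1]$ leaves both skein classes fixed, so we may pick whatever convenient position for $K$ near $v_0$ we like.

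First I would analyze the side $\widehat\rho_{\lambda'} = \mu_{\lambda'}\circ\Phi\circ\Tr_\lambda^\omega\circ I$. Here $I[K]$ is the class of $K$ regarded in $S_\lambda\times[0,1]$, so by an isotopy sweeping it \emph{off} of $T$ (possible since $K$, viewed in $S_\lambda\times[0,1]$, sees $T$ as an ordinary face with no puncture) we can arrange that $K$ meets only the edges $e_1,e_2,e_3$ bounding $T$ and edges outside $\overline T$; in particular $\Tr_\lambda^\omega(I[K])$ is a sum of monomials in the $Z_i$ that, restricted to the face $T$, only involve $Z_1, Z_2, Z_3$. Then I would compute $\Phi$ applied to the relevant elementary pieces: by the formula for $\Phi$, a piece of the form $Z_1^{a}Z_2^{b}Z_3^{c}$ (times generators outside $T$) gets sent to $Z_1'^{a}Z_2'^{b}Z_3'^{c}$ times a Weyl-ordered monomial in $Z_{n+1}', Z_{n+2}', Z_{n+3}'$ with exponents $\tfrac{b+c-a}2$, etc. The upshot, which I expect to fall out cleanly, is that $\Phi$ of such a piece \emph{commutes with the off-diagonal term} $Q_{v_0}=1+\omega^{-4}Z_{n+1}'^2+\omega^{-8}Z_{n+1}'^2Z_{n+2}'^2$ up to the $H_{v_0}$-type corrections already handled in Lemma~\ref{lem:SubdivRespectsOffDiagKernel}, so on $F_{v_0}'$ the action of $\widehat\rho_{\lambda'}[K]$ is given by $\mu_{\lambda'}$ applied to exactly the ``outside $T$'' part of the trace, with the only surviving coefficients coming from the Weyl reorderings of the three central edges.

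Second, I would compute $\rho_{\lambda'}[K] = \mu_{\lambda'}\circ\Tr_{\lambda'}^\omega[K]$ directly, with $K$ now positioned so that it runs around $v_0$ as described — this is the \emph{same} local picture as Figure~\ref{fig:VertexSweep} and the same State Sum computation that produced the long expression for $\Tr_\lambda^\omega([K])$ in the proof of Proposition~\ref{prop:VertexSweep}. Using $\mu_{\lambda'}(H_{v_0}) = -\omega^4\,\Id$ (Proposition~\ref{prop:ConstructRepCheFock}(3)) and $\mu_{\lambda'}(Z_{n+j}'^{2N}) = x_{n+j}'\,\Id$ together with the classical off-diagonal relation $1+x_{n+1}'+x_{n+1}'x_{n+2}'=0$ of Lemma~\ref{lem:ClassicalOffDiagonal} for the trivalent vertex $v_0$, the bulk of the terms collapse: on $F_{v_0}' = \ker\mu_{\lambda'}(Q_{v_0})$ the whole contribution of the arcs around $v_0$ reduces to a scalar (coming from the $B_{+-}$ and $B_{-+}$ type terms, exactly as in the displayed simplification $\rho_\lambda([K])_{|F_\lambda} = \omega^{-1}\mu_\lambda(B_{+-})_{|F_\lambda} - \omega^{-5}\mu_\lambda(B_{-+})_{|F_\lambda}$ in the proof of Proposition~\ref{prop:VertexSweep}) times $\mu_{\lambda'}$ of the part of $K$ lying outside $\mathcal N(v_0)$. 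Since the ``outside'' part of $K$ is the same for both computations and the outside triangulations agree, matching the two scalar prefactors finishes the proof; invariance of $F_{v_0}'$ under both representations (Lemma~\ref{lem:SubdivRespectsOffDiagKernel} and the strengthened Proposition~\ref{prop:OffDiagKernelInvariant} via Remark~\ref{rem:OffDiagKernelInvariantWeakerHyp}) is what makes the restriction to $F_{v_0}'$ meaningful.

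The main obstacle I anticipate is bookkeeping the Weyl-ordering and $\omega$-power corrections consistently between the two sides: on the $\widehat\rho_{\lambda'}$ side these come from the definition of $\Phi$ (the $\omega^{k_1k_3-k_1k_2-k_2k_3}$ factor and the internal Weyl ordering of $Z_{n+1}', Z_{n+2}', Z_{n+3}'$), while on the $\rho_{\lambda'}$ side they come from the State Sum Formula and from reordering $Z_{k_1}$-type boundary generators past the $Z_{n+j}'$. Getting these to cancel is the crux; I would organize it by first verifying the identity on the ``elementary monomials'' $Z_1Z_2Z_4^{k_4}\dots$, $Z_1Z_3Z_4^{k_4}\dots$, $Z_2Z_3Z_4^{k_4}\dots$ used in Lemma~\ref{lem:SubdivRespectsOffDiagKernel}, where the combinatorics of $\Phi$ is transparent, and then invoking that every relevant trace monomial is a product of these. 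A secondary subtlety is that $\lambda'$ need not be combinatorial near the old vertices, but near $v_0$ itself the three edges $e_{n+1}', e_{n+2}', e_{n+3}'$ are distinct and $v_0$ is a genuine interior vertex of a disk, so the hypotheses of Remarks~\ref{rem:OffDiagKernelInvariantWeakerHyp} and \ref{rem:VertexSweepWeakerHyp} apply and the local analysis goes through unchanged.
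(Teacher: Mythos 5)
Your overall framework (localize the comparison at the subdivided face $T$, expand both quantum traces as state sums, and use $Q_{v_0}$, $H_{v_0}'$ and the kernel $F_{v_0}'$) is the right one, but the mechanism you rely on is not the one that makes the lemma true, and as stated it fails. You claim that on $F_{v_0}'$ each of $\rho_{\lambda'}\bigl([K]\bigr)$ and $\widehat\rho_{\lambda'}\bigl([K]\bigr)$ collapses to a scalar times the contribution of the part of $K$ outside $T$, ``exactly as in Proposition~\ref{prop:VertexSweep}''. That collapse only happens for a strand that encircles the vertex, so that the sum of all its terms assembles into $Q_v$ and $H_v$; here the generic piece of $K$ inside $T$ is an arc turning around a \emph{corner} of $T$ and crossing a single new edge, and its state-sum contributions --- for instance $A_l' = Z_{\sigma_l(1)}^{\prime\,2} Z_{\sigma_l(2)}^{\prime\,2} Z_{n+\sigma_l(3)}^{\prime\,2}$ on the $\rho_{\lambda'}$ side, or $\Phi\bigl(Z_1Z_2Z_4^{k_4}\cdots\bigr)=\omega^{-1}Z_1'Z_2'Z_4^{\prime\,k_4}\cdots Z_{n+3}'$ on the $\widehat\rho_{\lambda'}$ side --- merely \emph{preserve} $F_{v_0}'$ (Lemma~\ref{lem:SubdivRespectsOffDiagKernel}); they are by no means scalars there. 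So ``matching the two scalar prefactors'' proves nothing. The actual proof keeps $K$ in one common corner-arc position, expands $\Tr_{\lambda'}^\omega\bigl([K]\bigr)$ and $\Tr_{\lambda}^\omega\bigl([K]\bigr)$ over the same set of boundary states, and matches terms individually under $\Phi$: one checks $\Phi(B)=B'$, $\Phi(C)=C'$, and $\Phi(A_l)=A_l'$ on the nose in two of the three arc types; the only place where the kernel enters is the remaining type $A_l=Z_{\sigma_l(2)}^2$, $A_l'=\omega^{-4}Z_{\sigma_l(2)}^{\prime\,2}Z_{n+\sigma_l(3)}^{\prime\,2}+Z_{\sigma_l(2)}^{\prime\,2}$, where one writes $A_l' = Z_{\sigma_l(2)}^{\prime\,2}Q_{v_0}' - \omega^{-8}Z_{\sigma_l(2)}^{\prime\,2}Z_{n+\sigma_l(3)}^{\prime\,2}Z_{n+\sigma_l(1)}^{\prime\,2}$ and $\Phi(A_l)=\omega^{-4}H_{v_0}^{\prime\,-1}Z_{\sigma_l(2)}^{\prime\,2}Z_{n+\sigma_l(3)}^{\prime\,2}Z_{n+\sigma_l(1)}^{\prime\,2}$, and the two restrictions to $F_{v_0}'$ agree because $\mu_{\lambda'}(Q_{v_0}')$ vanishes there while $\mu_{\lambda'}(H_{v_0}')=-\omega^4\,\Id$. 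Your proposal never identifies this term-by-term matching, which is the entire content of the lemma.

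Several supporting steps are also incorrect. You cannot ``sweep $K$ off $T$'' in $S_\lambda\times[0,1]$: the corners of $T$ are punctures of $S_\lambda$, so an arc crossing $T$ from one side to another cannot be removed --- only sweeping across $v_0$ is available --- hence $\Tr_\lambda^\omega\bigl(I([K])\bigr)$ retains genuine $T$-contributions (the $A_l$ above). Moreover, by computing the two sides in different positions of $K$ you would in addition need a vertex-sweep statement at $v_0$, which cannot simply be quoted from Remarks~\ref{rem:OffDiagKernelInvariantWeakerHyp} and \ref{rem:VertexSweepWeakerHyp}: their hypothesis is that no two distinct edges join the vertex to the same vertex, and this fails at $v_0$ whenever two corners of $T$ are identified in $S$ (e.g.\ for the one-vertex triangulations used later in the paper); ``the three new edges are distinct'' is not the required condition. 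The local picture is also misstated: the boundary of $\mathcal N(v_0)=T$ consists of the old sides $e_1',e_2',e_3'$, and the arcs turn around the corners of $T$, each crossing exactly one new edge, rather than entering through the new edges and turning around $v_0$. Finally, ``verify the identity on the elementary monomials of $\ZZ(\lambda)$ and multiply'' is not a meaningful reduction: the statement is not an identity between two maps defined on $\ZZ(\lambda)$, but an equality of $\mu_{\lambda'}\circ\Tr_{\lambda'}^\omega$ and $\mu_{\lambda'}\circ\Phi\circ\Tr_\lambda^\omega\circ I$ on a skein, and only the parallel state-sum expansions of the two quantum traces of the same geometric link produce terms that can be compared.
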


\begin{proof}
As in the proof of Proposition~\ref{prop:OffDiagKernelInvariant}, we can arrange that the projection of $K$ to $S$ meets the face $T$ of $\lambda$ along a family of arcs $a_1$, $a_2$, \dots, $a_t$, where $a_l$ meets only the edges $e_{\sigma_l(1)}$, $e_{\sigma_l(2)}$, $e_{n+\sigma_l(3)}'$ of $\lambda'$, for a cyclic permutation $\sigma_l$ of the indices $\{1,2,3\}$. Namely, the situation is as illustrated in Figure~\ref{fig:VertexStar} with $u=3$. Then, still as in the proof of Proposition~\ref{prop:OffDiagKernelInvariant}, the quantum trace $\Tr_{\lambda'}^\omega\bigl([K] \bigr) \in \ZZ(\lambda')$ is a sum of terms of the form
$$
A_1' A_2' \dots A_t' B' C' \in \ZZ(\lambda')
$$
where each term correspond to a state for the boundary of $K\cap T\times [0,1]$, where $C'$ is equal to $C'=[Z_{i_1'}^{\prime\,-1} Z_{i_2'}^{\prime\,-1} \dots Z_{i'_{w'}}^{\prime\, -1}]$ if $K$ crosses the edges $e_{i_1'}'$, $e_{i_2'}'$, \dots, $e_{i_{w'}'}'$ of $\lambda'$, where $B'$ involves only generators $Z_i'$ with $4\leq i \leq n$ (corresponding to edges $e_i'$ of $\lambda'$ contained in the complement of $T$), and where $A_l'$ is the contribution of the arc $a_l$ and is of one of the following three types:
\begin{align*}
A_l' &= Z_{\sigma_l(1)}^{\prime\,2} Z_{\sigma_l(2)}^{\prime\,2}Z_{n+\sigma_l(3)}^{\prime\,2} \\
\text{or } A_l' &= \omega^{-4} Z_{\sigma_l(2)}^{\prime\,2} Z_{n+\sigma_l(3)}^{\prime\,2} +  Z_{\sigma_l(2)}^{\prime\,2}\\
\text{or } A_l' &=1.
\end{align*}

Similarly, $\Tr_{\lambda}^\omega\bigl([K] \bigr) \in \ZZ(\lambda)$ is a sum of terms
$$
A_1 A_2 \dots A_t B C \in \ZZ(\lambda)
$$
corresponding to states for the boundary of $K\cap T\times [0,1]$, where $C$ is equal to $C=[Z_{i_1}^{-1} Z_{i_2}^{-1} \dots Z_{i_w}^{-1}]$ if $K$ crosses the edges $e_{i_1}$, $e_{i_2}$, \dots, $e_{i_{w}}$ of $\lambda$, where $B$ involves only generators $Z_i$ with $4\leq i \leq n$ (corresponding to edges $e_i$ of $\lambda$ contained in the complement of $T$), and where $A_l$ is the contribution of the arc $a_l$ and is of one of the following three types:
\begin{align*}
A_l &= \omega^4 Z_{\sigma_l(1)}^2  Z_{\sigma_l(2)}^2 \\
\text{or } A_l &=  Z_{\sigma_l(2)}^2\\
\text{or } A_l &=1.
\end{align*}

In order to show that $\rho_{\lambda'} \bigl( [K] \bigr) _{|F_{v_0}'} = \mu_{\lambda'} \circ \Tr_{\lambda'}^\omega\bigl([K] \bigr)_{|F_{v_0}'} $ is equal to $ \widehat \rho_{\lambda'} \bigl( [K] \bigr) _{|F_{v_0}'} = \mu_{\lambda'} \circ \Phi \circ \Tr_\lambda^\omega \bigl( [K] \bigr) _{|F_{v_0}'} $, we will compare the respective contributions to these quantities of the terms $A_1' A_2' \dots A_t' B' C'$ and $A_1 A_2 \dots A_t B C$ associated to the same state for the boundary of $K\cap T\times [0,1]$. 

Let $A_1' A_2' \dots A_t' B' C' \in \ZZ(\lambda')$ and $A_1 A_2 \dots A_t B C\in \ZZ(\lambda)$ be the terms of  $\Tr_{\lambda'}^\omega\bigl([K] \bigr)$ and $ \Tr_{\lambda}^\omega\bigl([K] \bigr)$ respectively associated to the same state for the boundary of $K\cap T\times [0,1]$. 

From the definition of the homomorphism $\Phi \colon \ZZ(\lambda) \to \ZZ(\lambda')$, is is immediate $\Phi(B)=B'$. From the observation that each arc $a_l$ contributes a monomial $Z_{\sigma_l(1)}^{\prime\, -1}Z_{\sigma_l(2)}^{\prime\,-1}Z_{n+\sigma_l(3)}^{\prime\, -1}$ to $C'$ and a monomial $Z_{\sigma_l(1)}^{-1}Z_{\sigma_l(2)}^{-1}$ to $C$, it also easily follows from the definition of $\Phi$ that $\Phi(C) = C'$ and $\Phi(B)=B'$. As a consequence, $\mu_{\lambda'} (B')_{|F_{v_0}'} =  \mu_{\lambda'} \circ \Phi(B) _{|F_{v_0}'} $ and $\mu_{\lambda'} (C')_{|F_{v_0}'} =  \mu_{\lambda'} \circ \Phi(C) _{|F_{v_0}'} $

 We need to compare each $\mu_{\lambda'} (A_l')_{|F_{v_0}'} $ to the corresponding $ \mu_{\lambda'} \circ \Phi(A_l) _{|F_{v_0}'} $.

In the case when $A_l'=1$, then $A_l=1$ and of course $\Phi(A_l) = \Phi(1) = 1 =A_l'$. In particular, $\mu_{\lambda'} (A_l')_{|F_{v_0}'} $ is equal to the corresponding $ \mu_{\lambda'} \circ \Phi(A_l) _{|F_{v_0}'} $ in this simple case. 

The case where $A_l'= Z_{\sigma_l(1)}^{\prime\,2} Z_{\sigma_l(2)}^{\prime\,2} Z_{n+\sigma_l(3)}^{\prime\,2}$ is barely more complicated, as the corresponding term is $A_l= \omega^4 Z_{\sigma_l(1)}^2  Z_{\sigma_l(2)}^2 $. Indeed, $\Phi(A_l) = A_l'$, so that $\mu_{\lambda'} (A_l')_{|F_{v_0}'} =\mu_{\lambda'} \circ \Phi(A_l) _{|F_{v_0}'} $ in this case as well. 

The case where $A_l'= \omega^{-4} Z_{\sigma_l(2)}^{\prime\,2} Z_{n+\sigma_l(3)}^{\prime\,2} +  Z_{\sigma_l(2)}^{\prime\,2}$ and $A_l= Z_{\sigma_l(2)}^2$ is much more interesting, because $\Phi(A_l) = Z_{\sigma_l(2)}^{\prime\,2}[Z_{n+\sigma_l(1)}^{\prime} Z_{n+\sigma_l(2)}^{\prime\,-1} Z_{n+\sigma_l(3)}^{\prime}  ]$ looks very different from $A_l'$. We can rewrite these terms as
\begin{align*}
A_l'&=Z_{\sigma_l(2)}^{\prime\,2} (1 + \omega^{-4}  Z_{n+\sigma_l(3)}^{\prime\,2}) = Z_{\sigma_l(2)}^{\prime\,2} Q_{v_0}' -  \omega^{-8} Z_{\sigma_l(2)}^{\prime\,2}Z_{n+\sigma_l(3)}^{\prime\,2}Z_{n+\sigma_l(1)}^{\prime\,2} \\
\text{and }  \Phi(A_l) &= \omega^{-4}  H_{v_0}^{\prime\,-1} Z_{\sigma_l(2)}^{\prime\,2} Z_{n+\sigma_l(3)}^{\prime\,2}Z_{n+\sigma_l(1)}^{\prime\,2}
\end{align*}
for  the off-diagonal term $Q_{v_0}' = 1 + \omega^{-4}  Z_{n+\sigma_l(3)}^{\prime\,2} + \omega^{-8} Z_{n+\sigma_l(3)}^{\prime\,2}Z_{n+\sigma_l(1)}^{\prime\,2}$ and the central element $H_{v_0}' = [Z_{n+\sigma_l(1)}' Z_{n+\sigma_l(2)}'Z_{n+\sigma_l(3)}']$ associated to the vertex $v_0$. Using the properties that $\mu_{\lambda'}(Q_{v_0})_{|F_{v_0}'} =0$ and $\mu_{\lambda'}(H_{v_0}') = -\omega^4 \Id_{E_{\lambda'}}$, it follows that 
$$
\mu_{\lambda'} (A_l')_{|F_{v_0}'} 
=\mu_{\lambda'} ( - \omega^{-8} Z_{\sigma_l(2)}^{\prime\,2}Z_{n+\sigma_l(3)}^{\prime\,2}Z_{n+\sigma_l(1)}^{\prime\,2})_{|F_{v_0}'}
=\mu_{\lambda'} \bigl( \Phi(A_l) \bigr) _{|F_{v_0}'}.
$$

This proves that 
$$
\mu_{\lambda'}(A_1' A_2' \dots A_t' B' C' )_{|F_{v_0}'} 
=\mu_{\lambda'} \circ  \Phi(A_1 A_2 \dots A_t B C)  _{|F_{v_0}'}
$$
whenever the terms $A_1' A_2' \dots A_t' B' C' \in \ZZ(\lambda')$  of  $\Tr_{\lambda'}^\omega\bigl([K] \bigr)$ and $A_1 A_2 \dots A_t B C\in \ZZ(\lambda)$ of $ \Tr_{\lambda}^\omega\bigl([K] \bigr)$ correspond to the same state for the boundary of $K\cap T\times [0,1]$. As a consequence,
\begin{equation*}
\rho_{\lambda'} \bigl( [K] \bigr) _{|F_{v_0}'} = \mu_{\lambda'} \circ \Tr_{\lambda'}^\omega\bigl([K] \bigr)_{|F_{v_0}'}  = \mu_{\lambda'} \circ \Phi \circ \Tr_\lambda^\omega \bigl( [K] \bigr) _{|F_{v_0}'}= \widehat \rho_{\lambda'} \bigl( [K] \bigr) _{|F_{v_0}'}. \qedhere
\end{equation*}

\end{proof}

We now return to the  irreducible representations  $\mu_{\lambda} \colon \ZZ(\lambda) \to \End({E_\lambda})$ and $\mu_{\lambda'} \colon \ZZ(\lambda') \to \End(E_{\lambda'})$, respectively associated by Proposition~\ref{prop:ConstructRepCheFock} to the $\lambda$--enhancement $\xi \colon \widetilde V_{\lambda} \to \mathbb{CP}^1$ and the $\lambda'$--enhancement $\xi' \colon \widetilde V_{\lambda'} \to \mathbb{CP}^1$ for  the  homomorphism $r\colon \pi_1(S) \to \SL(\C)$. Recall that $\xi$ is just the restriction of  $\xi'$ to $ \widetilde V_{\lambda} \subset \widetilde V_{\lambda'} $.

By Lemma~\ref{lem:SubdivRespectsOffDiagKernel}, the composition $\mu_{\lambda'} \circ \Phi \colon \ZZ(\lambda) \to \End(E_{\lambda'})$ respects the off-diagonal kernel $F_{v_0}' \subset E_{\lambda'}$, and therefore induces a representation $\bar \mu_\lambda \colon \ZZ(\lambda) \to \End(F_{v_0}')$ by the property that $\bar\mu_\lambda(Z) = \mu_{\lambda'} \circ \Phi(Z)_{F_{v_0}'}$ for every $Z\in \ZZ(\lambda)$.

\begin{lem}
\label{lem:SubdivIsomorphicCheFockReps}
After pre-composing $\mu_\lambda$ with the action of a sign-reversal symmetry of $r\in \RR(S)$ if necessary, the representations $\mu_{\lambda} \colon \ZZ(\lambda) \to \End({E_\lambda})$ and $\bar \mu_\lambda \colon \ZZ(\lambda) \to \End(F_{v_0}')$ are isomorphic. 
\end{lem}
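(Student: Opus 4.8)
The plan is to invoke the uniqueness part of Proposition~\ref{prop:ConstructRepCheFock}: I will check that the representation $\bar\mu_\lambda \colon \ZZ(\lambda) \to \End(F_{v_0}')$ satisfies the four properties listed there for the enhanced homomorphism $(r,\xi)$, after which that uniqueness statement at once gives that $\bar\mu_\lambda$ is isomorphic to $\mu_\lambda$, up to pre-composition by a sign-reversal symmetry of $r\in\RR(S)$. Property~(1), namely $\dim F_{v_0}' = N^{3g+p_\lambda-3} = \dim E_\lambda$, is exactly Lemma~\ref{lem:SubdivisionDimOffDiagKernel}. The real content is therefore to compute the central character of $\bar\mu_\lambda$, that is the scalars $\bar\mu_\lambda(H_v)$ and $\bar\mu_\lambda(Z_i^{2N})$ required by properties~(2) and~(3); property~(4) will then follow from~(1)--(3) exactly as in the proof of Proposition~\ref{prop:ConstructRepCheFock}, since there it is deduced from the explicit scalars of~(2) and~(3) together with the structure of the quantum trace. (Alternatively, one can appeal directly to the classification of representations of $\ZZ(\lambda)$ in \cite[\S2]{BonWon4}: any representation with the central character prescribed by~(2) and~(3) is a sum of copies of a single irreducible representation of dimension $N^{3g+p_\lambda-3}$, so property~(1) forces $\bar\mu_\lambda$ to be irreducible and isomorphic to $\mu_\lambda$ up to sign-reversal.)

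For the vertices I claim that $\Phi(H_v) = H_v'$ for every vertex $v$ of $\lambda$, where $H_v'\in\ZZ(\lambda')$ is the central element of $\lambda'$ attached to $v$. This is a purely combinatorial check: $v$ is a vertex of both $\lambda$ and $\lambda'$, the star of $v$ in $\lambda'$ differs from its star in $\lambda$ only by the insertion of the new edges joining $v$ to $v_0$ (one for each corner of the subdivided face $T$ located at $v$), and tracking the exponents $m_i$ (the number of ends of $e_i$ at $v$) through the definition of $\Phi$ matches them exactly with the exponents defining $H_v'$; here one uses that $\Phi$ is written precisely so as to respect Weyl orderings, so that no spurious power of $\omega$ appears, and the verification reduces to a short case analysis according to how $T$ sits around $v$ (the consistency check $m_{n+1}'+m_{n+2}'+m_{n+3}' = \tfrac12(m_1+m_2+m_3)$ follows from the fact that each corner of $T$ at $v$ contributes $2$ to $m_1+m_2+m_3$ and one new edge at $v$). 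Granting this, $\mu_{\lambda'}(\Phi(H_v)) = \mu_{\lambda'}(H_v') = -\omega^4\,\Id_{E_{\lambda'}}$ by property~(3) for $\mu_{\lambda'}$, hence $\bar\mu_\lambda(H_v) = -\omega^4\,\Id_{F_{v_0}'}$. There is no condition to check for the extra vertex $v_0\in V_{\lambda'}-V_\lambda$, since it is not a vertex of $\lambda$.

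For the edges there are two cases. If $e_i$ is not a side of $T$ (so $i\geq 4$), then $\Phi(Z_i^{2N}) = Z_i^{\prime\,2N}$, and since neither of the two faces of $\lambda$ adjacent to $e_i$ is $T$ the crossratio weight is unchanged, $x_i' = x_i$; hence $\bar\mu_\lambda(Z_i^{2N}) = \mu_{\lambda'}(Z_i^{\prime\,2N})_{|F_{v_0}'} = x_i\,\Id_{F_{v_0}'}$. If $e_i$ is a side of $T$, say $i=1$, then the definition of $\Phi$ gives $\Phi(Z_1^{2N}) = Z_1^{\prime\,2N}\,[Z_{n+1}^{\prime\,-N}Z_{n+2}^{\prime\,N}Z_{n+3}^{\prime\,N}]$, and after reassembling the bracketed monomial equals a power of $\omega$ times $Z_{n+1}^{\prime\,-2N}(H_{v_0}')^N$, in which both factors are \emph{central} in $\ZZ(\lambda')$. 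Applying $\mu_{\lambda'}$ and using $\mu_{\lambda'}(Z_{n+1}^{\prime\,2N}) = x_{n+1}'\,\Id$ together with $\mu_{\lambda'}(H_{v_0}') = -\omega^4\,\Id$ produces a scalar operator $y_1\,\Id_{F_{v_0}'}$, and it remains to identify $y_1$ with $x_1$. For this I would argue classically: since $\xi'$ restricts to $\xi$, comparing the defining crossratio formula for $x_1$ in $\lambda$ (using the vertex of $T$ opposite $e_1$) with those for $x_1'$ and $x_{n+1}'$ in $\lambda'$ (using $\xi'(v_0)$ in place of that vertex) yields the crossratio identity $x_1 = -x_1'/x_{n+1}'$, and the quantum computation above returns exactly this value once the powers of $\omega$ coming from the Weyl quantum orderings and from $(-\omega^4)^N$ are collected, using that $N$ is odd and $\omega^{2N}=-1$. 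This bookkeeping of $\omega$--powers (tracking the skew-commutation coefficients $\sigma_{n+i,n+j}$ around $v_0$) is the one genuinely technical point; I would carry it out directly, or else observe that the whole manipulation specializes, when $\omega$ is replaced by $1$, to the classical face-subdivision identity for shear coordinates, which is precisely Lemma~\ref{lem:ClassicalOffDiagonal} at $v_0$ combined with the definition of the crossratio weights. With properties~(1)--(4) established, Proposition~\ref{prop:ConstructRepCheFock} gives that $\bar\mu_\lambda$ is isomorphic to $\mu_\lambda$ up to the action of a sign-reversal symmetry of $r\in\RR(S)$, which is the assertion of the lemma.
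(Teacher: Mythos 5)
Your handling of properties (1)--(3) of Proposition~\ref{prop:ConstructRepCheFock} is essentially the paper's own argument: (1) is Lemma~\ref{lem:SubdivisionDimOffDiagKernel}, (3) follows from the combinatorial identity $\Phi(H_v)=H_v'$, and (2) is obtained exactly as in the paper by writing $\Phi(Z_1^{2N})=Z_1^{\prime\,2N}Z_{n+1}^{\prime\,-2N}H_{v_0}^{\prime\,N}$ up to the appropriate $\omega$--power and using the crossratio identity $x_1'x_{n+1}^{\prime\,-1}=-x_1$. The genuine gap is your treatment of property (4).

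It is not true that (4) ``follows from (1)--(3) as in the proof of Proposition~\ref{prop:ConstructRepCheFock}.'' The scalars $\mu(Z_i^{2N})=x_i\,\Id$ and $\mu(H_v)=-\omega^4\,\Id$ do \emph{not} determine an irreducible representation of $\ZZ(\lambda)$ up to isomorphism: the central character also involves the values of the central elements $[Z_1^{k_1}\dots Z_n^{k_n}]^N$ for balanced exponents, and these are determined by the $x_i$ only up to sign. Representations differing by the action of an arbitrary class $\epsilon\in H^1(S;\Z_2)$ satisfy the same conditions (1)--(3) but have classical shadows $\epsilon r$ instead of $r$; this is exactly why (4) is listed as an independent condition, and it is what restricts the ambiguity in the lemma to sign-reversal \emph{symmetries} of $r$ (classes fixing $r$) rather than arbitrary twists. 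Your parenthetical alternative has the same defect: ``isomorphic to $\mu_\lambda$ up to sign-reversal'' is strictly weaker than the lemma's conclusion, and would not suffice for its later use (for instance in Theorem~\ref{thm:ClassicalShadowCorrect}, where the classical shadow of $\check\rho_\lambda$ must be $r$ itself). The missing ingredient is Lemma~\ref{lem:SubdivisSkein}: the paper verifies (4) through the chain
$$
T_N \Bigl(\bar\mu_{\lambda} \circ \Tr_\lambda^\omega \bigl([K] \bigr)\Bigr)
= \widehat\rho_{\lambda'}\Bigl(T_N\bigl([K]\bigr)\Bigr)_{|F_{v_0}'}
= \rho_{\lambda'}\Bigl(T_N\bigl([K]\bigr)\Bigr)_{|F_{v_0}'}
= -\Tr\, r(K)\,\Id_{F_{v_0}'},
$$
where the middle equality is precisely Lemma~\ref{lem:SubdivisSkein} and the last one is property (4) for $\mu_{\lambda'}$. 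Without invoking that lemma (or some substitute that pins the classical shadow of $\bar\mu_\lambda$ to $r$ rather than to some $\epsilon r$), your argument does not yield the statement as claimed.
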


\begin{proof}
By the uniqueness statement in Proposition~\ref{prop:ConstructRepCheFock}, it suffices to check that $\bar \mu_\lambda \colon \ZZ(\lambda) \to \End(F_{v_0}')$ satisfies the following four properties, which characterize $\mu_\lambda$:
\begin{enumerate}
\item $\dim F_{v_0}' = N^{3g+p_\lambda-3}$;

\item  $\bar \mu_{\lambda} (Z_i^{2N}) = x_i \,\Id_{F_{v_0}'}$ for each $i=1$, $2$, \dots, $n$, where $x_i$ is the crossratio weight associated to the edge $e_i$ of $\lambda$ by the $\lambda$--enhancement $\xi$;

\item $\bar \mu_{\lambda} (H_v) = -\omega^4\, \Id_{F_{v_0}'}$ for every vertex $v$ of $\lambda$;

\item $ T_N \Bigl(\bar  \mu_{\lambda} \circ \Tr_\lambda^\omega \bigl([K] \bigr)\Bigr) = - \Tr\, r(K) \, \Id_{F_{v_0}'} $ for every framed knot $K\subset S_\lambda \times [0,1]$ whose projection to $K$ has no crossing and whose framing is vertical.  

\end{enumerate}

The first property (1) is proved in Lemma~\ref{lem:SubdivisionDimOffDiagKernel}. 

For the second property (2) the case where $i>3$, namely the case where the edge $e_i$ is not a side of the face $T$ that is being subdivided, is somewhat trivial. Indeed, $\bar \mu_\lambda (Z_i^{2N}) = \mu_{\lambda'} \circ \Phi(Z_i^{2N}) =  \mu_{\lambda'}(Z_i^{\prime\, 2N}) = x_i' \, \Id_{F_{v_0}'} =x_i \, \Id_{F_{v_0}'}$ as the enhancements $\xi$ and $\xi'$ associate the same crossratio weight $x_i=x_i'$ to the edge $e_i$. 

The cases where $i\leq 3$ require a geometric argument. For instance,
\begin{align*}
\bar \mu_\lambda (Z_1^{2N}) &= \mu_{\lambda'} \circ \Phi(Z_1^{2N}) =  \mu_{\lambda'}\bigl([Z_1^{\prime\, 2N} Z_{n+1}^{\prime\, -N} Z_{n+2}^{\prime\, N}  Z_{n+3}^{\prime\, N} ]  \bigr  )\\
&=  \mu_{\lambda'}\bigl( Z_1^{\prime\, 2N} Z_{n+1}^{\prime\, -2N} H_{v_0}'^{\prime\, N}\bigr  ) = - x_1' x_{n+1}^{\prime\, -1}\, \Id_{F_{v_0}'}
\end{align*}
since $\mu_{\lambda'}( Z_1^{\prime\, 2N})=x_1'  \, \Id_{F_{v_0}'}$, $\mu_{\lambda'}(  Z_{n+1}^{\prime\, 2N}) = x_{n+1}'  \, \Id_{F_{v_0}'}$, $\mu_{\lambda'}(H_{v_0}' ) =-\omega^4 \, \Id_{F_{v_0}'}$ and $\omega^{4N}=1$. Going back to the definition of the crossratio weights, a computation shows that $x_1' x_{n+1}^{\prime\, -1}= -x_1$. It follows that $\bar \mu_\lambda (Z_1^{2N}) = x_1  \, \Id_{F_{v_0}'}$, as required. 

Identical computations show that $\bar \mu_\lambda (Z_2^{2N}) = x_2  \, \Id_{F_{v_0}'}$ and $\bar \mu_\lambda (Z_3^{2N}) = x_3  \, \Id_{F_{v_0}'}$, and complete the proof of (2) in all cases. 

By definition of the homomorphism $\Phi $, it sends the central element $H_v \in \ZZ(\lambda)$ associated to a vertex $v$ of $\lambda$ to the central element $H_v' \in \ZZ(\lambda')$ associated to $v$ considered as a vertex of $\lambda'$. It follows that $\bar \mu_{\lambda} (H_v) =  \mu_{\lambda'} (H_v') =-\omega^4\, \Id_{F_{v_0}'}$. This proves the third property (3). 

Finally, (4) is a consequence of Lemma~\ref{lem:SubdivisSkein}. Indeed, for every framed knot $K\subset S_{\lambda'} \times [0,1]$ whose projection to $K$ has no crossing and whose framing is vertical
\begin{align*}
 T_N \Bigl(\bar  \mu_{\lambda} \circ \Tr_\lambda^\omega \bigl([K] \bigr)\Bigr)
 &= \bar  \mu_{\lambda} \circ \Tr_\lambda^\omega \Bigl(T_N \bigl([K] \bigr)  \Bigr)
 =  \widehat \rho_{\lambda'}\Bigl(T_N \bigl([K] \bigr)  \Bigr) _{|F_{v_0}'}
  =   \rho_{\lambda'}\Bigl(T_N \bigl([K] \bigr)  \Bigr) _{|F_{v_0}'}\\
   &=   \mu_{\lambda'} \circ \Tr_{\lambda'}^\omega\Bigl(T_N \bigl([K] \bigr)  \Bigr) _{|F_{v_0}'}
   =
   T_N \Bigl(  \mu_{\lambda'} \circ \Tr_{\lambda'}^\omega \bigl([K] \bigr)\Bigr)_{|F_{v_0}'}
   =  - \Tr\, r(K) \, \Id_{F_{v_0}'}
\end{align*}
where the first and fifth equalities come from the fact that all maps involved are algebra homomorphisms, where the second equality comes from the definitions of the representations $\widehat \rho_{\lambda'}$ and $\bar \mu_\lambda$, where the third equality is provided by  Lemma~\ref{lem:SubdivisSkein}, and where the last equality is part of the definition of $\mu_{\lambda'}$ by Proposition~\ref{prop:ConstructRepCheFock}. 

This proves that the representation $\bar \mu_\lambda \colon \ZZ(\lambda) \to \End(F_{v_0}')$ satisfies the properties (1--4) listed above. By Proposition~\ref{prop:ConstructRepCheFock}, it follows that $\bar \mu_\lambda$ is isomorphic to $\mu_\lambda$. 
\end{proof}

Our last step is to show that the isomorphism provided by Lemma~\ref{lem:SubdivIsomorphicCheFockReps} is compatible with off-diagonal kernels.

\begin{lem}
\label{lem:SubdivisOffDiagKernelConstant}
For  a vertex $v$ of the triangulation $\lambda$,  let  $F_v \subset {E_\lambda}$ and $F_{v}' \subset E_{\lambda'}$ be the respective off-diagonal kernels of $v$ for the representations $\mu_\lambda$ and $\mu_{\lambda'}$, defined by considering $v$ as a vertex of both $\lambda$ and $\lambda'$. Then,  the isomorphism $E_\lambda \to F_{v_0}'$ between the representations $\mu_{\lambda} \colon \ZZ(\lambda) \to \End({E_\lambda})$ and $\bar \mu_\lambda \colon \ZZ(\lambda) \to \End(F_{v_0}')$ provided by Lemma~{\upshape\ref{lem:SubdivIsomorphicCheFockReps}} sends $F_v$ to $ F_{v}' \cap F_{v_0}'$. 
\end{lem}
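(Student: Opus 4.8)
The plan is to transport the statement, through the isomorphism of Lemma~\ref{lem:SubdivIsomorphicCheFockReps}, into an identity between endomorphisms of the single off-diagonal kernel $F_{v_0}' \subset E_{\lambda'}$. Let $\psi\colon E_\lambda \to F_{v_0}'$ be that isomorphism; it intertwines $\mu_\lambda$ with the representation $\bar\mu_\lambda(Z) = \mu_{\lambda'}\bigl(\Phi(Z)\bigr)_{|F_{v_0}'}$ (the sign-reversal ambiguity allowed by Lemma~\ref{lem:SubdivIsomorphicCheFockReps} is harmless here, because every monomial of $Q_v$ is a product of squares of generators and is therefore fixed by any sign-reversal symmetry). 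Since $F_v = \ker\mu_\lambda(Q_v)$, one gets $\psi(F_v) = \ker\bar\mu_\lambda(Q_v) = F_{v_0}' \cap \ker\mu_{\lambda'}\bigl(\Phi(Q_v)\bigr)$, whereas $F_v' \cap F_{v_0}' = F_{v_0}' \cap \ker\mu_{\lambda'}(Q_v')$. So everything reduces to showing that the endomorphisms $\mu_{\lambda'}\bigl(\Phi(Q_v)\bigr)$ and $\mu_{\lambda'}(Q_v')$ of $E_{\lambda'}$ have the same kernel in restriction to $F_{v_0}'$; I will in fact aim for the sharper statement that these restricted endomorphisms coincide, or at worst differ by an invertible operator of $F_{v_0}'$, which is just as good for kernels.

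The easy case is $v \notin T$: then the edges of $\lambda$ at $v$ are edges of $\lambda'$, none of them a side of $T$ nor incident to $v_0$, the off-diagonal term $Q_v$ is a polynomial in the squares of the corresponding generators, and $\Phi$ is the identity on the subalgebra those generators span (the ``new'' exponents in the definition of $\Phi$ all vanish on such monomials); hence $\Phi(Q_v) = Q_v'$ verbatim. The work is in the case where $v$ is a vertex of $T$, namely the common endpoint of two of its sides. Here I would first use Lemma~\ref{lem:OffDiagKernelWellDefined} to index the edges of $\lambda$ at $v$ counterclockwise as $e_{i_1}, \dots, e_{i_u}$ so that $e_{i_u}$ and $e_{i_1}$ are those two sides of $T$ and $T$ is the angular sector between them; in $\lambda'$ the new edge $e_{n+1}'$ joining $v$ to $v_0$ subdivides that sector, so the $u+1$ edges of $\lambda'$ at $v$ are, counterclockwise, $e_{i_1}', \dots, e_{i_u}', e_{n+1}'$, and I would form $Q_v'$ from this cyclic order (starting it at $e_{n+1}'$, which is legitimate by Lemma~\ref{lem:OffDiagKernelWellDefined}). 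A direct computation from the explicit formula for $\Phi$ — it primes the generators of the non-$T$-side edges, while $\Phi(Z_{i_1}^2)$ and $\Phi(Z_{i_u}^2)$ are read off from the definition — then expresses $\Phi(Q_v)$ as $Q_v'$ plus a correction $R \in \ZZ(\lambda')$ whose only non-primed factors are the ``twisted central'' monomials $[Z_{n+1}'^{\pm1}Z_{n+2}'^{\pm1}Z_{n+3}'^{\pm1}]$ attached to $v_0$; by the quantum-ordering identities of Lemmas~\ref{lem:QuantumOrderH} and~\ref{lem:QuantumOrderAtVertex} applied at $v_0$, each of these is an $\omega$-power times $H_{v_0}'^{\pm1}$ times an invertible monomial in the $Z_{n+j}'^{\pm2}$.

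The concluding step, and the part I expect to be hardest, is to show that $\mu_{\lambda'}(R)$ annihilates $F_{v_0}'$. This is precisely where the two special properties of $\mu_{\lambda'}$ supplied by Proposition~\ref{prop:ConstructRepCheFock} come in: the central element satisfies $\mu_{\lambda'}(H_{v_0}') = -\omega^4\,\Id_{E_{\lambda'}}$, and $F_{v_0}' = \ker\mu_{\lambda'}(Q_{v_0})$. After substituting the first relation into the rewritten $R$, one is left with a combination of monomials in the $Z_{n+j}'^{\pm2}$ which — exactly as in the proof of Lemma~\ref{lem:SubdivisSkein}, and resting on the classical off-diagonal equality of Lemma~\ref{lem:ClassicalOffDiagonal} that is built into $\mu_{\lambda'}$ — should be shown to vanish on $F_{v_0}'$ by playing it against the relation $\mu_{\lambda'}(\widetilde Q_{v_0})_{|F_{v_0}'} = 0$ for a suitable cyclic representative $\widetilde Q_{v_0}$ of $Q_{v_0}$. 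Pushing this through gives $\mu_{\lambda'}\bigl(\Phi(Q_v)\bigr)_{|F_{v_0}'} = \mu_{\lambda'}(Q_v')_{|F_{v_0}'}$, hence $\psi(F_v) = F_v' \cap F_{v_0}'$. The genuinely delicate point is the $\omega$-exponent bookkeeping in this cancellation: aligning $R$ with a cyclically-shifted $Q_{v_0}$ works only because $H_{v_0}'$ carries the exact eigenvalue $-\omega^4$ rather than an arbitrary $N$-th root of $-1$, which is exactly why Proposition~\ref{prop:ConstructRepCheFock} was stated with $h_{v_0} = -\omega^4$.
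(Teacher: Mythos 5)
Your overall strategy coincides with the paper's: transport the statement through the isomorphism of Lemma~\ref{lem:SubdivIsomorphicCheFockReps} (noting, as the paper does, that a sign-reversal symmetry fixes $Q_v$ because it involves only squares), dispose of the case $v\notin T$ by taking $\Phi(Q_v)$ as the choice of $Q_v'$, and in the remaining case compare $\mu_{\lambda'}\bigl(\Phi(Q_v)\bigr)$ with $\mu_{\lambda'}(Q_v')$ on $F_{v_0}'$ by exhibiting their difference as left multiples of expressions of the form $\widetilde Q_{v_0}-1-\omega^{-4}H_{v_0}'$, which kill $F_{v_0}'$ because $\mu_{\lambda'}(H_{v_0}')=-\omega^4\,\Id$ and because, by Lemma~\ref{lem:OffDiagKernelWellDefined}, every cyclic representative of the off-diagonal term of $v_0$ vanishes on $F_{v_0}'$. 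Your emphasis on the eigenvalue $-\omega^4$ being exactly what makes the cancellation close up is also the right one.

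The gap is in your treatment of the hard case. You set it up assuming that $v$ meets the subdivided face $T$ in a single corner: you take $e_{i_1}$ and $e_{i_u}$ to be ``the'' two sides of $T$ at $v$, insert the single new edge $e_{n+1}'$ in that one angular sector, and conclude that $\lambda'$ has $u+1$ edges at $v$ with one insertion point for the correction. But in this section the triangulations are no longer assumed combinatorial, so $v$ may be identified with two or even all three corners of $T$; this is not a marginal situation, since the one-vertex triangulations used in \S\ref{bigsect:DimensionOffDiagonalKernel} have every corner of every face equal to the same vertex. In those subcases two or three of the new edges $e_{n+1}',e_{n+2}',e_{n+3}'$ emanate from $v$, the sides of $T$ occur several times in the cyclic order around $v$, and the correction $R$ acquires contributions at several insertion points, which must be cancelled against several \emph{different} cyclic representatives of $Q_{v_0}$ simultaneously (the computation the paper actually writes out is the two-corner subcase, where both rotations $Q_{v_0}'$ and $Q_{v_0}''$ appear in the factorization of $Q_{v'}'-\Phi(Q_v)$). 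Your single-sector, single-representative plan does not cover these configurations, and since the explicit identity ``$\Phi(Q_v)=Q_v'+R$ with $R$ a left multiple of $\widetilde Q_{v_0}-1-\omega^{-4}H_{v_0}'$'' is only anticipated rather than verified, the core of the lemma---checking that the $\omega$-exponent bookkeeping works in all three incidence subcases---is still missing, even though every ingredient you name (the rotation freedom of $Q_{v_0}$, the central eigenvalue from Proposition~\ref{prop:ConstructRepCheFock}, and ultimately Lemma~\ref{lem:ClassicalOffDiagonal} behind it) is the correct one.
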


\begin{proof} We first simplify the situation a little. The representation $\mu_{\lambda} \colon \ZZ(\lambda) \to \End({E_\lambda})$ is only defined up to isomorphism and up to sign-reversal symmetry by Proposition~\ref{prop:ConstructRepCheFock}. Modifying it by a sign-reversal symmetry if necessary, and by the isomorphism of Lemma~\ref{lem:SubdivIsomorphicCheFockReps}, we can consequently assume that it is equal, not just isomorphic, to $\bar \mu_\lambda \colon \ZZ(\lambda) \to \End(F_{v_0}')$. In particular, $E_\lambda = F_{v_0}'$, $\mu_\lambda= \bar \mu_\lambda$ and the isomorphism is the identity.

Note that the modification of $\mu_\lambda$ by a sign-reversal symmetry does not change the off-diagonal kernel $F_v$, as the off-diagonal term $Q_v\in \ZZ(\lambda)$ involves only even powers of the generators $Z_i$. We consequently have to show that  $F_v =  F_{v}' \cap F_{v_0}'$ once we have arranged that the representations $\mu_\lambda$ and $\bar \mu_\lambda$ coincide.  

If $v$ is not one of the vertices of the face $T$ of $\lambda$ that is being subdivided, the expression of the off-diagonal term $Q_v\in \ZZ(\lambda)$ involves only generators $Z_i$ with $i>3$, and  $\Phi (Q_v) \in \ZZ(\lambda')$ is obtained from $Q_v\in \ZZ(\lambda)$ by replacing each generator $Z_i$ by $Z_i'$. Then we can choose the off-diagonal term $Q_{v}' \in \ZZ(\lambda')$ to be equal to $\Phi (Q_v)$. Then, 
\begin{align*}
F_v &= \ker \mu_\lambda(Q_v)  = \ker \bar \mu_\lambda (Q_v)= \ker \mu_{\lambda'} \circ \Phi(Q_v)_{ |F_{v_0}'}\\
& = \ker  \mu_{\lambda'} (Q_v')_{ |F_{v_0}'} = \bigl( \ker  \mu_{\lambda'} (Q_v') \bigr)\cap F_{v_0}'
=F_v' \cap F_{v_0}'.
\end{align*}

When $v$ belongs to  the face $T$ of $\lambda$, this case splits into three subcases according to whether $v$ corresponds to 1, 2 or 3 vertices of the triangle $T$. We  restrict our discussion to the subcase where $v$ corresponds to two vertices of $T$. The other two subcases are very similar. 

Without loss of generality, we can choose the edge indexing of Figure~\ref{fig:TriangSubdivis} so that both endpoints of the edge $e_1$ are equal to the vertex $v$. Then,  the off-diagonal term of $v$ starting at the edge $e_2$ can be written as
$$
Q_v = 1 + \omega^{-4} Z_2^2 + Z_2^2Z_1^2B + Z_2^2Z_1^2CZ_1^2 + Z_2^2Z_1^2CZ_1^2 Z_3^2 D
$$
where $B$, $C$ and $D$ are polynomials in the variables $Z_i^2$ with $4\leq i\leq n$; namely, these $Z_i^2$ correspond to edges of $\lambda$ that are not contained in the face $T$. 

Similarly, if we start from the edge $e_2'$, the off-diagonal term of the vertex $v$ in $\ZZ(\lambda')$ is
\begin{align*}
Q_{v'}' = 1 + \omega^{-4} Z_2^{\prime\, 2} &+ \omega^{-8} Z_2^{\prime\, 2}Z_{n+3}^{\prime\, 2} + \omega^{-4}Z_2^{\prime\, 2} Z_{n+3}^{\prime\, 2} Z_1^{\prime\, 2} B' \\
&+  \omega^{-4}Z_2^{\prime\, 2} Z_{n+3}^{\prime\, 2} Z_1^{\prime\, 2}  C' Z_1^{\prime\, 2} + \omega^{-8}Z_2^{\prime\, 2} Z_{n+3}^{\prime\, 2} Z_1^{\prime\, 2}  C' Z_1^{\prime\, 2} Z_{n+2}^{\prime\, 2}  \\
& +\omega^{-8}Z_2^{\prime\, 2} Z_{n+3}^{\prime\, 2}Z_1^{\prime\, 2}  C' Z_1^{\prime\, 2} Z_{n+2}^{\prime\, 2}  Z_3^{\prime\, 2} D'
\end{align*}
where $B'$, $C'$, $D'\in \ZZ(\lambda')$ are respectively obtained from $B
$, $C$, $D \in \ZZ(\lambda)$ by replacing each $Z_i^2$ with $Z_i^{\prime\, 2}$. 

By definition of the homomorphism $\Phi \colon \ZZ(\lambda) \to \ZZ(\lambda')$, 
\begin{align*}
\Phi(Z_2^2) &= \omega^{3} Z_2^{\prime\, 2} Z_{n+1}' Z_{n+2}^{\prime\, -1} Z_{n+3}' 
& \Phi (\omega^{-4}Z_2^2Z_1^2) &=\omega^{-8}  Z_2^{\prime\, 2} Z_{n+3}^{\prime\, 2} Z_1^{\prime\, 2}\\
\Phi(Z_1^2) &= \omega^{3} Z_1^{\prime\, 2} Z_{n+3}' Z_{n+1}^{\prime\, -1} Z_{n+2}' 
& \Phi (\omega^{-4} Z_1^2Z_3^2) &= \omega^{-8} Z_1^{\prime\, 2} Z_{n+2}^{\prime\, 2} Z_3^{\prime\, 2}\\
\Phi(B) & = B'  \quad \quad\quad\quad\Phi(C) =C' & \Phi(D)&= D'.
\end{align*}
Therefore,
\begin{align*}
\Phi(Q_v) = 1 
&+ \omega^{-1} Z_2^{\prime\, 2} Z_{n+1}' Z_{n+2}^{\prime\, -1} Z_{n+3}' 
+ \omega^{-4} Z_2^{\prime\, 2} Z_{n+3}^{\prime\, 2} Z_1^{\prime\, 2} B'\\
& + \omega^{-1} Z_2^{\prime\, 2} Z_{n+3}^{\prime\, 2} Z_1^{\prime\, 2} C' Z_1^{\prime\, 2} Z_{n+3}' Z_{n+1}^{\prime\, -1} Z_{n+2}' \\
&  + \omega^{-8} Z_2^{\prime\, 2} Z_{n+3}^{\prime\, 2} Z_1^{\prime\, 2} C'  Z_1^{\prime\, 2} Z_{n+2}^{\prime\, 2} Z_3^{\prime\, 2} D' .
\end{align*}
The above expressions of $ Q_{v'}' $ and $ \Phi(Q_v)$ share several terms, and their difference can therefore be expressed as  
\begin{align*}
Q_{v'}' - \Phi(Q_v)& = Z_2^{\prime\, 2} ( \omega^{-4} + \omega^{-8}Z_{n+3}^{\prime\, 2} -  \omega^{-1} Z_{n+1}' Z_{n+2}^{\prime\, -1} Z_{n+3}'  ) \\
&\qquad\qquad +  Z_2^{\prime\, 2} Z_{n+3}^{\prime\, 2} Z_1^{\prime\, 2} C' Z_1^{\prime\, 2} 
(\omega^{-4}+ \omega^{-8}  Z_{n+2}^{\prime\, 2} 
-  \omega^{-1}  Z_{n+3}' Z_{n+1}^{\prime\, -1} Z_{n+2}') \\
&=  Z_2^{\prime\, 2} Z_{n+2}^{\prime\, -2}( \omega^{-4}Z_{n+2}^{\prime\, 2} + \omega^{-8}Z_{n+2}^{\prime\, 2} Z_{n+3}^{\prime\, 2} -  \omega^{-3}  Z_{n+2}' Z_{n+1}' Z_{n+3}'  ) \\
&\qquad\qquad +  Z_2^{\prime\, 2} Z_{n+3}^{\prime\, 2} Z_1^{\prime\, 2} C' Z_1^{\prime\, 2} Z_{n+1}^{\prime\, -2}
(\omega^{-4} Z_{n+1}^{\prime\, 2}  + \omega^{-8}  Z_{n+1}^{\prime\, 2}  Z_{n+2}^{\prime\, 2} \\
&\qquad\qquad\qquad\qquad\qquad\qquad\qquad\qquad\qquad\qquad -  \omega^{-3}  Z_{n+1}' Z_{n+3}'  Z_{n+2}') \\
&= Z_2^{\prime\, 2} Z_{n+2}^{\prime\, -2}( Q_{v_0}' -1 -\omega^{-4} H_{v_0}') \\
&\qquad\qquad +  Z_2^{\prime\, 2} Z_{n+3}^{\prime\, 2} Z_1^{\prime\, 2} C' Z_1^{\prime\, 2} Z_{n+1}^{\prime\, -2}
( Q_{v_0}'' -1 -\omega^{-4} H_{v_0}') 
\end{align*}
for
\begin{align*}
Q_{v_0}' &= 1+  \omega^{-4}Z_{n+2}^{\prime\, 2} + \omega^{-8}Z_{n+2}^{\prime\, 2} Z_{n+3}^{\prime\, 2}\\
Q_{v_0}'' &= 1+  \omega^{-4} Z_{n+1}^{\prime\, 2}  + \omega^{-8}  Z_{n+1}^{\prime\, 2}  Z_{n+2}^{\prime\, 2}\\
H_{v_0}' &= \omega^{-1} Z_{n+2}' Z_{n+1}' Z_{n+3}' = \omega^{-1} Z_{n+1}' Z_{n+3}'  Z_{n+2}'.
\end{align*}

Note that $Q_{v_0}'$ and $Q_{v_0}''\in \ZZ(\lambda')$ are two off-diagonal terms for the vertex $v_0$, corresponding to different indexings of the edges around this vertex. As a consequence, $\mu_{\lambda'}(Q_{v_0}')_{|F_{v_0}'} = \mu_{\lambda'}(Q_{v_0}')_{|F_{v_0}'} = 0$ while $\mu_{\lambda'}(H_{v_0}')_{|F_{v_0}'} = -\omega^4 \, \Id_{F'_{v_0}}$. It consequently follows from the above computation that
$$
\mu_{\lambda'}(Q_{v'}')_{|F_{v_0}'}  - \mu_{\lambda'} \circ \Phi(Q_v)_{|F_{v_0}'}  =0. 
$$
 Then, as in the first case considered,  
\begin{align*}
F_v &= \ker \mu_\lambda(Q_v)  = \ker \bar \mu_\lambda (Q_v)= \ker \mu_{\lambda'} \circ \Phi(Q_v)_{ F_{v_0}'}\\
& = \ker  \mu_{\lambda'} (Q_v')_{ F_{v_0}'} = \bigl( \ker  \mu_{\lambda'} (Q_v') \bigr)\cap F_{v_0}'
=F_v' \cap F_{v_0}'.
\end{align*}

This concludes the proof of Lemma~\ref{lem:SubdivisOffDiagKernelConstant} when the vertex $v$ corresponds to two vertices of the triangle $T$. The  cases where it corresponds to one or three vertices of $T$ are very similar, and we omit the corresponding  proofs. 
\end{proof}

We now gather the results of this section in the following statement, which we state in inductive form for later use in \S \ref{subsect:ConstructSkeinRepArbitraryTriangulation}. 

It is convenient to introduce some terminology. If the representation $\rho_\lambda \colon \SSS(S_\lambda) \to \End(E_\lambda)$ respects the total off-diagonal kernel $F_\lambda \subset E_\lambda$, we say that $\rho_\lambda$ \emph{induces a representation} $\check\rho_\lambda \colon \SSS(S) \to \End(F_\lambda)$ if $\rho_\lambda\bigl( [K] \bigr)_{|F_\lambda}= \rho_\lambda\bigl( [K'] \bigr)_{|F_\lambda}$ whenever the two framed links $K$, $K' \subset S_\lambda \times [0,1]$ are isotopic in $S\times [0,1]$. 

For instance, when the triangulation $\lambda$ is combinatorial, Proposition~\ref{prop:OffDiagKernelInvariant} shows that the representation $\rho_\lambda  \colon \SSS(S_\lambda) \to \End(E_\lambda)$ respects $F_\lambda$, and Proposition~\ref{prop:VertexSweep} implies that $\rho_\lambda$ {induces a representation} $\check\rho_\lambda \colon \SSS(S) \to \End(F_\lambda)$.

\begin{prop}
\label{prop:SubdivisionInvariance}
Let $\lambda'$ be obtained from the triangulation  $\lambda$ of the surface $S$ by subdividing a face into three triangles as in Figure~{\upshape\ref{fig:TriangSubdivis}}, let  $\xi' \colon \widetilde V_{\lambda'} \to \mathbb{CP}^1$  be a $\lambda'$--enhancement for  the  homomorphism $r\colon \pi_1(S) \to \SL(\C)$, and let $\xi \colon \widetilde V_{\lambda} \to \mathbb{CP}^1$ be the  $\lambda$--enhancement defined by restriction of $\xi'$ to $\widetilde V_\lambda \subset \widetilde V_{\lambda'}$. Let $\mu_\lambda \colon \ZZ(\lambda) \to \End({E_\lambda})$ and $\mu_{\lambda'} \colon \ZZ(\lambda') \to \End(E_{\lambda'})$ be the representations respectively associated to $\xi$ and $\xi'$ by Proposition~{\upshape\ref{prop:ConstructRepCheFock}}. 

Suppose in addition that $\rho_{\lambda'} = \mu_{\lambda'} \circ \Tr_{\lambda'}^\omega \colon \SSS(S_{\lambda'}) \to \End(E_{\lambda'})$ respects the total off-diagonal kernel $F_{\lambda'} \subset E_{\lambda'}$ of $\mu_{\lambda'}$, and induces a representation $\check\rho_{\lambda'} \colon \SSS(S) \to \End(F_{\lambda'})$ as above. Then, $\rho_{\lambda} = \mu_{\lambda} \circ \Tr_{\lambda}^\omega \colon \SSS(S_{\lambda}) \to \End(E_{\lambda})$ respects the total off-diagonal kernel ${F_\lambda} \subset {E_\lambda}$ of $\mu_\lambda$, and induces a representation $\check\rho_\lambda \colon \SSS(S) \to \End({F_\lambda})$. Moreover, $\check\rho_\lambda$  is isomorphic to $\check\rho_{\lambda'}$ after a possible pre-composition with the action of a sign-reversal symmetry of $r\in \RR(S)$ on $\SSS(S)$. 
\end{prop}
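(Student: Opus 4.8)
The plan is to deduce the statement from the three lemmas just established, doing essentially no new computation. First I would invoke Lemma~\ref{lem:SubdivIsomorphicCheFockReps} to replace $\mu_\lambda$ by the isomorphic representation $\bar\mu_\lambda\colon \ZZ(\lambda)\to\End(F_{v_0}')$, $\bar\mu_\lambda(Z)=\mu_{\lambda'}\bigl(\Phi(Z)\bigr)_{|F_{v_0}'}$, after pre-composing $\mu_\lambda$ with a sign-reversal symmetry of $r$ if necessary; this modification changes nothing below since every off-diagonal term $Q_v$ involves only even powers of the generators, and at the end it will translate into the pre-composition with a sign-reversal action on $\SSS(S)$ allowed in the statement. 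Under the resulting identification $E_\lambda=F_{v_0}'\subset E_{\lambda'}$, Lemma~\ref{lem:SubdivisOffDiagKernelConstant} gives $F_v=F_v'\cap F_{v_0}'$ for every vertex $v$ of $\lambda$. Intersecting over all $v\in V_\lambda$ and using that $V_{\lambda'}=V_\lambda\sqcup\{v_0\}$,
$$
F_\lambda=\bigcap_{v\in V_\lambda}F_v=\Bigl(\bigcap_{v\in V_\lambda}F_v'\Bigr)\cap F_{v_0}'=\bigcap_{v\in V_{\lambda'}}F_v'=F_{\lambda'},
$$
so the two total off-diagonal kernels literally coincide as subspaces of $E_{\lambda'}$.

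The second ingredient is that the inclusion $S_{\lambda'}\subset S_\lambda$ induces a surjection $I\colon\SSS(S_{\lambda'})\to\SSS(S_\lambda)$: any framed link in $S_\lambda\times[0,1]$ can be pushed off the arc $\{v_0\}\times[0,1]$ by a general-position isotopy inside $S_\lambda\times[0,1]$, so its class lies in the image of $I$. Given any $[K]\in\SSS(S_\lambda)$, write $[K]=I\bigl([\widetilde K]\bigr)$ with $\widetilde K\subset S_{\lambda'}\times[0,1]$. Under the identification above,
$$
\rho_\lambda\bigl([K]\bigr)=\bar\mu_\lambda\bigl(\Tr_\lambda^\omega([K])\bigr)=\mu_{\lambda'}\bigl(\Phi(\Tr_\lambda^\omega([K]))\bigr)_{|F_{v_0}'}=\widehat\rho_{\lambda'}\bigl([\widetilde K]\bigr)_{|F_{v_0}'},
$$
which equals $\rho_{\lambda'}\bigl([\widetilde K]\bigr)_{|F_{v_0}'}$ by Lemma~\ref{lem:SubdivisSkein}. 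Now $\rho_{\lambda'}\bigl([\widetilde K]\bigr)$ respects $F_{v_0}'$ (established before this proposition, using Remark~\ref{rem:OffDiagKernelInvariantWeakerHyp}) and respects $F_{\lambda'}$ by hypothesis; since $F_{\lambda'}\subseteq F_{v_0}'$, its restriction to $F_{v_0}'$ maps $F_{\lambda'}$ into $F_{\lambda'}$. Hence $\rho_\lambda\bigl([K]\bigr)$ respects $F_{\lambda'}=F_\lambda$, and since $[K]$ was arbitrary, $\rho_\lambda$ respects the total off-diagonal kernel $F_\lambda$.

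It remains to identify the induced representations. Let $J_\lambda\colon\SSS(S_\lambda)\to\SSS(S)$ and $J_{\lambda'}\colon\SSS(S_{\lambda'})\to\SSS(S)$ be the surjections induced by inclusion, so $J_{\lambda'}=J_\lambda\circ I$. Restricting the computation above once more to $F_\lambda=F_{\lambda'}$,
$$
\rho_\lambda\bigl([K]\bigr)_{|F_\lambda}=\rho_{\lambda'}\bigl([\widetilde K]\bigr)_{|F_{\lambda'}}=\check\rho_{\lambda'}\bigl(J_{\lambda'}([\widetilde K])\bigr)=\check\rho_{\lambda'}\bigl(J_\lambda([K])\bigr).
$$
The right-hand side depends only on $J_\lambda([K])\in\SSS(S)$; in particular $\rho_\lambda([K])_{|F_\lambda}=\rho_\lambda([K'])_{|F_\lambda}$ whenever $K$, $K'\subset S_\lambda\times[0,1]$ are isotopic in $S\times[0,1]$, so $\rho_\lambda$ induces a representation $\check\rho_\lambda\colon\SSS(S)\to\End(F_\lambda)$. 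The same displayed identity, together with surjectivity of $J_\lambda$, shows $\check\rho_\lambda=\check\rho_{\lambda'}$ under the identification $F_\lambda=F_{\lambda'}$. Undoing the sign-reversal modification of the first paragraph — which changes $\rho_\lambda$, hence $\check\rho_\lambda$, only by a pre-composition with the corresponding sign-reversal action on $\SSS(S_\lambda)$, resp. $\SSS(S)$, by equivariance of the quantum trace — gives that $\check\rho_\lambda$ is isomorphic to $\check\rho_{\lambda'}$ up to a sign-reversal action of $r\in\RR(S)$ on $\SSS(S)$, as claimed.

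I do not expect a genuine obstacle here: all the substance is contained in Lemmas~\ref{lem:SubdivisAlgebraHom}--\ref{lem:SubdivisOffDiagKernelConstant}, and the remaining work is careful bookkeeping of the several identifications (which one lives on $E_\lambda$, which on the off-diagonal kernels, and where the sign-reversal symmetry enters) plus the two routine general-position facts that $I$ and $J_\lambda$ are surjective. The one point that warrants a moment's attention is the passage $\bigcap_{v\in V_\lambda}(F_v'\cap F_{v_0}')=F_{\lambda'}$: one must be sure this is the full $F_{\lambda'}$ and not something smaller, which is immediate once one observes $V_{\lambda'}=V_\lambda\sqcup\{v_0\}$.
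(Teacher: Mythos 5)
Your proposal is correct and follows essentially the same route as the paper's proof: identify $\mu_\lambda$ with $\bar\mu_\lambda$ via Lemma~\ref{lem:SubdivIsomorphicCheFockReps} (absorbing a sign-reversal symmetry), deduce $F_\lambda=F_{\lambda'}$ from Lemma~\ref{lem:SubdivisOffDiagKernelConstant}, and transfer the skein action through Lemma~\ref{lem:SubdivisSkein}. Your extra bookkeeping with the surjections $I$, $J_\lambda$, $J_{\lambda'}$ only makes explicit what the paper leaves implicit when it restricts to links in $S_{\lambda'}\times[0,1]$, so there is no substantive difference.
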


\begin{proof} After pre-composition with the action of a sign-reversal symmetry of $r\in \RR(S)$ on $\ZZ(
\lambda)$, Lemma~\ref{lem:SubdivIsomorphicCheFockReps} provides an isomorphism between the two representations  $\mu_{\lambda} \colon \ZZ(\lambda) \to \End({E_\lambda})$ and $\bar \mu_\lambda \colon \ZZ(\lambda) \to \End(F_{v_0}')$. Note that this modification of $\mu_\lambda$ does not change its total off-diagonal kernel $F_\lambda$, as a sign-reversal symmetry respects each off-diagonal term $Q_v\in \ZZ(\lambda)$. 

As in the beginning of the proof of Lemma~\ref{lem:SubdivisOffDiagKernelConstant}, we can arrange without loss of generality that this isomorphism is the identity, so that $\mu_\lambda=\bar\mu_\lambda$. Under these conditions, we want to prove that $F_\lambda = F_{\lambda'}$, and that $\check\rho_\lambda = \check\rho_{\lambda'}$.

We first compare the two total off-diagonal kernels $F_\lambda $ and $ F_{\lambda'}$. Lemma~\ref{lem:SubdivisOffDiagKernelConstant} shows that $F_v =  F_{v}' \cap F_{v_0}'$  for every vertex $v$ of $\lambda$. Then
$$
F_\lambda = \bigcap_{v \in V_\lambda}F_v = \bigcap_{v \in V_\lambda} (F_{v}' \cap F_{v_0}') 
= F_{v_0}' \cap  \bigcap_{v \in V_\lambda} F_{v}' = \bigcap_{v' \in V_{\lambda'}} F_{v'}' = F_{\lambda'}.
$$

Also, by our assumption that the isomorphism between $\mu_\lambda$ and $\bar\mu_\lambda$  is the identity, $E_\lambda = F_{v_0}' \subset E_{\lambda'}$. For every framed link $K \subset S_{\lambda'} \times [0,1]$, the fact that $\mu_\lambda = \bar\mu_\lambda$ and the definition of $\bar\mu_\lambda$ imply that 
$$
\rho_\lambda \bigl( [K] \bigr) = \mu_{\lambda} \circ \Tr_{\lambda}^\omega  \bigl( [K] \bigr) = 
\mu_{\lambda'} \circ \Phi \circ \Tr_{\lambda}^\omega  \bigl( [K] \bigr) _{|F_{v_0}'} = \rho_{\lambda'} \bigl( [K] \bigr)_{|F_{v_0}'},
$$
where the last equality is provided by  Lemma~\ref{lem:SubdivisSkein}, and where we use  the same notation for the skeins $[K] \in \SSS(S_{\lambda'})$ and $[K] = I \bigl( [K] \bigr) \in \SSS(S_\lambda)$. 

In particular, since $\rho_{\lambda'} \bigl( [K] \bigr)\in \End(E_{\lambda'})$ respects the total off-diagonal kernel $F_{\lambda'}$ by hypothesis, then $\rho_\lambda \bigl( [K] \bigr) = \rho_{\lambda'} \bigl( [K] \bigr)_{|F_{v_0}'}$ respects $F_\lambda$ since $F_\lambda = F_{\lambda'} \subset F_{v_0}' \subset E_{\lambda'}$. 

The same equality $\rho_\lambda \bigl( [K] \bigr) = \rho_{\lambda'} \bigl( [K] \bigr)_{|F_{v_0}'}$ shows that $\rho_\lambda \bigl( [K] \bigr)_{|F_\lambda} = \rho_{\lambda'} \bigl( [K] \bigr)_{|F_\lambda}$ since $F_\lambda  \subset F_{v_0}' $. Therefore,  if $K$, $K' \subset S_{\lambda'} \times [0,1]$ are isotopic in $S\times [0,1]$, 
$$
\rho_\lambda \bigl( [K] \bigr)_{|F_\lambda} = \rho_{\lambda'} \bigl( [K] \bigr)_{|F_\lambda}
= \rho_{\lambda'} \bigl( [K'] \bigr)_{|F_\lambda} = \rho_{\lambda} \bigl( [K'] \bigr)_{|F_\lambda}
$$
where the second equality comes from the hypothesis that $\rho_{\lambda'}$ induces a representation $\check\rho_{\lambda'} \colon \SSS(S) \to \End(F_{\lambda'})$ and the fact that $F_\lambda = F_{\lambda'}$. As a consequence, $\rho_{\lambda}$ induces a representation $\check\rho_{\lambda} \colon \SSS(S) \to \End(F_{\lambda})$. 

Finally, the properties that $F_\lambda = F_{\lambda'}$ and $\rho_\lambda \bigl( [K] \bigr)_{|F_v} = \rho_{\lambda'} \bigl( [K] \bigr)_{|F_v}$ show that  $\check\rho_{\lambda}=\check\rho_{\lambda'}$.
\end{proof}

\subsection{Diagonal exchanges} 
\label{subsect:DiagonalExchanges}
Diagonal exchanges (also called flips) are triangulation moves that occur in many different contexts. The arguments in this section are very similar to those used for earlier results in quantum Teichm\"uller theory \cite{CheFoc1, CheFoc2, BonLiu, Liu}. In particular, this section is conceptually and technically much simpler than the previous one. 

Let $\lambda$ and $\lambda'$ be two triangulations of $S$ which have the same vertices, and which differ only in one edge. We can index the edges of $\lambda$ as $e_1$, $e_2$, \dots, $e_n$, and the edges of $\lambda'$ as $e_1'$, $e_2'$, \dots, $e_n'$ in such a way that $e_i = e_i'$ when $i\geq 2$. Then, the two faces of $\lambda$ containing the edge $e_1$ form a ``square'' $Q$ as in Figure~\ref{fig:DiagExchange}, and $e_1'$ is the other diagonal of the square $Q$. In this case, we say that $\lambda$ and $\lambda'$ differ by a \emph{diagonal exchange}.

\begin{figure}[htbp]
\vskip 5pt
\SetLabels
( .45* .55) $e_1 $ \\
(.5 *1.05 ) $ e_2$ \\
( 1.1*.5 ) $e_3 $ \\
(.5 * -.1) $e_4 $ \\
( -.1*.5 ) $e_5$ \\
(.5 *-.3 ) The triangulation $ \lambda$ \\
\endSetLabels
\centerline{\AffixLabels{\includegraphics{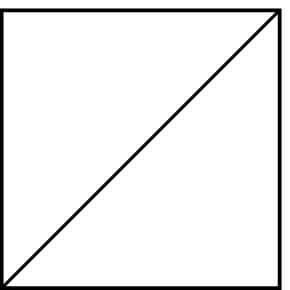}}
\hskip 60pt
\SetLabels
( .55* .55) $e_1' $ \\
(.5 *1.05 ) $ e_2'$ \\
( 1.1*.5 ) $e_3' $ \\
(.5 * -.1) $e_4' $ \\
( -.1*.5 ) $e_5' $ \\
(.5 *-.3 ) The triangulation $ \lambda'$ \\
\endSetLabels
\AffixLabels{\rotatebox{90}{\includegraphics{DiagExchange.eps}}}}
\vskip 20pt
\caption{}
\label{fig:DiagExchange}
\end{figure}

Let $\xi \colon \widetilde V_\lambda \to \CP$ be a $\lambda$--enhancement for the homomorphism  $ r \colon \pi_1(S) \to \SL(\C)$. 

We assume that the following conditions are satisfied:
\begin{enumerate}
\item The $\lambda$--enhancement $\xi$ is also a $\lambda'$--enhancement for $r$. Since the triangulations $\lambda$ and $\lambda'$ have the same vertex sets $V_\lambda =V_{\lambda'} \subset S$, this just means that $\xi \colon \widetilde V_\lambda \to \CP$ assigns distinct values  to the  endpoints of an arbitrary  lift of $e_1'$.

\item The four sides of the square $Q$, formed by the two faces of $\lambda$ containing the edge $e_1$, correspond to distinct edges of $\lambda$.
\end{enumerate}

The second condition is not essential, but will simplify our exposition by dispensing us from the need to consider many cases,  as was required in \cite{BonLiu, Liu}. Note that we are allowing identifications between the corners of $Q$ which, for instance, could very well correspond to the same vertex of $\lambda$. The first condition is really critical.

This second condition enables us to index the edges of $\lambda$ and $\lambda'$ so that the sides of the square $Q$ are $e_2=e_2'$, $e_3=e_3'$, $e_4=e_4'$, $e_5=e_5'$, as in Figure~\ref{fig:DiagExchange}.

In \cite[\S6]{Hiatt}, Chris Hiatt constructs a natural isomorphism $\Theta_{\lambda\lambda'} \colon \widehat{\mathcal Z}^\omega(\lambda') \to  \widehat{\mathcal Z}^\omega(\lambda)$ between the fraction algebras $\widehat{\mathcal Z}^\omega(\lambda') $ and $\widehat{\mathcal Z}^\omega(\lambda) $ of the balanced Chekhov-Fock algebras $\ZZ(\lambda') $ and $\ZZ(\lambda) $.  The elements of $\widehat{\mathcal Z}^\omega(\lambda) $ are formal fractions $UV^{-1}$ with $U$, $V\in \ZZ(\lambda)$ and $V\neq 0$, and are manipulated with the usual rules for fractions (except that the noncommutative context can greatly complicate computations, in particular when one needs to reduce two fractions to a common denominator in order to add them). 

The homomorphism $\Theta_{\lambda\lambda'} \colon \widehat{\mathcal Z}^\omega(\lambda') \to  \widehat{\mathcal Z}^\omega(\lambda)$ is defined as follows. Because of the exponent parity condition defining the balanced Chekhov-Fock algebra, every monomial $Z'$ of $\ZZ(\lambda')$ can be uniquely written as 
$$ Z'= (Z_1'Z_2'Z_4')^{k_1} (Z_2'Z_3')^{k_2} (Z_4'Z_5')^{k_3} Z_2^{\prime\, 2k_4} Z_4^{\prime\, 2k_5} B'$$
for exponents $k_i\in \Z$ and for a monomial $B'$ involving only generators $Z_i'$ with $i>5$.  Then $\Theta_{\lambda\lambda'} $ is uniquely determined by the property that 
\begin{multline*}
\Theta_{\lambda\lambda'} (Z') = \bigl( \omega^4 Z_1Z_2Z_4 +  Z_1^{-1} Z_2Z_4 \bigr)^{k_1} (\omega Z_1Z_2Z_3) ^{k_2} (\omega Z_1Z_4Z_5)^{k_3}\\
 \bigl(Z_2^2 + \omega^4 Z_1^2 Z_2^2\bigr)^{k_4} \bigl(Z_4^2 + \omega^4 Z_1^2 Z_4^2\bigr)^{k_5} B
\end{multline*}
where $B$ is obtained from $B'$ by replacing each generator $Z_i'$ with $i>5$ by $Z_i$. The fact that this really defines an algebra  homomorphism  $\Theta_{\lambda\lambda'} \colon \widehat{\mathcal Z}^\omega(\lambda') \to  \widehat{\mathcal Z}^\omega(\lambda)$ is proved in \cite{Hiatt}. 

\begin{lem}
\label{lem:CheFockCoordinateChanges}
\begin{align*}
 \Theta_{\lambda\lambda'}(Z_1^{\prime\, 2} )&= Z_1^{-2}  {}\kern -2cm{}
 &
\Theta_{\lambda\lambda'}(Z_2^{\prime\, 2}) &=  \bigl(1+ \omega^4 Z_1^2 \bigr) Z_2^2 
\\
\Theta_{\lambda\lambda'}(Z_3^{\prime\, 2} )&= \bigl(1+ \omega^4 Z_1^{-2} \bigr)^{-1}Z_3^2 
&
\Theta_{\lambda\lambda'}(Z_4^{\prime\, 2}) &=  \bigl(1+ \omega^4 Z_1^2 \bigr) Z_4^2 
\\
\Theta_{\lambda\lambda'}(Z_5^{\prime\, 2} )&= \bigl(1+ \omega^4 Z_1^{-2} \bigr)^{-1}Z_5^2 
&
\text{and } \Theta_{\lambda\lambda'}(Z_i^{\prime\, 2}) &= Z_i^2 \text{ for every } i>5. 
\end{align*}

\end{lem}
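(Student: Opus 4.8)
The plan is to compute $\Theta_{\lambda\lambda'}$ on each $Z_i^{\prime\,2}$ by writing that monomial in the canonical form
$$ Z'= (Z_1'Z_2'Z_4')^{k_1} (Z_2'Z_3')^{k_2} (Z_4'Z_5')^{k_3} Z_2^{\prime\, 2k_4} Z_4^{\prime\, 2k_5} B'$$
and applying the defining formula. The cases $i>5$ are immediate, since then $Z_i^{\prime\,2}$ is already in canonical form with only the $B'$-part nonzero, so $\Theta_{\lambda\lambda'}(Z_i^{\prime\,2})=Z_i^2$. For $i=1$, I would note that $Z_1^{\prime\,2} = (Z_1'Z_2'Z_4')^1(Z_2'Z_3')^{-1}(Z_4'Z_5')^{-1}Z_2^{\prime\,2}Z_4^{\prime\,2} Z_3^{\prime\,2}Z_5^{\prime\,2}$ up to a Weyl-ordering scalar — more cleanly, I would exploit that $Z_1^{\prime\,2}$ times the product of the four side-generators (with appropriate signs) is central and forces the exponents, so that after the reordering dictated by the skew-commutation relations one gets $\Theta_{\lambda\lambda'}(Z_1^{\prime\,2})=Z_1^{-2}$; this is the standard "the new diagonal is the reciprocal of the old" statement from quantum Teichm\"uller theory and is essentially forced by the first factor $\omega^4 Z_1Z_2Z_4 + Z_1^{-1}Z_2Z_4$ in the definition of $\Theta_{\lambda\lambda'}$.

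For $i=2,3,4,5$ the computation is genuinely a bookkeeping exercise with the defining formula. Take $i=2$: one has $Z_2^{\prime\,2} = (Z_2'Z_3')^1 (Z_2'Z_3')^{-1}Z_2^{\prime\,2}$, i.e. $Z_2^{\prime\,2}$ is the monomial with $k_4=1$, or more directly one writes $Z_2^{\prime\,4}=(Z_2'Z_3')(Z_2'Z_4')\cdots$ — cleaner is to observe $Z_2^{\prime\,2}$ itself sits in canonical form with $k_4=1$ and all other exponents $0$ (after absorbing a Weyl scalar). Then the defining formula gives $\Theta_{\lambda\lambda'}(Z_2^{\prime\,2}) = Z_2^2 + \omega^4 Z_1^2 Z_2^2 = (1+\omega^4 Z_1^2)Z_2^2$. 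The case $i=4$ is identical with the factor $(Z_4^2+\omega^4 Z_1^2 Z_4^2)$ in place of the $Z_2$ one, giving $(1+\omega^4 Z_1^2)Z_4^2$. For $i=3$ and $i=5$ one uses that $Z_3^{\prime\,2}$ is expressed as $(Z_2'Z_3')^1(Z_2'Z_3')$-type combinations divided by $Z_2^{\prime\,2}$; concretely $Z_3^{\prime\,2} = (Z_2'Z_3')^2 Z_2^{\prime\,-2}$ up to a scalar, so $\Theta_{\lambda\lambda'}(Z_3^{\prime\,2}) = (\omega Z_1Z_2Z_3)^2 \bigl((1+\omega^4 Z_1^2)Z_2^2\bigr)^{-1}$, and simplifying the scalar together with $Z_1^2 Z_2^2 Z_3^2 (Z_1^2 Z_2^2)^{-1}$ and the skew-commutation of $Z_1$ past $Z_2$ yields $(1+\omega^4 Z_1^{-2})^{-1}Z_3^2$; symmetrically for $Z_5^{\prime\,2}$.

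The main obstacle is keeping the Weyl-ordering scalars and skew-commutation factors straight: the monomials $Z_i^{\prime\,2}$ must first be rewritten in \emph{exactly} the canonical ordered form appearing in the definition of $\Theta_{\lambda\lambda'}$, and each time one commutes a $Z_1^{\pm1}$ past a $Z_2$ or $Z_4$ (using $Z_1Z_2 = \omega^{2\sigma_{12}}Z_2Z_1$ with $\sigma_{12}=\pm1$ depending on orientations read off Figure~\ref{fig:DiagExchange}), a power of $\omega$ appears that must cancel against the $\omega^4$'s planted in the definition; since $\omega^2=A^{-1}$ and $\omega^{4N}=1$, these do cancel, but verifying it requires care. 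Everything else — checking that the displayed formulas are consistent with $\Theta_{\lambda\lambda'}$ being an algebra homomorphism, and that, e.g., $\Theta_{\lambda\lambda'}(Z_1^{\prime\,2})\Theta_{\lambda\lambda'}(Z_3^{\prime\,2})$ reproduces the correct skew-commutation in $\widehat{\mathcal Z}^\omega(\lambda)$ — is then a routine consistency check that I would state as a remark rather than belabor. The key conceptual point, which makes all the $1+\omega^4 Z_1^{\pm2}$ factors appear in the expected pattern, is that under a diagonal exchange the shear coordinate of the two "far" edges ($e_3,e_5$) gets divided by $1+\omega^4 Z_1^{-2}$ and the two "near" edges ($e_2,e_4$) gets multiplied by $1+\omega^4 Z_1^2$, which is exactly the quantum analogue of the classical cross-ratio transformation under a flip.
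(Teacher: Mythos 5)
Your strategy coincides with the paper's: express each $Z_i^{\prime\,2}$ in the canonical form $(Z_1'Z_2'Z_4')^{k_1}(Z_2'Z_3')^{k_2}(Z_4'Z_5')^{k_3}Z_2^{\prime\,2k_4}Z_4^{\prime\,2k_5}B'$ and apply the defining formula for $\Theta_{\lambda\lambda'}$, keeping track of the powers of $\omega$. Your cases $i=2,4$ (where $k_4$ or $k_5$ equals $1$ and nothing else appears), $i>5$, and $i=3,5$ (via $Z_3^{\prime\,2}=\omega^{\ast}(Z_2'Z_3')^2Z_2^{\prime\,-2}$ and its mirror image for $Z_5^{\prime\,2}$) are exactly the paper's computations, at a comparable level of detail.

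The case $i=1$ as you wrote it does not work. The expression $(Z_1'Z_2'Z_4')(Z_2'Z_3')^{-1}(Z_4'Z_5')^{-1}Z_2^{\prime\,2}Z_4^{\prime\,2}Z_3^{\prime\,2}Z_5^{\prime\,2}$ is not $Z_1^{\prime\,2}$ up to a scalar: summing exponents it equals $Z_1'Z_2^{\prime\,2}Z_3'Z_4^{\prime\,2}Z_5'$ up to a power of $\omega$, and in any case stray factors $Z_3^{\prime\,2}$, $Z_5^{\prime\,2}$ are not permitted in the canonical form (only $Z_2^{\prime\,2k_4}$, $Z_4^{\prime\,2k_5}$ and a $B'$ in the generators $Z_i'$ with $i>5$ may occur besides the three blocks). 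Your fallback remark that centrality of some product ``forces the exponents'' is not a substitute for the computation. The correct decomposition, which is the one the paper uses, is forced by matching exponents: $k_1=2$, $k_2=k_3=0$, $k_4=k_5=-1$, that is $Z_1^{\prime\,2}=\omega^{\ast}\,(Z_1'Z_2'Z_4')^2\,Z_2^{\prime\,-2}\,Z_4^{\prime\,-2}$. Applying $\Theta_{\lambda\lambda'}$ then gives, up to a power of $\omega$, $\bigl((1+\omega^4Z_1^2)Z_1^{-1}Z_2Z_4\bigr)^2\bigl((1+\omega^4Z_1^2)Z_2^2\bigr)^{-1}\bigl((1+\omega^4Z_1^2)Z_4^2\bigr)^{-1}$; commuting the factors of type $1+\omega^{4k}Z_1^2$ together (these commutations only shift the power of $\omega$ inside them) they cancel in pairs and the surviving monomial reduces to $Z_1^{-2}$, as claimed. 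With this replacement for $i=1$, your argument is the paper's proof.
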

\begin{proof}
This is a simple computation based on the formula defining $\Theta_{\lambda\lambda'}$: Use the property that $Z_1^{\prime\, 2} = \omega^{a+4} (Z_1'Z_2'Z_4')^2 Z_2^{\prime\, -2} Z_4^{\prime\, -2}$, $Z_3^{\prime\, 2}= \omega^{b-6}(Z_2'Z_3')^2 Z_2^{\prime\, -2}$ and $Z_5^{\prime\, 2}= \omega^{c-6}(Z_4'Z_5')^2 Z_4^{\prime\, -2}$, where the integers $a$, $b$, $c \in \Z$ are determined by the faces of $\lambda$ that are adjacent to several faces of the square $Q$ and are not contained in $Q$ (and contribute additional terms to the skew-commutativity relations between the $Z_i'$, and between the $Z_i$). See also \cite[\S 6]{Hiatt}, which explains that $\Theta_{\lambda\lambda'}$ was designed as a `square root' of the Chekhov-Fock coordinate change of \cite{CheFoc1, CheFoc2, BonLiu, Liu}. 
\end{proof}

Let $\mu_\lambda \colon \ZZ(\lambda)\to \End({E_\lambda})$ and $\mu_{\lambda'} \colon \ZZ(\lambda') \to \End(E_{\lambda'})$  be the representations associated to the enhanced character $(r, \xi)$ by Proposition~\ref{prop:ConstructRepCheFock}. We would like to consider the representation $\mu_{\lambda} \circ \Theta_{\lambda\lambda'} \colon \ZZ(\lambda') \to \End({E_\lambda})$. But this composition is not immediately defined, as $\Theta_{\lambda\lambda'}$ is valued in the fraction algebra $ \widehat{\mathcal Z}^\omega(\lambda)$, whereas $\mu_\lambda$ is only defined on the Chekhov-Fock algebra $\ZZ(\lambda) \subset  \widehat{\mathcal Z}^\omega(\lambda)$. 

\begin{lem}
\label{lem:CrossratiosDiagExchange}
There is a well-defined representation $\mu_{\lambda} \circ \Theta_{\lambda\lambda'} \colon \ZZ(\lambda') \to \End({E_\lambda})$, defined as follows.
\begin{enumerate}
\item For every $Z'\in \ZZ(\lambda')$, there exists $U_1$, $V_1$, $U_2$, $V_2\in \ZZ(\lambda)$ such that $$\Theta_{\lambda\lambda'}(Z')=U_1V_1^{-1}=V_2^{-1}U_2\in \widehat{\mathcal Z}^\omega(\lambda)$$ and $\mu_\lambda(V_1)$ and $\mu_\lambda(V_2) $ are invertible in $\End({E_\lambda})$. 
\item For every decomposition $\Theta_{\lambda\lambda'}(Z')=U_1V_1^{-1}=V_2^{-1}U_2$ as above,
$$
\mu_{\lambda} \circ \Theta_{\lambda\lambda'} (Z') = \mu_\lambda(U_1)\mu_\lambda(V_1)^{-1}=\mu_\lambda(V_2)^{-1} \mu_\lambda(U_2) \in \End({E_\lambda}). 
$$
\end{enumerate}
\end{lem}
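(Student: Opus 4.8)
The plan is to reduce everything to the explicit computation of $\Theta_{\lambda\lambda'}$ on monomials of $\ZZ(\lambda')$, using the fact that $\mu_\lambda(Z_1^2)\in\End(E_\lambda)$ is invertible and that $\mu_\lambda(1+\omega^4 Z_1^2)$, $\mu_\lambda(1+\omega^4 Z_1^{-2})$ are invertible. The invertibility of $\mu_\lambda(Z_1^2)$ is immediate since $Z_1^2$ is a unit in $\ZZ(\lambda)$ and $\mu_\lambda$ is a representation, hence sends units to units; so $\mu_\lambda(Z_1^{2N})=x_1\,\Id_{E_\lambda}$ with $x_1\neq 0$ gives $\mu_\lambda(Z_1^{-2})=x_1^{-1}\mu_\lambda(Z_1^{2(N-1)})$ concretely. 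The invertibility of $\mu_\lambda(1+\omega^4 Z_1^2)$ requires a separate observation: since $Z_1^2$ and $Z_1^{2N}$ generate a commutative subalgebra and $\mu_\lambda(Z_1^{2N})=x_1\,\Id$, the endomorphism $\mu_\lambda(Z_1^2)$ is diagonalizable with eigenvalues the $2N$-th (really $N$-th, as $\mu_\lambda(Z_1^2)^N = x_1\,\Id$) roots of $x_1$; its eigenvalues are thus $\zeta x_1^{1/N}$ with $\zeta^N=1$, and $\mu_\lambda(1+\omega^4 Z_1^2)$ is invertible provided $-\omega^{-4}$ is not such an eigenvalue, i.e. provided $(-\omega^{-4})^N=-\omega^{-4N}=-1\neq x_1$. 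Here I will invoke Lemma~\ref{lem:ClassicalOffDiagonal} (or rather the geometric meaning of the crossratio weights: $x_1\neq -1$ precisely because the $\lambda$--enhancement assigns four \emph{distinct} points of $\CP$ to the relevant vertices, so the crossratio is not $1$). The same argument handles $1+\omega^4 Z_1^{-2}$.

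The main steps, in order. \textbf{Step 1.} Record that $\mu_\lambda(Z_1^2)$, $\mu_\lambda(1+\omega^4 Z_1^2)$ and $\mu_\lambda(1+\omega^4 Z_1^{-2})$ are invertible in $\End(E_\lambda)$, by the eigenvalue argument above together with $x_1\notin\{0,-1\}$. \textbf{Step 2.} For a monomial $Z'=(Z_1'Z_2'Z_4')^{k_1}(Z_2'Z_3')^{k_2}(Z_4'Z_5')^{k_3}Z_2^{\prime\,2k_4}Z_4^{\prime\,2k_5}B'$ of $\ZZ(\lambda')$, compute $\Theta_{\lambda\lambda'}(Z')$ from the defining formula: it is a product of factors $(\omega^4Z_1Z_2Z_4+Z_1^{-1}Z_2Z_4)^{\pm 1}$, $(\omega Z_1Z_2Z_3)^{\pm1}$, $(\omega Z_1Z_4Z_5)^{\pm1}$, $(Z_2^2+\omega^4Z_1^2Z_2^2)^{\pm1}$, $(Z_4^2+\omega^4Z_1^2Z_4^2)^{\pm1}$ and $B^{\pm1}$. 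Each such factor, or its inverse, can be written as $PQ^{-1}$ with $P\in\ZZ(\lambda)$ and $Q$ a product of powers of $Z_1^2$, $Z_2^2$, $Z_4^2$ and $(1+\omega^4Z_1^2)$ and their $B$-analogues—all of which have invertible image under $\mu_\lambda$ by Step 1 (units go to units). Using the skew-commutation relations in $\ZZ(\lambda)$ to push all the ``denominator'' factors to one side, one gets $\Theta_{\lambda\lambda'}(Z')=U_1V_1^{-1}$ with $V_1$ a monomial in those invertible-image elements; pushing them to the other side gives $\Theta_{\lambda\lambda'}(Z')=V_2^{-1}U_2$ likewise. This proves (1) for monomials, hence by linearity (clearing a common denominator, again using that products of these special elements have invertible image) for all $Z'\in\ZZ(\lambda')$. \textbf{Step 3.} Well-definedness, i.e. (2): if $U_1V_1^{-1}=U_1'V_1'^{-1}$ with all four denominators having invertible image, then $U_1V_1'=U_1'V_1$ in $\ZZ(\lambda)$ (after reducing to a common right denominator inside the fraction algebra, which is an Ore localization—here I invoke that $\ZZ(\lambda)$ is an Ore domain, as used implicitly in \cite{Hiatt}), so applying the algebra homomorphism $\mu_\lambda$ gives $\mu_\lambda(U_1)\mu_\lambda(V_1')=\mu_\lambda(U_1')\mu_\lambda(V_1)$, and multiplying by the inverses of $\mu_\lambda(V_1)$ and $\mu_\lambda(V_1')$ yields $\mu_\lambda(U_1)\mu_\lambda(V_1)^{-1}=\mu_\lambda(U_1')\mu_\lambda(V_1')^{-1}$. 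The equality with the left-fraction form $\mu_\lambda(V_2)^{-1}\mu_\lambda(U_2)$ is the same computation. \textbf{Step 4.} The assignment $Z'\mapsto\mu_\lambda\circ\Theta_{\lambda\lambda'}(Z')$ is then an algebra homomorphism $\ZZ(\lambda')\to\End(E_\lambda)$ because both $\mu_\lambda$ and $\Theta_{\lambda\lambda'}$ are, and because multiplication of fractions is respected by Step 3.

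\textbf{The main obstacle} is Step 1: identifying a \emph{finite} list of elements of $\ZZ(\lambda)$ whose image under $\mu_\lambda$ is guaranteed invertible, and which is closed enough under the skew-commutation relations that every denominator appearing in $\Theta_{\lambda\lambda'}(Z')$ lies in the multiplicative set they generate. The units $Z_i^2$ are free; the subtle point is $1+\omega^4 Z_1^2$ and $1+\omega^4 Z_1^{-2}$, whose invertibility under $\mu_\lambda$ hinges on $x_1\neq -1$—which is exactly where the geometry of the enhancement (distinctness of the four crossratio points, as in the definition of $\lambda$--enhancement in \S\ref{subsect:Enhancements}) enters. I expect the paper to phrase this as: $x_1=-1$ would force the four points $\xi(\widetilde v_1^+),\xi(\widetilde v_1^-),\xi(\widetilde v_1^{\,\mathrm{left}}),\xi(\widetilde v_1^{\,\mathrm{right}})$ into a degenerate (harmonic, resp. collinear-with-a-coincidence) position incompatible with Condition~(1) of Lemma~\ref{lem:CrossratiosDiagExchange}, namely that $\xi$ assigns distinct values to the endpoints of a lift of $e_1'$. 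Everything else is bookkeeping with the Ore condition and the explicit formula for $\Theta_{\lambda\lambda'}$.
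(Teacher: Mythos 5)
Your proposal follows essentially the same route as the paper's proof: write $\Theta_{\lambda\lambda'}(Z')$ for each monomial as a fraction whose denominator is a product of units of $\ZZ(\lambda)$ and terms $1+\omega^{4k}Z_1^2$, and deduce invertibility of their $\mu_\lambda$-images from diagonalizability of $\mu_\lambda(Z_1^2)$ (eigenvalues $N$-th roots of $x_1$, $N$ odd) together with $x_1\neq -1$ --- which the paper obtains, as you ultimately do, directly from the standing assumption that $\xi$ is also a $\lambda'$--enhancement (distinct $\xi$-values at the endpoints of lifts of $e_1'$), not from Lemma~\ref{lem:ClassicalOffDiagonal}. The only bookkeeping difference is that pushing denominators past monomials produces the whole family $1+\omega^{4k}Z_1^2$, $k\in\Z$, which your eigenvalue argument covers verbatim; the common-denominator and well-definedness steps are the same elementary manipulations as in the paper, since all denominators are polynomials in $Z_1^2$ and hence commute.
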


\begin{proof}
When $Z'$ is a monomial $ (Z_1'Z_2'Z_4')^{k_1} (Z_2'Z_3')^{k_2} (Z_4'Z_5')^{k_3} Z_3^{\prime\, 2k_4} Z_5^{\prime\, 2k_5} B'$, the non-monomial terms occurring in the definition of $\Theta_{\lambda\lambda'}(Z')$ can be written as
\begin{align*}
 \omega^4 Z_1Z_2Z_4 +  Z_1^{-1} Z_2Z_4 &= (1+\omega^4 Z_1^2) Z_1^{-1}Z_2Z_4\\
 Z_2^2 + \omega^4 Z_1^2 Z_2^2 &= (1+\omega^4 Z_1^2) Z_2^2 \\
 Z_4^2 + \omega^4 Z_1^2 Z_4^2 & =  (1+\omega^4 Z_1^2) Z_4^2
\end{align*}
The skew commutativity properties then enable us to write $\Theta_{\lambda\lambda'}(Z')=U_1V_1^{-1}=V_2^{-1}U_2$ for some $U_1$, $V_1$, $U_2$, $V_2\in \ZZ(\lambda)$ where the denominators $V_1$ and $V_2$ are products of terms $1+\omega^{4k}Z_1^2$ with $k\in \Z$.

The same holds for any $Z'\in \ZZ(\lambda')$ by decomposing $Z'$ as a sum of monomials, applying the above argument to each monomial, and reducing to a common denominator. (The reduction to a common denominator is here trivial, because all denominators commute with each other.)

By definition of the representation $\mu_\lambda$ in Proposition~\ref{prop:ConstructRepCheFock}, $\mu_\lambda(Z_1^2)^N = x_1\, \Id_{E_\lambda}$ where $x_1$ is the crossratio weight associated to the edge $e_1$ of $\lambda$ by the enhancement $\xi$. In particular, $\mu_\lambda(Z_1^2)$ is diagonalizable and its eigenvalues are $N$--roots of $x_1$. Also, $x_1$ is different from $-1$ because $\xi$ sends the end points of each lift of the edge $e_1'$ to different points of $\mathbb{CP}^1$. Because $N$ is odd, it follows that the eigenvalues of $\mu_\lambda(Z_1^2)$ are never of the form $-\omega^{-4k}$ with $k\in \Z$, and therefore that $\mu_\lambda(1+\omega^{4k}Z_1^2)$ is invertible for each such $k$. 

This proves that the image of each $Z'\in \ZZ(\lambda)$ under $\Theta_{\lambda\lambda'}$ can be decomposed as
 $$\Theta_{\lambda\lambda'}(Z')=U_1V_1^{-1}=V_2^{-1}U_2\in \widehat{\mathcal Z}^\omega(\lambda)$$
for some  $U_1$, $V_1$, $U_2$, $V_2\in \ZZ(\lambda)$ with $\mu_\lambda(V_1)$ and $\mu_\lambda(V_2) $  invertible in $\End({E_\lambda})$. 

An elementary algebraic manipulation shows that $\mu_\lambda(U_1)\mu_\lambda(V_1)^{-1}$ is equal to $\mu_\lambda(V_2)^{-1} \mu_\lambda(U_2) $ in $ \End({E_\lambda}) 
$, and that this endomorphism is independent of the above  decomposition. We can therefore define a map $\mu_{\lambda} \circ \Theta_{\lambda\lambda'} \colon \ZZ(\lambda') \to \End({E_\lambda})$ by the property that
$$
\mu_{\lambda} \circ \Theta_{\lambda\lambda'} (Z') = \mu_\lambda(U_1)\mu_\lambda(V_1)^{-1}=\mu_\lambda(V_2)^{-1} \mu_\lambda(U_2).
$$
for every such $Z' \in \ZZ(\lambda')$. 

The property that the map  $\mu_{\lambda} \circ \Theta_{\lambda\lambda'} \colon \ZZ(\lambda') \to \End({E_\lambda})$ is an algebra homomorphism easily follows from a couple more easy algebraic manipulations. 
\end{proof}

\begin{lem}
\label{lem:DiagExRespectsCentralElts}
For a vertex $v$ of the triangulations $\lambda$ and $\lambda'$, consider its associated central elements $H_v\in \ZZ(\lambda)$ and $H_v'\in \ZZ(\lambda')$.  Then,  the triangulation change homomorphism $\Theta_{\lambda\lambda'} \colon \widehat{\mathcal Z}^\omega(\lambda') \to \widehat{\mathcal Z}^\omega(\lambda)$ sends  $H_v'\in \ZZ(\lambda')$ to $H_v\in \ZZ(\lambda)$.
\end{lem}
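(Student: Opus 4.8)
The plan is to compute $\Theta_{\lambda\lambda'}(H_v')$ directly from its defining formula, reducing first to a statement local to the square $Q$ and then analyzing the corners of $Q$ case by case. Throughout, write $H_v=[\prod_i Z_i^{\kappa_i}]\in\ZZ(\lambda)$ and $H_v'=[\prod_i Z_i^{\prime\,\kappa_i'}]\in\ZZ(\lambda')$, where $\kappa_i$ (resp.\ $\kappa_i'$) is the number of ends of $e_i$ (resp.\ $e_i'$) incident to $v$; note $\kappa_i=\kappa_i'$ for $i\geq2$ since $e_i=e_i'$ there.

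First I would dispose of the vertices $v$ that are not corners of $Q$. For such a $v$ we have $\kappa_1=\kappa_1'=\kappa_2=\cdots=\kappa_5=0$, so $H_v$ and $H_v'$ involve only the generators of index $>5$. The canonical decomposition of $H_v'$ used to define $\Theta_{\lambda\lambda'}$ then has all of $k_1,\dots,k_5$ equal to $0$, and the defining formula simply substitutes $Z_i'\mapsto Z_i$. Since the diagonal exchange leaves the cyclic order of the edges of index $>5$ around every vertex unchanged, the skew-commutation exponents $\sigma_{ij}$ with $i,j>5$ coincide for $\lambda$ and $\lambda'$; hence this substitution sends $H_v'$ to $H_v$, as wanted.

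Next I would treat a corner $v$ of $Q$. From the canonical decomposition $H_v'=(Z_1'Z_2'Z_4')^{k_1}(Z_2'Z_3')^{k_2}(Z_4'Z_5')^{k_3}Z_2^{\prime\,2k_4}Z_4^{\prime\,2k_5}B'$ one reads off, by comparing exponents of the generators, that $k_1=\kappa_1'$, $k_2=\kappa_3$, $k_3=\kappa_5$, $2k_4=\kappa_2-k_1-k_2$ and $2k_5=\kappa_4-k_1-k_3$. Applying the defining formula of $\Theta_{\lambda\lambda'}$ together with the factorizations $\omega^4Z_1Z_2Z_4+Z_1^{-1}Z_2Z_4=Z_1^{-1}(1+\omega^4Z_1^2)Z_2Z_4$ and $Z_2^2+\omega^4Z_1^2Z_2^2=(1+\omega^4Z_1^2)Z_2^2$ (and the analogue with $Z_4$), one sees that the only ``denominator'' factors produced are powers of $1+\omega^4Z_1^{2}$, with total exponent $k_1+k_4+k_5$. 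This quantity vanishes, because $k_1+k_4+k_5=\frac12(\kappa_2+\kappa_4-\kappa_3-\kappa_5)$ while $\kappa_2+\kappa_4=\kappa_3+\kappa_5$, both sides counting the number of corners of $Q$ that coincide with $v$. One then checks, moving these factors together through the relevant commutation relations, that they cancel exactly, so that $\Theta_{\lambda\lambda'}(H_v')$ is a single monomial; comparing exponents and using Lemma~\ref{lem:QuantumOrderH} (and the bookkeeping in its proof) to pin down the power of $\omega$ identifies it with $H_v$.

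I would organize the corner case by the number $m\in\{1,2,3,4\}$ of (possibly identified) corners of $Q$ that equal $v$, since the exponents $\kappa_2,\dots,\kappa_5$ and the incidences of $e_1$ and $e_1'$ at $v$ depend on $m$; all subcases run identically. I expect the main obstacle to be precisely this last step of noncommutative bookkeeping: tracking the powers of $\omega$ through the Weyl orderings and verifying that the factors $1+\omega^4Z_1^{2}$ — which get shifted to $1+\omega^{4c}Z_1^2$ as they are moved past even powers of the other generators — really do cancel in pairs rather than leaving a residual fraction. This exact cancellation is exactly what makes $\Theta_{\lambda\lambda'}(H_v')$ lie in $\ZZ(\lambda)$, and not merely in $\widehat{\mathcal Z}^\omega(\lambda)$, and equal $H_v$ on the nose. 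Should the direct computation become unwieldy, an alternative is to first prove the squared statement $\Theta_{\lambda\lambda'}(H_v^{\prime\,2})=H_v^2$ using only Lemma~\ref{lem:CheFockCoordinateChanges} — this amounts to saying that $\Theta_{\lambda\lambda'}$ projects to the classical Chekhov--Fock coordinate change of \cite{CheFoc1,CheFoc2,BonLiu,Liu}, which is known to preserve the puncture invariants — and then to note that $\Theta_{\lambda\lambda'}(H_v')$ is a central unit of $\widehat{\mathcal Z}^\omega(\lambda)$ whose square is $H_v^2$, hence equals $\pm H_v$, the sign being fixed by a leading-term comparison.
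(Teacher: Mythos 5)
Your primary route is the same in spirit as the paper's proof---both compute $\Theta_{\lambda\lambda'}(H_v')$ directly from its defining formula---but the step at which you stop (``one then checks \dots\ that they cancel exactly'') is exactly where the content of the lemma lies, and your counting argument does not supply it. Your bookkeeping of exponents is correct: $k_1=\kappa_1'$, $k_2=\kappa_3$, $k_3=\kappa_5$, and $k_1+k_4+k_5=\tfrac12(\kappa_2+\kappa_4-\kappa_3-\kappa_5)=0$ since both $\kappa_2+\kappa_4$ and $\kappa_3+\kappa_5$ count the corners of $Q$ at $v$. But this only says that the factors of type $1+\omega^{4c}Z_1^2$ occur with total exponent zero; because these factors are conjugated to different values of $c$ as they are pushed past the other generators, equal counts do not by themselves force cancellation, and you flag this without resolving it. The paper sidesteps the global bookkeeping by a different decomposition: using Lemma~\ref{lem:QuantumOrderH} it writes $H_v'=\omega^{-u+2}Z_{i_1}'Z_{i_2}'\cdots Z_{i_u}'$ with the edges in counterclockwise order around $v$, observes that each corner of $Q$ at $v$ contributes one of the four blocks $Z_2'Z_3'$, $Z_3'Z_1'Z_4'$, $Z_4'Z_5'$, $Z_5'Z_1'Z_2'$, and computes the image of each block separately; the only cancellations needed are then internal to $Z_3'Z_1'Z_4'=\omega^{a+6}(Z_1'Z_2'Z_4')(Z_2'Z_3')Z_2^{\prime\,-2}$ and $Z_5'Z_1'Z_2'=\omega^{b-6}(Z_1'Z_2'Z_4')(Z_4'Z_5')Z_4^{\prime\,-2}$, where a single factor $1+\omega^4Z_1^2$ coming from $(Z_1'Z_2'Z_4')$ meets a single inverse factor coming from $Z_2^{\prime\,-2}$ or $Z_4^{\prime\,-2}$---a one-step check. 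To complete your version you would essentially have to reproduce this corner-by-corner pairing, so the block decomposition is the missing ingredient rather than an optional convenience.

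Your fallback is a genuinely different idea and can be made to work, but as stated it has two gaps of its own. First, $\Theta_{\lambda\lambda'}(H_v^{\prime\,2})=H_v^2$ is not free: the classical (commutative) statement about shear coordinates around a puncture is not the statement you need, and deducing the quantum identity from Lemma~\ref{lem:CheFockCoordinateChanges} runs into the same shift-matching issue, now with the two flavours $1+\omega^4Z_1^{2}$ and $\bigl(1+\omega^4Z_1^{-2}\bigr)^{-1}$; you would need either that computation or a precise quantum-level citation, taking care not to be circular with the present lemma. Second, the conclusion $\Theta_{\lambda\lambda'}(H_v')=\pm H_v$ (valid, since $\widehat{\mathcal Z}^\omega(\lambda)$ is the fraction division algebra of a quantum torus, so a central element whose square is $H_v^2$ must be $\pm H_v$) leaves a sign that genuinely has to be pinned down: the normalization $\mu_\lambda(H_v)=-\omega^4\,\Id_{E_\lambda}$ is exactly what makes condition~(3) in the proof of Lemma~\ref{lem:DiagExRespectsCheFockReps} match, and since $(-\omega^4)^N=-1$ while $(\omega^4)^N=+1$, the wrong sign is not absorbed by any symmetry of the construction. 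Your ``leading-term comparison'' can be implemented---clear the denominators so the claim becomes an identity between Laurent polynomials in $\ZZ(\lambda)$ and compare the coefficient of the extreme power of $Z_1$---but at present it is only a phrase, whereas the paper's block computation delivers the sign automatically.
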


\begin{proof} If we  index the edges of $\lambda'$ meeting $v$ as $e_{i_1'}$, $e_{i_2'}$, \dots, $e_{i_u'}$ in counterclockwise order around $v$ and if we suitably choose the starting point of this indexing, each corner of the square $Q$ that is equal to $v$ contributes a block $Z_2'Z_3'$, $Z_3'Z_1'Z_4'$, $Z_4'Z_5'$ or $Z_5'Z_1'Z_2'$ to the expression 
$$
H_v' = \omega^{-u+2} Z_{i_1}' Z_{i_2}' \dots Z_{i_u}'
$$
provided by Lemma~\ref{lem:QuantumOrderH}. The formula defining $\Theta_{\lambda\lambda'}$ then show that $\Theta_{\lambda\lambda'}(H_v')$ is obtained from this expression by replacing each block $Z_2'Z_3'$, $Z_3'Z_1'Z_4'$, $Z_4'Z_5'$ or $Z_5'Z_1'Z_2'$ by $\omega^{-1}Z_2Z_1Z_3$, $\omega Z_3Z_4$, $\omega^{-1}Z_4Z_1Z_5$ or $\omega Z_5Z_2$, respectively, and by replacing each $Z_i'$ with $i>5$ by $Z_i$. (The only case requiring an argument is that of the blocks $Z_3'Z_1'Z_4' = \omega^{a+6}(Z_1'Z_2'Z_4')(Z_2'Z_3') Z_2^{\prime\, -2}$ and $Z_5'Z_1'Z_2' = \omega^{b-6}(Z_1'Z_2'Z_4')(Z_4'Z_5') Z_4^{\prime\, -2}$, where $a$, $b\in \Z$ depend on the faces of $\lambda$ that are adjacent to several faces of the square $Q$ and are not contained in $Q$, if any.) The result immediately follows from this computation and from the application of Lemma~\ref{lem:QuantumOrderH} to $H_v$. 
\end{proof}

\begin{lem}
\label{lem:DiagExRespectsCheFockReps}
After pre-composing $\mu_{\lambda'}$ with the action on $\ZZ(\lambda')$ of a sign-reversal symmetry of $r\in \RR(S)$ if necessary, the representations $\mu_{\lambda} \circ \Theta_{\lambda\lambda'} \colon \ZZ(\lambda') \to \End({E_\lambda})$ and  $\mu_{\lambda'} \colon \ZZ(\lambda') \to \End(E_{\lambda'})$ are isomorphic. 
\end{lem}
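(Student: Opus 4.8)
The plan is to use the uniqueness statement in Proposition~\ref{prop:ConstructRepCheFock}, exactly as in the proof of Lemma~\ref{lem:SubdivIsomorphicCheFockReps}. That is, I would show that the representation $\mu_{\lambda} \circ \Theta_{\lambda\lambda'} \colon \ZZ(\lambda') \to \End({E_\lambda})$ satisfies the four characterizing properties (1--4) of Proposition~\ref{prop:ConstructRepCheFock} relative to the $\lambda'$--enhancement $\xi$ (and the associated crossratio weights $x_i'$ on the edges $e_i'$ of $\lambda'$). Property~(1), that $\dim E_\lambda = N^{3g+p_{\lambda'}-3}$, is immediate since $\lambda$ and $\lambda'$ have the same vertex set, so $p_\lambda = p_{\lambda'}$ and $E_\lambda$ already has the correct dimension by construction of $\mu_\lambda$.

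For property~(2), I would compute $\mu_{\lambda} \circ \Theta_{\lambda\lambda'}(Z_i^{\prime\, 2N})$ using Lemma~\ref{lem:CheFockCoordinateChanges}. For $i>5$ this is trivial: $\mu_\lambda(Z_i^{2N}) = x_i\, \Id_{E_\lambda}$ and the enhancement assigns the same weight $x_i' = x_i$ to this edge. For $i=1$, Lemma~\ref{lem:CheFockCoordinateChanges} gives $\Theta_{\lambda\lambda'}(Z_1^{\prime\, 2N}) = Z_1^{-2N}$, so the value is $x_1^{-1}\Id_{E_\lambda}$, and I need the geometric fact (standard for shearbend/crossratio parameters under a flip) that the crossratio weight of the new diagonal $e_1'$ is $x_1' = x_1^{-1}$. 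For $i = 2,3,4,5$, Lemma~\ref{lem:CheFockCoordinateChanges} produces factors like $(1+\omega^{4}Z_1^2)^{\pm1}$; after raising to the $N$--th power and using the quantum binomial formula together with $\mu_\lambda(Z_1^{2N}) = x_1\Id$ (plus $\omega^{4N}=1$), I get $\mu_\lambda\bigl((1+\omega^4 Z_1^2)^N\bigr) = (1+x_1)\Id_{E_\lambda}$, which is invertible since $x_1 \neq -1$ by the enhancement hypothesis on $e_1'$; the resulting scalar then matches $x_i'$ via the classical shearbend transformation law under a diagonal exchange. Property~(3) is Lemma~\ref{lem:DiagExRespectsCentralElts}: $\Theta_{\lambda\lambda'}$ sends $H_v'$ to $H_v$, so $\mu_\lambda \circ \Theta_{\lambda\lambda'}(H_v') = \mu_\lambda(H_v) = -\omega^4\Id_{E_\lambda}$. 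For property~(4), the quantum trace homomorphism is natural under diagonal exchanges: by \cite{BonWon1} (and the compatibility of $\Theta_{\lambda\lambda'}$ with Hiatt's coordinate change), one has $\Theta_{\lambda\lambda'} \circ \Tr_{\lambda'}^\omega = \Tr_\lambda^\omega$ on $\SSS(S_\lambda) = \SSS(S_{\lambda'})$ (the punctured surfaces coincide since the vertex sets agree), so $\mu_\lambda \circ \Theta_{\lambda\lambda'} \circ \Tr_{\lambda'}^\omega = \mu_\lambda \circ \Tr_\lambda^\omega = \rho_\lambda$, which has classical shadow $r$ by Condition~(4) of Proposition~\ref{prop:ConstructRepCheFock}; applying $T_N$ and using that $T_N$ commutes with algebra homomorphisms gives the required identity.

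Having verified (1--4), Proposition~\ref{prop:ConstructRepCheFock} yields that $\mu_{\lambda} \circ \Theta_{\lambda\lambda'}$ is isomorphic to $\mu_{\lambda'}$, up to the possible action of a sign-reversal symmetry of $r \in \RR(S)$ — this accounts for the "after pre-composing $\mu_{\lambda'}$ with a sign-reversal symmetry" clause in the statement. I would also note, as in the end of the proof of Proposition~\ref{prop:ConstructRepCheFock}, that any sign-reversal ambiguity lives in $H^1(S;\Z_2)$ rather than in the larger $H^1(S_\lambda;\Z_2)$, since the puncture-loops have nonzero trace (indeed trace $2$, as they are trivial in $\pi_1(S)$).

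The main obstacle I anticipate is property~(2) in the cases $i \in \{1,2,3,4,5\}$: it requires knowing the precise transformation law for the crossratio (shearbend) weights of the five edges $e_1', e_2', \dots, e_5'$ of the square $Q$ under a diagonal exchange, and then checking that the scalars produced by $\mu_\lambda \circ \Theta_{\lambda\lambda'}$ — obtained from Lemma~\ref{lem:CheFockCoordinateChanges} via the quantum binomial formula — match these classical weights exactly. This is the one place where genuinely new (though routine) computation is needed, combining Hiatt's formula for $\Theta_{\lambda\lambda'}$, the eigenvalue analysis of $\mu_\lambda(Z_1^2)$ from the proof of Lemma~\ref{lem:CrossratiosDiagExchange}, and the classical cross-ratio identities; the integer ambiguities $a, b, c$ appearing in Lemmas~\ref{lem:CheFockCoordinateChanges} and~\ref{lem:DiagExRespectsCentralElts} must be tracked carefully but drop out after taking $N$--th powers because $\omega^{4N} = 1$. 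Everything else is either immediate or a direct citation.
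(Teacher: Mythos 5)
Your proposal is correct and follows essentially the same route as the paper: verify the four characterizing conditions of Proposition~\ref{prop:ConstructRepCheFock} for $\mu_\lambda \circ \Theta_{\lambda\lambda'}$ (dimension count via $p_\lambda = p_{\lambda'}$; the $Z_i^{\prime\,2N}$ computation via Lemma~\ref{lem:CheFockCoordinateChanges}, the quantum binomial formula and the classical flip law for the crossratio weights; Lemma~\ref{lem:DiagExRespectsCentralElts} for the central elements; and $\Theta_{\lambda\lambda'}\circ\Tr_{\lambda'}^\omega = \Tr_\lambda^\omega$ for the classical shadow), then invoke the uniqueness statement. The "obstacle" you flag in condition (2) is handled in the paper exactly as you anticipate, by citing the known shear-coordinate transformation law (e.g.\ \cite{Liu}, \cite{BonLiu}) and checking a sample case such as $Z_3^{\prime\,2N}$.
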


\begin{proof} By Proposition~\ref{prop:ConstructRepCheFock},  the representation $\mu_{\lambda'} \colon \ZZ(\lambda') \to \End(E_{\lambda'})$  is characterized up to isomorphism and sign-reversal symmetry by the following properties.
\begin{enumerate}
\item The dimension of $E_{\lambda'}$ is equal to $N^{3g+p_{\lambda'}-3} $, where $g$ is the genus of the surface  $S$ and where $p_{\lambda'}$ is the number of vertices of the triangulation $\lambda'$;
\item For every edge $e_i'$ of $\lambda'$, let  $x_i' \in \C^*$ be the crossratio weight associated to  $e_i'$ by the enhancement $\xi$, and let $Z_i'$ be the corresponding generator of the Chekhov-Fock algebra $ \TT(\lambda)$. Then,
$$
\mu_{\lambda'} (Z_i^{\prime\, 2N}) = x_i'\, \Id_{E_{\lambda'}}.
$$
\item For every vertex $v$ of $\lambda'$, with associated central element $H_v' \in \ZZ(\lambda')$, 
$$
\mu_{\lambda'}(H_v') = -\omega^4 \,\Id_{E_{\lambda'}}.  
$$
\item
The representation $\rho_{\lambda'} = \mu_{\lambda'} \circ \Tr_{\lambda'}^\omega \colon \SSS(S_\lambda) \to \End({E_\lambda})$ has classical shadow $r \in \RR(S)$, in the sense that 
$$
T_N \bigl( \rho_{\lambda'} ([K]) \bigr) = - \Tr\, r(K)\, \Id_{E_{\lambda'}}
$$
for every knot $K \subset S_{\lambda'}\times [0,1]$ whose projection to $S_{\lambda'}$ has no crossing and whose framing is vertical. 
\end{enumerate}

It therefore suffices to show that the representation $\mu_{\lambda} \circ \Theta_{\lambda\lambda'} \colon \ZZ(\lambda') \to \End({E_\lambda})$ satisfies the same conditions. 

The triangulations $\lambda$ and $\lambda'$ have the same vertex set, so that $p_\lambda=p_{\lambda'}$. The dimension of the space ${E_\lambda}$ is equal to $N^{3g+p_{\lambda}-3}=N^{3g+p_{\lambda'}-3}$ by Proposition~\ref{prop:ConstructRepCheFock} applied to $\mu_\lambda$, which proves the first condition. 

The second condition is checked by several computations. The first elementary computation is that the crossratio weights $x_i$ and $x_i'$ respectively associated to the edges of $\lambda$ and $\lambda'$ by the enhancement $\xi$ are related by the property that $x_i'$ is equal to  $x_1^{-1}$ if $ i=1$, to $(1+x_1) x_2$ if $ i=2$, to $ x_1(1+x_1)^{-1} x_3$ if $ i=3$, to $(1+x_1) x_4$ if $ i=4$, to $x_1(1+x_1)^{-1} x_5$ if $i=5$, and to $x_i $ if $ i>5$.
See for instance \cite[\S2]{Liu} or \cite[\S8]{BonLiu}. 

Then, for instance,
\begin{align*}
\mu_{\lambda} \circ \Theta_{\lambda\lambda'} (Z_3^{\prime\, 2N}) 
&= \mu_{\lambda} \circ \Theta_{\lambda\lambda'} \bigl( \omega^{2N(2N+1)}(Z_2'Z_3')^{2N} Z_2^{\prime\, -2N} \bigr)\\
&= \mu_{\lambda} \bigl( \omega^{2N(2N+1)} (\omega Z_1 Z_2Z_3)^{2N} (Z_2^2+ \omega^4 Z_1^2 Z_2^2)^{-N} \bigr)\\
&= \mu_{\lambda} \bigl( Z_1^{2N} Z_2^{2N} Z_3^{2N} (Z_2^{2N}+ Z_1^{2N} Z_2^{2N})^{-1} \bigr)\\
&= x_1x_2x_3(x_2+x_1x_2)^{-1} \Id_{E_\lambda} = x_3' \Id_{E_\lambda},
\end{align*}
where the third equality uses the relation $Z_2^2(Z_1^2Z_2^2) = \omega^4 (Z_1^2Z_2^2) Z_2^2$, the Quantum Binomial Formula  \cite[\S IV.2]{Kassel} and the fact that $\omega^4$ is a primitive $N$--root of unity. 

Similar computations show that $\mu_{\lambda} \circ \Theta_{\lambda\lambda'} (Z_i^{\prime\, 2N}) =  x_i' \Id_{E_\lambda}$ for every $i$. See also  \cite[\S\S7--8]{BonLiu}. This proves the second condition. 

The third condition is an immediate consequence of Lemma~\ref{lem:DiagExRespectsCentralElts}. 

Finally, the fourth condition is a consequence of the property,  proved in Theorem~28 of \cite{BonWon1}, that $\Theta_{\lambda\lambda'} \circ \Tr_{\lambda'}^\omega = \Tr_{\lambda}^\omega$. 
\end{proof}

Because the triangulations $\lambda$ and $\lambda'$ have the same vertex sets $V_\lambda = V_{\lambda'}$, the associated punctured surfaces $S_\lambda = S - V_\lambda$ and $S_{\lambda'} =S - V_{\lambda'}$ are equal. As a consequence, the homomorphisms $\rho_\lambda = \mu_{\lambda} \circ \Tr_{\lambda}^\omega$ and $\rho_{\lambda'} = \mu_{\lambda'} \circ \Tr_{\lambda'}^\omega$ associated to the enhanced character $(r, \xi)$ provide representations of the same skein algebra $\SSS(S_\lambda) = \SSS(S_{\lambda'})$. 

\begin{cor}
After pre-composing $\mu_{\lambda'}$ with the action on $\ZZ(\lambda')$ of a sign-reversal symmetry of $r\in \RR(S)$ if necessary, the representations $\rho_\lambda = \mu_{\lambda} \circ \Tr_{\lambda}^\omega \colon \SSS(S_\lambda) \to \End({E_\lambda})$ and $\rho_{\lambda'} = \mu_{\lambda'} \circ \Tr_{\lambda'}^\omega \colon \SSS(S_\lambda) \to \End({E_\lambda})$ are isomorphic. 
\end{cor}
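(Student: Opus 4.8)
The plan is to deduce this corollary as a routine consequence of Lemma~\ref{lem:DiagExRespectsCheFockReps} together with the compatibility of the triangulation change homomorphism $\Theta_{\lambda\lambda'}$ with the quantum trace homomorphisms, which is the already-quoted Theorem~28 of \cite{BonWon1} asserting $\Theta_{\lambda\lambda'} \circ \Tr_{\lambda'}^\omega = \Tr_\lambda^\omega$. First I would invoke Lemma~\ref{lem:DiagExRespectsCheFockReps} to pick a sign-reversal symmetry $\epsilon$ of $r\in\RR(S)$ (possibly trivial) so that, after pre-composing $\mu_{\lambda'}$ with the action of $\epsilon$ on $\ZZ(\lambda')$, there is a linear isomorphism $\Psi\colon E_{\lambda'} \to E_\lambda$ intertwining $\mu_{\lambda'}$ (so modified) with $\mu_\lambda \circ \Theta_{\lambda\lambda'}\colon \ZZ(\lambda')\to \End(E_\lambda)$. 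The point is that this same $\epsilon$, viewed via $\Tr_{\lambda'}^\omega$ as acting on skeins, will also be the required sign-reversal symmetry on the skein side, but since the corollary only claims an isomorphism ``after pre-composing $\mu_{\lambda'}$ with a sign-reversal symmetry'', I just carry $\epsilon$ along.

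Next I would compute directly. For any framed link $K\subset S_\lambda\times[0,1] = S_{\lambda'}\times[0,1]$ we have, using Theorem~28 of \cite{BonWon1},
\begin{align*}
\Psi \circ \rho_{\lambda'}\bigl([K]\bigr) \circ \Psi^{-1}
&= \Psi \circ \mu_{\lambda'}\bigl( \Tr_{\lambda'}^\omega([K]) \bigr) \circ \Psi^{-1}
= \mu_\lambda \circ \Theta_{\lambda\lambda'}\bigl( \Tr_{\lambda'}^\omega([K]) \bigr) \\
&= \mu_\lambda\bigl( \Tr_\lambda^\omega([K]) \bigr)
= \rho_\lambda\bigl([K]\bigr),
\end{align*}
where the middle equality is precisely the intertwining property of $\Psi$ from Lemma~\ref{lem:DiagExRespectsCheFockReps}. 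One subtlety to be careful about: $\Theta_{\lambda\lambda'}$ is a priori only a homomorphism between the \emph{fraction} algebras $\widehat{\mathcal Z}^\omega(\lambda')$ and $\widehat{\mathcal Z}^\omega(\lambda)$, so the composition $\mu_\lambda\circ\Theta_{\lambda\lambda'}$ has to be understood through Lemma~\ref{lem:CrossratiosDiagExchange}, which is exactly why that lemma was proved; since $\Tr_{\lambda'}^\omega([K])\in\ZZ(\lambda')$, the expression $\mu_\lambda\circ\Theta_{\lambda\lambda'}\bigl(\Tr_{\lambda'}^\omega([K])\bigr)$ is a well-defined element of $\End(E_\lambda)$ by that lemma, and the computation above is legitimate.

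This displayed chain of equalities shows that $\Psi$ conjugates $\rho_{\lambda'}$ (with $\mu_{\lambda'}$ pre-composed by $\epsilon$) into $\rho_\lambda$, i.e.\ the two representations of $\SSS(S_\lambda)=\SSS(S_{\lambda'})$ are isomorphic, which is the claim. I do not expect a genuine obstacle here; the only thing requiring a little care is the bookkeeping that $\mu_\lambda\circ\Theta_{\lambda\lambda'}$ is applied only to balanced elements coming from $\Tr_{\lambda'}^\omega$, so that the fraction-algebra issue is harmless, and the observation that pre-composing $\mu_{\lambda'}$ with $\epsilon$ is exactly the modification already built into the statement. Everything else is a direct substitution using Theorem~28 of \cite{BonWon1} and Lemma~\ref{lem:DiagExRespectsCheFockReps}.
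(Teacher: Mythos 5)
Your proposal is correct and follows exactly the paper's own argument: the paper also deduces this corollary immediately from Lemma~\ref{lem:DiagExRespectsCheFockReps} together with the identity $\Theta_{\lambda\lambda'} \circ \Tr_{\lambda'}^\omega = \Tr_{\lambda}^\omega$ from Theorem~28 of \cite{BonWon1}. Your extra care about the fraction-algebra issue, handled via Lemma~\ref{lem:CrossratiosDiagExchange}, is a sound and welcome piece of bookkeeping, but it does not change the route.
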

\begin{proof}
This is an immediate consequence of Lemma~\ref{lem:DiagExRespectsCheFockReps} and of the fact, proved in Theorem~28 of \cite{BonWon1}, that $\Theta_{\lambda\lambda'} \circ \Tr_{\lambda'}^\omega = \Tr_{\lambda}^\omega$. 
\end{proof}

\begin{lem}
\label{lem:DiagExIsomRespectsOffDiagKernel}
Every isomorphism $\phi \colon {E_\lambda} \to E_{\lambda'}$ between the representations $\mu_{\lambda} \circ \Theta_{\lambda\lambda'} \colon \ZZ(\lambda') \to \End({E_\lambda})$ and  $\mu_{\lambda'} \colon \ZZ(\lambda') \to \End(E_{\lambda'})$,  as in Lemma~{\upshape\ref{lem:DiagExRespectsCheFockReps}}, sends the off-diagonal kernel $F_v \subset {E_\lambda}$ of each  vertex $v\in V_\lambda= V_{\lambda'}$ to the off-diagonal kernel $F_v' \subset E_{\lambda'}$. 
\end{lem}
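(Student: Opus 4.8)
The plan is to reduce the statement to a statement purely about the balanced Chekhov--Fock algebras, using the fact that an isomorphism between representations intertwines the actions, so it suffices to track how the off-diagonal term $Q_v\in \ZZ(\lambda)$ relates to $Q_v'\in \ZZ(\lambda')$ under the coordinate change $\Theta_{\lambda\lambda'}$. More precisely, let $\phi\colon E_\lambda\to E_{\lambda'}$ be an isomorphism as in Lemma~\ref{lem:DiagExRespectsCheFockReps}, so that $\phi\circ \bigl(\mu_\lambda\circ\Theta_{\lambda\lambda'}(Z')\bigr) = \mu_{\lambda'}(Z')\circ\phi$ for every $Z'\in \ZZ(\lambda')$ (after the possible pre-composition with a sign-reversal symmetry, which does not affect off-diagonal kernels since the $Q_v$ involve only even powers of the generators). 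Since $F_v=\ker\mu_\lambda(Q_v)$ and $F_v'=\ker\mu_{\lambda'}(Q_v')$, the desired conclusion $\phi(F_v)=F_v'$ will follow if I can exhibit, for each vertex $v$, a choice of off-diagonal term $Q_v'\in \ZZ(\lambda')$ and a unit $u\in \ZZ(\lambda)$ with $\mu_\lambda(u)$ invertible, such that
$$
\Theta_{\lambda\lambda'}(Q_v') = u\, Q_v \quad\text{in } \widehat{\mathcal Z}^\omega(\lambda),
$$
possibly after also using the flexibility in the choice of starting edge for $Q_v$ granted by Lemma~\ref{lem:OffDiagKernelWellDefined}. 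Indeed, then $\mu_\lambda\circ\Theta_{\lambda\lambda'}(Q_v') = \mu_\lambda(u)\,\mu_\lambda(Q_v)$ has the same kernel as $\mu_\lambda(Q_v)$, hence $\phi$ carries $F_v$ to $F_v'$.

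The key computation is therefore the behavior of the off-diagonal term under $\Theta_{\lambda\lambda'}$. I would proceed vertex-by-vertex according to how many corners of the square $Q$ (with sides $e_2,e_3,e_4,e_5$ and diagonals $e_1,e_1'$) are equal to $v$. If $v$ is not a corner of $Q$ at all, then the cyclic sequence of edges around $v$ is unchanged by the flip, every generator $Z_{i_j}$ appearing in $Q_v$ is among the $e_i$ with $i\geq 2$ or is $e_1$ but — crucially — $e_1$ can only emanate from $v$ if $v$ is a corner of $Q$, so in fact $Q_v$ involves no $Z_1$, and by Lemma~\ref{lem:CheFockCoordinateChanges} each $Z_i^{\prime\,2}$ with $i\geq 6$ maps to $Z_i^2$ while $Z_2^{\prime\,2}, Z_4^{\prime\,2}$ acquire a factor $(1+\omega^4Z_1^2)$ and $Z_3^{\prime\,2}, Z_5^{\prime\,2}$ acquire $(1+\omega^4Z_1^{-2})^{-1}$; I need to check these correction factors can be factored out on the left as a single unit $u$, which works because $Z_1^{\pm2}$ $\omega^4$- or $\omega^{-4}$-commutes with $Z_2^2, Z_3^2, Z_4^2, Z_5^2$ in a controlled way, so the product of corrections collected along the cyclic word $Q_v'$ slides to the front up to the usual $\omega$-powers. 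The genuinely substantive cases are when $v$ is one, two, three, or four corners of $Q$ (the last being possible since the corners of $Q$ need only be distinct edges, not distinct vertices); there one must replace each block of consecutive edges around $v$ lying inside $Q$ — namely $e_2'e_3'$, $e_4'e_5'$, or one of $e_3'e_1'e_4'$, $e_5'e_1'e_2'$ — by its $\Theta_{\lambda\lambda'}$-image, using the block substitutions recorded in the proof of Lemma~\ref{lem:DiagExRespectsCentralElts} ($Z_2'Z_3'\mapsto \omega^{-1}Z_2Z_1Z_3$, etc.), and then check that the resulting element of $\widehat{\mathcal Z}^\omega(\lambda)$ differs from a genuine off-diagonal term $Q_v$ of $v$ for the triangulation $\lambda$ by a left unit. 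This is where the calculation in Lemma~\ref{lem:SubdivisOffDiagKernelConstant} provides a good template: one will again encounter partial off-diagonal terms and central elements $H_{v_0}$-type expressions, here the central element $H_v$ itself, and one uses $\mu_\lambda(H_v)=-\omega^4\Id$ together with $\mu_\lambda(Q_v)_{|F_\lambda}=0$-type identities only at the very end, since at the algebra level one just needs the unit relation.

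I would organize the write-up so that the cases $v\notin Q$ and $v$ a single corner of $Q$ are done in full, and the cases of two, three, four corners are noted to be entirely analogous (as is done repeatedly elsewhere in this section), since they only differ in how many $Q$-blocks appear in the cyclic word for $Q_v'$ and the bookkeeping of $\omega$-powers is identical in structure. The main obstacle I anticipate is the $\omega$-power bookkeeping: because $\Theta_{\lambda\lambda'}$ is only a \emph{square root} of the classical Chekhov--Fock coordinate change, the correction factors $1+\omega^{4}Z_1^{\pm2}$ and the block substitutions carry half-integer-looking powers of $\omega$ (really integer powers, but delicate), and one must verify that when these are commuted to the front of the word $Q_v'$ they combine into a single well-defined unit $u\in\ZZ(\lambda)$ rather than term-dependent scalars — this is exactly the kind of cancellation that makes Lemma~\ref{lem:QuantumOrderH} and the combinatorial angular-sector count relevant, and I would invoke that count (as in the proof of Lemma~\ref{lem:SubdivisOffDiagKernelConstant}) rather than redo it. A secondary, purely logical point: $\Theta_{\lambda\lambda'}(Q_v')$ a priori lives in the fraction algebra $\widehat{\mathcal Z}^\omega(\lambda)$, so to even speak of $\mu_\lambda\circ\Theta_{\lambda\lambda'}(Q_v')$ I must first observe that its denominators are products of $1+\omega^{4k}Z_1^2$, hence have invertible image under $\mu_\lambda$ by the argument in Lemma~\ref{lem:CrossratiosDiagExchange} (using $x_1\neq -1$, which holds precisely because $\xi$ is also a $\lambda'$-enhancement); this is why the relation must be phrased as $\Theta_{\lambda\lambda'}(Q_v')=uQ_v$ with the unit $u$ absorbing those denominators.
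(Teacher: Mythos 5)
Your proposal is correct and follows essentially the paper's own route: the paper likewise reduces the lemma to showing that the off-diagonal terms correspond under $\Theta_{\lambda\lambda'}$ up to an invertible factor --- concretely, it proves the exact identity $\Theta_{\lambda\lambda'}(Z_2^{\prime\,2}Q_v')=Z_2^2\,Q_v$ in a representative corner configuration (multiplying by the invertible squared generator to clear exactly the denominators you worry about) --- and then transports kernels through the intertwining isomorphism. Two minor corrections to your sketch: when $v$ is not a corner of the square $Q$, none of $e_1,\dots,e_5$ can emanate from $v$, so that case is immediate with no correction factors at all; and the identity between the off-diagonal terms is exact in the balanced algebras, so the representation-level inputs $\mu_\lambda(H_v)=-\omega^4\,\Id$ and vanishing on the kernel that you hedge about (by analogy with the face-subdivision Lemma~\ref{lem:SubdivisOffDiagKernelConstant}) are not needed here.
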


\begin{proof} 
There are again several cases to consider according to which corners of the square $Q$ correspond to the vertex $v$. We will give the proof in the case when $v$ corresponds to two corners of $Q$, the one where $e_2$ and $e_3$ meet and the corner where $e_5$ and $e_2$ meet. 

As usual, index the edges of $\lambda'$ around the vertex $v$ as $e_{i_1}'$, $e_{i_2}'$, \dots, $e_{i_u}'$, in counterclockwise order around $v$. We can choose the starting point of the indexing at $e_2'$, so that $e_{i_1}'=e_2'$, $e_{i_2}'=e_3'$, $e_{i_{s-1}}' =e_5'$, $e_{i_{s}}'=e_1'$, and $e_{i_{s+1}}'=e_2'$ for some index $s$. To avoid having to worry about whether $s+1 = u$ or not, it is convenient to shift the indexing by 1 and to consider the off-diagonal element 
$$
Q_v' = 1 + \omega^{-4} Z_{i_2}^{\prime\, 2} + \omega^{-8} Z_{i_2}^{\prime\, 2} Z_{i_3}^{\prime\, 2} + \dots +  \omega^{-4u} Z_{i_2}^{\prime\, 2}  Z_{i_3}^{\prime\, 2} \dots   Z_{i_u}^{\prime\, 2} \in \ZZ(\lambda'). 
$$

Then,
\begin{align*}
Z_{2}^{\prime\, 2} Q_v'=
Z_{i_1}^{\prime\, 2} Q_v' = Z_{i_1}^{\prime\, 2} &+ \omega^{-4} Z_{i_1}^{\prime\, 2} Z_{i_2}^{\prime\, 2} + \omega^{-8} Z_{i_1}^{\prime\, 2} Z_{i_2}^{\prime\, 2} Z_{i_3}^{\prime\, 2} + \dots +  \omega^{-4u} Z_{i_1}^{\prime\, 2} Z_{i_2}^{\prime\, 2}  Z_{i_3}^{\prime\, 2} \dots   Z_{i_u}^{\prime\, 2} \\
= Z_{2}^{\prime\, 2} &+ \omega^{-4} Z_{2}^{\prime\, 2} Z_{3}^{\prime\, 2} B' \\
& + \omega^{-4(s-2)} Z_{2}^{\prime\, 2} Z_{3}^{\prime\, 2} Z_{i_3}^{\prime\, 2} \dots Z_{i_{s-2}}^{\prime\, 2} ( Z_{5}^{\prime\, 2} + \omega^{-4} Z_{5}^{\prime\, 2}Z_{1}^{\prime\, 2} +\omega^{-8} Z_{5}^{\prime\, 2} Z_{1}^{\prime\, 2} Z_{2}^{\prime\, 2})\\
&  + \omega^{-4s} Z_{2}^{\prime\, 2} Z_{3}^{\prime\, 2} Z_{i_3}^{\prime\, 2} \dots Z_{i_{s-2}}^{\prime\, 2}  Z_{5}^{\prime\, 2} Z_{1}^{\prime\, 2} Z_{2}^{\prime\, 2} C'
\end{align*}
where $B'$, $C'\in \ZZ(\lambda')$ are polynomials in the variables $Z_{i}^{\prime\, 2}$ with $i>5$, corresponding to edges of $\lambda'$ that are not contained in the square $Q$.

Similarly, we can index the edges of $\lambda$ counterclockwise around $v$ as $e_{j_1}$, $e_{j_2}$, \dots, $e_{j_u}$, in such a way that $e_{j_1}=e_2$, $e_{j_2}=e_1$, $e_{j_3}=e_3$, $e_{i_{s}}=e_5$, and $e_{i_{s+1}}=e_2$. Then,
\begin{align*}
Z_2^2 Q_v=
Z_{j_1}^2 Q_v
= Z_{2}^2 &+ \omega^{-4} Z_{2}^2 Z_{1}^2 + \omega^{-8} Z_{2}^2 Z_{1}^2 Z_{3}^2 B \\
& + \omega^{-4(s-1)} Z_{2}^2 Z_{1}^2 Z_{3}^2 Z_{j_4}^2 \dots Z_{j_{s-1}}^2 ( Z_{5}^2 + \omega^{-4} Z_{5}^2  Z_{2}^2)\\
&  + \omega^{-4s} Z_{2}^2 Z_{1}^2 Z_{3}^2 Z_{j_4}^2 \dots Z_{j_{s-1}}^2 Z_{5}^2  Z_{2}^2 C
\end{align*}
where $B$, $C\in \ZZ(\lambda)$ are respectively obtained from $B'$, $C' \in \ZZ(\lambda')$ by replacing each term $Z_{i}^{\prime\, 2}$ (with $i>5$) with the corresponding $Z_i^2$. 

The computations of Lemma~\ref{lem:CheFockCoordinateChanges} show that 
\begin{align*}
&\Theta_{\lambda\lambda'} (Z_{2}^{\prime\, 2}) = Z_{2}^2 + \omega^{-4} Z_{2}^2 Z_{1}^2
&
&\Theta_{\lambda\lambda'} (Z_{2}^{\prime\, 2}Z_{3}^{\prime\, 2}) = \omega^{-4} Z_{2}^2 Z_1^2 Z_3^2
\\
&\Theta_{\lambda\lambda'} (Z_{5}^{\prime\, 2} + \omega^{-4} Z_{5}^{\prime\, 2}Z_{1}^{\prime\, 2}) = Z_{5}^{\prime\, 2}
&
&\Theta_{\lambda\lambda'} (Z_{5}^{\prime\, 2} Z_{1}^{\prime\, 2} Z_{2}^{\prime\, 2}) = \omega^{4} Z_{5}^2  Z_{2}^2.
\end{align*}
This follows that $\Theta_{\lambda\lambda'} (Z_{2}^{\prime\, 2} Q_v') = Z_2^2 Q_v$.

As a consequence, the isomorphism $\phi$ sends the kernel of $\mu_\lambda(Z_2^2 Q_v) = \mu_\lambda \circ \Theta_{\lambda\lambda'}(Z_{2}^{\prime\, 2} Q_v')$ to the kernel of $\mu_{\lambda'}(Z_{2}^{\prime\, 2} Q_v')$. Since $Z_2^2$ is invertible in $\ZZ(\lambda)$, the kernel of $\mu_\lambda(Z_2^2 Q_v) = \mu_\lambda(Z_2^2) \circ \mu_\lambda (Q_v)$ is equal to the kernel of $ \mu_\lambda (Q_v)$, namely to the off-diabonal kernel $F_v \subset {E_\lambda}$. Similarly, the kernel of $\mu_{\lambda'}(Z_{2}^{\prime\, 2} Q_v')$ is equal to the off-diagonal kernel $F_v' \subset E_{\lambda'}$. 

This concludes the proof in the case when $v$ corresponds to the corners of $Q$ where $e_2$ and $e_3$ meet as well as to the corner where $e_5$ and $e_2$ meet. The other cases are essentially identical to this one. 
\end{proof}

We summarize the discussion and results of this section in the following statement. Let the triangulations $\lambda$ and $\lambda'$ differ from each other by a diagonal exchange as in Figure~\ref{fig:DiagExchange}. Recall that we are assuming that the sides of the square $Q$ where the diagonal exchange takes place are  distinct; however, the triangulations $\lambda$ and $\lambda'$ are not necessarily assumed to be combinatorial. Since $\lambda$ and $\lambda'$ have the same vertex set $V_\lambda = V_{\lambda'}$, the punctured surfaces $S_{\lambda} = S- V_\lambda$ and $S_{\lambda'} = S- V_{\lambda'}$ are equal.

\begin{prop}
\label{prop:DiagExRespectSkeinReps}
Let the triangulations $\lambda$ and $\lambda'$ differ from each other by a diagonal exchange as in Figure~{\upshape\ref{fig:DiagExchange}},  let $\xi \colon \widetilde V_\lambda = \widetilde V_{\lambda'} \to \CP$ be simultaneously  a $\lambda$-- and a $\lambda'$--enhancement for the homomorphism $r \colon \pi_1(S) \to \SL(\C)$, and let $\mu_\lambda \colon \ZZ(\lambda) \to \End({E_\lambda})$ and $\mu_{\lambda'} \colon \ZZ(\lambda')\to \End(E_{\lambda'})$ be the representations associated to this data by Proposition~{\upshape\ref{prop:ConstructRepCheFock}}. 

Suppose in addition that $\rho_{\lambda'} = \mu_{\lambda'} \circ \Tr_{\lambda'}^\omega \colon \SSS(S_{\lambda'}) \to \End({E_\lambda})$ respects the total off-diagonal kernel ${F_{\lambda'}} \subset E_{\lambda'}$ of $\mu_{\lambda'}$ and induces a representation $\check\rho_{\lambda'} \colon \SSS(S) \to \End({F_{\lambda'}})$, as defined above Proposition~{\upshape\ref{prop:SubdivisionInvariance}}. Then, $\rho_{\lambda} = \mu_{\lambda} \circ \Tr_{\lambda}^\omega \colon \SSS(S_{\lambda}) \to \End(E_{\lambda'})$ respects the total off-diagonal kernel ${F_\lambda} \subset {E_\lambda}$ of $\mu_\lambda$ and induces a representation $\check \rho_\lambda \colon \SSS(S) \to \End({F_\lambda})$. Moreover,  $\check \rho_\lambda$ is isomorphic to $\check\rho_{\lambda'} $ after a possible pre-composition with the action of a sign-reversal symmetry of $r\in \RR(S)$ on $\SSS(S)$. 
\end{prop}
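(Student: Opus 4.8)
The plan is to assemble Proposition~\ref{prop:DiagExRespectSkeinReps} from the lemmas proved earlier in this section, in close analogy with the proof of Proposition~\ref{prop:SubdivisionInvariance}. First, by Lemma~\ref{lem:DiagExRespectsCheFockReps}, after pre-composing $\mu_{\lambda'}$ with the action of a sign-reversal symmetry of $r\in\RR(S)$ on $\ZZ(\lambda')$ if necessary, there is an isomorphism $\phi\colon E_\lambda \to E_{\lambda'}$ between the representations $\mu_\lambda\circ\Theta_{\lambda\lambda'}\colon \ZZ(\lambda')\to\End(E_\lambda)$ and $\mu_{\lambda'}\colon \ZZ(\lambda')\to\End(E_{\lambda'})$. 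As in the proof of Lemma~\ref{lem:SubdivisOffDiagKernelConstant}, I would note that this modification of $\mu_{\lambda'}$ does not change the total off-diagonal kernel $F_{\lambda'}$, since each off-diagonal term $Q_v'\in\ZZ(\lambda')$ involves only even powers of the generators, and that we may transport everything through $\phi$ to assume $\phi$ is the identity, so $E_\lambda = E_{\lambda'}$ and $\mu_\lambda\circ\Theta_{\lambda\lambda'} = \mu_{\lambda'}$ as representations of $\ZZ(\lambda')$.

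Next I would identify the total off-diagonal kernels. Lemma~\ref{lem:DiagExIsomRespectsOffDiagKernel} gives $\phi(F_v) = F_v'$ for every vertex $v\in V_\lambda = V_{\lambda'}$; under the identification above this reads $F_v = F_v'$. Intersecting over all vertices,
$$
F_\lambda = \bigcap_{v\in V_\lambda} F_v = \bigcap_{v\in V_{\lambda'}} F_v' = F_{\lambda'}.
$$
Then, using the key compatibility $\Theta_{\lambda\lambda'}\circ\Tr_{\lambda'}^\omega = \Tr_\lambda^\omega$ from Theorem~28 of \cite{BonWon1} (and recalling that $S_\lambda = S_{\lambda'}$ since the vertex sets agree), for every framed link $K\subset S_\lambda\times[0,1]$ we have
$$
\rho_\lambda\bigl([K]\bigr) = \mu_\lambda\circ\Tr_\lambda^\omega\bigl([K]\bigr) = \mu_\lambda\circ\Theta_{\lambda\lambda'}\circ\Tr_{\lambda'}^\omega\bigl([K]\bigr) = \mu_{\lambda'}\circ\Tr_{\lambda'}^\omega\bigl([K]\bigr) = \rho_{\lambda'}\bigl([K]\bigr).
$$
Since by hypothesis $\rho_{\lambda'}\bigl([K]\bigr)$ respects $F_{\lambda'} = F_\lambda$, so does $\rho_\lambda\bigl([K]\bigr)$, giving the first assertion of the proposition.

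Finally, to get the induced representation and its isomorphism class: if $K$, $K'\subset S_\lambda\times[0,1]$ are isotopic in $S\times[0,1]$, then
$$
\rho_\lambda\bigl([K]\bigr)_{|F_\lambda} = \rho_{\lambda'}\bigl([K]\bigr)_{|F_{\lambda'}} = \rho_{\lambda'}\bigl([K']\bigr)_{|F_{\lambda'}} = \rho_\lambda\bigl([K']\bigr)_{|F_\lambda},
$$
where the middle equality uses the hypothesis that $\rho_{\lambda'}$ induces $\check\rho_{\lambda'}\colon\SSS(S)\to\End(F_{\lambda'})$. Hence $\rho_\lambda$ induces $\check\rho_\lambda\colon\SSS(S)\to\End(F_\lambda)$, and the displayed equalities show $\check\rho_\lambda = \check\rho_{\lambda'}$ on the nose (after the initial sign-reversal adjustment), which is stronger than the asserted isomorphism. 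Undoing the transport through $\phi$ turns this literal equality into an isomorphism, and the bookkeeping of the sign-reversal symmetry is exactly as in Proposition~\ref{prop:SubdivisionInvariance}. I expect no genuine obstacle here: unlike the face-subdivision case, the diagonal exchange preserves the vertex set, so there is no dimension jump and no need for a delicate argument comparing $F_{v_0}'$ with $E_\lambda$; the only mild care needed is the consistent tracking of the sign-reversal symmetry through the identifications, and the invocation of $\Theta_{\lambda\lambda'}\circ\Tr_{\lambda'}^\omega = \Tr_\lambda^\omega$ at the level of the full skein algebra rather than just on Chebyshev-threaded knots.
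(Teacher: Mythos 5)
Your proposal is correct and follows essentially the same route as the paper: it combines Lemma~\ref{lem:DiagExRespectsCheFockReps}, Lemma~\ref{lem:DiagExIsomRespectsOffDiagKernel}, and the compatibility $\Theta_{\lambda\lambda'}\circ\Tr_{\lambda'}^\omega = \Tr_{\lambda}^\omega$ from Theorem~28 of \cite{BonWon1} to transport the invariance of $F_{\lambda'}$ and the induced representation back to $F_\lambda$. Your normalization making $\phi$ the identity is only a presentational variant of the paper's argument, and your remark that the sign-reversal adjustment leaves the off-diagonal kernels unchanged (since the $Q_v$ involve only even powers) is consistent with how the paper handles that point.
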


\begin{proof}
Lemma~\ref{lem:DiagExRespectsCheFockReps} provides an isomorphism $\phi\colon {E_\lambda} \to E_{\lambda'}$ between the Chekhov-Fock algebra representations  $\mu_{\lambda}\circ \Theta_{\lambda\lambda'} \colon \ZZ(\lambda') \to \End({E_\lambda})$ and  $\mu_{\lambda'} \colon \ZZ(\lambda') \to \End(E_{\lambda'})$. Also, Theorem~28 of \cite{BonWon1} states that  the quantum trace homomorphisms $\Tr_{\lambda}^\omega \colon \SSS(S_\lambda) \to \ZZ(\lambda)$ and $\Tr_{\lambda'}^\omega \colon \SSS(S_\lambda) \to \ZZ(\lambda')$ are compatible with the isomorphism $\Theta_{\lambda\lambda'} \colon \widehat{\mathcal Z}^\omega(\lambda') \to \widehat{\mathcal Z}^\omega (\lambda)$ in the sense that $\Theta_{\lambda\lambda'} \circ \Tr_{\lambda'}^\omega = \Tr_{\lambda}^\omega$. Therefore, $\phi\colon {E_\lambda} \to E_{\lambda'}$ provides an isomorphism between the representations $ \mu_{\lambda}\circ \Theta_{\lambda\lambda'}  \circ \Tr_{\lambda'}^\omega =   \mu_{\lambda}\circ  \Tr_{\lambda}^\omega = \rho_\lambda$ and $ \mu_{\lambda'} \circ \Tr_{\lambda'}^\omega = \rho_{\lambda'}$.

By Lemma~\ref{lem:DiagExIsomRespectsOffDiagKernel}, the isomorphism $\phi\colon {E_\lambda} \to E_{\lambda'}$ sends the  total off-diagonal kernel ${F_\lambda} = \bigcap_{v\in V_\lambda} F_v$ to the total off-diagonal kernel ${F_{\lambda'}} = \bigcap_{v\in V_\lambda} F_v'$. Since the representation $\rho_{\lambda'}$ respects ${F_{\lambda'}}$ by hypothesis, it follows that $\rho_\lambda$ respects ${F_\lambda}$. 

Finally, the property that $\rho_{\lambda'}$ induces a representation $\bar\rho_{\lambda'}\colon \SSS(S) \to \End({F_{\lambda'}})$ means that $\rho_{\lambda'}\bigl( [K] \bigr)_{|{F_\lambda}} = \rho_{\lambda'}\bigl( [K'] \bigr)_{|{F_\lambda}}$ whenever the two framed links $K$, $K' \subset S_\lambda \times [0,1]$ are isotopic in $S\times [0,1]$. The isomorphism $\phi$ again shows that the same property holds for $\rho_\lambda$. 
\end{proof}

\subsection{Constructing representations of the skein algebra of a closed surface using arbitrary triangulations}
\label{subsect:ConstructSkeinRepArbitraryTriangulation}

\begin{lem}
\label{lem:MakeTriangCombin}
Let $\lambda$ be a triangulation of $S$. Then one can apply to $\lambda$ a sequence of face subdivisions and diagonal exchanges, as in  \S\S {\upshape\ref{subsect:FaceSubdivisions}} and {\upshape\ref{subsect:DiagonalExchanges}}, to obtain a new triangulation $\lambda'$ that is combinatorial, in the sense that each edge of $\lambda'$ has distinct endpoints and no two edges have the same endpoints. 

In addition, any $\lambda$--enhancement $\xi \colon \widetilde V_\lambda \to \mathbb{CP}^1$ for the group homomorphism $r\colon \pi_1(S) \to \SL(\C)$ can be extended to a  $\lambda'$--enhancement $\xi' \colon \widetilde V_{\lambda'} \to \mathbb{CP}^1$. (Note that the vertex set $V_\lambda$ of $\lambda$ is contained in the vertex set $V_{\lambda'}$). 
\end{lem}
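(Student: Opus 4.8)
The plan is to prove the lemma in two stages: first arrange that every edge has distinct endpoints, then arrange that no two distinct edges share the same pair of endpoints, all the while keeping track of the enhancement.

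\textbf{Step 1: killing edges that are loops.} Suppose $\lambda$ has an edge $e$ whose two endpoints coincide at a vertex $v$. Our convention on triangulations guarantees that the three sides of each face are distinct, so by Remark~\ref{rem:EdgesUnivCoverDistinctEndpoints} the loop $e$ is essential in $S$. First I would apply a face subdivision to one of the two faces adjacent to $e$, introducing a new interior vertex $v_0$ and replacing that face by three triangles. The three new edges each run between $v_0$ and an old vertex, and one checks directly that after the subdivision the edge $e$ is no longer a loop: its former face neighbor has been replaced, and a suitable diagonal exchange across the newly created square that contains $e$ as a diagonal replaces $e$ by an edge joining two of the other corners of that square, i.e. joining $v$ to $v_0$. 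The only subtlety is to verify that the hypotheses of Proposition~\ref{prop:SubdivisionInvariance} and Proposition~\ref{prop:DiagExRespectSkeinReps} are met — in particular that the square involved in the diagonal exchange has four distinct sides, which can be ensured by first performing enough face subdivisions. Iterating this procedure over all loop-edges (and noting that each face subdivision and each diagonal exchange can only create finitely many new edges, none of which are loops by construction), I obtain a triangulation $\lambda_1$ in which every edge has distinct endpoints.

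\textbf{Step 2: separating parallel edges.} Now suppose $\lambda_1$ has two distinct edges $e$, $e'$ with the same pair of endpoints $\{v, w\}$. If $e$ and $e'$ are not the two diagonals-swap pair of a single square, I would subdivide a face adjacent to $e$, again introducing an interior vertex $v_0$; after one or two diagonal exchanges the edge $e$ gets replaced by an edge with an endpoint at $v_0$, which is then no longer parallel to $e'$ since $v_0$ is a brand-new vertex not shared by $e'$. As in Step~1, one arranges via preliminary face subdivisions that all squares involved in the diagonal exchanges have four distinct sides, and one checks the loop-free condition is preserved (which it is, since the new edges all touch $v_0$). Repeating over all pairs of parallel edges yields a combinatorial triangulation $\lambda'$.

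\textbf{Step 3: extending the enhancement.} Each face subdivision adds exactly one new vertex $v_0$ in the interior of a face, and the equivariance requirement forces $\xi'$ on the whole $\pi_1(S)$--orbit of any chosen lift $\widetilde v_0$ once we pick the value $\xi'(\widetilde v_0) \in \mathbb{CP}^1$. Exactly as in the proof of Lemma~\ref{lem:EnhancementsExist}, the finitely many conditions (Condition~(2) of the definition of $\lambda'$--enhancement for the three new edges at $v_0$, plus avoiding the images of previously defined orbits) each exclude only a countable set of values, so a valid choice exists; diagonal exchanges do not alter the vertex set and so do not affect the enhancement, except that we must check the old $\lambda$--enhancement remains a $\lambda'$--enhancement across each exchanged diagonal, which is Condition~(1) in \S\ref{subsect:DiagonalExchanges} and holds for generic choices — since at each stage only countably many bad configurations are forbidden, we can make all choices compatibly. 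The new vertices can all be enhanced after the face subdivisions are performed, and then the diagonal exchanges carried out; alternatively one interleaves the choices, always possible by the countability argument.

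\textbf{Main obstacle.} The genuinely delicate point is the bookkeeping in Steps~1 and~2: one must confirm that a combination of a face subdivision followed by a bounded number of diagonal exchanges actually \emph{removes} a given loop or parallel edge without creating new ones, and that the auxiliary hypotheses (four distinct sides of each exchanged square, no edge of the relevant vertex being a loop — cf. Remarks~\ref{rem:OffDiagKernelInvariantWeakerHyp} and \ref{rem:VertexSweepWeakerHyp}) can always be arranged by preliminary subdivisions. This is a finite but fiddly combinatorial argument about local pictures of the triangulation near the offending edge; I expect this to be the part of the proof that takes the most care, whereas the enhancement extension is a routine countability argument modeled on Lemma~\ref{lem:EnhancementsExist}.
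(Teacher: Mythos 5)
Your Step 3 is fine and is exactly the paper's argument: the enhancement is extended orbit by orbit as in Lemma~\ref{lem:EnhancementsExist}, with only countably many forbidden values at each stage. The problem is in Steps 1 and 2, and it is precisely the point you label the ``main obstacle'' and then do not carry out. The claim that one face subdivision followed by a bounded number of diagonal exchanges removes a given loop or parallel pair \emph{without creating new ones} is false as stated: subdividing a face $T$ introduces three spokes from the new vertex $v_0$ to the corners of $T$, and whenever $T$ has a repeated corner (which is the typical situation for the small triangulations where loops occur, e.g.\ the one-vertex triangulation of the torus) two of these spokes already form a new parallel pair; moreover the diagonal exchange along $e$ replaces $e$ by the edge joining $v_0$ to the third corner $w$ of the face on the other side of $e$ (not, as you write, by an edge joining $v$ to $v_0$), and this exchanged edge can itself be parallel to the spoke from $v_0$ to the corner of $T$ opposite $e$ when those two corners coincide in $S$. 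So your induction on the number of loops and parallel pairs has no visible reason to terminate; without a complexity measure that actually decreases, the heart of the lemma is missing rather than merely fiddly.

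The paper sidesteps this bookkeeping with a single global construction instead of an edge-by-edge repair: first subdivide a few faces so that any two faces of $\lambda$ have at most one edge in common; then subdivide \emph{every} face as in Figure~\ref{fig:TriangSubdivis}, and perform a diagonal exchange along \emph{every} old edge of $\lambda$. Each edge of the resulting triangulation is then either a spoke joining an old vertex to a new face-center, or an exchanged diagonal joining the centers of the two distinct faces adjacent to an old edge, and combinatoriality is verified globally: exchanged diagonals cannot share endpoints because two faces share at most one edge, and spokes at a given center go to the corners of that face --- which is exactly where one must also control faces with repeated corners (for instance by iterating the construction once more, since after one pass every face has three distinct vertices). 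If you want to keep your local scheme you must confront the same issue and exhibit a decreasing invariant; otherwise adopt the global ``subdivide everything, then exchange every old edge'' construction, after which your Step 3 goes through verbatim.
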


\begin{proof} By subdividing a few  faces if necessary, we can arrange that any two faces of $\lambda$ have at most one edge in common. 

After this preliminary step, let $\lambda''$ be obtained from $\lambda$ by subdividing each face, and let $\lambda'$ be obtained by performing a diagonal exchange along each edge of $\lambda''$ that is also an edge of  $\lambda$; see Figure~\ref{fig:MakeCombinatorial}. All edges of a face of $\lambda$ are distinct by our general convention for triangulations,  and we had arranged at the beginning fo the proof that any two faces have at most one edge in common. It easily follows that each edge of the resulting triangulation $\lambda'$ joins, either a vertex of $V_\lambda$ to a vertex of  $V_{\lambda'} - V_\lambda$, or two distinct vertices of $V_{\lambda'} - V_\lambda$. As a consequence,  $\lambda'$ is combinatorial.

\begin{figure}[htbp]

\includegraphics{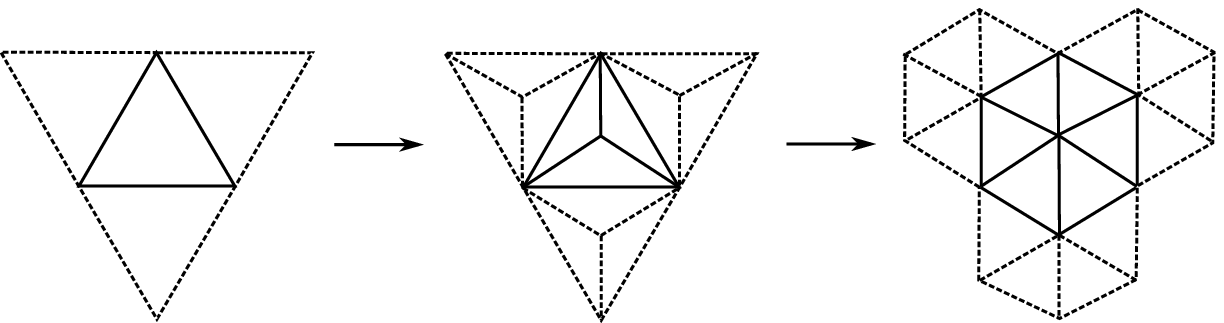}
\caption{}
\label{fig:MakeCombinatorial}
\end{figure}

Using the properties that $\lambda'$ is combinatorial and that each edge of $\lambda'$ touches at most one edge of $V_\lambda$, the inductive process of the proof of Lemma~\ref{lem:EnhancementsExist} then proves the second statement. 
\end{proof}

Lemma~\ref{lem:MakeTriangCombin} and the results of \S\S \ref{subsect:FaceSubdivisions} and \ref{subsect:DiagonalExchanges}  enable us to extend Proposition~\ref{prop:OffDiagKernelInvariant} and \ref{prop:VertexSweep} to triangulations that are not necessarily combinatorial. 

\begin{thm}
\label{thm:ConstructRepSkeinAlgebraClosedSurface}
Given a triangulation $\lambda$ of the surface $S$ and a $\lambda$--enhancement $\xi \colon \widetilde V_\lambda \to \mathbb{CP}^1$ for the group homomorphism $r\colon \pi_1(S) \to \SL(\C)$,  let $\mu_\lambda \colon \ZZ(\lambda) \to \End({E_\lambda})$ be the irreducible representation associated to this data by Proposition~{\upshape \ref{prop:ConstructRepCheFock}}. Then, the total off-diagonal kernel ${F_\lambda} \subset {E_\lambda}$ of  $\mu_\lambda$ is invariant under the representation  $\rho_\lambda = \mu_\lambda \circ \Tr_\lambda^\omega \colon \SSS(S_\lambda) \to \End({E_\lambda}) $ constructed in {\upshape \S \ref{subsect:RepresentingPunctSurfSkein}}, and $\rho_\lambda$ induces a representation $\check \rho_ \lambda \colon \SSS(S) \to \End({F_\lambda})$. 
\end{thm}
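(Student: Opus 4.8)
The plan is to reduce the general (non-combinatorial) case to the combinatorial case already handled by Proposition~\ref{prop:OffDiagKernelInvariant} and Proposition~\ref{prop:VertexSweep}, using the invariance under triangulation changes established in \S\S\ref{subsect:FaceSubdivisions} and \ref{subsect:DiagonalExchanges}. The key auxiliary tool is Lemma~\ref{lem:MakeTriangCombin}, which produces from $\lambda$ a combinatorial triangulation $\lambda'$ obtained by a finite sequence of face subdivisions and diagonal exchanges, together with a $\lambda'$--enhancement $\xi'$ extending $\xi$; at each intermediate stage the enhancement is obtained by restriction or by the extension procedure of Lemma~\ref{lem:EnhancementsExist}, and the associated Chekhov--Fock representation is the one furnished by Proposition~\ref{prop:ConstructRepCheFock}.

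First I would record the base case: when the triangulation is combinatorial, Proposition~\ref{prop:OffDiagKernelInvariant} shows $\rho_\lambda$ respects the total off-diagonal kernel $F_\lambda$, and Proposition~\ref{prop:VertexSweep} shows that it induces a representation $\check\rho_\lambda \colon \SSS(S) \to \End(F_\lambda)$; that is, in the terminology introduced just before Proposition~\ref{prop:SubdivisionInvariance}, the base case already satisfies ``$\rho_{\lambda'}$ respects $F_{\lambda'}$ and induces a representation''. Then I would run a downward induction along the sequence of moves given by Lemma~\ref{lem:MakeTriangCombin}: starting from the combinatorial endpoint $\lambda'$, I apply Proposition~\ref{prop:SubdivisionInvariance} (for each face subdivision, read backwards) and Proposition~\ref{prop:DiagExRespectSkeinReps} (for each diagonal exchange) to propagate, one move at a time, the property ``$\rho$ respects the total off-diagonal kernel and induces a representation of $\SSS(S)$'' back to the original triangulation $\lambda$. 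Both of these propositions are stated precisely in the inductive form needed: each asserts that if the property holds for the ``more refined'' triangulation $\lambda'$ then it holds for $\lambda$. Since the sequence of moves is finite, after finitely many steps we conclude that $\rho_\lambda = \mu_\lambda \circ \Tr_\lambda^\omega$ respects $F_\lambda$ and induces $\check\rho_\lambda \colon \SSS(S) \to \End(F_\lambda)$, which is exactly the assertion of the theorem.

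One small bookkeeping point I would be careful about: Lemma~\ref{lem:MakeTriangCombin} is phrased with a ``preliminary step'' of subdividing some faces so that any two faces share at most one edge, and then a round of subdivisions followed by diagonal exchanges. Each of these is either a face subdivision or a diagonal exchange whose square $Q$ has four distinct sides (this distinctness is guaranteed by the arrangement that two faces meet in at most one edge), so both Proposition~\ref{prop:SubdivisionInvariance} and Proposition~\ref{prop:DiagExRespectSkeinReps} genuinely apply at every step; I would simply note this compatibility rather than re-verify it. I would also observe that the statement of the theorem only asks for existence of the induced representation, not for its isomorphism class, so I can ignore the sign-reversal ambiguities that Propositions~\ref{prop:SubdivisionInvariance} and \ref{prop:DiagExRespectSkeinReps} carry along (those will matter in \S\ref{subsect:IndependenceChoices}, not here).

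I do not expect a genuine obstacle in this proof: all the hard work — the invariance of the off-diagonal kernel, the vertex-sweep computation, the compatibility of $\Phi$ and $\Theta_{\lambda\lambda'}$ with quantum traces and with off-diagonal kernels — has already been done in the cited results. The only thing to be mildly attentive to is making sure the induction is set up in the correct direction (from combinatorial $\lambda'$ down to arbitrary $\lambda$, since the propositions hypothesize the property for the refined triangulation and conclude it for the coarser one), and that every move in the sequence produced by Lemma~\ref{lem:MakeTriangCombin} is of one of the two types for which we have an invariance proposition. Thus the ``hard part'', such as it is, is purely organizational: correctly chaining Lemma~\ref{lem:MakeTriangCombin}, Proposition~\ref{prop:OffDiagKernelInvariant}, Proposition~\ref{prop:VertexSweep}, Proposition~\ref{prop:SubdivisionInvariance}, and Proposition~\ref{prop:DiagExRespectSkeinReps} into a single finite induction.
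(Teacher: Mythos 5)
Your proposal is correct and is essentially the paper's own proof: reduce to the combinatorial case via Lemma~\ref{lem:MakeTriangCombin}, establish the base case with Propositions~\ref{prop:OffDiagKernelInvariant} and \ref{prop:VertexSweep}, and propagate the property back along the finite sequence of moves by induction using Propositions~\ref{prop:SubdivisionInvariance} and \ref{prop:DiagExRespectSkeinReps}. Your extra remarks (direction of the induction, distinctness of the sides of the square for diagonal exchanges, irrelevance of sign-reversal ambiguities for mere existence) are consistent with how the paper's argument is organized.
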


\begin{proof}
By Lemma~\ref{lem:MakeTriangCombin}, there exists a sequence of triangulations $\lambda=\lambda_0$, $\lambda_1$, \dots, $\lambda_{n-1}$, $\lambda_n = \lambda'$ such that $\lambda'$ is combinatorial, and such that each $\lambda_{i+1}$ is obtained from $\lambda_i$ by a face subdivision or by a diagonal exchange. In addition, for every $i$, the $\lambda$--enhancement $\xi \colon \widetilde V_\lambda \to \mathbb{CP}^1$ can be extended to a $\lambda_i$--enhancement $\xi_i \colon \widetilde V_{\lambda_i} \to \mathbb{CP}^1$ for $r$, in such a way that each $\xi_{i+1}$ restricts to $\xi_i$ on $\widetilde V_{\lambda_i}$. 

Since $\lambda'$ is combinatorial, the property sought holds for $\lambda'$ by Propositions~\ref{prop:OffDiagKernelInvariant} and \ref{prop:VertexSweep}. Propositions~\ref{prop:SubdivisionInvariance} and \ref{prop:DiagExRespectSkeinReps} assert that  the property will also hold for $\lambda_i$ if it holds for $\lambda_{i+1}$. The result then follows by induction. 
\end{proof}

\begin{thm}
\label{thm:ClassicalShadowCorrect}
The  representation $\check \rho_ \lambda \colon \SSS(S) \to \End({F_\lambda})$ provided by Theorem~{\upshape\ref{thm:ConstructRepSkeinAlgebraClosedSurface}} has classical shadow equal to the character $ r \in \RR(S)$ represented by the group homomorphism $r\colon \pi_1(S) \to \SL(\C)$, in the sense that 
$$
T_N\bigl( \check \rho_ \lambda ([K] )\bigr) = - \Tr \,r(K)\, \Id_{F_\lambda}
$$
for every knot $K \subset S\times [0,1]$ whose projection to $S$ has no crossing and whose framing is vertical. 
\end{thm}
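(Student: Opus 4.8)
The plan is to reduce the statement for an arbitrary triangulation $\lambda$ to the case of a combinatorial triangulation, which was already handled by Proposition~\ref{prop:ClassicalShadowOKCombinatorial}. First I would recall, from Lemma~\ref{lem:MakeTriangCombin}, that there is a sequence of triangulations $\lambda = \lambda_0, \lambda_1, \dots, \lambda_n = \lambda'$ in which each $\lambda_{i+1}$ is obtained from $\lambda_i$ by a face subdivision or a diagonal exchange, $\lambda'$ is combinatorial, and the $\lambda$--enhancement $\xi$ extends compatibly to $\lambda_i$--enhancements $\xi_i$. Proposition~\ref{prop:ConstructRepCheFock} then supplies representations $\mu_{\lambda_i}$, and Theorem~\ref{thm:ConstructRepSkeinAlgebraClosedSurface} already tells us that each $\rho_{\lambda_i} = \mu_{\lambda_i}\circ\Tr_{\lambda_i}^\omega$ respects the total off-diagonal kernel $F_{\lambda_i}$ and induces a representation $\check\rho_{\lambda_i}\colon \SSS(S)\to\End(F_{\lambda_i})$.

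The key point is that Propositions~\ref{prop:SubdivisionInvariance} and \ref{prop:DiagExRespectSkeinReps} assert not merely that $F_{\lambda_i}$ is respected but that $\check\rho_{\lambda_i}$ is isomorphic to $\check\rho_{\lambda_{i+1}}$, up to a possible pre-composition with a sign-reversal symmetry of $r\in\RR(S)$ on $\SSS(S)$. As observed in Section~\ref{sect:signreversal}, pre-composing a representation with classical shadow $r$ by such a sign-reversal symmetry $\epsilon$ produces a representation with classical shadow $\epsilon r = r$; equivalently, the classical shadow is unchanged because $T_N$ is a sum of odd-degree monomials and $\epsilon$ fixes $r$. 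Hence the property of having classical shadow $r\in\RR(S)$ is preserved under the moves of \S\S\ref{subsect:FaceSubdivisions} and \ref{subsect:DiagonalExchanges}, and it suffices to verify the conclusion for the combinatorial triangulation $\lambda'$ at the end of the chain.

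Therefore I would conclude as follows. For $\lambda'$ combinatorial, Proposition~\ref{prop:ClassicalShadowOKCombinatorial} gives $T_N(\check\rho_{\lambda'}([K])) = -\Tr\,r(K)\,\Id_{F_{\lambda'}}$ for every framed knot $K\subset S_{\lambda'}\times[0,1]$ whose projection to $S_{\lambda'}$ has no crossing and whose framing is vertical; by isotoping such a knot off the (finitely many) vertices of $\lambda'$ this covers every framed knot in $S\times[0,1]$ of the stated form. Running back up the chain $\lambda' = \lambda_n, \dots, \lambda_0 = \lambda$ and using the isomorphisms of Propositions~\ref{prop:SubdivisionInvariance} and \ref{prop:DiagExRespectSkeinReps} together with the observation above, we get $T_N(\check\rho_{\lambda_i}([K])) = -\Tr\,r(K)\,\Id_{F_{\lambda_i}}$ for each $i$, since $T_N(\cdot)$ intertwines with algebra isomorphisms and the scalar $-\Tr\,r(K)$ is unaffected by the sign-reversal symmetries. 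In particular this holds for $i=0$, which is the desired statement.

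The step I expect to require the most care is not a calculation but a bookkeeping point: one must be sure that the sign-reversal symmetries introduced at each stage genuinely do not change the classical shadow. This is exactly the content of the remark in \S\ref{sect:signreversal} that a sign-reversal symmetry $\epsilon$ of $r$ satisfies $\epsilon r = r$, combined with the fact that $T_N$ has only odd-degree terms so that $\rho\circ\epsilon$ has classical shadow $\epsilon r$. Once that is invoked, the induction is purely formal; the only other thing to check is the harmless reduction that a framed knot in $S\times[0,1]$ with crossingless vertical-framing projection can be isotoped into $S_\lambda\times[0,1]$, which is immediate since the vertex set $V_\lambda$ is finite.
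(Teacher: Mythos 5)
Your proposal is correct and follows essentially the same route as the paper: reduce to a combinatorial triangulation via the chain of face subdivisions and diagonal exchanges, apply Proposition~\ref{prop:ClassicalShadowOKCombinatorial} at the combinatorial end, and propagate back using the isomorphisms of Propositions~\ref{prop:SubdivisionInvariance} and \ref{prop:DiagExRespectSkeinReps}, noting that sign-reversal symmetries of $r$ do not alter the classical shadow since $\epsilon r = r$. Your extra bookkeeping about the sign-reversal symmetries and about isotoping a crossingless knot off the vertex set is a harmless elaboration of points the paper leaves implicit.
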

\begin{proof} We again use a sequence of triangulations  $\lambda=\lambda_0$, $\lambda_1$, \dots, $\lambda_{n-1}$, $\lambda_n = \lambda'$ and $\lambda_i$--enhancements $\xi_i \colon \widetilde V_{\lambda_i} \to \mathbb{CP}^1$ such that $\lambda'$ is combinatorial,  each $\lambda_{i+1}$ is obtained from $\lambda_i$ by a face subdivision or by a diagonal exchange, and each $\xi_{i+1}$ restricts to $\xi_i$ on $\widetilde V_{\lambda_i}$. 

Since $\lambda'$ is combinatorial, Proposition~\ref{prop:ClassicalShadowOKCombinatorial} shows that $\check \rho_{\lambda'} = \check \rho_{\lambda_n}$ has classical shadow equal to $r\in \RR(S)$.  Propositions~\ref{prop:SubdivisionInvariance} and \ref{prop:DiagExRespectSkeinReps}  then inductively show that the  $\check \rho_{\lambda_i}$ are all isomorphic, and consequently also have classical shadow $r\in \RR(S)$. In particular, $\check \rho_{\lambda}=\check \rho_{\lambda_0}$ has classical shadow $r\in \RR(S)$. 
\end{proof}

\subsection{Independence of choices}
\label{subsect:IndependenceChoices}

We now prove that the construction of the representation  $\check \rho_ \lambda \colon \SSS(S) \to \End({F_\lambda})$ of Theorem~\ref{thm:ConstructRepSkeinAlgebraClosedSurface} is very natural. 

\begin{lem}
\label{lem:ConnectTriangulations}
Let $\lambda$ and $\lambda'$ be two  triangulations of $S$ whose vertex sets are disjoint, and let $\xi:\widetilde V_\lambda \to \CP$ and $\xi':\widetilde V_{\lambda'} \to \CP$ be $\lambda$-- and $\lambda'$--enhancements, respectively, for the homomorphism $r \colon \pi_1(S) \to \SL(\C)$. Then $\lambda$ and $\lambda'$ can be connected by a sequence of triangulations $\lambda= \lambda_0$, $\lambda_1$, \dots, $\lambda_{n-1}$, $\lambda_n=\lambda'$, each equipped with a $\lambda_i$--enhancement $\xi_i:V_{\lambda_i} \to \CP$ for $r$,  such that:
\begin{enumerate}
\item each $\lambda_{i+1}$ is obtained from $\lambda_i$ by a face subdivision as introduced in   {\S \upshape \ref{subsect:FaceSubdivisions}}, the inverse of a face subdivision, or a diagonal exchange as in  {\S \upshape \ref{subsect:DiagonalExchanges}};
\item $\xi_0=\xi$ and $\xi_n = \xi'$;
\item for every $i$, $\xi_i$ and $\xi_{i+1}$ coincide on the intersection $\widetilde V_{\lambda_i} \cap \widetilde V_{\lambda_{i+1}}$. 
\end{enumerate}
\end{lem}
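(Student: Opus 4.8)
The plan is to reduce the statement to the classical fact that any two triangulations of a surface are connected by Pachner moves (face subdivisions, their inverses, and diagonal exchanges), and then to upgrade this combinatorial connectedness to the enhanced setting by carefully choosing the enhancements $\xi_i$ along the way. The key point is that we have the freedom to put new vertices wherever we want, so we can avoid the ``bad'' configurations that would obstruct existence of enhancements.

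\textbf{Step 1: Connect the underlying triangulations by Pachner moves.} First I would invoke the classical result (due in this generality to Pachner, and in the surface case going back to Alexander) that any two triangulations $\lambda$ and $\lambda'$ of a closed surface $S$ are related by a finite sequence of bistellar moves. On a surface, the relevant moves are exactly the diagonal exchange (the $2 \leftrightarrow 2$ move) and the face subdivision together with its inverse (the $1 \leftrightarrow 3$ move); since we are allowed all three move types in conclusion (1), this gives a combinatorial path $\lambda = \lambda_0, \lambda_1, \dots, \lambda_n = \lambda'$. A minor technical care is needed because we allow non-combinatorial triangulations (edges from a vertex to itself, etc.), but the sides of each face must be distinct; I would first apply a face subdivision to enough faces of $\lambda$ and of $\lambda'$ so that the two triangulations agree on a disjoint union of small disks around each of their (disjoint) vertex sets, and then it is standard that the remainder can be handled by diagonal exchanges. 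Alternatively, one can first make both $\lambda$ and $\lambda'$ combinatorial using Lemma~\ref{lem:MakeTriangCombin} and connect combinatorial triangulations.

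\textbf{Step 2: Propagate the enhancements.} Given the combinatorial path and enhancements $\xi = \xi_0$ at the start and $\xi' = \xi_n$ at the end, I would construct the intermediate $\xi_i$ so that each consecutive pair agrees on the common vertex set. For a diagonal exchange $\lambda_i \to \lambda_{i+1}$ the vertex sets are equal, so I would simply set $\xi_{i+1} = \xi_i$; one must check that $\xi_i$ remains a valid enhancement for $\lambda_{i+1}$, i.e. that the two endpoints of the new diagonal get distinct values in $\CP$. This is where a genericity choice enters: since new vertices introduced at earlier subdivision steps were (and will be) chosen outside a countable set of forbidden positions, exactly as in the proof of Lemma~\ref{lem:EnhancementsExist}, this distinctness can be arranged; if a problematic diagonal exchange is forced, one inserts an extra subdivision/inverse-subdivision pair to reposition the offending vertex. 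For a face subdivision $\lambda_i \to \lambda_{i+1}$, the new vertex set is $\widetilde V_{\lambda_i}$ plus one new orbit; I would define $\xi_{i+1}$ to equal $\xi_i$ on $\widetilde V_{\lambda_i}$ and pick the value at the new vertex by the same ``avoid a countable set'' argument of Lemma~\ref{lem:EnhancementsExist}, ensuring Conditions (1) and (2) of a $\lambda_{i+1}$-enhancement. For an inverse face subdivision $\lambda_i \to \lambda_{i+1}$ (removing a vertex), I would simply restrict $\xi_i$; this is automatically still an enhancement since we only delete data and delete edges, so all the distinctness conditions survive.

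\textbf{Expected main obstacle.} The genuinely delicate point is guaranteeing that, along the specified combinatorial path, every diagonal exchange we are forced to perform has its new diagonal's endpoints assigned distinct $\CP$-values by the enhancement carried along. A priori $r$ could send the relevant element of $\pi_1(S)$ to something that fixes the projective point we chose. The resolution is to interleave additional moves: whenever an obstruction would occur, subdivide a nearby face (choosing the new vertex generically), perform the needed exchanges using the extra room, and then undo the subdivision; because we get to re-choose the generic enhancement value at each subdivision, we can always dodge the countably many bad configurations. Making this interleaving precise — rather than hand-waving ``perturb'' — is the part of the proof requiring the most care, and it should be written out explicitly by first reducing (via Lemma~\ref{lem:MakeTriangCombin}) to the case where $\lambda$, $\lambda'$, and all intermediate triangulations are combinatorial, since for combinatorial triangulations all the enhancement conditions are automatic by Remark~\ref{rem:EdgesUnivCoverDistinctEndpoints} and Lemma~\ref{lem:EnhancementsExist}.
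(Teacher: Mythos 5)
Your overall skeleton matches the paper's: reduce to combinatorial triangulations via Lemma~\ref{lem:MakeTriangCombin}, connect them by the two-dimensional case of Pachner's theorem (so that all intermediate triangulations are combinatorial), and then make generic choices of enhancement values. The genuine gap is in how you secure Condition~(2) of an enhancement at the diagonal exchanges, and your closing claim that ``for combinatorial triangulations all the enhancement conditions are automatic'' is not correct as stated: combinatoriality guarantees that \emph{some} $\lambda_i$--enhancement exists (Lemma~\ref{lem:EnhancementsExist}), but not that the particular equivariant map you are carrying along assigns distinct values to the endpoints of every lifted edge. Your proposed repair --- inserting a subdivision/inverse-subdivision pair to ``reposition the offending vertex'' --- does not address the worst case: a diagonal exchange can create an edge joining two vertices whose values are prescribed (two vertices of $\lambda$, two of $\lambda'$, or one of each, via $\xi$ and $\xi'$), and those values cannot be re-chosen by subdividing nearby faces, so your interleaving scheme gives no way to rule out this configuration.

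What the paper does instead is to fix the entire Pachner sequence $\lambda=\lambda_0$, \dots, $\lambda_n=\lambda'$ first, and then make one global choice: extend $\xi$ and $\xi'$ to a single $r$--equivariant map $\xi''$ defined on the union $\bigcup_{i} \widetilde V_{\lambda_i}$ (here the hypothesis that $V_\lambda$ and $V_{\lambda'}$ are disjoint is exactly what makes the simultaneous extension possible), proceeding orbit by orbit as in the proof of Lemma~\ref{lem:EnhancementsExist} and requiring distinct $\pi_1(S)$--orbits to have disjoint images in $\CP$, which only forces one to avoid countably many values at each step. Then every restriction $\xi_i = \xi''_{|\widetilde V_{\lambda_i}}$ is a $\lambda_i$--enhancement for free, precisely because each $\lambda_i$ is combinatorial, so the endpoints of any lifted edge lie in distinct orbits and hence receive distinct values; Conditions~(2) and (3) of the lemma are then immediate since all the $\xi_i$ are restrictions of the same map. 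This single up-front generic choice, made after the combinatorial path is fixed, is the missing idea that turns your hoped-for ``automatic'' statement into a true one and eliminates any need for interleaved extra moves.
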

For the third condition, note that the vertex sets $V_{\lambda_i} $ and $ V_{\lambda_{i+1}}$ differ by at most one vertex, so that $\widetilde V_{\lambda_i} $ and $\widetilde V_{\lambda_{i+1}}$ differ by at most one $\pi_1(S)$--orbit. 
\begin{proof}
By Lemma~\ref{lem:MakeTriangCombin}, we can assume without loss of generality that $\lambda$ and $\lambda'$ are combinatorial. The existence of the sequence $\lambda= \lambda_0$, $\lambda_1$, \dots, $\lambda_{n-1}$, $\lambda_n=\lambda'$ in this combinatorial setup is then  the 2--dimensional case of Pachner's theorem \cite{Pach1, Pach2} (which of course predates the full generality of Pachner's theorem by many decades). In addition, the $\lambda_i$ provided by this statement are all combinatorial.

To construct the enhancements $\xi_i$, note that $\pi_1(S)$ acts on each $\widetilde V_{\lambda_i} $, and therefore on the union $\bigcup_{i=1}^n \widetilde V_{\lambda_i} $. Extend $\xi$ and $\xi'$ to an $r$--equivariant map $\xi'' \colon \bigcup_{i=1}^n \widetilde V_{\lambda_i} \to \CP$ (this is where we use the fact that the vertex sets $V_\lambda$ and $V_{\lambda'}$ are disjoint), orbit by orbit as in the proof of Lemma~\ref{lem:EnhancementsExist}.  In this construction, we can require that distinct $\pi_1(S)$--orbits in $\bigcup_{i=1}^n \widetilde V_{\lambda_i} $ have disjoint images in $\CP$, since we only need to avoid countably many values at each step. Then, the restriction $\xi_i$ of $\xi''$ to $ \widetilde V_{\lambda_i} $ is a $\lambda_i$--enhancement for $r$; indeed, because $\lambda_i$ is combinatorial, the endpoints of each edge of its lift $\widetilde \lambda_i$ to the universal cover $\widetilde S$ belong to distinct $\pi_1(S)$--orbits, and in particular have distinct images under $\xi''$. 
\end{proof}

\begin{thm}
\label{thm:RepIndependentChoices}
Up to isomorphism and up to the action of a sign-reversal symmetry of $r\in \RR(S)$ on $\SSS(S)$, the representation $\check \rho_ \lambda \colon \SSS(S) \to \End({F_\lambda})$ provided by Theorem~{\upshape\ref{thm:ConstructRepSkeinAlgebraClosedSurface}} depends only on the group homomorphism $r\colon \pi_1(S) \to \SL(\C)$, not on the triangulation $\lambda$ or the $\lambda$--enhancement $\xi$ used in the construction. 
\end{thm}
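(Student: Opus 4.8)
The plan is to reduce the claim, as the title of \S\ref{subsect:IndependenceChoices} suggests, to the combination of the two triangulation-change invariance results already established (Propositions~\ref{prop:SubdivisionInvariance} and \ref{prop:DiagExRespectSkeinReps}) together with the connectivity statement of Lemma~\ref{lem:ConnectTriangulations}. First I would fix a group homomorphism $r\colon\pi_1(S)\to\SL(\C)$ and consider two triangulations $\lambda$, $\lambda'$ of $S$ equipped with enhancements $\xi$, $\xi'$. The first step is to dispose of the case where $\lambda$ and $\lambda'$ do not have disjoint vertex sets: by Lemma~\ref{lem:MakeTriangCombin} (or simply by subdividing one extra face), one may replace $\lambda'$ by an isotopic copy whose vertex set is disjoint from $V_\lambda$, and Proposition~\ref{prop:SubdivisionInvariance} guarantees this does not change $\check\rho_{\lambda'}$ up to isomorphism and sign-reversal symmetry; alternatively one can invoke Lemma~\ref{lem:ConnectTriangulations} directly once the vertex sets are made disjoint. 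So without loss of generality $V_\lambda\cap V_{\lambda'}=\emptyset$.

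Next I would apply Lemma~\ref{lem:ConnectTriangulations} to obtain a chain $\lambda=\lambda_0,\lambda_1,\dots,\lambda_n=\lambda'$ of triangulations, each carrying a $\lambda_i$--enhancement $\xi_i$ for $r$ with $\xi_0=\xi$, $\xi_n=\xi'$, and $\xi_i$, $\xi_{i+1}$ agreeing on $\widetilde V_{\lambda_i}\cap\widetilde V_{\lambda_{i+1}}$, where each step $\lambda_i\rightsquigarrow\lambda_{i+1}$ is a face subdivision, the inverse of a face subdivision, or a diagonal exchange. Let $\mu_{\lambda_i}\colon\ZZ(\lambda_i)\to\End(E_{\lambda_i})$ be the representation attached to $(r,\xi_i)$ by Proposition~\ref{prop:ConstructRepCheFock}, and let $\rho_{\lambda_i}=\mu_{\lambda_i}\circ\Tr_{\lambda_i}^\omega$. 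By Theorem~\ref{thm:ConstructRepSkeinAlgebraClosedSurface}, each $\rho_{\lambda_i}$ respects its total off-diagonal kernel $F_{\lambda_i}$ and induces a representation $\check\rho_{\lambda_i}\colon\SSS(S)\to\End(F_{\lambda_i})$. I would then argue by induction along the chain that consecutive $\check\rho_{\lambda_i}$ and $\check\rho_{\lambda_{i+1}}$ are isomorphic up to a sign-reversal symmetry of $r$. For a face subdivision from $\lambda_i$ to $\lambda_{i+1}$ this is exactly Proposition~\ref{prop:SubdivisionInvariance} (with $\lambda=\lambda_i$, $\lambda'=\lambda_{i+1}$); for the inverse of a face subdivision one applies the same Proposition with the roles of $\lambda_i$ and $\lambda_{i+1}$ reversed; for a diagonal exchange it is Proposition~\ref{prop:DiagExRespectSkeinReps}. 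Here I should note that those two Propositions are stated with a directional hypothesis (the conclusion for $\lambda$ is deduced from an assumption on $\lambda'$), so care is needed: in both cases the hypothesis "$\rho_{\lambda'}$ respects $F_{\lambda'}$ and induces $\check\rho_{\lambda'}$" is supplied unconditionally by Theorem~\ref{thm:ConstructRepSkeinAlgebraClosedSurface}, so the Propositions apply regardless of which of the two triangulations one calls $\lambda$ and which $\lambda'$, and in particular the induced representations at the two ends of each elementary move are isomorphic up to a sign-reversal symmetry. Composing the isomorphisms (and the accumulated sign-reversal symmetries, which form a group $H^1(S;\Z_2)$, so their composite is again a single sign-reversal symmetry) along the chain yields that $\check\rho_\lambda$ and $\check\rho_{\lambda'}$ are isomorphic up to a sign-reversal symmetry of $r\in\RR(S)$.

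Finally I would address the remaining source of non-uniqueness: for a fixed triangulation $\lambda$, two different $\lambda$--enhancements $\xi$ and $\xi'$ of the same $r$. One clean way is to observe that any two enhancements can be connected through a continuous path of enhancements of $r$ (the space of $r$--equivariant maps $\widetilde V_\lambda\to\CP$ avoiding the degeneracy locus of Condition~(2) is, orbit by orbit, a product of $\CP$'s with countably many hyperplanes removed, hence connected), and then invoke Complement~\ref{comp:ConstructRepCheFockContinuous}: along such a path the representation $\mu_\lambda$ varies continuously in the sense made precise there, so the crossratio weights $x_i$ vary continuously and one gets a continuous family of representations $\rho_\lambda$, hence $\dim F_\lambda$ is locally constant and the $\check\rho_\lambda$ are all isomorphic along the path (invoking a sign-reversal symmetry only when the path crosses a wall where such symmetries appear). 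Alternatively, and perhaps more in keeping with the "drill, baby, drill" methodology, one can first subdivide $\lambda$ enough that the two enhancements can be realized as restrictions of enhancements on a common finer triangulation and then use the chain argument above. The main obstacle I anticipate is precisely this enhancement-independence step: handling it cleanly requires being careful about the sign-reversal ambiguity — the set of representations with classical shadow $r$ is only a torsor-like object under $H^1(S;\Z_2)$ away from walls, and one must check that crossing a wall does not introduce a genuinely new isomorphism class but only the predictable twist by a sign-reversal symmetry of $r$. The triangulation-independence part, by contrast, is essentially bookkeeping once Lemma~\ref{lem:ConnectTriangulations} and the two invariance Propositions are in hand.
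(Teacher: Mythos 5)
Your proof follows essentially the same route as the paper: make the vertex sets disjoint by a small isotopy (the paper's justification for this step is simply that an isotopy leaves the crossratio weights and hence the representation unchanged, not Lemma~\ref{lem:MakeTriangCombin} or Proposition~\ref{prop:SubdivisionInvariance}), connect $(\lambda,\xi)$ to $(\lambda',\xi')$ by the chain of Lemma~\ref{lem:ConnectTriangulations}, and propagate isomorphism up to sign-reversal symmetry along the chain via Propositions~\ref{prop:SubdivisionInvariance} and \ref{prop:DiagExRespectSkeinReps}, whose hypotheses are indeed supplied unconditionally by Theorem~\ref{thm:ConstructRepSkeinAlgebraClosedSurface}, so the directional issue you worry about is handled exactly as you say. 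Your final paragraph is superfluous: since Lemma~\ref{lem:ConnectTriangulations} already interpolates the enhancements ($\xi_0=\xi$, $\xi_n=\xi'$), two enhancements of the same triangulation are covered by the chain argument once one copy is isotoped off the other, so the less rigorous continuity argument you sketch there (local constancy of $\dim F_\lambda$ does not follow from lower semi-continuity, nor does isomorphism along a path follow from continuity alone) is not needed.
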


\begin{proof}
Consider two triangulations $\lambda$ and $\lambda'$, with respective enhancements $\xi:\widetilde V_\lambda \to \CP$ and $\xi':\widetilde V_{\lambda'} \to \CP$ for the homomorphism $r \colon \pi_1(S) \to \SL(\C)$.  Modifying $\lambda'$ by a small isotopy does not change the associated representations $\mu_{\lambda'} \colon \ZZ(\lambda') \to \End(E_{\lambda'})$ and $\check \rho_ {\lambda'} \colon \SSS(S) \to \End({F_{\lambda'}})$, so we can assume that the vertex sets $V_\lambda$ and $V_{\lambda'}$ are disjoint. We can then consider the sequences of triangulations $\lambda= \lambda_0$, $\lambda_1$, \dots, $\lambda_{n-1}$, $\lambda_n=\lambda'$ and $\lambda_i$--enhancement $\xi_i:V_{\lambda_i} \to \CP$ provided by Lemma~\ref{lem:ConnectTriangulations}. Theorem~\ref{thm:ConstructRepSkeinAlgebraClosedSurface} associates to each triangulation $\lambda_i$ and $\lambda_i$--enhancement $\xi_i$  a representation $\check \rho_ {\lambda_i} \colon \SSS(S) \to \End(F_i)$. 
Propositions~\ref{prop:SubdivisionInvariance} and \ref{prop:DiagExRespectSkeinReps} show that each $\check \rho_ {\lambda_i} \colon \SSS(S) \to \End(F_i)$ is isomorphic to $\check \rho_ {\lambda_{i+1}} \colon \SSS(S) \to \End(F_{i+1})$ after possible composition with a sign-reversal symmetry.  It follows that $\check \rho_ \lambda \colon \SSS(S) \to \End({F_\lambda})$ is isomorphic to $\check \rho_{ \lambda'} \colon \SSS(S) \to \End({F_{\lambda'}})$ after possible composition with a sign-reversal symmetry. 
\end{proof}

\begin{rem}
Conjugating the homomorphism $r\colon \pi_1(S) \to \SL(\C)$ by an element $\theta\in\SL(\C)$ also leaves the isomorphism class of the representation $\check \rho_ \lambda \colon \SSS(S) \to \End(F_{\lambda})$ invariant; indeed, the $\lambda$--enhancement $\theta \xi \colon \widetilde V_\lambda \to \CP$ for $\theta r \theta^{-1}$ induces the same edge weights $x_i$ as $\xi$ in the construction of $\mu_\lambda \colon \ZZ(\lambda) \to \End({E_\lambda})$ in Proposition~{\upshape \ref{prop:ConstructRepCheFock}}. For irreducible homomorphisms $r \colon \pi_1(S) \to \SL(\C)$, being conjugate  by an element of $\SL(\C)$ is equivalent to defining the same character $ r \in \RR(S)$. However, for reducible homomorphisms $r $, we do not know if the representation $\check \rho_ \lambda \colon \SSS(S) \to \End(F_\lambda)$ depends only on the induced character $ r \in \RR(S)$  or on  subtler properties of the conjugacy class of $r \colon \pi_1(S) \to \SL(\C)$. 
\end{rem}

\section{The dimension of the total off-diagonal kernel}
\label{bigsect:DimensionOffDiagonalKernel}

We now have associated to each group homomorphism $r\colon \pi_1(S) \to \SL(\C)$ as representation $\check \rho_ \lambda \colon \SSS(S) \to \End(F_\lambda)$ of the skein algebra $\SSS(S)$, with classical shadow equal to the character $r\in \RR(S)$ represented by $r$. This construction is very natural as, up to isomorphism, $\check\rho_\lambda$ is independent of the triangulation $\lambda$ and of the $\lambda$--enhancement $\xi$.

However, we still do not know that this representation is non-trivial, namely that the total off-diagonal kernel $F_\lambda$ is non-trivial. This section is devoted to proving the non-triviality of $F_\lambda$, and to estimate its dimension. 

\begin{thm}
\label{thm:DimOffDiagKernel}
Let $\check \rho_ \lambda \colon \SSS(S) \to \End({F_\lambda})$ of the Kauffman bracket skein algebra of the closed oriented surface $S$ associated to the group homomorphism $r\colon \pi_1(S) \to \SL(\C)$ by Theorem~{\upshape\ref{thm:ConstructRepSkeinAlgebraClosedSurface}}. Then, the dimension of the off-diagonal kernel $F_{\lambda}$ is such that
$$
\dim F_{\lambda} \geq
\begin{cases}
N^{3(g-1)} &\text{ if } g\geq 2\\
N &\text{ if } g=1\\
1 &\text{ if } g=0
\end{cases}
$$
where $g$ is the genus of  $S$. In addition, the above inequality is an equality when the character $r\in \RR(S)$ represented by  $r$ is sufficiently generic, in the sense that it belongs to an explicit Zariski open dense subset of $\RR(S)$.  
\end{thm}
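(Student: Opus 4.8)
\textbf{Proof proposal for Theorem~\ref{thm:DimOffDiagKernel}.}

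The plan is to exploit the naturality established in Theorem~\ref{thm:RepIndependentChoices}: since $\dim F_\lambda$ is, up to isomorphism, independent of the triangulation $\lambda$, we are free to compute it using the most economical triangulation available. Concretely, I would select for each genus $g$ a specific triangulation $\lambda_0$ of $S$ with as few vertices as possible — the one-vertex triangulations familiar from surface topology (for $g \geq 1$ a one-vertex triangulation with $6g-3$ edges and $4g-2$ faces, and for $g=0$ a small triangulation of the sphere). For such a triangulation, $p_{\lambda_0}=1$ when $g\geq 1$, so by Conclusion~(1) of Proposition~\ref{prop:ConstructRepCheFock} we have $\dim E_{\lambda_0} = N^{3g+1-3} = N^{3g-2}$, and $F_{\lambda_0} = F_v$ is the off-diagonal kernel of the single vertex $v$. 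The target lower bound $N^{3(g-1)} = N^{3g-3}$ is exactly $\frac1N \dim E_{\lambda_0}$, so the claim reduces to showing that $\dim F_v \geq \frac1N \dim E_{\lambda_0}$, with equality generically. For $g=1$, $\dim E_{\lambda_0} = N$ and we must show $F_v$ is all of $E_{\lambda_0}$; for $g=0$, $\dim E_{\lambda_0}=1$ and again $F_v$ must be everything.

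The key technical step is an analogue of the computation in Lemma~\ref{lem:SubdivisionDimOffDiagKernel}. There, the off-diagonal term $Q_{v_0}$ had only three terms (a degree-$2$ vertex in the subdivided triangle), and the Quantum Binomial Formula together with the classical relation of Lemma~\ref{lem:ClassicalOffDiagonal} forced $\mu_{\lambda'}(Q_{v_0}-1)^N = -\Id$, hence diagonalizability with $N$ equidistributed eigenspaces. For a general vertex $v$ of valence $u$ in $\lambda_0$, the off-diagonal term $Q_v = \sum_{j=0}^{u-1} \omega^{-4j} Z_{i_1}^2 \cdots Z_{i_j}^2$ has $u$ terms and the monomials no longer pairwise $\omega^{8}$-commute in the clean way needed for the binomial formula. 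The plan here is: (i) use Lemma~\ref{lem:ClassicalOffDiagonal} (the classical off-diagonal vanishing) together with $\mu_\lambda(Z_i^{2N}) = x_i\Id$ and $\mu_\lambda(H_v) = -\omega^4\Id$ to pin down enough spectral data about $\mu_\lambda(Q_v)$; (ii) analyze the action of a suitable generator $\mu_\lambda(Z_{i_k}^2)$, which $\omega^{\pm 8}$-shifts certain eigenspaces of a companion operator built from $Q_v$, to show the relevant eigenspaces have equal dimension, exactly as in the last paragraph of the proof of Lemma~\ref{lem:SubdivisionDimOffDiagKernel}. For $g \geq 2$ this yields $\dim F_v = \frac1N \dim E_{\lambda_0}$ on a Zariski open set where the crossratio weights $x_i$ are generic (so that no accidental eigenvalue collisions shrink the $(-1)$-eigenspace), and $\dim F_v \geq \frac1N \dim E_{\lambda_0}$ always, by semicontinuity of the kernel dimension.

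The case $g=1$ deserves separate attention: here $\dim E_{\lambda_0} = N$ is already small, and one must show $\mu_\lambda(Q_v) = 0$ identically, i.e. $F_v = E_{\lambda_0}$. I expect this to follow because on the torus the one-vertex triangulation has the single vertex $v$ with $\mu_\lambda(H_v) = -\omega^4\Id$ forcing the product of all the $Z_{i_k}^2$ to act as a scalar, and the algebra $\ZZ(\lambda_0)$ is small enough (its representation has dimension $N$) that the classical relation of Lemma~\ref{lem:ClassicalOffDiagonal} already forces the quantum relation $\mu_\lambda(Q_v)=0$ to hold on the nose; a direct check on the explicit $N$-dimensional irreducible representation of the relevant quantum torus algebra should close this. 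The case $g=0$ is trivial once one observes $\dim E_{\lambda_0}=1$ and that $\mu_\lambda(Q_v)$ is then a scalar which Lemma~\ref{lem:ClassicalOffDiagonal} forces to be $0$.

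\textbf{Main obstacle.} The principal difficulty is the spectral analysis of $\mu_\lambda(Q_v)$ for a high-valence vertex: unlike the valence-$3$ case of Lemma~\ref{lem:SubdivisionDimOffDiagKernel}, the summands of $Q_v$ do not satisfy the uniform $\omega^{8}$-commutation hypothesis of the Quantum Binomial Formula, so I cannot simply raise $Q_v - 1$ to the $N$-th power and read off the spectrum. The workaround I anticipate is to introduce an auxiliary ``shear'' decomposition — write $Q_v$ as a telescoping expression in the partial products $Z_{i_1}^2\cdots Z_{i_j}^2$ and use the relation $\mu_\lambda(H_v)=-\omega^4\Id$ to wrap the long product around, reducing to finitely many model computations indexed by how many of the $e_{i_k}$ coincide (cf. the case analysis in Lemmas~\ref{lem:QuantumOrderH} and \ref{lem:QuantumOrderAtVertex}) — and then to invoke the equidistribution-of-eigenspaces argument via the conjugation action of a single $\mu_\lambda(Z_{i_k}^2)$. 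Verifying that the generic Zariski open set is non-empty and giving it explicitly (in terms of non-vanishing of certain polynomials in the $x_i$, equivalently in the trace functions of $r$) is then a bookkeeping matter, handled by exhibiting one homomorphism $r$ — e.g. one whose character is a smooth point with all the $x_i$ in general position — for which the bound is sharp.
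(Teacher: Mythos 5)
Your reduction to a one-vertex triangulation via Theorem~\ref{thm:RepIndependentChoices}, and your treatment of the torus and sphere (where $\mu_\lambda(Q_v)$ vanishes identically because $\mu_\lambda(H_v)=-\omega^4\,\Id$ makes a factor of $Q_v$ die), are in the same spirit as the paper; but for the main case $g\geq 2$ there is a genuine gap at exactly the point you flag as the ``main obstacle,'' and the workaround you sketch does not close it. For a vertex of high valence the summands of $Q_v$ do not uniformly $\omega^8$-commute, $(Q_v-1)^N$ has no reason to be scalar, and no single $Z_{i_k}^2$ $q$-commutes with all of $Q_v-1$ (it commutes with most terms and skew-commutes with only a few), so the conjugation-shift argument that worked in the valence-$3$ situation of Lemma~\ref{lem:SubdivisionDimOffDiagKernel} has nothing to act on; ``telescoping and wrapping around with $H_v$'' does not produce the needed spectral structure. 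The paper avoids analyzing $\mu_\lambda(Q_v)$ spectrally altogether. It chooses the one-vertex triangulation so that some edge $e_{i_0}$ \emph{separates} $S$ into $S_1\cup S_2$, and proves (Lemma~\ref{lem:OffDiagKernelAndSweep}, by an explicit quantum-trace computation) that $F_\lambda=\ker\bigl(\rho_\lambda([K_1])-\rho_\lambda([K_2])\bigr)$ for the two push-offs $K_1\subset S_1$, $K_2\subset S_2$ of that edge. The separating edge then gives a tensor factorization $\ZZ(\lambda)\cong \ZZ(\lambda_1)\otimes\ZZ(\lambda_2)\otimes\C[H_v^{\pm1}]$ and $E_\lambda\cong E_1\otimes E_2$, so that $\rho_\lambda([K_1])$ and $\rho_\lambda([K_2])$ act on separate tensor factors and $F_\lambda=\bigoplus_a E_1^{(a)}\otimes E_2^{(a)}$, a sum over matching eigenvalues (Lemmas~\ref{lem:CheFockOneHole} and \ref{lem:EigenvaluesTensorProduct}).

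The remaining (and hardest) ingredient, absent from your proposal, is Lemma~\ref{lem:EigenspaceDimensions}: when $\Tr\,r(e_{i_0})\neq\pm2$, the operator $\rho_\lambda([K_1])$ is diagonalizable with exactly the $N$ roots of $T_N(x)=-\Tr\,r(K_1)$ as eigenvalues, all eigenspaces of dimension $\frac1N\dim E_\lambda$. This is \emph{not} a genericity-of-crossratios bookkeeping matter: the paper proves it by deforming into the space $\mathcal W$ of arbitrary (unconstrained) edge-weight systems using Complement~\ref{comp:ConstructRepCheFockContinuous}, degenerating to a special weight system where $\rho_\lambda^{\widehat{\mathbf u}}([K_1])$ is approximately $u_0^{-2}A_t^{-1}+u_0^2A_t$, invoking Roger's lemma to produce an element that $\omega^8$-conjugates the monomial $[Z_{i_1}\cdots Z_{i_t}]$ (hence equidistributes the eigenspaces of $A_t$), and then propagating the conclusion by connectedness of the locus $\Tr\,r^{\mathbf u}(K_1)\neq\pm2$. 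Note also that the equidistribution is applied to the simple-closed-curve monomial $[Z_{i_1}\cdots Z_{i_t}]$, not to $Q_v$. Two smaller points: one-vertex triangulations are never combinatorial, so enhancement existence requires $r(e_i)\neq\pm\Id$ for every edge loop (Lemma~\ref{lem:EnhancementsExist}); this is why the paper states the generic computation under the explicit hypotheses $r(e_i)\neq\pm\Id$ and $\Tr\,r(e_{i_0})\neq\pm2$ and then extends the inequality to all $r$ by the semicontinuity argument you also invoke (and likewise for the torus, where the generic case requires the image of $\bar r$ in $\PSL(\C)$ to have more than two elements). Without the separating-edge reduction and the eigenspace-equidistribution lemma, your plan has no route to the bound $\frac1N\dim E_\lambda$, so as it stands the $g\geq2$ case is not proved.
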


When the surface $S$ is not the sphere, the proof of Theorem~\ref{thm:DimOffDiagKernel} is based on explicit computations for triangulations $\lambda$ that have only one vertex. In particular, these triangulations cannot  be combinatorial.  This proof is the only reason why we struggled to include non-combinatorial triangulations in the previous sections. 

\subsection{Proof of Theorem~\ref{thm:DimOffDiagKernel} when the surface  $S$ has genus $g\geq 2$}

Let $\lambda$ be a triangulation of the surface $S$ with only one vertex $v$. In particular, every edge of $\lambda$ is a loop. Because $S$ has genus $g\geq 2$, we can choose $\lambda$ so that, in addition, there is an edge $e_{i_0}$ of $\lambda$ that  separates the surface $S$ into two subsurfaces  $S_1$ and $S_2$. Because of our conventions for triangulations, the three sides of each face of $\lambda$ are distinct, and an Euler characteristic argument shows that each of $S_1$ and $S_2$ has positive genus.

We first consider the case where the  homomorphism $r\colon \pi_1(S) \to \SL(\C)$ admits a $\lambda$--enhancement $\xi$. By Lemma~\ref{lem:EnhancementsExist}, this is equivalent to the property that $r(e_i)\neq \pm \Id$ for every edge $e_i$ of the triangulation $\lambda$. Let $\mu_\lambda \colon \ZZ(\lambda) \to \End({E_\lambda})$ be the representation associated   to the enhanced homomorphism $(r, \xi)$ by Proposition~\ref{prop:ConstructRepCheFock}, and consider the representation $\rho_\lambda = \mu_\lambda \circ \Tr_\lambda^\omega \colon \SSS(S_\lambda) \to \End({E_\lambda})$ as in \S \ref{subsect:RepresentingPunctSurfSkein}. Note that $S_\lambda$ is here the punctured surface $S-\{v\}$, obtained by removing from $S$ the vertex $v$ of $\lambda$. In particular, the total off-diagonal kernel $F_\lambda \subset E_\lambda$ of $\mu_\lambda$ is equal to the off-diagonal kernel $F_v$ of $v$. 

Let $K_1 \subset S_1$ be the closed curve obtained by pushing the edge loop $e_{i_0}$ inside of the subsurface $S_1$, and let $K_2 \subset S_2$ be similarly defined. In particular, $K_1$ and $K_2$ are both contained in the punctured surface $S_\lambda = S- \{ v \}$. When endowed with the vertical framing, $K_1$ and $K_2$ define skeins $[K_1]$, $[K_2] \in \SSS(S_\lambda)$. 

\begin{lem}
\label{lem:OffDiagKernelAndSweep}
The off-diagonal kernel $F_{\lambda}=\ker \mu_\lambda(Q_v)$ is equal to the kernel of $\rho_\lambda   \bigl( [K_1] \bigr) - \rho_\lambda \bigl( [K_2] \bigr) $.  
\end{lem}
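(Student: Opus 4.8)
The plan is to exploit the fact that $K_1$ and $K_2$ become isotopic in $S\times[0,1]$ after a single sweep across the puncture $v$, so the computation is governed by the local picture around $v$ analyzed in the proof of Proposition~\ref{prop:VertexSweep}. First I would set up the standard labeling: index the edges of $\lambda$ emanating from $v$ counterclockwise as $e_{i_1},\dots,e_{i_u}$, with $\mathcal N(v)$ the union of the faces containing $v$, and note that since $e_{i_0}$ separates $S$ into $S_1$ and $S_2$, both endpoints of the edge loop $e_{i_0}$ are $v$. Since $\lambda$ has only one vertex, $S_\lambda = S-\{v\}$ and $F_\lambda = F_v$. The curves $K_1\subset S_1$ and $K_2\subset S_2$ are the two pushoffs of the edge loop $e_{i_0}$ to either side, and an isotopy of $e_{i_0}$-pushoffs across $v$ carries $K_1$ to $K_2$; this reduces the problem to computing $\rho_\lambda([K_1])$ and $\rho_\lambda([K_2])$ via the State Sum Formula for the quantum trace.

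Next, following the computation in the proof of Proposition~\ref{prop:VertexSweep}, I would record that for a knot sweeping once past $v$ with vertical framing and with the ``hidden parts'' $B_{\pm\pm}$ identical for $K_1$ and $K_2$, the quantum traces $\Tr_\lambda^\omega([K_1])$ and $\Tr_\lambda^\omega([K_2])$ differ precisely by terms proportional to the off-diagonal term $Q_v$ and the central element $H_v$. Concretely, from the formulas derived there one gets expressions of the shape
$$
\rho_\lambda\bigl([K_1]\bigr) = P_1 + (\text{terms in }\mu_\lambda(Q_v)),\qquad
\rho_\lambda\bigl([K_2]\bigr) = P_2 + (\text{terms in }\mu_\lambda(Q_v)),
$$
where $P_1, P_2$ agree after restriction to $\ker\mu_\lambda(Q_v)$ (that is the content of Proposition~\ref{prop:VertexSweep}), while the difference $\rho_\lambda([K_1]) - \rho_\lambda([K_2])$ is, up to an invertible factor, equal to $\mu_\lambda(Q_v)$ composed with an invertible endomorphism of $E_\lambda$. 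The invertibility comes from the fact that the correction terms involve only generators $Z_{k_j}^{\pm2}$ associated to boundary edges of $\mathcal N(v)$, which are invertible in $\ZZ(\lambda)$, together with $\mu_\lambda(H_v) = -\omega^4\,\Id_{E_\lambda}$. Once $\rho_\lambda([K_1]) - \rho_\lambda([K_2]) = \mu_\lambda(Q_v)\circ G$ with $G\in\End(E_\lambda)$ invertible, it follows immediately that $\ker(\rho_\lambda([K_1]) - \rho_\lambda([K_2])) = \ker\mu_\lambda(Q_v) = F_v = F_\lambda$.

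The main obstacle is the bookkeeping: I need to pin down exactly which terms in the two State Sum expansions are proportional to $Q_v$ (or to $Q_v - 1$ plus a constant) and verify that the total off-diagonal contribution to $\rho_\lambda([K_1]) - \rho_\lambda([K_2])$ assembles into $\mu_\lambda(Q_v)$ times something invertible, rather than something with a larger kernel. This requires being careful that $K_1$ wraps around $v$ on the $S_1$ side and $K_2$ on the $S_2$ side, so the two knots meet different collections of edges of $\lambda$ and the $B_{\pm\pm}$ factorizations are not literally identical but differ only by boundary-edge generators that commute appropriately with $Q_v$ and $H_v$ (exactly as in the $Z_{k_1}$-manipulations of Proposition~\ref{prop:VertexSweep}). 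I would also need the final remark that, for a general homomorphism $r$ not admitting a $\lambda$--enhancement, one first perturbs $r$ to $r'$ with $r'(e_i)\neq\pm\Id$ for all edges, applies the above, and then uses Comp.~\ref{comp:ConstructRepCheFockContinuous} together with semicontinuity of $\dim\ker$ to transfer the conclusion; but since the lemma as stated only concerns the case where $\xi$ exists, this last point belongs to the surrounding argument rather than to the lemma itself.
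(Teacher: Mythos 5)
Your high-level target is the right one, and it is in fact what the paper proves: one exhibits an invertible element of $\ZZ(\lambda)$ whose image under $\mu_\lambda$, composed with $\rho_\lambda\bigl([K_1]\bigr)-\rho_\lambda\bigl([K_2]\bigr)$, equals $\mu_\lambda(Q_v)$ exactly, whence the two kernels coincide. But the route you propose for getting there has a genuine gap: you plan to recycle the local computation from the proof of Proposition~\ref{prop:VertexSweep}, and that computation simply does not apply here. The triangulation $\lambda$ has a single vertex, so every edge is a loop at $v$; the hypotheses of Proposition~\ref{prop:VertexSweep} (and even the weakened ones of Remark~\ref{rem:VertexSweepWeakerHyp}) fail, and its whole setup is absent: there is no embedded star neighborhood $\mathcal N(v)$ with distinct boundary edges $e_{k_j}$, and $K_1$, $K_2$ are not two knots coinciding outside a small arc near $v$ with common ``hidden parts'' $B_{\pm\pm}$ --- they are disjoint parallel copies of the separating loop $e_{i_0}$, one entirely in $S_1$ and one entirely in $S_2$, sharing no portion at all. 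Consequently your claim that the difference of the two state sums is controlled by the $B_{\pm\pm}$-bookkeeping of that proposition, and your identification of the invertible factor as a product of boundary-edge generators $Z_{k_j}^{\pm2}$, have no meaning in this situation; likewise the easy inclusion $F_\lambda\subset\ker\bigl(\rho_\lambda([K_1])-\rho_\lambda([K_2])\bigr)$ cannot be attributed to Proposition~\ref{prop:VertexSweep} but only to Theorem~\ref{thm:ConstructRepSkeinAlgebraClosedSurface}, which was extended to arbitrary triangulations precisely because the combinatorial case does not cover this one.

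What is actually needed, and what constitutes the bulk of the paper's proof, is a fresh computation tailored to the one-vertex picture: isotop $K_1$ to a knot $K_1'$ creating a small bigon across $e_{i_0}$ so that the state sum of \cite{BonWon1} has no further correction factors; evaluate $\Tr_\lambda^\omega\bigl([K_1]\bigr)$ as $\omega^{-t}\sum_{k=0}^{t} Z_{i_0}Z_{i_1}\cdots Z_{i_k}Z_{i_{k+1}}^{-1}\cdots Z_{i_t}^{-1}Z_{i_0}^{-1}$; use the non-combinatorial quantum-ordering computations of Lemma~\ref{lem:QuantumOrderAtVertex}, the commutation of $[Z_{i_1}\cdots Z_{i_t}]$, $[Z_{j_1}\cdots Z_{j_u}]$ and $Z_{i_0}$, and the factorization $H_v=Z_{i_0}^2[Z_{i_1}\cdots Z_{i_t}][Z_{j_1}\cdots Z_{j_u}]$, to arrive at
$$
\mu_\lambda\bigl([Z_{i_1}Z_{i_2}\cdots Z_{i_t}]\bigr)\circ\Bigl(\rho_\lambda\bigl([K_1]\bigr)-\rho_\lambda\bigl([K_2]\bigr)\Bigr)=\mu_\lambda(Q_v),
$$
using $\mu_\lambda(H_v)=-\omega^4\,\Id_{E_\lambda}$ in the last step. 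The invertible factor is thus the monomial $[Z_{i_1}\cdots Z_{i_t}]$ attached to the edges crossed by $K_1$, not anything coming from a star-neighborhood boundary. Without carrying out this computation (or an equivalent one), the assertion that the difference is ``$\mu_\lambda(Q_v)$ composed with an invertible endomorphism'' remains unproved, and it is exactly the nontrivial content of the lemma.
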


\begin{proof}
Since $K_1$ and $K_2$ are isotopic in $S\times [0,1]$, Theorem~\ref{thm:ConstructRepSkeinAlgebraClosedSurface}  shows that the restrictions $\rho_\lambda\bigl( [K_1] \bigr)_{|F_{\lambda}} = \rho_\lambda\bigl( [K_2] \bigr)_{|F_{\lambda}} $ coincide. The off-diagonal kernel $F_{\lambda}$ is therefore contained in the kernel of $\rho_\lambda   \bigl( [K_1] \bigr) - \rho_\lambda \bigl( [K_2] \bigr) $.  

Because $\lambda$ is not combinatorial, our proof of Theorem~\ref{thm:ConstructRepSkeinAlgebraClosedSurface} relied on the ``drill, baby, drill'' strategy to reduce the problem to a combinatorial triangulation, where we could apply Proposition~\ref{prop:VertexSweep}. We will here use a careful examination of an analogue of  Proposition~\ref{prop:VertexSweep} for the non-combinatorial triangulation $\lambda$.

\begin{figure}[htbp]
\vskip 10pt
\SetLabels
( .6* 1.05) $ K_1$ \\
( .6* .7) $K_2 $ \\
(.6* -.2) (a)\\
\small
(0.65 * .32) $e_{i_0} $ \\
(0.1 *.83) $e_{i_1} $ \\
( 0*.75 ) $e_{i_2}$ \\
( .12* .15) $ e_{i_t}$ \\
( .35* .31) $e_{j_1}$ \\
(.4 * .42) $e_{j_2}$ \\
( .38*.66 ) $ e_{j_u}$ \\
\endSetLabels
\centerline{
\AffixLabels{\includegraphics{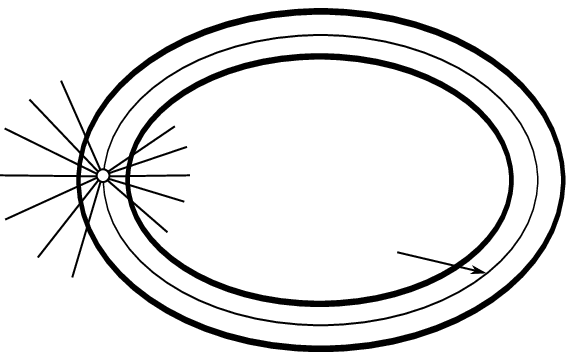}}\hskip 2cm
\SetLabels
( .6* 1.05) $ K_1'$ \\
(.6* -.2) (b)\\
\small
(0.65 * .15) $e_{i_0} $ \\
(0.1 *.83) $e_{i_1} $ \\
( 0*.75 ) $e_{i_2}$ \\
( .12* .15) $ e_{i_t}$ \\
(.98 * .7) $p$\\
(.98 * .27) $q$ \\
(.96 * .48) $B$ \\
\endSetLabels\AffixLabels{\includegraphics{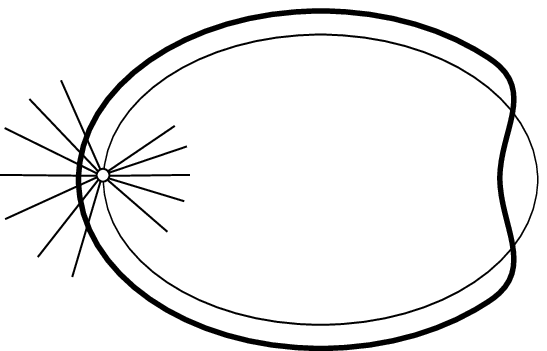}}%\hskip .3cm
%\SetLabels
%( .7* .7) $K_2 '$ \\
%(.6* -.2) (c)\\
%\small
%( .37* .26) $e_{i_1'}$ \\
%(.41 * .4) $e_{i_2'}$ \\
%( .39*.69 ) $ e_{i'_u}$ \\
%\endSetLabels\AffixLabels{\includegraphics{OneHole3.eps}}
}
\vskip 10pt
\caption{}
\label{fig:OneHole}
\end{figure}

We first need to compute $ \Tr_\lambda^\omega \bigl( [K_1] \bigr)$ and $ \Tr_\lambda^\omega \bigl( [K_2] \bigr)$. For this, index the edges around the vertex $v$ as $e_{i_0}$, $e_{i_1}$, $e_{i_2}$, \dots $e_{i_t}$,  $e_{i_0}$, $e_{j_1}$, $e_{j_2}$, \dots $e_{j_{u}}$, counterclockwise in this order, so that all edges $e_{i_k}$ are contained in the subsurface  $S_1$ and all edges $e_{j_k}$ are contained in $S_2$. See Figure~\ref{fig:OneHole}(a). 

The computation of $ \Tr_\lambda^\omega \bigl( [K_1] \bigr)$ given by \cite{BonWon1} can be somewhat  complicated, because the projection of $K_1$ to $S$ cuts some edges of $\lambda$ more than once; this usually introduces correction factors in bigon neighborhoods of these edges. A convenient way to avoid these correction factors is to  isotop $K_1$ to a  framed knot $K_1'\subset S_\lambda\times[0,1]$ whose projection to $S$ coincides with the projection of $K_1$ for most of its length, except for a small interval that is pushed across the edge $e_{i_0}$ to create a small bigon $B\subset S_2$ bounded by an arc in $e_{i_0}$ and an arc in the projection of $K_1'$. In particular, the projection of $K_1'$ to $S$ cuts the edge $e_{i_0}$ in two points $p$ and $q$ occuring in this order for the orientation of $e_{i_0}$ coming from the boundary orientation of $K_1$. See Figure~\ref{fig:OneHole}(b).

In addition, we can arrange that the elevation on $K_1 \subset S_\lambda \times [0,1]$ steadily increases as one goes around $K_1$ from $p$ to $q$, crossing the preimage of the edges  $e_{i_1}$, $e_{i_2}$, \dots $e_{i_t}$, and then steeply goes down from $q$ to $p$ along the bigon $B$ to return to the starting point $p$. 

Then, there is no need for correction terms, except for the contribution of the bigon $B$. More precisely, the construction of the quantum trace in \cite{BonWon1} gives in this case:
\begin{align*}
 \Tr_\lambda^\omega \bigl( [K_1] \bigr) = \Tr_\lambda^\omega \bigl( [K_1'] \bigr) &=
 \omega^{-1}
 \sum_{k=0}^t \omega^{-t+1} Z_{i_0}Z_{i_1}\dots Z_{i_k} Z_{i_{k+1}}^{-1}Z_{i_{k+2}}^{-1}\dots Z_{i_t}^{-1}Z_{i_0}^{-1} \\
 &=
 \omega^{-t}
 \sum_{k=0}^t Z_{i_0}Z_{i_1}\dots Z_{i_k} Z_{i_{k+1}}^{-1}Z_{i_{k+2}}^{-1}\dots Z_{i_t}^{-1}Z_{i_0}^{-1} 
\end{align*}
where the factor $ \omega^{-1}$ is the contribution of the bigon $B$.

We will use the computation of Weyl quantum orderings in Lemma~\ref{lem:QuantumOrderAtVertex} to rearrange this expression. By the first case of Lemma~\ref{lem:QuantumOrderAtVertex},
\begin{align*}
&[Z_{i_0}Z_{i_1}\dots Z_{i_k}] = \omega^{-k} Z_{i_0}Z_{i_1}\dots Z_{i_k},\\
&[Z_{i_0}Z_{i_1}\dots Z_{i_k}]^{-1} = [Z_{i_0}^{-1}Z_{i_1}^{-1}\dots Z_{i_k}^{-1}]= \omega^{-k} Z_{i_0}^{-1}Z_{i_1}^{-1}\dots Z_{i_k}^{-1},\\
\text{and }&[Z_{i_0}Z_{i_1}\dots Z_{i_k}]^{2}  =[Z_{i_0}^{2}Z_{i_1}^{2}\dots Z_{i_k}^{2}]= \omega^{-4k} Z_{i_0}^{2}Z_{i_1}^{2}\dots Z_{i_k}^{2}
\end{align*}
It follows that
\begin{align*}
Z_{i_0}Z_{i_1}\dots Z_{i_k} &= \omega^k [Z_{i_0}Z_{i_1}\dots Z_{i_k}] \\
&=  \omega^k [Z_{i_0}Z_{i_1}\dots Z_{i_k}]^{2} [Z_{i_0}Z_{i_1}\dots Z_{i_k}]^{-1} \\
&= \omega^{-4k} Z_{i_0}^{2}Z_{i_1}^{2}\dots Z_{i_k}^{2}Z_{i_0}^{-1}Z_{i_1}^{-1}\dots Z_{i_k}^{-1}.
\end{align*}
This enables us to write 
\begin{align*}
 \Tr_\lambda^\omega \bigl( [K_1] \bigr) &= \omega^{-t}
\biggl( \sum_{k=0}^t \omega^{-4k} Z_{i_0}^2Z_{i_1}^2\dots Z_{i_k}^2\biggr) \Bigl( Z_{i_0}^{-1}Z_{i_1}^{-1}\dots Z_{i_t}^{-1} Z_{i_0}^{-1}\Bigr)\\
&= \biggl( \sum_{k=0}^t \omega^{-4k} Z_{i_0}^2Z_{i_1}^2\dots Z_{i_k}^2\biggr) Z_{i_0}^{-1}[Z_{i_1}^{-1}\dots Z_{i_t}^{-1}] Z_{i_0}^{-1},
\end{align*}
where the second equality follows from  an application of the third case of Lemma~\ref{lem:QuantumOrderAtVertex} to $[Z_{i_1}^{-1}\dots Z_{i_t}^{-1}]  = \omega^{-t} Z_{i_1}^{-1}\dots Z_{i_t}^{-1}$.

Consider the term  $[Z_{i_1} Z_{i_2} \dots Z_{i_t}]$. First of all, note that its exponents satisfy the parity condition required to belong to the balanced Chekhov-Fock algebra $\ZZ(\lambda)$. Also, $[Z_{i_1} Z_{i_2} \dots Z_{i_t}]$ commutes with $Z_{i_0}$; indeed,  the only $Z_{i_k}$ that do not commute with $Z_{i_0}$ are $Z_{i_1}$, $Z_{i_t}$, and a pair of consecutive elements $Z_{i_{k_1}}=Z_{i_1}$ and $Z_{i_{k_1+1}}=Z_{i_t}$ corresponding to the third vertex of the face of $\lambda$ that is contained in the subsurface $S_1$ and is adjacent to the edge $e_{i_0}$. It also commutes with all generators $Z_{j_l}$ since the corresponding edges are located in the interior of the surface $S_2$. 

Similarly $[Z_{j_1} Z_{j_2} \dots Z_{j_u}]$ is an element of  $\ZZ(\lambda)$ that commutes with $Z_{i_0}$ and with all $Z_{i_k}$. 

In particular, $[Z_{i_1} Z_{i_2} \dots Z_{i_t}]$, $[Z_{j_1} Z_{j_2} \dots Z_{j_u}]$ and $Z_{i_0}$ commute with each other, and the central element $H_v$ associated to the vertex $v$ is equal to
$$
H_v = [ Z_{i_0} Z_{i_1}  \dots Z_{i_t} Z_{i_0}  Z_{j_1}  \dots Z_{j_u}] = Z_{i_0}^2 [Z_{i_1} Z_{i_2} \dots Z_{i_t}][Z_{j_1} Z_{j_2} \dots Z_{j_u}].
$$

Returning to $\Tr_\lambda^\omega \bigl( [K_1] \bigr) $ and remembering that $H_v$ and $[Z_{j_1} Z_{j_2} \dots Z_{j_u}]$ commute with all $Z_{i_k}$, we conclude that
\begin{align*}
 \Tr_\lambda^\omega \bigl( [K_1] \bigr) 
&= \biggl( \sum_{k=0}^t \omega^{-4k} Z_{i_0}^2Z_{i_1}^2 Z_{i_2}^2 \dots Z_{i_k}^2\biggr) Z_{i_0}^{-2}[Z_{i_1}Z_{i_2}\dots Z_{i_t}]^{-1} \\
&=  \biggl( \sum_{k=0}^t \omega^{-4k} Z_{i_0}^2Z_{i_1}^2 Z_{i_2}^2\dots Z_{i_k}^2\biggr) H_v^{-1} [Z_{j_1} Z_{j_2} \dots Z_{j_u}] \\
&=  H_v^{-1} [Z_{j_1} Z_{j_2} \dots Z_{j_u}]  \biggl( \sum_{k=0}^t \omega^{-4k} Z_{i_0}^2Z_{i_1}^2  Z_{i_2}^2 \dots Z_{i_k}^2\biggr)\\
&=  [Z_{i_1} Z_{i_2} \dots Z_{i_t}]^{-1} Z_{i_0}^{-2}  \biggl( \sum_{k=0}^t \omega^{-4k} Z_{i_0}^2Z_{i_1}^2  Z_{i_2}^2 \dots Z_{i_k}^2\biggr).
\end{align*}

The same arguments applied to the framed knot $K_2$ give
$$
 \Tr_\lambda^\omega \bigl( [K_2] \bigr) 
 =  [Z_{j_1} Z_{j_2} \dots Z_{j_u}]^{-1} Z_{i_0}^{-2}   \biggl( \sum_{l=0}^u \omega^{-4l} Z_{i_0}^2Z_{j_1}^2Z_{j_2}^2\dots Z_{j_l}^2\biggr) .
$$

In particular,
$$
 [Z_{i_1} Z_{i_2} \dots Z_{i_t}]  \Tr_\lambda^\omega \bigl( [K_1] \bigr)  
=  1+  \sum_{k=1}^t \omega^{-4k} Z_{i_1}^2Z_{i_2}^2 \dots Z_{i_k}^2
$$
and 
\begin{align*}
 [Z_{i_1} Z_{i_2}& \dots Z_{i_t}]    \Tr_\lambda^\omega \bigl( [K_2] \bigr)  \\
&= Z_{i_0}^{-2} [Z_{j_1} Z_{j_2} \dots Z_{j_u}] ^{-1} [Z_{i_1} Z_{i_2} \dots Z_{i_t}]  \biggl( \sum_{l=0}^u \omega^{-4l} Z_{i_0}^2Z_{j_1}^2Z_{j_2}^2\dots Z_{j_l}^2\biggr) \\
&= H_v^{-1} [Z_{i_1} Z_{i_2} \dots Z_{i_t}]^2  \biggl( \sum_{l=0}^u \omega^{-4l} Z_{i_0}^2Z_{j_1}^2Z_{j_2}^2\dots Z_{j_l}^2\biggr) \\
&= H_v^{-1}   \biggl( \sum_{l=0}^u \omega^{-4t - 4l} Z_{i_1}^2 Z_{i_2}^2 \dots Z_{i_t}^2 Z_{i_0}^2Z_{j_1}^2Z_{j_2}^2\dots Z_{j_l}^2\biggr) \\
\end{align*}
by using again the third case of Lemma~\ref{lem:QuantumOrderAtVertex}, in addition to  the fact that $[Z_{i_1} Z_{i_2} \dots Z_{i_t}]$, $[Z_{j_1} Z_{j_2} \dots Z_{j_u}]$ and $Z_{i_0}$ commute with each other. 

This is beginning to look a lot like the off-diagonal term $Q_v \in \ZZ(\lambda)$ associated to the counterclockwise indexing of the edges of $\lambda$ around $v$ as $e_{i_1}$, $e_{i_2}$, \dots, $e_{i_t}$, $e_{i_0}$, $e_{j_1}$, $e_{j_2}$, \dots, $e_{j_u}$, $e_{i_0}$. Indeed, this off-diagonal term can be written as
$$
Q_v =  1+  \sum_{k=1}^t \omega^{-4k} Z_{i_1}^2Z_{i_2}^2 \dots Z_{i_k}^2  + \sum_{l=0}^u \omega^{-4t - 4l+4} Z_{i_1}^2 Z_{i_2}^2 \dots Z_{i_t}^2 Z_{i_0}^2Z_{j_1}^2Z_{j_2}^2\dots Z_{j_l}^2
$$

If we apply the representation $\mu_\lambda \colon \ZZ(\lambda) \to \End({E_\lambda})$ and  remember that $\mu_\lambda(H_v^{-1})=-\omega^{-4} \Id_{E_\lambda}$, this proves that
$$
\mu_\lambda \Bigl( [Z_{i_1} Z_{i_2} \dots Z_{i_t}] \Bigr)\circ  \biggl( \mu_\lambda  \Bigl(  \Tr_\lambda^\omega \bigl( [K_1] \bigr) \Bigr) - \mu_\lambda  \Bigl(  \Tr_\lambda^\omega \bigl( [K_1] \bigr) \Bigr) \biggr)= \mu_\lambda(Q_v) .
$$

Since $ [Z_{i_1} Z_{i_2} \dots Z_{i_t}]$ is invertible in $\ZZ(\lambda)$, the linear map $\mu_\lambda \Bigl(  [Z_{i_1} Z_{i_2} \dots Z_{i_t}] \Bigr)\in \End({E_\lambda})$ is invertible. It follows that the kernel of $\mu_\lambda  \Bigl(  \Tr_\lambda^\omega \bigl( [K_1] \bigr) \Bigr) - \mu_\lambda  \Bigl(  \Tr_\lambda^\omega \bigl( [K_1] \bigr) \Bigr) $ is equal to the kernel of $ \mu_\lambda(Q_v)$, namely to the off-diagonal kernel $F_{\lambda}=F_v \subset {E_\lambda}$. 

Since $\rho_\lambda = \mu_\lambda \circ \Tr_\lambda^\omega$, this  completes the proof of Lemma~\ref{lem:OffDiagKernelAndSweep}. 
\end{proof}

We now consider the algebraic structure of the balanced Chekhov-Fock algebra $\ZZ(\lambda)$ and of the irreducible representation $\mu_\lambda \colon \ZZ(\lambda) \to \End({E_\lambda})$. 

Let $\lambda_1$ and $\lambda_2$ be the triangulations of the surfaces $S_1$ and $S_2$ respectively induced by the triangulation $\lambda$. Define the balanced Chekhov-Fock algebra $\ZZ(\lambda_1)$ as the subalgebra of $\ZZ(\lambda)$ generated by all monomials in the generators $Z_{i_1}$, $Z_{i_2}$, \dots, $Z_{i_t}$ satisfying the appropriate exponent parity condition. Similarly, $\ZZ(\lambda_2)\subset \ZZ(\lambda)$ is generated by all monomials in the generators $Z_{j_1}$, $Z_{j_2}$, \dots, $Z_{j_u}$ with the appropriate exponent parity condition.

Because each $Z_{i_k}$ (with $k>0$) commutes with each $Z_{j_l}$ and because the element  $H_v$ is central, the inclusion maps $\ZZ(\lambda_1) \to \ZZ(\lambda)$, $\ZZ(\lambda_2) \to \ZZ(\lambda)$ and $\C[H_v^{\pm1}] \to \ZZ(\lambda)$ define an algebra homomorphism
$$
\ZZ(\lambda_1) \otimes \ZZ(\lambda_2) \otimes \C[H_v^{\pm1}]   \to \ZZ(\lambda).
$$

\begin{lem}
\label{lem:CheFockOneHole}
The above homomorphism defines an isomorphism
$$
\ZZ(\lambda) \cong \ZZ(\lambda_1) \otimes \ZZ(\lambda_2) \otimes \C[H_v^{\pm1}]   .
$$
\end{lem}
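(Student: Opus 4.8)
The plan is to verify that the natural homomorphism $\Psi\colon \ZZ(\lambda_1) \otimes \ZZ(\lambda_2) \otimes \C[H_v^{\pm1}] \to \ZZ(\lambda)$ is an isomorphism by exhibiting an explicit inverse on the level of vector-space bases, and then checking that this inverse is multiplicative. First I would recall the vector-space structure of $\ZZ(\lambda)$: it has a basis consisting of the balanced monomials, i.e.\ the Weyl-ordered monomials $[Z_1^{k_1} \dots Z_n^{k_n}]$ whose exponent tuple $\mathbf k$ satisfies the parity condition of \S\ref{subsect:CheFock}. Via the train-track description from \cite[\S2]{BonWon4} used in the proof of Lemma~\ref{lem:SubdivisAlgebraHom}, such tuples correspond bijectively to integer edge-weight systems on $\tau_\lambda$. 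Since $e_{i_0}$ separates $S$ into $S_1$ and $S_2$ and $v$ is the unique vertex, the edges of $\lambda$ split as $\{e_{i_0}\} \sqcup \{e_{i_k}\}_{k=1}^t \sqcup \{e_{j_l}\}_{l=1}^u$, with the $e_{i_k}$ inside $S_1$ and the $e_{j_l}$ inside $S_2$. The key combinatorial observation is that, for a balanced tuple, the parity of the exponent of $Z_{i_0}$ is determined by the parities of the $k_{i_k}$ (equivalently the $k_{j_l}$), via the parity conditions on the two faces of $\lambda$ adjacent to $e_{i_0}$; this is precisely the mechanism that lets $H_v$, whose $Z_{i_0}$-exponent is $2$, absorb the ``diagonal'' part of the exponent while $\ZZ(\lambda_1)$ and $\ZZ(\lambda_2)$ carry the rest.

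Concretely, I would argue that every balanced monomial of $\ZZ(\lambda)$ can be written uniquely as $\omega^{c}\, U_1 \cdot U_2 \cdot H_v^{m}$ with $U_1 \in \ZZ(\lambda_1)$ a balanced monomial in the $Z_{i_k}$, $U_2 \in \ZZ(\lambda_2)$ a balanced monomial in the $Z_{j_l}$, $m \in \Z$, and $c \in \Z$ a quantum-ordering correction; here $m$ is chosen so that $2m$ plus the $Z_{i_0}$-exponent contributed by the Weyl-ordering identities of the form used in Lemma~\ref{lem:OffDiagKernelAndSweep} matches the $Z_{i_0}$-exponent of the given monomial. The point is that $H_v = Z_{i_0}^2 [Z_{i_1}\dots Z_{i_t}][Z_{j_1}\dots Z_{j_u}]$ (as computed in the proof of Lemma~\ref{lem:OffDiagKernelAndSweep}), together with the fact that the $Z_{i_k}$, $Z_{j_l}$, and $Z_{i_0}$ needed to form $H_v$ pairwise commute, means that $H_v$ supplies exactly the bridge between a balanced tuple on all of $\lambda$ and the product of a balanced tuple on $\lambda_1$, a balanced tuple on $\lambda_2$, and a power of $H_v$. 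This furnishes a vector-space isomorphism; I would phrase it cleanly by noting that the monomials $\{ U_1 U_2 H_v^m \}$, as $U_1, U_2, m$ range over bases of the three tensor factors, are a basis of $\ZZ(\lambda)$ — they are linearly independent because their images under the injection to $\TT(\lambda)$ are distinct monomials up to scalars, and they span by the exponent-counting argument above.

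Finally I would check that $\Psi$ is an algebra homomorphism, which is where the commutation relations enter. Since $H_v$ is central in $\ZZ(\lambda)$ (cited from \cite[\S3]{BonLiu} or \cite[\S2.2]{BonWon4}) and since each $Z_{i_k}$ with $k \geq 1$ commutes with each $Z_{j_l}$ — because the corresponding edges lie in the interiors of disjoint subsurfaces $S_1$ and $S_2$ and hence share no vertex angular sectors — the three subalgebras pairwise commute inside $\ZZ(\lambda)$, so $\Psi$ respects products. Combined with the bijectivity on bases, this shows $\Psi$ is an algebra isomorphism. The main obstacle I anticipate is the bookkeeping in the uniqueness/surjectivity step: one must be careful that the parity conditions on the faces adjacent to $e_{i_0}$ (which involve $e_{i_0}$, one $e_{i_k}$, and one $e_{j_l}$ each) force the $Z_{i_0}$-exponent of any balanced monomial to have the same parity as a suitable combination coming from $H_v^m$, so that $m$ is uniquely pinned down; the Euler-characteristic remark that both $S_1$ and $S_2$ have positive genus guarantees there genuinely are edges $e_{i_k}$ and $e_{j_l}$, so the decomposition is non-degenerate. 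Once the combinatorics of this single separating edge is handled, the rest is routine, and I would keep the exposition at the level of ``this follows from the train-track description and a direct check on monomials.''
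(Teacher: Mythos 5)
Your overall plan is the same as the paper's: split each balanced monomial of $\ZZ(\lambda)$ uniquely as a product of a monomial of $\ZZ(\lambda_1)$, a monomial of $\ZZ(\lambda_2)$ and a power of $H_v$, compare monomial bases, and use centrality of $H_v$ together with the commutation of the $Z_{i_k}$ with the $Z_{j_l}$ for multiplicativity. However, the step you defer as ``the main obstacle'' is precisely the content of the lemma, and the mechanism you propose for it does not work. Every element in the image of your map $\Psi$ has even $Z_{i_0}$--exponent (the factors from $\ZZ(\lambda_1)$ and $\ZZ(\lambda_2)$ contribute $0$, and $H_v^m$ contributes $2m$), so surjectivity requires proving that the exponent $k_{i_0}$ of $Z_{i_0}$ in \emph{every} balanced monomial of $\ZZ(\lambda)$ is even. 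This is exactly where the hypothesis that $e_{i_0}$ separates enters, and it is a global statement: the parity conditions on the two faces adjacent to $e_{i_0}$ do not force $k_{i_0}$ to be even. (Compare the one-vertex triangulation of the torus, where $Z_1Z_2$ is balanced although the exponent of $Z_1$ is odd; locally around a non-separating edge nothing rules this out.) Two further slips: each face adjacent to $e_{i_0}$ lies entirely in $S_1$ or entirely in $S_2$, so its other two sides are both edges $e_{i_k}$ or both edges $e_{j_l}$, not ``one $e_{i_k}$ and one $e_{j_l}$'' as you wrote; and the Weyl ordering never contributes to exponents, only scalar powers of $\omega$, so the only candidate for $m$ is $k_{i_0}/2$, which is why evenness is indispensable.

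The missing argument is short, and it is the one the paper uses. Sum the balancing parity conditions over all faces of $\lambda$ contained in $S_1$: each edge interior to $S_1$ is a side of exactly two such faces and hence contributes an even amount, while $e_{i_0}$ is a side of exactly one, so $k_{i_0}\equiv 0 \pmod 2$. Equivalently, by the homological interpretation of the balancing condition, the class $[\mathbf k]\in H_1(S_\lambda;\Z_2)$ determined by the monomial has even intersection number with the separating loop formed by $e_{i_0}$. Once $k_{i_0}=2m$ is known, your factorization $U_1U_2H_v^m$ (with $U_1$, $U_2$ the leftover monomials on the two sides, which are easily checked to satisfy the parity condition of $\ZZ(\lambda_1)$ and $\ZZ(\lambda_2)$), the uniqueness of $m$, $U_1$, $U_2$, and the comparison of monomial bases go through as you describe, and the argument then coincides with the paper's proof.
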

\begin{proof}
We need to show that the algebra homomorphism above is a linear isomorphism. 

The key observation for this is the following. For every monomial $$Z_{i_0}^n Z_{i_1}^{m_1}Z_{i_2}^{m_2} \dots Z_{i_t}^{m_t} Z_{j_1}^{n_1}Z_{j_2}^{n_2} \dots Z_{j_u}^{n_u}$$ of $\ZZ(\lambda)$, the exponent parity condition defining the balanced Chekhov-Fock algebra implies that the exponent $n$ of $Z_{i_0}$ is even, because the edge $e_0$ separates the surface $S$. As a consequence, such a monomial can be uniquely split as the product of a monomial of $\ZZ(\lambda_1)$, a monomial of $\ZZ(\lambda_2)$, and a power of $H_v = [Z_{i_0}^2 Z_{i_1}Z_{i_2}\dots Z_{i_t} Z_{j_1}Z_{j_2} \dots Z_{j_u}]$. 

Since these monomials $Z_{i_0}^n Z_{i_1}^{n_1}Z_{i_2}^{n_2} \dots Z_{i_t}^{n_t} Z_{j_1}^{m_1}Z_{j_2}^{m_2} \dots Z_{j_u}^{m_u}$ form a basis for $\ZZ(\lambda)$, while the monomials $ Z_{i_1}^{n_1}Z_{i_2}^{n_2} \dots Z_{i_t}^{n_t} $ form a basis for $\ZZ(\lambda_1)$ and  the monomials $ Z_{j_1}^{m_1}Z_{j_2}^{m_2} \dots Z_{j_u}^{m_u}$ form a basis for $\ZZ(\lambda_1)$, the result immediately follows. 
\end{proof}

The structure theorem provided by Lemma~\ref{lem:CheFockOneHole} enables us to split the irreducible representation $\mu_\lambda \colon \ZZ(\lambda) \to \End({E_\lambda})$ as a tensor product. Indeed, by elementary linear algebra (see for instance \cite[\S 4]{BonLiu}) or a careful analysis of the proof of Proposition~\ref{prop:ConstructRepCheFock}, there exists  irreducible representations $\mu_1  \colon \ZZ(\lambda_1) \to \End(E_1)$, $\mu_2 \colon \ZZ(\lambda_2) \to \End(E_2)$, $\mu_0 \colon \C[H_v^{\pm1}]   \to \End(\C)$, and an isomorphism ${E_\lambda} \cong E_1 \otimes E_2$ for which $\mu_\lambda$ corresponds to
$$
\mu_1 \otimes \mu_2 \otimes \mu_0 \colon 
 \ZZ(\lambda_1) \otimes \ZZ(\lambda_2) \otimes \C[H_v^{\pm1}]  \to 
\End(E_1 \otimes E_2 \otimes \C) = \End({E_\lambda}).
$$
In fact, since $\mu_\lambda(H_v) = -\omega^4 \Id_{E_\lambda}$, $\mu_0$ is the unique algebra homomorphism such that $\mu_0(H_v) = -\omega^4 \Id_\C$.

We now return to the knots $K_1$, $K_2\subset S\times [0,1]$. The knot $K_1$ is  contained in $S_1 \times [0,1]$, so that the quantum trace $\Tr_\lambda^\omega\bigl( [K_1] \bigr)$ belongs to the subalgebra $\ZZ(\lambda_1) \subset \ZZ(\lambda)$ corresponding to the subsurface $S_1$. In particular,  
$$
\rho_\lambda \bigl( [K_1] \bigr) = \mu_\lambda \bigl( \Tr_\lambda^\omega\bigl( [K_1] \bigr) \bigr) = \mu_1 \bigl( \Tr_\lambda^\omega\bigl( [K_1] \bigr) \bigr) \otimes \Id_{E_2}
$$
in $\End({E_\lambda}) = \End(E_1 \otimes E_2) = \End(E_1) \otimes \End(E_2)$. Similarly, 
$$
\rho_\lambda \bigl( [K_2] \bigr) =  \mu_\lambda \bigl( \Tr_\lambda^\omega\bigl( [K_2] \bigr) \bigr) =   \Id_{E_1} \otimes \mu_2 \bigl( \Tr_\lambda^\omega\bigl( [K_2] \bigr) \bigr).
$$

By Lemma~\ref{lem:OffDiagKernelAndSweep}, the off-diagonal kernel $F_{\lambda}\subset {E_\lambda}$ is equal to the kernel of $\rho_\lambda \bigl( [K_1] \bigr) -\rho_\lambda \bigl( [K_2] \bigr) $. The following statement is then an immediate consequence of the above observations. 

\begin{lem}
\label{lem:EigenvaluesTensorProduct}
The off-diagonal kernel $F_{\lambda} \subset {E_\lambda}=E_1 \otimes E_2$ is equal to 
$$
F_{\lambda}= \bigoplus_{a\in \C} \,E_1^{(a)} \otimes E_2^{(a)}
$$
where, for each $a\in \C$, $E_i^{(a)} = \bigl\{ w\in E_i, \mu_i \bigl( \Tr_\lambda^\omega\bigl( [K_i] \bigr) \bigr) (w) = aw \bigr \} $ is the eigenspace of $ \mu_i \bigl( \Tr_\lambda^\omega\bigl( [K_i] \bigr) \bigr) \in \End(E_\lambda)$ corresponding to $a$ if $a$ is an eigenvalue of this endomorphism, and is $0$ otherwise. \qed 
\end{lem}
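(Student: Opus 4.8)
The plan is to read off the identity from Lemma~\ref{lem:OffDiagKernelAndSweep} together with the tensor-product decomposition of $\mu_\lambda$ just established, and then to invoke a standard fact about kernels of operators of the shape $A\otimes\Id-\Id\otimes B$. So the proof will be short, with one genuine technical input (diagonalizability of $M_1$ and $M_2$) that I will isolate.

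First I would rewrite $F_\lambda$. By Lemma~\ref{lem:OffDiagKernelAndSweep}, $F_\lambda=\ker\bigl(\rho_\lambda([K_1])-\rho_\lambda([K_2])\bigr)$. Since $K_1\subset S_1\times[0,1]$ and $K_2\subset S_2\times[0,1]$, the locality of the quantum trace gives $\Tr_\lambda^\omega([K_1])\in\ZZ(\lambda_1)$ and $\Tr_\lambda^\omega([K_2])\in\ZZ(\lambda_2)$, so under the identification $\ZZ(\lambda)\cong\ZZ(\lambda_1)\otimes\ZZ(\lambda_2)\otimes\C[H_v^{\pm1}]$ of Lemma~\ref{lem:CheFockOneHole} and $\mu_\lambda\cong\mu_1\otimes\mu_2\otimes\mu_0$ one gets $\rho_\lambda([K_1])=M_1\otimes\Id_{E_2}$ and $\rho_\lambda([K_2])=\Id_{E_1}\otimes M_2$, where $M_i:=\mu_i\bigl(\Tr_\lambda^\omega([K_i])\bigr)$. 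Hence $F_\lambda=\ker\bigl(M_1\otimes\Id_{E_2}-\Id_{E_1}\otimes M_2\bigr)$.

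Next I would record that $M_1$ and $M_2$ are diagonalizable, which is the one substantive point. Because $K_i$ is a framed knot with crossingless projection and vertical framing, Condition~(4) of Proposition~\ref{prop:ConstructRepCheFock} (equivalently Theorem~\ref{thm:ClassicalShadowCorrect}) together with $T_N\bigl(\rho_\lambda([K_i])\bigr)=T_N(M_i)\otimes\Id_{E_2}$ (resp. $\Id_{E_1}\otimes T_N(M_i)$) forces $T_N(M_i)=-\Tr\,r(K_i)\,\Id$. Thus the minimal polynomial of $M_i$ divides $T_N(X)+\Tr\,r(K_i)$, which has simple roots as soon as $\Tr\,r(K_i)\neq\pm2$ (the only critical values of $T_N$), so $M_i$ is then diagonalizable; in particular this holds for $r$ in the Zariski dense open set where $\Tr\,r(e_j)\neq\pm2$ for every edge, which is exactly the regime needed for the equality claim in Theorem~\ref{thm:DimOffDiagKernel}. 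Once $M_1$ and $M_2$ are diagonalizable one decomposes $E_\lambda=E_1\otimes E_2=\bigoplus_{a,b}E_1^{(a)}\otimes E_2^{(b)}$ into joint eigenspaces; on $E_1^{(a)}\otimes E_2^{(b)}$ the operator $M_1\otimes\Id-\Id\otimes M_2$ acts as the scalar $a-b$, hence is invertible unless $a=b$, giving $F_\lambda=\bigoplus_a E_1^{(a)}\otimes E_2^{(a)}$. I would also note that the inclusion $\bigoplus_a E_1^{(a)}\otimes E_2^{(a)}\subseteq F_\lambda$ is unconditional, and that diagonalizability of just one of $M_1$, $M_2$ already suffices for equality.

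The part requiring the most care is this diagonalizability step, both the clean identification $\rho_\lambda([K_i])=M_i\otimes\Id$ (making sure the quantum trace of $K_i$ genuinely lands in $\ZZ(\lambda_i)$ and is compatible with the tensor splitting of $\mu_\lambda$) and the Chebyshev argument $T_N(M_i)=-\Tr\,r(K_i)\Id$; handling the degenerate values $\Tr\,r(K_i)=\pm2$ (e.g. the trivial homomorphism) is the only place where a finer argument, or restriction to generic $r$, is needed. Everything else is formal linear algebra.
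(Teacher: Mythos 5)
Your proposal is correct and follows essentially the same route as the paper: the paper presents this lemma as an immediate consequence of Lemma~\ref{lem:OffDiagKernelAndSweep} together with the tensor splitting $\rho_\lambda\bigl([K_1]\bigr)=\mu_1\bigl(\Tr_\lambda^\omega([K_1])\bigr)\otimes\Id_{E_2}$ and $\rho_\lambda\bigl([K_2]\bigr)=\Id_{E_1}\otimes\mu_2\bigl(\Tr_\lambda^\omega([K_2])\bigr)$, which is exactly the reduction you carry out before the eigenspace bookkeeping. The diagonalizability issue you isolate is the right point to flag: the paper leaves it implicit at this stage and supplies it only later (in Lemma~\ref{lem:EigenspaceDimensions}, under the hypothesis $\Tr\,r(e_{i_0})\neq\pm2$) by the same Chebyshev argument $T_N\bigl(\rho_\lambda([K_i])\bigr)=-\Tr\,r(K_i)\,\Id_{E_\lambda}$, and since the lemma is subsequently applied only in that generic regime, your restriction to $\Tr\,r(K_i)\neq\pm2$ is consistent with how the statement is actually used.
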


This reduces the problem to the determination of the eigenvalues and eigenspaces of the homomorphisms $\mu_1 \bigl( \Tr_\lambda^\omega\bigl( [K_1] \bigr) \bigr) $ and $\mu_2 \bigl( \Tr_\lambda^\omega\bigl( [K_2] \bigr) \bigr) $.

Let us focus attention on the first homomorphism. The eigenvalues and eigenspaces of $\mu_1 \bigl( \Tr_\lambda^\omega\bigl( [K_1] \bigr) \bigr)$ are easily deduced from those of $\rho_\lambda \bigl( [K_1] \bigr) = \mu_1 \bigl( \Tr_\lambda^\omega\bigl( [K_1] \bigr) \bigr) \otimes \Id_{E_1}$. 

\begin{lem}
\label{lem:EigenspaceDimensions}
Suppose that the homomorphism $r \colon \pi_1(S) \to \SL(\C)$ is  generic enough that $\Tr\, r(e_{i_0}) \neq \pm 2$. Then the homomorphism $\rho_\lambda \bigl( [K_1] \bigr) \in \End(E_\lambda)$ is diagonalizable, its eigenvalues are the $N$ distinct solutions of the equation $T_N(x)= -\Tr\, r(K_1)$,  and all of its eigenspaces have the same dimension $\frac1N \dim E_\lambda$. 
\end{lem}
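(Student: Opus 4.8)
The plan is to deduce all three assertions from the known classical-shadow identity $T_N\bigl(\rho_\lambda([K_1])\bigr)=-\Tr\,r(K_1)\,\Id_{E_\lambda}$ of Proposition~\ref{prop:ConstructRepCheFock}(4), combined with a commutation trick that produces an invertible intertwiner cyclically permuting the eigenspaces. First I would observe that $\rho_\lambda([K_1])=\mu_\lambda\circ\Tr_\lambda^\omega([K_1])$, and that from the computation in the proof of Lemma~\ref{lem:OffDiagKernelAndSweep} we have an explicit formula $\Tr_\lambda^\omega([K_1])=[Z_{i_1}Z_{i_2}\dots Z_{i_t}]^{-1}Z_{i_0}^{-2}\bigl(\sum_{k=0}^t\omega^{-4k}Z_{i_0}^2Z_{i_1}^2\dots Z_{i_k}^2\bigr)$, which lies in the subalgebra generated by the even powers $Z_{i_0}^2,\dots,Z_{i_t}^2$ together with $[Z_{i_1}\dots Z_{i_t}]^{-1}$.

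Next I would show $\rho_\lambda([K_1])$ is diagonalizable with eigenvalues among the roots of $T_N(x)=-\Tr\,r(K_1)$. Since $T_N$ has degree $N$ and, under the hypothesis $\Tr\,r(e_{i_0})\neq\pm2$, one checks $\Tr\,r(K_1)\neq\pm2$, so the polynomial $T_N(x)+\Tr\,r(K_1)$ has $N$ distinct roots $a_1,\dots,a_N$; the identity $T_N\bigl(\rho_\lambda([K_1])\bigr)=-\Tr\,r(K_1)\,\Id_{E_\lambda}$ then says $\rho_\lambda([K_1])$ is annihilated by the separable polynomial $\prod_m(x-a_m)$, hence is diagonalizable with $\mathrm{Spec}\,\rho_\lambda([K_1])\subseteq\{a_1,\dots,a_N\}$. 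The main work is to prove that \emph{all} $N$ values actually occur and that the eigenspaces have equal dimension. For this I would exhibit an element $W\in\ZZ(\lambda)$ — a suitable monomial in one of the $Z_{i_k}$ (for instance $Z_{i_1}^2$ or more precisely a monomial whose $\sigma$-commutator with the Weyl-ordered block $[Z_{i_0}Z_{i_1}\dots Z_{i_t}]$ equals a primitive power of $\omega^{4}$), such that conjugation by $\mu_\lambda(W)$ sends $\rho_\lambda([K_1])$ to $\omega^{\pm4}$ times an expression whose effect on eigenvalues is the map $2\cos\theta\mapsto 2\cos(\theta+2\pi/N)$ permuting $\{a_1,\dots,a_N\}$ cyclically. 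Concretely, $\rho_\lambda([K_1])$ is (up to an invertible central factor) a $q$-analogue of $z+z^{-1}$ built from a Weyl pair, and the standard quantum-torus trick shows that the grouplike generator acting by multiplication cyclically permutes the eigenspaces of such an operator; since $\mu_\lambda(W)$ is invertible, all eigenspaces have the same dimension, and as there are $N$ of them each has dimension $\frac1N\dim E_\lambda$. This forces all $N$ roots to be eigenvalues, completing the proof.

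The hard part will be producing the intertwiner $\mu_\lambda(W)$ cleanly: one must identify, inside $\ZZ(\lambda)$, a balanced monomial that $\omega^{4}$-commutes (or $\omega^{8}$-commutes, matching the fact that $Z_i^2$ are the balanced generators) with the relevant combination of $Z_{i_0}^2,\dots,Z_{i_t}^2$ appearing in $\Tr_\lambda^\omega([K_1])$ while commuting with the $[Z_{i_1}\dots Z_{i_t}]$ factor, so that conjugation multiplies $\rho_\lambda([K_1])$ by a scalar up to an additive shift realizing the Chebyshev angle rotation. This is exactly the kind of computation carried out for $Q_{v_0}$ in the proof of Lemma~\ref{lem:SubdivisionDimOffDiagKernel}, where $Z_2^{\prime\,2}$ shifts the eigenspaces of $\mu_{\lambda'}(Q_{v_0}-1)$; I expect the same mechanism to apply here with $Z_{i_0}^2$ (or a product $Z_{i_0}^2Z_{i_1}^2\dots$) playing the role of the shifting operator, using that $\omega^8$ is a primitive $N$-th root of unity because $N$ is odd. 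Once the cyclic intertwiner is in hand, the diagonalizability and the equal-dimension conclusion, and hence the claim that all $N$ roots of $T_N(x)=-\Tr\,r(K_1)$ are attained, follow formally.
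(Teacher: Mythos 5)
Your first step (diagonalizability and the inclusion of the spectrum in the roots of $T_N(x)=-\Tr\,r(K_1)$, via the separability of $T_N(x)+\Tr\,r(K_1)$ when $\Tr\,r(e_{i_0})\neq\pm2$) is exactly the paper's, and is fine. The gap is in the main step. You want a balanced monomial $W$ whose conjugation action cyclically permutes the eigenspaces of $\rho_\lambda\bigl([K_1]\bigr)$ itself, by analogy with the proof of Lemma~\ref{lem:SubdivisionDimOffDiagKernel}. But that mechanism only works for operators whose spectrum is a single orbit under multiplication by $\omega^8$: conjugating a sum of monomials by a monomial multiplies each summand by its own power of $\omega$, so the best such a conjugation can do is rescale (pieces of) the operator, and a spectrum of the form $\{a\zeta+a^{-1}\zeta^{-1}:\zeta^N=1\}$ is \emph{not} invariant under multiplication by $\omega^{8}$ for generic $a$. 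In Lemma~\ref{lem:SubdivisionDimOffDiagKernel} the operator $Q_{v_0}-1$ is a \emph{two-term} $q$-binomial whose $N$-th power is a scalar, so its eigenvalues do form one $\omega^8$-orbit and the shift trick applies; the operator $\rho_\lambda\bigl([K_1]\bigr)$ is a sum of $t+1$ monomial terms and is not, for the given enhanced homomorphism, of Weyl-pair form $uX+u^{-1}X^{-1}$, so the intertwiner you hope for does not exist in general, and your proposal does not supply a substitute.

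The paper's proof gets around precisely this obstruction by a deformation argument that your proposal is missing. Using Complement~\ref{comp:ConstructRepCheFockContinuous}, one frees the edge weights and considers arbitrary $\mathbf u\in\mathcal W=(\C^*)^n$, with $\rho_\lambda^{\mathbf u}\bigl([K_1]\bigr)=\sum_{k=0}^t u_{i_1}\cdots u_{i_k}u_{i_{k+1}}^{-1}\cdots u_{i_t}^{-1}A_k$ for fixed endomorphisms $A_k$, where $A_t$ is a scalar multiple of $\mu_\lambda^{\mathbf u}\bigl([Z_{i_1}\cdots Z_{i_t}]\bigr)$, $A_0=A_t^{-1}$ and $A_t^N=\pm\Id_{E_\lambda}$. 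Roger's intertwiner ($B$ with $B[Z_{i_1}\cdots Z_{i_t}]=\omega^4[Z_{i_1}\cdots Z_{i_t}]B$) applies to the \emph{single monomial} $[Z_{i_1}\cdots Z_{i_t}]$, hence to $A_t$ — not to the whole sum — and shows the eigenspaces of $A_t$ all have dimension $\frac1N\dim E_\lambda$. One then degenerates to weights $\widehat{\mathbf u}$ (with a small parameter $\epsilon$) at which only the extreme terms survive, so the operator is close to the genuine Weyl-pair-type operator $u_0^{-2}A_t^{-1}+u_0^2A_t$, whose eigenspaces are those of $A_t$ and have $N$ distinct eigenvalues; finally, since the locus $\mathcal W'=\{\Tr\,r^{\mathbf u}(K_1)\neq\pm2\}$ is connected and the roots of $T_N(x)=-\Tr\,r^{\mathbf u}(K_1)$ stay simple there, the eigenspace dimensions are locally constant and hence equal to $\frac1N\dim E_\lambda$ at the original $(r,\xi)$. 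Without this ``deform to a two-term operator, then propagate by connectedness'' step (or some replacement for it), your argument does not establish that all $N$ roots occur, nor the equal-dimension claim.
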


\begin{proof} We begin with a simple observation about the Chebyshev polynomial $T_N(x)$. If $y\neq \pm2$, the equation $T_N(x)=y$ has $N$ distinct solutions. Indeed, if we write $y$ as $y=b+b^{-1}$ for some $b$, the solutions to the equation $T_N(x)=y$ are of the form $x=a + a^{-1}$ as $a$ ranges over all $N$--roots of $b$. A little algebraic manipulation shows that these solutions are all distinct unless $b =\pm1$, which is excluded by our hypothesis that $y\neq \pm2$. 

The fact that $\rho_\lambda \bigl( [K_1] \bigr) \in \End(E_\lambda)$ is diagonalizable is then an immediate consequence of this observation and of the property, provided by Conclusion~(4) of Proposition~\ref{prop:ConstructRepCheFock}, that 
$$
T_N \bigl(  \rho_\lambda \bigl( [K_1] \bigr) \bigr) =  -\Tr\, r(K_1) \,\Id_{E_\lambda} =  -\Tr\, r(e_{i_0}) \,\Id_{E_\lambda} . 
$$
This proves that all eigenvalues of $ \rho_\lambda \bigl( [K_1] \bigr)$ are solutions of the equation $T_N(x) =  -\Tr\, r(e_{i_0})$ and, since all solutions to this equation are simple by our hypothesis that $\Tr\, r(e_{i_0}) \neq \pm 2$, that $ \rho_\lambda \bigl( [K_1] \bigr)$ is diagonalizable. 

Showing that all solutions of the above equation occur as eigenvalues, and computing the dimension of the corresponding eigenspaces, will require a more elaborate argument.

By Complement~\ref{comp:ConstructRepCheFockContinuous},  if we vary the enhanced homomorphism $(r, \xi)$ over a small open subset in the space of such pairs, the representation $\mu_\lambda \colon \ZZ(\lambda) \to \End({E_\lambda})$ can be chosen so that, for every monomial $Z_1^{k_1}Z_2^{k_2} \dots Z_n^{k_n} \in \ZZ(\lambda)$, 
$$
\mu_\lambda \bigl( [Z_1^{k_1}Z_2^{k_2} \dots Z_n^{k_n}] \bigr) = u_1^{k_1}u_2^{k_2} \dots u_n^{k_n} \, A_{k_1k_2\dots k_n} 
$$
where each $u_i=\sqrt[2N]{x_i}$ is a local determination of the $2N$--root of the crossratio weight $x_i$ defined by $(r, \xi)$, and where the endomorphisms  $A_{k_1k_2\dots k_n} \in \End({E_\lambda})$ are independent of $(r,\xi)$. 

%In this formula, note that $[Z_1^{k_1}Z_2^{k_2} \dots Z_n^{k_n}]^N $ is central in $\ZZ(\lambda)$ by the determination of the algebraic structure of $\ZZ(\lambda)$ given in  \cite[\S 2.2]{BonWon4}; see in particular Lemma~9 of \cite{BonWon4}. Since $\mu_\lambda$ is irreducible,  there consequently exists a number $w_{k_1k_2 \dots k_n}$ such that $\mu_\lambda \bigl( [Z_1^{k_1}Z_2^{k_2} \dots Z_n^{k_n}]^N \bigr) = w_{k_1k_2 \dots k_n}  \,\Id_{E_\lambda}$. 
%From the fact that $\mu_\lambda (Z_i^{2N}) = x_i \,\Id_{E_\lambda} = u_i^{2N} \,\Id_{E_\lambda}$, it follows that $w_{k_1k_2 \dots k_n}^2 = u_1^{2Nk_1}u_2^{2Nk_2} \dots u_n^{2Nk_n}$ and  $w_{k_1k_2 \dots k_n} =\pm u_1^{Nk_1}u_2^{Nk_2} \dots u_n^{Nk_n}$, so that $\mu_\lambda \bigl( [Z_1^{k_1}Z_2^{k_2} \dots Z_n^{k_n}]^N \bigr) = \pm u_1^{Nk_1}u_2^{Nk_2} \dots u_n^{Nk_n} \,\Id_{E_\lambda}$. Combining this with the fact that $\mu_\lambda \bigl( [Z_1^{k_1}Z_2^{k_2} \dots Z_n^{k_n}] \bigr) = u_1^{k_1}u_2^{k_2} \dots u_n^{k_n} \, A_{k_1k_2\dots k_n} $, we conclude that
%$$
%A_{k_1k_2\dots k_n} ^N = \pm \Id_{E_\lambda}.
%$$
%In addition, it follows from $\mu_\lambda (Z_i^{2N}) = u_i^{2N} \,\Id_{E_\lambda}$ that the sign $\pm$ above depends only on the parity of the exponents $k_i$. 

We can now reverse the process and add more generality to it in order to give ourselves some flexibility. Consider  the space $\mathcal W = (\C-\{0\})^n$ of weight systems $\mathcal u$ assigning a  weight $u_i$ to each edge $e_i$ of $\lambda$, with no specific relation between these edge weights. (The edge weights $u_i = \sqrt[2N]{x_i}$ associated to an enhanced homomorphism $(r, \xi)$ that we  considered so far were constrained by the relations of \S \ref{subsect:ClassicalOffDiagonalTerm}.) An edge weight system $\mathbf u \in \mathcal W$ determines a representation $\mu_\lambda^{\mathbf u} \colon \ZZ(\lambda) \to \End({E_\lambda})$ by the property that 
$$
\mu_\lambda^{\mathbf u} (Z_1^{k_1}Z_2^{k_2} \dots Z_n^{k_n}) = u_1^{k_1}u_2^{k_2} \dots u_n^{k_n} \, A_{k_1k_2\dots k_n} 
$$
for every monomial $Z_1^{k_1}Z_2^{k_2} \dots Z_n^{k_n} \in \ZZ(\lambda)$, where the endomorphisms  $A_{k_1k_2\dots k_n} \in \End({E_\lambda})$ are the ones occurring above. 

This associates to $\mathbf u \in \mathcal W$ a representation $\rho_\lambda^{\mathbf u} = \mu_\lambda^{\mathbf u} \circ \Tr_\lambda^\omega \colon \SSS(S_\lambda) \to \End(E_\lambda)$, and the miraculous cancellations of \cite{BonWon3} (as used in \cite[\S4]{BonWon4}) provide a homomorphism $r^{\mathbf u} \colon \pi_1(S_\lambda) \to \SL(\C)$ such that 
$$
T_N \bigl( \rho_\lambda^{\mathbf u}  \bigl([K] \bigr)  \bigr)=
T_N \bigl( \mu_\lambda^{\mathbf u} \circ \Tr_\lambda^\omega \bigl([K] \bigr)  \bigr) = - \Tr \,  r^{\mathbf u}(K) \, \Id_{E_\lambda}
$$
for every framed knot $K\subset S_\lambda \times [0,1]$ whose projection to $S_\lambda$ has no crossing and whose framing is vertical.

In particular,  if we return to  the formula
$$
 \Tr_\lambda^\omega \bigl( [K_1] \bigr) = \omega^{-t}
 \sum_{k=0}^t Z_{i_0}Z_{i_1}\dots Z_{i_k} Z_{i_{k+1}}^{-1}Z_{i_{k+2}}^{-1}\dots Z_{i_t}^{-1}Z_{i_0}^{-1} 
$$
used in the proof of Lemma~\ref{lem:OffDiagKernelAndSweep}, this gives 
$$
 \rho_\lambda^{\mathbf u} \bigl( [K_1] \bigr)   =  \mu_\lambda^{\mathbf u} \circ \Tr_\lambda^\omega \bigl( [K_1] \bigr)  =  \sum_{k=0}^t u_{i_1} u_{i_2}\dots u_{i_k} u_{i_{k+1}}^{-1} u_{i_{k+2}}^{-1}\dots  u_{i_t}^{-1} A_k
$$
where $A_k  \in \End(E_\lambda)$ is the product of a suitable term $A_{k_1k_2\dots k_n}$ with a power of $\omega$. In particular, we will use the observation that for $k=t$
$$
u_{i_1} u_{i_2}\dots u_{i_t} A_t = \mu_\lambda^{\mathbf u} \bigl( \omega^{-t} Z_{i_0}Z_{i_1}Z_{i_2}\dots Z_{i_t} Z_{i_0}^{-1}  \bigr) = \mu_\lambda^{\mathbf u} \bigl( [Z_{i_1}Z_{i_2}\dots Z_{i_t} ]\bigr)
$$
where the quantum ordering computation comes from Lemma~\ref{lem:QuantumOrderAtVertex} and the fact that $Z_{i_0}$ commutes with $Z_{i_1}Z_{i_2}\dots Z_{i_t}$ in $\TT(\lambda)$. Similarly, for $k=0$, 
$$
u_{i_1}^{-1} u_{i_2}^{-1}\dots u_{i_t}^{-1} A_0 = \mu_\lambda^{\mathbf u} \bigl( \omega^{-t} Z_{i_0}Z_{i_1}^{-1}Z_{i_2}^{-1}\dots Z_{i_t}^{-1} Z_{i_0}^{-1}  \bigr) = \mu_\lambda^{\mathbf u} \bigl( [Z_{i_1}^{-1}Z_{i_2}^{-1}\dots Z_{i_t}^{-1} ]\bigr),
$$
from which it follows that $A_0 = A_t^{-1}$. 

Also, by our determination of the algebraic structure of $\ZZ(\lambda)$ in \cite[\S 2.2]{BonWon4} (and in particular Lemma~10 of that article),  $ [Z_{i_1}^{N}Z_{i_2}^{N}\dots Z_{i_t}^{N} ]$ is central in $\ZZ(\lambda)$ since $\omega^{4N}=1$. By irreducibility of the representation $\mu_\lambda^{\mathbf u}$, there consequently exists a number $x\in \C^*$ such that $\mu_\lambda^{\mathbf u} \bigl( [Z_{i_1}^{N}Z_{i_2}^{N}\dots Z_{i_t}^{N} ]=x\,\Id_{E_\lambda}$. Taking the square of this equation and using the property that $\mu_\lambda^{\mathbf u}(Z_i^{2N}) = u_i^{2N}\,\Id_{E_\lambda}$, we conclude that $x = \pm u_{i_1}^{N}u_{i_2}^{N}\dots u_{i_t}^{N} $ and that 
$$
A_t^{N} = u_{i_1}^{-N} u_{i_2}^{-N} \dots u_{i_t}^{-N} \mu_\lambda^{\mathbf u}\bigl([Z_{i_1}^{N} Z_{i_2}^{N} \dots Z_{i_t}^{N}]\bigr) =\pm \Id_{E_\lambda}.
$$

After these preliminary observations, we now return to the main line of our proof. 
If $\Tr \,  r^{\mathbf u}(K_1) \neq \pm2$, the same argument as before shows that $ \rho_\lambda^{\mathbf u} \bigl( [K_1] \bigr) $ is diagonalizable, and that all its eigenvalues are solutions of the equation $T_N(x) = - \Tr \,  r^{\mathbf u}(K_1)$. Our strategy will be to determine the dimension of the eigenspaces of $ \rho_\lambda^{\mathbf u} \bigl( [K_1] \bigr) $ for one specific value of $\mathbf u$, and then to conclude by a connectedness property that these dimensions are the same for all $\mathbf u \in \mathcal W$ with $\Tr \,  r^{\mathbf u}(K_1) \neq \pm2$.

For this, we borrow two distinct ideas from Julien Roger \cite{Roger1, Roger2}. The first one is a  result of \cite[Lemma~19]{Roger1}, where Roger considers  monomials of $\ZZ(\lambda)$ associated to simple closed curves in the punctured surface $S_\lambda$. In the case of $K_1$, the corresponding monomial is $[Z_{i_1} Z_{i_2} \dots Z_{i_t}] $ and Roger produces a monomial $B\in \TT(\lambda_1)$ such that $B[Z_{i_1} Z_{i_2} \dots Z_{i_t}] = \omega^4 [Z_{i_1} Z_{i_2} \dots Z_{i_t}] B$. Taking the square $B^2$ to make sure that we  have an element of the balanced Chekhov-Fock algebra $\ZZ(\lambda)$, applying the representation $\mu_\lambda^{\mathbf u} \colon \ZZ(\lambda) \to \End(E_\lambda)$ associated to $\mathbf u \in \mathcal W$, and remembering that $\mu_\lambda^{\mathbf u} \bigl( [Z_{i_1}Z_{i_2}\dots Z_{i_t} ]\bigr) = u_{i_1} u_{i_2}\dots u_{i_t} A_t $, it follows that $\mu_\lambda^{\mathbf u}(B^2) A_t = \omega^8 A_t\, \mu_\lambda^{\mathbf u}(B^2)$. As a consequence,  $\mu_\lambda^{\mathbf u}(B^2)$ sends the eigenspace of $ A_t $ corresponding to the eigenvalue $a$ to the eigenspace corresponding to the eigenvalue $\omega^8a$. Since we observed that $A_t^N = \pm \Id_{E_\lambda}$ and since $\omega^8$ is a primitive $N$--root of unity (as $A=\omega^2$ is a primitive $N$--root of $-1$ and $N$ is odd), it follows that the  eigenvalues of $A_t$ are all $N$--roots of  $\pm1$, and that  its eigenspaces have the same dimension $\frac 1N \dim E_\lambda$. 

We now follow another idea first exploited in \cite[\S 2.2]{Roger1} and \cite[Appendix~B]{Roger2}, except that the broader context  of $\mathcal W$ enables us to use an explicit argument without having to rely on results of \cite{Roger1, Roger2}. To construct a suitable edge weight system $\widehat{\mathbf u} \in \mathcal W$, pick  an arbitrary number $u_0\in \C-\{0\}$ such that $u_0^{4N}\neq 1$, and another number $\epsilon\in \C-\{0\}$  close to 0.  Then define $\widehat{\mathbf u}$ to assign weight  $\widehat u_{i_1}=u_0\epsilon$ to the edge $e_{i_1}$, weight $\widehat u_{i_t} = \epsilon^{-1}$ to $e_{i_t}$, and weight $\widehat u_i=1$ to all other $e_i$. Remember that there exists an index $k_1$ such that the edge $e_{i_{k_1}}$ is equal to $e_{i_1}$ and $e_{i_{k_1+1}}$ is equal to $e_{i_t}$. In particular, $\widehat u_{i_{k_1}}= u_0\epsilon$ and $\widehat u_{i_{k_1+1}}=\epsilon^{-1}$. It follows that
$$
\widehat u_{i_1}\dots \widehat u_{i_k} \widehat u_{i_{k+1}}^{-1}\widehat u_{i_{k+2}}^{-1}\dots \widehat u_{i_t}^{-1} =
\begin{cases}
u_0^{-2} &\text{ if } k=0\\
\epsilon^2 &\text{ if } 0<k<k_1\\
u_0^2 \epsilon^4 &\text{ if } k=k_1\\
u_0^2 \epsilon^2 &\text{ if } k_1<k<t\\
u_0^2 &\text{ if } k=t
\end{cases}
$$

Then, if $\epsilon$ is sufficiently small,
$$
 \rho_\lambda^{\widehat{\mathbf u}} \bigl( [K_1] \bigr)  
= \sum_{k=0}^t \widehat u_{i_1}\dots \widehat u_{i_k} \widehat u_{i_{k+1}}^{-1} \widehat u_{i_{k+2}}^{-1}\dots \widehat u_{i_t}^{-1} A_k
$$
is very close to
$$
C= u_0^{-2} A_0 + u_0^2 A_t = u_0^{-2} A_t^{-1} + u_0^2 A_t \in \End(E_\lambda). 
$$

We proved that the eigenvalues of $A_t$ are all $N$--roots of $\pm1$, where $\pm$ is the sign such that $A_t^N = \pm \Id_{E_\lambda}$. Therefore, the eigenvalues of $C = u_0^{-2} A_t^{-1} + u_0^2 A_t$ are the numbers $\pm(u_0^{-2} \omega^{-4k} + u_0^2 \omega^{4k})$ with $k=0$, $1$, \dots, $N-1$. These $N$ numbers  are distinct by our assumption that $u_0^{4N} \neq 1$.  The eigenspaces of $C$ are the eigenspaces of $A_t$, which we proved all  have the same dimension $\frac1N \dim E_\lambda$. 

Therefore, for $\widehat{\mathbf u} \in \mathcal W$ associated to $u_0$ and $\epsilon$ as above, with $\epsilon$ small enough, the diagonalizable endomorphism $ \rho_\lambda^{\widehat{\mathbf u}} \bigl( [K_1] \bigr)  \in \End(E_\lambda)$ has $N$ distinct eigenvalues and the corresponding eigenspaces all have dimension $\frac 1N \dim E_{\lambda}$.

In the space $\mathcal W\cong (\C^*)^n$ of edge weight systems for $\lambda$, the subspace $\mathcal W'$ consisting of those $\mathbf u \in \mathcal W$ with $\Tr \,  r^{\mathbf u}(K_1) \neq \pm2$ is connected, since its complement has complex codimension 1. Note that the above point $\widehat{\mathbf u}$ belongs to $\mathcal W'$ since 
$$
\Tr \,  r^{\widehat{\mathbf u}}(K_1) \,\Id_{E_\lambda} = T_N \bigl (\rho_\lambda^{\widehat{\mathbf u}} \bigl( [K_1] \bigr)  \bigr)
$$
is very close to 
$$
T_N(u_0^{-2} A_t^{-1} + u_0^2 A_t) = u_0^{-2N} A_t^{-N} + u_0^{2N} A_t^N = \pm (u_0^{-2N}  + u_0^{2N}) \Id_{E_\lambda}.
$$
Therefore, the trace  $\Tr \,  r^{\widehat{\mathbf u}}(K_1)$ is very close to $\pm (u_0^{-2N}  + u_0^{2N}) $, and is consequently different from $\pm 2\, \Id_{E_\lambda}$ by our assumption that $u_0^{4N} \neq 1$. 

We saw that, for all $\mathbf u \in \mathcal W'$, the endomorphism $ \rho_\lambda^{\mathbf u} \bigl( [K_1] \bigr) $ is diagonalizable and its eigenvalues are solutions of the equation $T_N(x) =- \Tr \,  r^{\mathbf u}(K_1) $. Since the solutions of that equation are always simple for $\mathbf u \in \mathcal W'$, the dimension of the eigenspaces is a locally constant function of $\mathbf u$, and is therefore constant by connectedness of $\mathcal W'$. We found one point $\widehat {\mathbf u} \in \mathcal W$ such that all eigenspaces of  $\rho_\lambda^{\widehat{\mathbf u}} \bigl( [K_1] \bigr) $ have dimension  $\frac1N \dim E_\lambda$. Therefore, the eigenspaces of  $\rho_\lambda^{\mathbf u} \bigl( [K_1] \bigr) $ have dimension  $\frac1N \dim E_\lambda$ for every $\mathbf u \in \mathcal W'$. 

In particular, this property holds for $\mathbf u \in \mathcal W'$ defined by the edge weights $u_i = \sqrt[2N]{x_i}$ associated to the enhanced homomorphism $(r, \xi)$ of the hypotheses of Lemma~\ref{lem:EigenspaceDimensions}, which proves this statement. 
\end{proof}

\begin{prop}
\label{prop:DimensionOffDiagKernelHighGenusGenericCase}
Let $S$ be a closed oriented surface of genus $g\geq 2$, and consider a homomorphism $r\colon \pi_1(S) \to \SL(\C)$. Suppose that there exists a triangulation $\lambda_0$  of $S$ with exactly one vertex $v$ and with at least one separating edge $e_{i_0}$, such that  $r(e_i)\neq \pm \Id$ for every edge $e_i$ of $\lambda_0$ and  $\Tr \, r(e_{i_0}) \neq \pm 2$. Then, for every triangulation $\lambda$ of $S$ and every  $\lambda$--enhancement $\xi$ for $r$, the off-diagonal kernel $F_{\lambda}$ associated to this data has dimension
$$
\dim F_{\lambda} = N^{3(g-1)}. 
$$
\end{prop}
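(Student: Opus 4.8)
The plan is to reduce the computation to the particular one-vertex triangulation $\lambda_0$, where the tensor-product structure established in the previous lemmas turns the dimension count into simple bookkeeping. First I would invoke the naturality result Theorem~\ref{thm:RepIndependentChoices}: up to isomorphism and up to pre-composition with a sign-reversal symmetry of $r\in\RR(S)$, the representation $\check\rho_\lambda\colon\SSS(S)\to\End(F_\lambda)$ depends only on the homomorphism $r$, not on the triangulation $\lambda$ or the $\lambda$--enhancement $\xi$. A sign-reversal symmetry acts on the source $\SSS(S)$ and leaves the target space unchanged, so in particular $\dim F_\lambda$ is the same integer for every pair $(\lambda,\xi)$. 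Since the hypothesis $r(e_i)\neq\pm\Id$ for every edge $e_i$ of $\lambda_0$ guarantees, by Lemma~\ref{lem:EnhancementsExist}, that $r$ admits a $\lambda_0$--enhancement $\xi_0$, it therefore suffices to prove that $\dim F_{\lambda_0}=N^{3(g-1)}$ for the pair $(\lambda_0,\xi_0)$.

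For $(\lambda_0,\xi_0)$ the edge $e_{i_0}$ separates $S$ into $S_1$ and $S_2$, and the analysis leading up to Lemma~\ref{lem:EigenvaluesTensorProduct} gives the tensor decomposition $E_{\lambda_0}\cong E_1\otimes E_2$ (with the central factor $\C[H_v^{\pm1}]$ acting on a one-dimensional space), so that $\dim E_{\lambda_0}=\dim E_1\cdot\dim E_2$, together with the identification
$$
F_{\lambda_0}=\bigoplus_{a\in\C}E_1^{(a)}\otimes E_2^{(a)},
$$
where $E_i^{(a)}$ is the $a$--eigenspace of $\mu_i\bigl(\Tr_{\lambda_0}^\omega([K_i])\bigr)$. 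Now I would use the genericity hypothesis $\Tr\,r(e_{i_0})\neq\pm2$. Because $K_1$ and $K_2$ are each isotopic in $S\times[0,1]$ to the vertically framed loop carried by $e_{i_0}$, we have $\Tr\,r(K_1)=\Tr\,r(K_2)=\Tr\,r(e_{i_0})$. Applying Lemma~\ref{lem:EigenspaceDimensions} to $\rho_{\lambda_0}([K_1])=\mu_1\bigl(\Tr_{\lambda_0}^\omega([K_1])\bigr)\otimes\Id_{E_2}$, and its evident analogue for $\rho_{\lambda_0}([K_2])=\Id_{E_1}\otimes\mu_2\bigl(\Tr_{\lambda_0}^\omega([K_2])\bigr)$, shows that each $\mu_i\bigl(\Tr_{\lambda_0}^\omega([K_i])\bigr)$ is diagonalizable, that its set of eigenvalues is exactly the set $\Sigma$ of the $N$ distinct roots of the equation $T_N(x)=-\Tr\,r(e_{i_0})$, and that each eigenspace $E_i^{(a)}$ with $a\in\Sigma$ has dimension $\tfrac1N\dim E_i$.

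It then remains to add up. Since the two eigenvalue sets coincide (both equal $\Sigma$), the decomposition of $F_{\lambda_0}$ reduces to a sum over $a\in\Sigma$, and
$$
\dim F_{\lambda_0}=\sum_{a\in\Sigma}\Bigl(\tfrac1N\dim E_1\Bigr)\Bigl(\tfrac1N\dim E_2\Bigr)=N\cdot\tfrac1{N^2}\dim E_1\dim E_2=\tfrac1N\dim E_{\lambda_0}.
$$
By Proposition~\ref{prop:ConstructRepCheFock}, $\dim E_{\lambda_0}=N^{3g+p_{\lambda_0}-3}=N^{3g-2}$ since $\lambda_0$ has exactly one vertex, hence $\dim F_{\lambda_0}=N^{3g-3}=N^{3(g-1)}$. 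Combined with the first paragraph, this gives $\dim F_\lambda=N^{3(g-1)}$ for every triangulation $\lambda$ and every $\lambda$--enhancement $\xi$ of $r$, completing the proof.

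Most of the genuine difficulty here is already absorbed into Lemmas~\ref{lem:OffDiagKernelAndSweep}, \ref{lem:CheFockOneHole}, \ref{lem:EigenvaluesTensorProduct} and \ref{lem:EigenspaceDimensions}. The only steps that require care in assembling the argument are the two pieces of bookkeeping: verifying that the eigenvalue sets of the two tensor factors are literally the same set $\Sigma$ (this is exactly where both the isotopy $K_1\simeq e_{i_0}\simeq K_2$ and the genericity $\Tr\,r(e_{i_0})\neq\pm2$ are needed, the latter so that $\Sigma$ has $N$ distinct elements and every element of $\Sigma$ actually occurs as an eigenvalue), and translating Lemma~\ref{lem:EigenspaceDimensions}, which is phrased for $\rho_{\lambda_0}([K_i])$ on all of $E_{\lambda_0}$, into the corresponding statement for $\mu_i\bigl(\Tr_{\lambda_0}^\omega([K_i])\bigr)$ on the single tensor factor $E_i$. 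Both are routine once the tensor splitting $E_{\lambda_0}\cong E_1\otimes E_2$ is in hand.
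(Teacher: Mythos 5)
Your proposal is correct and follows essentially the same route as the paper's own proof: reduce to the one-vertex triangulation $\lambda_0$ via Theorem~\ref{thm:RepIndependentChoices} and Lemma~\ref{lem:EnhancementsExist}, then combine the tensor splitting of Lemma~\ref{lem:EigenvaluesTensorProduct} with the eigenspace-dimension count of Lemma~\ref{lem:EigenspaceDimensions} (applied to both $K_1$ and $K_2$) and the genericity hypothesis $\Tr\,r(e_{i_0})\neq\pm2$ to get $\dim F_{\lambda_0}=\tfrac1N\dim E_{\lambda_0}=N^{3(g-1)}$. The bookkeeping details, including the passage from the eigenspaces of $\rho_{\lambda_0}([K_i])$ on $E_{\lambda_0}$ to those of $\mu_i$ on the single factor $E_i$, match the paper's argument.
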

\begin{proof} The hypotheses on $r$ and $\lambda_0$ guarantee that, by Lemma~\ref{lem:EnhancementsExist},  $r$ admits at east one  $\lambda_0$--enhancement $\xi_0$. By Theorem~\ref{thm:RepIndependentChoices}, the total off-diagonal kernel $F_{\lambda_0}$ is isomorphic to $F_{\lambda}$ and  we can consequently restrict attention to the case where $\lambda = \lambda_0$ and $\xi = \xi_0$.

Namely, we assume that $\lambda$ has exactly one vertex $v$, and that $r$ admits a $\lambda$--enhancement $\xi$; in addition, an edge $e_{i_0}$ of $\lambda$ separates $S$ into two subsurfaces $S_1$ and $S_2$, and $\Tr\, r(e_{i_0}) \neq \pm2$. In this case and with the  notations of this section, recall that we have split the representation $\mu_\lambda \colon \ZZ(\lambda) \to \End(E_\lambda)$ associated to the enhanced homomorphism $(r, \xi)$ as a tensor product
$$
\mu_1 \otimes \mu_2 \otimes \mu_0 \colon 
 \ZZ(\lambda_1) \otimes \ZZ(\lambda_2) \otimes \C[H_v^{\pm1}]  \to 
\End(E_1 \otimes E_2 \otimes \C) = \End({E_\lambda}).
$$
of three irreducible representations $\mu_1  \colon \ZZ(\lambda_1) \to \End(E_1)$, $\mu_2 \colon \ZZ(\lambda_2) \to \End(E_2)$, $\mu_0 \colon \C[H_v^{\pm1}]   \to \End(\C)$, for  isomorphisms $\ZZ(\lambda) \cong \ZZ(\lambda_1) \otimes \ZZ(\lambda_2) \otimes \C[H_v^{\pm1}]   $ and  ${E_\lambda} \cong E_1 \otimes E_2$. In addition,  $\mu_0$ is the unique algebra homomorphism such that $\mu_0(H_v) = -\omega^4 \Id_\C$.

By Lemma~\ref{lem:EigenvaluesTensorProduct}
$$
\dim F_{\lambda} = \sum_{a\in \C} \dim E_1^{(a)} \dim E_2^{(a)} .
$$
where $E_i^{(a)}$ is the eigenspace of $\mu_i \bigl( \Tr_{\lambda_i}^\omega \bigr( [K_i]\bigr) \bigr)$ corresponding to the eigenvalue $a$ (and is 0 if $a$ is not an eigenvalue). 

Since $
\rho_\lambda \bigl( [K_1] \bigr) = \mu_\lambda \bigl( \Tr_\lambda^\omega\bigl( [K_1] \bigr) \bigr) = \mu_1 \bigl( \Tr_\lambda^\omega\bigl( [K_1] \bigr) \bigr) \otimes \Id_{E_2}
$, the $a$--eigenspace of $\rho_\lambda \bigl( [K_1] \bigr) $ is equal to the tensor product  $E_1^{(a)} \otimes E_2$.  By Lemma~\ref{lem:EigenspaceDimensions}, we conclude that 
$$
\dim E_1^{(a)} \dim E_2 = {\textstyle\frac1N} \dim E_\lambda = {\textstyle\frac1N}  \dim E_1 \dim E_2
$$
when $T_N(a) = - \Tr\,r(K_1)$, and $E_1^{(a)}=0$ otherwise. As a consequence, $\dim E_1^{(a)} $ is equal to $ {\textstyle\frac1N}  \dim E_1 $ if  $T_N(a) = - \Tr\,r(K_1)$ and to $0$ otherwise.

Similarly, $\dim E_2^{(a)} $ is equal to $ {\textstyle\frac1N}  \dim E_2 $ if  $T_N(a) = - \Tr\,r(K_2)$ and to $0$ otherwise.

By hypothesis, $\Tr\,r(K_1) = \Tr\,r(K_2) \neq \pm2$, so there are exactly $N$ values of $a$ that have non-zero contributions to the sum
$$
\dim F_{\lambda} = \sum_{a\in \C} \dim E_1^{(a)} \dim E_2^{(a)}  = N ( {\textstyle\frac1N}  \dim E_1)( {\textstyle\frac1N}  \dim E_2) =  {\textstyle\frac1N}  \dim E_\lambda = N^{3g-3}
$$
since $\dim E_\lambda = N^{3g-2}$ by Proposition~\ref{prop:ConstructRepCheFock}. 
\end{proof}

\begin{rem}
\label{rem:EnhanceOneVertexTriangGeneric}
If we fix a triangulation $\lambda_0$ with exactly one vertex and with at least one separating edge $e_{i_0}$, the homomorphisms $r$ satisfying the hypotheses of Proposition~\ref{prop:DimensionOffDiagKernelHighGenusGenericCase} form a Zariski open dense subset of the space of all group homomorphisms $\pi_1(S) \to \SL(\C)$. Indeed, for a simple closed curve $\gamma$, many possible arguments show that the set of characters $r\in \RR(S)$ such that $\Tr\, r(\gamma) \neq \pm2$ is Zariski open and dense in $\RR(S)$. 
\end{rem}

\begin{prop}
\label{prop:DimensionOffDiagKernelHighGenusGeneralCase}
Let $S$ be a closed oriented surface of genus $g\geq 2$. Then, for every homomorphism $r\colon \pi_1(S) \to \SL(\C)$ and for every triangulation $\lambda$ such that $r$ admits a $\lambda$--enhancement $\xi$, the total off-diagonal kernel $F_\lambda$ defined by this enhanced homomorphism $(r, \xi)$ has dimension
$$
\dim F_{\lambda} \geq N^{3(g-1)}. 
$$
\end{prop}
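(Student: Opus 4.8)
The plan is to deduce the general inequality from the generic equality of Proposition~\ref{prop:DimensionOffDiagKernelHighGenusGenericCase} by a semicontinuity argument. First I would use Theorem~\ref{thm:RepIndependentChoices}: the isomorphism class of $\check\rho_\lambda$, and in particular $\dim F_\lambda$, depends only on the homomorphism $r$ and not on $\lambda$ or $\xi$, among triangulations for which $r$ admits an enhancement. Since every combinatorial triangulation admits an enhancement for every $r$ by Lemma~\ref{lem:EnhancementsExist}, I may fix once and for all a combinatorial triangulation $\lambda$ with edges $e_1,\dots,e_n$, and regard $d(r):=\dim F_\lambda$ as a function on the representation variety $\mathrm{Hom}(\pi_1(S),\SL(\C))$. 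The goal then becomes to show $d(r)\ge N^{3(g-1)}$ for every $r$, knowing that $d(r)=N^{3(g-1)}$ whenever $r$ lies in the Zariski dense open subset $\mathcal R_{\mathrm{gen}}$ of homomorphisms satisfying the hypotheses of Proposition~\ref{prop:DimensionOffDiagKernelHighGenusGenericCase} for some fixed one-vertex triangulation with a separating edge (such a triangulation exists because $g\ge 2$, and $\mathcal R_{\mathrm{gen}}$ is Zariski open dense by Remark~\ref{rem:EnhanceOneVertexTriangGeneric}).

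The second and main step is to prove that $d$ is upper semicontinuous for the Zariski topology. The point is that each off-diagonal term $Q_v\in\ZZ(\lambda)$ involves only even powers of the generators $Z_i$, hence is fixed by the action on $\ZZ(\lambda)$ of every class of $H^1(S;\Z_2)$, and in particular by every sign-reversal symmetry of $r$; therefore $\mu_\lambda(Q_v)$ is well-defined up to conjugation, and $d(r)=\dim\bigcap_{v}\ker\mu_\lambda(Q_v)$ genuinely depends only on $r$. By Complement~\ref{comp:ConstructRepCheFockContinuous}, after passing to the finite covering of a Zariski open subset of the space of enhanced homomorphisms on which one can consistently choose an enhancement and $2N$-th roots $u_i=\sqrt[2N]{x_i}$ of the crossratio weights, one has $\mu_\lambda(Z_1^{k_1}\dots Z_n^{k_n})=u_1^{k_1}\dots u_n^{k_n}\,A_{k_1\dots k_n}$ with the endomorphisms $A_{k_1\dots k_n}$ constant; consequently the operator $w\mapsto\bigl(\mu_\lambda(Q_v)(w)\bigr)_{v}$ is given by a matrix whose entries are regular functions (Laurent polynomials in the $u_i$). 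Its rank is then lower semicontinuous, so its kernel dimension is upper semicontinuous, on the covering; since the covering map is finite and surjective, hence closed, and $d$ is constant on its fibers, $d$ is upper semicontinuous downstairs. Setting this up carefully — in particular identifying the closed sublocus of the covering on which the chosen branch of roots is the ``correct'' one, making $\mu_\lambda(H_v)=-\omega^4\,\Id$ and the classical shadow equal to $r$ — is the one genuinely technical point; everything else is soft.

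Finally, given this upper semicontinuity, let $X_0$ be any irreducible component of $\mathrm{Hom}(\pi_1(S),\SL(\C))$. Since $\mathcal R_{\mathrm{gen}}$ is Zariski open and dense, $\mathcal R_{\mathrm{gen}}\cap X_0$ is a nonempty open, hence dense, subset of the irreducible variety $X_0$. The restriction $d|_{X_0}$ is upper semicontinuous, so it attains its minimum $m_0$ on a nonempty open, hence dense, subset of $X_0$; intersecting this with $\mathcal R_{\mathrm{gen}}\cap X_0$ shows $m_0=N^{3(g-1)}$. Therefore $d\ge N^{3(g-1)}$ on $X_0$, and since $X_0$ was an arbitrary component, $\dim F_\lambda=d(r)\ge N^{3(g-1)}$ for every $r$, which is the assertion of the Proposition. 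I expect the main obstacle to be precisely the rigorous descent of semicontinuity from Complement~\ref{comp:ConstructRepCheFockContinuous}, since that result only supplies the explicit regular description after passing to the branched cover and selecting the correct branch of the radicals $u_i$.
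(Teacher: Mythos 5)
Your proposal is correct and follows essentially the same route as the paper: reduce to a fixed combinatorial triangulation via Theorem~\ref{thm:RepIndependentChoices} and Lemma~\ref{lem:EnhancementsExist}, then combine the generic equality of Proposition~\ref{prop:DimensionOffDiagKernelHighGenusGenericCase} (with Remark~\ref{rem:EnhanceOneVertexTriangGeneric}) with semicontinuity of the kernel dimension of the $\mu_\lambda(Q_v)$, which follows from the dependence of $\mu_\lambda$ on $r$ given by Complement~\ref{comp:ConstructRepCheFockContinuous}. The paper treats the technical point you flag more lightly, choosing the enhancement (hence the roots $u_i$) to vary continuously with $r$ only locally instead of passing to a global branched cover, but the argument is the same.
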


\begin{proof} By Theorem~\ref{thm:RepIndependentChoices}, the dimension of $F_\lambda$ depends only on the group homomorphism $r$, not on the triangulation $\lambda$ or the enhancement $\xi$. In particular, we can assume without loss of generality that $\lambda$ is combinatorial, so that every homomorphism $r\colon \pi_1(S) \to \SL(\C)$ admits an enhancement $\xi$ by Lemma~\ref{lem:EnhancementsExist}. 
If we locally vary $r$, the proof of Lemma~\ref{lem:EnhancementsExist} shows that we can choose the enhancement $\xi$ so that it varies continuously with $r$. Then, the representation $\mu_\lambda \colon \ZZ(\lambda) \to \End(E_\lambda)$ associated to $(r,\xi)$ by Proposition~\ref{prop:ConstructRepCheFock} depends continuously on $r$ by Complement~\ref{comp:ConstructRepCheFockContinuous}. 

The total off-diagonal kernel $F_\lambda$ is defined as an intersection of kernels $\ker \mu_\lambda(Q_v)$. Its dimension is therefore a lower semi-continuous function of the representation $\mu_\lambda$, thus of the homomorphism $r$. Proposition~\ref{prop:DimensionOffDiagKernelHighGenusGenericCase} (see also Remark~\ref{rem:EnhanceOneVertexTriangGeneric}) asserts that the dimension of $F_\lambda$ is equal to $N^{3g-3}$ for generic homomorphisms $r\colon \pi_1(S) \to \SL(\C)$. By lower semi-continuity, it follows that $\dim F_\lambda  \geq N^{3g-3}$ for all $r$. 
\end{proof}

\subsection{Proof of Theorem~\ref{thm:DimOffDiagKernel} when the surface  $S$ is the torus}

\begin{prop}
\label{prop:DimensionOffDiagKernelTorusGenericCase}
Suppose that the surface $S$ is a torus, and that the image of the homomorphism $\bar r \colon \pi_1(S) \to \PSL(\C)$ induced by $r \colon \pi_1(S) \to \SL(\C)$ has more than two elements. Then, for every triangulation $\lambda$ of $S$ and every $\lambda$--enhancement $\xi$ for $r$, the associated off-diagonal kernel has dimension
$$
\dim F_\lambda = N.
$$
\end{prop}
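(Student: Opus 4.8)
The plan is to reduce to a triangulation $\lambda$ of the torus with a single vertex $v$, mimicking the one-vertex strategy of the genus $\geq 2$ case but with the crucial difference that the torus has no separating curve, so one must use a non-separating edge and track a little more carefully how the central element $H_v$ interacts with the subalgebra structure. By Theorem~\ref{thm:RepIndependentChoices}, $\dim F_\lambda$ depends only on $r$, so I would fix a specific one-vertex triangulation $\lambda$ of the torus (three edges $e_1,e_2,e_3$, all loops at $v$, with the standard gluing of a hexagon or equivalently the $1$-skeleton being a theta-graph-like configuration), and check that the hypothesis $\bar r(\pi_1(S))$ having more than two elements guarantees $r(e_i) \neq \pm\Id$ for each $i$ (if some $r(e_i)=\pm\Id$, then since $\pi_1(S)$ is abelian of rank $2$ generated by two of the edge classes, the image would be contained in a group of order at most two, contradicting the hypothesis) — hence $r$ admits a $\lambda$--enhancement $\xi$ by Lemma~\ref{lem:EnhancementsExist}. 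The total off-diagonal kernel is then just $F_v = \ker\mu_\lambda(Q_v)$ for the single vertex $v$.

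Next I would analyze $\mu_\lambda(Q_v)$ directly. Around the single vertex $v$, each of the three edges appears twice, so the cyclic sequence of edge-ends has length $u=6$, and $Q_v = 1 + \omega^{-4}Z_{i_1}^2 + \omega^{-8}Z_{i_1}^2 Z_{i_2}^2 + \cdots + \omega^{-20}Z_{i_1}^2\cdots Z_{i_5}^2$ where $(e_{i_1},\dots,e_{i_6})$ is a cyclic reordering of $(e_1,e_2,e_3,e_1,e_2,e_3)$ (up to the combinatorics of the particular triangulation; the exact edge multiset may differ but each edge occurs twice). The key computation, exactly as in Lemma~\ref{lem:SubdivisionDimOffDiagKernel}, is to raise $Q_v - 1$ to a suitable power using the Quantum Binomial Formula: the summands of $Q_v-1$ pairwise $\omega^{8k}$--commute in a controlled way, and $\omega^8 = A^{-4}$ is a primitive $N$-th root of unity since $N$ is odd. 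Using the classical off-diagonal identity of Lemma~\ref{lem:ClassicalOffDiagonal} (which here reads $1 + x_{i_1} + x_{i_1}x_{i_2} + \cdots + x_{i_1}\cdots x_{i_5} = 0$ with $x_{i_1}\cdots x_{i_6} = 1$), together with $\mu_\lambda(Z_i^{2N}) = x_i\,\Id$, one finds that $\mu_\lambda(Q_v-1)^N$ is a scalar, equal to $-\Id_{E_\lambda}$. Hence $\mu_\lambda(Q_v - 1)$ is diagonalizable with eigenvalues among the $N$-th roots of $-1$, i.e. of the form $-\omega^{8k}$.

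To pin down the eigenspace dimensions, I would exhibit an element $Z \in \ZZ(\lambda)$ — a monomial in the $Z_i^2$ coming from an edge whose class is non-trivial in $H_1$ and intersects the curve defining $Q_v$ — such that $Z (Q_v - 1) = \omega^{8}(Q_v-1)Z$ (or $\omega^{-8}$), so that $\mu_\lambda(Z)$ cyclically permutes the eigenspaces of $\mu_\lambda(Q_v-1)$ and forces all $N$ of them to have equal dimension $\frac1N\dim E_\lambda$. Since $F_v = \ker\mu_\lambda(Q_v)$ is the $(-1)$-eigenspace of $\mu_\lambda(Q_v-1)$ (note $-1 = -\omega^{0}$ is indeed among the allowed eigenvalues), it follows that $\dim F_\lambda = \dim F_v = \frac1N \dim E_\lambda = \frac1N N^{3g+p_\lambda - 3} = \frac1N N^{3\cdot 1 + 1 - 3} = \frac1N N^{1} = 1$... which is \emph{wrong}, so this is where care is needed: for the torus $g=1$, $p_\lambda = 1$ gives $\dim E_\lambda = N^{3+1-3} = N$, and $\frac1N\dim E_\lambda = 1$, not $N$. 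The resolution — and I expect this to be the main obstacle — is that for the torus the edge $e_{i_0}$ is \emph{non-separating}, so $H_v$ does \emph{not} split off as a free central variable in the way it does in Lemma~\ref{lem:CheFockOneHole}; instead, the parity/homological constraints make $Q_v$ itself lie in a commutative subalgebra on which $\mu_\lambda$ restricts with larger common eigenspaces. I would instead argue that $\mu_\lambda(Q_v-1)$ and $\mu_\lambda(Z_i^2)$ together generate only a \emph{proper} commutative subalgebra (the image of a maximal torus of $\ZZ(\lambda)$ adapted to the separating-versus-not distinction), whose generic common eigenspace has dimension $N$; equivalently, after the structural decomposition $\ZZ(\lambda) \cong \ZZ(\lambda') \otimes \C[\text{central}]$ appropriate to the punctured torus, $Q_v$ depends only on the central/peripheral variables, the relevant endomorphism has exactly $N$ eigenvalues each with multiplicity $N = \dim E_\lambda / N$, and the $(-1)$-eigenspace has dimension $N$. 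Concretely I would imitate Lemma~\ref{lem:EigenspaceDimensions}: realize $Q_v$ (up to an invertible factor) as $\mu_\lambda(\Tr_\lambda^\omega([K_1]) - \Tr_\lambda^\omega([K_2]))$ for two curves $K_1, K_2$ isotopic in $S$ across the puncture — here both sitting in the once-punctured torus — use the classical shadow identity $T_N(\rho_\lambda([K_i])) = -\Tr\,r(K_i)\,\Id$ and the hypothesis $\bar r(\pi_1(S))$ nonabelian-enough to ensure $\Tr\,r(K_i) \neq \pm 2$, deduce that $\rho_\lambda([K_i])$ is diagonalizable with $N$ simple eigenvalue-solutions of $T_N(x) = -\Tr\,r(K_i)$ and equidimensional eigenspaces, and then count: $F_\lambda = \ker(\rho_\lambda([K_1]) - \rho_\lambda([K_2]))$ decomposes over matching eigenvalues as in Lemma~\ref{lem:EigenvaluesTensorProduct}, giving $\dim F_\lambda = N$ after accounting for the single central $H_v$-factor forcing $\Tr\,r(K_1) = \Tr\,r(K_2)$ on the nose. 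The genericity set ($\bar r$ having image of order $> 2$, equivalently $r$ not valued in a group making all edge-traces $0$ or $\pm 2$) is manifestly Zariski open and dense, and the general lower bound $\dim F_\lambda \geq N$ then follows by the lower-semicontinuity argument of Proposition~\ref{prop:DimensionOffDiagKernelHighGenusGeneralCase}, using Complement~\ref{comp:ConstructRepCheFockContinuous}.
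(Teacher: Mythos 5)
There is a genuine gap: you never reach the key computation, and both of your attempted rescues fail. The paper's proof is a short direct calculation on a one-vertex triangulation chosen \emph{adapted to} $r$: since the image of $\bar r$ has more than two elements, one can pick generators $a_1,a_2$ of $\pi_1(S)\cong\Z^2$ with $\bar r(a_1)$, $\bar r(a_2)$, $\bar r(a_1a_2)$ all non-trivial, and take the one-vertex triangulation whose edges represent these three classes (your claim that a \emph{fixed} one-vertex triangulation works is false: $r(a_1)=\Id$ and $r(a_2)$ of infinite order satisfies the hypothesis but kills any $\lambda$--enhancement for that $\lambda$, so the image need not have ``order at most two''). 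With the relations $Z_iZ_{i+1}=\omega^4Z_{i+1}Z_i$ one then checks that the off-diagonal term factors,
$$
Q_v=(1+\omega^{-4}H_v)\,(1+\omega^{-4}Z_1^2+\omega^{-8}Z_1^2Z_2^2),
$$
and since $\mu_\lambda(H_v)=-\omega^4\,\Id_{E_\lambda}$ the first factor is annihilated, so $\mu_\lambda(Q_v)=0$ and $F_\lambda=\ker\mu_\lambda(Q_v)=E_\lambda$, of dimension $N^{3g+p_\lambda-3}=N$. The torus case is thus qualitatively different from genus $\geq 2$: $F_\lambda$ is the \emph{whole} space, not a $\frac1N$--slice, which is exactly why your first computation outputs the wrong answer $1$.

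Neither of your fixes recovers this. The eigenvalue argument à la Lemma~\ref{lem:SubdivisionDimOffDiagKernel} cannot be repaired: in fact $\mu_\lambda(Q_v-1)=-\Id_{E_\lambda}$ is a scalar, so there is a single eigenvalue and no element can cyclically permute eigenspaces (a scalar commutes with everything), and equidistribution of eigenspaces is precisely what must fail. The imitation of Lemmas~\ref{lem:OffDiagKernelAndSweep}--\ref{lem:EigenspaceDimensions} is blocked twice over: the hypothesis that $\bar r$ has image with more than two elements does \emph{not} imply $\Tr\,r(K_i)\neq\pm2$ (a nontrivial parabolic representation has infinite image and all traces $\pm2$), so the diagonalizability-with-simple-roots step is unavailable under the stated hypothesis; and the tensor decomposition of Lemma~\ref{lem:CheFockOneHole} genuinely requires a separating edge, so the eigenvalue-matching count of Lemma~\ref{lem:EigenvaluesTensorProduct} has no analogue here --- you acknowledge this but offer only a sketch whose bookkeeping is internally inconsistent (``$N$ eigenvalues each with multiplicity $N=\dim E_\lambda/N$'' while $\dim E_\lambda=N$). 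The missing idea is simply the factorization of $Q_v$ through $1+\omega^{-4}H_v$ on a suitably chosen one-vertex triangulation; the concluding lower-semicontinuity remark belongs to the general (non-generic) torus statement, not to this proposition.
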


\begin{proof} By Theorem~\ref{thm:RepIndependentChoices}, the dimension of $F_\lambda$ is independent of the triangulation $\lambda$ and of the enhancement $\xi$. This provides us with  flexibility in the choice of $\lambda$ to perform computations. 

By hypothesis, the image of $\bar r \colon \pi_1(S)  \to \PSL(\C)$ is neither trivial nor isomorphic to $\Z_2$.  A simple algebraic manipulation then provides a set of generators $a_1$, $a_2$ of $ \pi_1(S) \cong \Z^2$ such that $\bar r(a_1)$, $\bar r(a_2)$ and $\bar r(a_1a_2)\in \PSL(\C)$ are non-trivial. Then there exists a triangulation $\lambda$ with one vertex $v$,  and whose edges $e_1$, $e_2$ and $e_3$ respectively represent the classes $a_1$, $a_2$ and $a_1a_2$ in $\pi_1(S)$. By Lemma~\ref{lem:EnhancementsExist}, this guarantees that there exists  a $\lambda$--enhancement $\xi$ for $r$. 

In the Chekhov-Fock algebra $\TT(\lambda)$, let $Z_1$, $Z_2$, $Z_3$ be the generators respectively associated to the edges $e_1$, $e_2$ and $e_3$. Exchanging the r\^oles of $e_1$ and $e_2$ if necessary, we can assume that $e_1$, $e_2$, $e_3$ arise in this order clockwise around each of the two faces of $\lambda$. Then the skew-commutativity relations satisfied by the $Z_i$ are that $Z_iZ_{i+1} = \omega^4 Z_{i+1}Z_i$ for every $i$ (considering indices modulo 3).

The  central element $H_v \in \ZZ(\lambda)$ associated to the vertex $v$ is equal to
$$
H_v = [Z_1^2 Z_2^2 Z_3^2] = \omega^{-8} Z_1^2 Z_2^2 Z_3^2.
$$
while its off-diagonal term is 
\begin{align*}
Q_v &= 1 + \omega^{-4}Z_1^2 + \omega^{-8} Z_1^2Z_2^2 + \omega^{-12} Z_1^2 Z_2^2 Z_3^2\\
&\qquad\qquad \qquad\qquad+ \omega^{-12} Z_1^2 Z_2^2 Z_3^2Z_1^2 + \omega^{-12} Z_1^2 Z_2^2 Z_3^2Z_1^2Z_2^2 \\
&= (1 +  \omega^{-12} Z_1^2 Z_2^2 )(1 + \omega^{-4}Z_1^2 + \omega^{-8} Z_1^2Z_2^2 )= (1 +  \omega^{-4} H_v )(1 + \omega^{-4}Z_1^2 + \omega^{-8} Z_1^2Z_2^2 ).
\end{align*}

The representation $\mu_\lambda \colon \ZZ(\lambda) \to \End(E_\lambda)$ associated to the enhanced homomorphism $(r, \xi)$ by Proposition~\ref{prop:ConstructRepCheFock} has dimension $\dim E_\lambda =N$, and sends $H_v$ to $-\omega^4 \Id_{E_\lambda}$. The above computation shows that $\mu_\lambda(Q_v) =0 \in \End(E_\lambda)$. Therefore, the off-diagonal kernel is equal to 
$$
F_\lambda = \ker \mu_\lambda(Q_v) = \ker 0 = E_\lambda
$$
and has dimension $N$. 
\end{proof}

The hypotheses of Proposition~\ref{prop:DimensionOffDiagKernelTorusGenericCase} are realized on a Zariski open dense subset of the space of homomorphisms $r\colon \pi_1(S) \to \SL(\C)$. The same lower semi-continuity argument used in the proof of Proposition~\ref{prop:DimensionOffDiagKernelHighGenusGeneralCase}  gives the following general statement.

\begin{prop}
\label{prop:DimensionOffDiagKernelTorusGeneralCase}
Suppose that the surface $S$ is a torus. Then, for every triangulation $\lambda$ of the torus and every $\lambda$--enhancement $\xi$ for $r$, the associated off-diagonal kernel $F_\lambda$ has dimension at least $N$. \qed
\end{prop}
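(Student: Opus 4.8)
The plan is to deduce Proposition~\ref{prop:DimensionOffDiagKernelTorusGeneralCase} from the generic case Proposition~\ref{prop:DimensionOffDiagKernelTorusGenericCase} by exactly the lower semi-continuity argument already used in the proof of Proposition~\ref{prop:DimensionOffDiagKernelHighGenusGeneralCase}, with the torus playing the role of the higher-genus surface. First I would invoke Theorem~\ref{thm:RepIndependentChoices} to reduce to a fixed convenient triangulation: the dimension of $F_\lambda$ depends only on the homomorphism $r\colon \pi_1(S) \to \SL(\C)$, not on $\lambda$ or on the $\lambda$--enhancement $\xi$, so we may take $\lambda$ combinatorial, in which case Lemma~\ref{lem:EnhancementsExist} guarantees that every $r$ admits a $\lambda$--enhancement.

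Next I would set up the continuity input. Varying $r$ locally, the proof of Lemma~\ref{lem:EnhancementsExist} lets us choose the enhancement $\xi$ to vary continuously with $r$; then Complement~\ref{comp:ConstructRepCheFockContinuous} says the representation $\mu_\lambda\colon \ZZ(\lambda)\to\End(E_\lambda)$ can be chosen to depend continuously on $r$. Since the total off-diagonal kernel $F_\lambda = \bigcap_{v\in V_\lambda}\ker\mu_\lambda(Q_v)$ is an intersection of kernels of continuously-varying endomorphisms of the fixed space $E_\lambda$, its dimension is a lower semi-continuous function of $r$ (the rank of each $\mu_\lambda(Q_v)$, and of any finite collection of them stacked together, can only jump down on closed subsets).

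Finally, Proposition~\ref{prop:DimensionOffDiagKernelTorusGenericCase} asserts that $\dim F_\lambda = N$ whenever the image of the induced homomorphism $\bar r\colon \pi_1(S)\to\PSL(\C)$ has more than two elements, and this condition is satisfied on a Zariski open dense subset of the space of homomorphisms $\pi_1(S)\to\SL(\C)$: the excluded homomorphisms are those whose $\PSL(\C)$--image is trivial or $\Z_2$, which is cut out by finitely many algebraic equations (for instance $\Tr\,r(a_1)^2 = 4$, $\Tr\,r(a_2)^2 = 4$, $\Tr\,r(a_1a_2)^2 = 4$ on generators $a_1,a_2$ of $\pi_1(S)\cong\Z^2$, together with the commutator condition) and is a proper subvariety since torus groups admit homomorphisms with large image. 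By lower semi-continuity of $\dim F_\lambda$ and density of the set where it equals $N$, we conclude $\dim F_\lambda \geq N$ for every $r$. The only mild subtlety — and the point most worth being careful about — is checking that the ``variation'' of $(\mu_\lambda, Q_v)$ really is continuous in the relevant sense across \emph{all} of the parameter space, not just on the Zariski open locus where Proposition~\ref{prop:DimensionOffDiagKernelTorusGenericCase} applies; but this is exactly what Complement~\ref{comp:ConstructRepCheFockContinuous} provides, so there is no genuine obstacle here, and the argument is entirely parallel to Proposition~\ref{prop:DimensionOffDiagKernelHighGenusGeneralCase}.
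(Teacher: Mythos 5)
Your proposal is correct and is essentially the paper's own argument: the paper simply notes that the hypotheses of Proposition~\ref{prop:DimensionOffDiagKernelTorusGenericCase} hold on a Zariski open dense subset and then invokes verbatim the lower semi-continuity argument from the proof of Proposition~\ref{prop:DimensionOffDiagKernelHighGenusGeneralCase} (reduction to a combinatorial triangulation via Theorem~\ref{thm:RepIndependentChoices} and Lemma~\ref{lem:EnhancementsExist}, continuity of $\mu_\lambda$ from Complement~\ref{comp:ConstructRepCheFockContinuous}, and lower semi-continuity of the dimension of the intersection of kernels). Your additional remarks on why the non-generic locus is a proper subvariety are fine elaborations of what the paper leaves implicit, but they do not change the route.
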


\subsection{Proof of Theorem~\ref{thm:DimOffDiagKernel} when the surface  $S$ is the sphere} In this case, every homomorphism $r\colon \pi_1(S) \to \SL(\C)$ is of course trivial.

\begin{prop}
\label{prop:DimensionOffDiagKernelSphere}
Suppose that the surface $S$ is a sphere. Then, for every triangulation $\lambda$ of $S$ and every $\lambda$--enhancement $\xi$ for the trivial homomorphism, the  associated total off-diagonal kernel $F_\lambda$ has dimension equal to $1$. 
\end{prop}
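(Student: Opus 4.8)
The plan is to invoke Theorem~\ref{thm:RepIndependentChoices} to reduce to a single, maximally simple triangulation and then compute directly. Since $S$ is the sphere, $H^1(S;\Z_2)=0$, so there are no non-trivial sign-reversal symmetries, and Theorem~\ref{thm:RepIndependentChoices} tells us that $\dim F_\lambda$ depends only on the (necessarily trivial) homomorphism $r$, not on $\lambda$ or $\xi$. I would therefore take $\lambda$ to be the triangulation of $S^2$ with three vertices $v_1,v_2,v_3$, three edges $e_1,e_2,e_3$ (with $e_i$ joining the two vertices other than $v_i$), and two faces $T_+,T_-$, each bounded by $e_1,e_2,e_3$; this $\lambda$ is combinatorial and satisfies our standing conventions on triangulations, and since $r$ is trivial it admits a $\lambda$--enhancement by Lemma~\ref{lem:EnhancementsExist}. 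An Euler characteristic count shows $S^2$ has no triangulation with fewer than three vertices, so here $p_\lambda=3$ is as small as possible and Conclusion~(1) of Proposition~\ref{prop:ConstructRepCheFock} gives $\dim E_\lambda = N^{3g+p_\lambda-3} = N^{0} = 1$. In particular $F_\lambda\subseteq E_\lambda$ has dimension $0$ or $1$, and the whole problem reduces to showing that $\mu_\lambda(Q_{v_k})=0$ for each of the three vertices $v_k$.

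A routine inspection of the combinatorics at each vertex gives $\sigma_{ij}=0$ for all $i,j$ (the two faces at a vertex contribute cancelling terms), so the generators $Z_1,Z_2,Z_3$ of $\TT(\lambda)$ pairwise commute. At $v_1$ the two incident edges are $e_2$ and $e_3$, so $Q_{v_1}=1+\omega^{-4}Z_2^2$ and, by Lemma~\ref{lem:QuantumOrderH}, $H_{v_1}=Z_2Z_3$; the analogous formulas hold at $v_2$ and $v_3$. Set $\zeta_i=\mu_\lambda(Z_i^2)\in\C^*$. From Conclusion~(3), $-\omega^4=\mu_\lambda(H_{v_1})=\mu_\lambda(Z_2Z_3)$, so squaring and using $Z_2Z_3=Z_3Z_2$ gives $\zeta_2\zeta_3=\omega^8$, and cyclically $\zeta_1\zeta_3=\zeta_1\zeta_2=\omega^8$; hence $\zeta_1=\zeta_2=\zeta_3=:\zeta$ with $\zeta^2=\omega^8$, i.e.\ $\zeta=\pm\omega^4$. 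On the other hand Conclusion~(2) gives $\zeta^N=\mu_\lambda(Z_i^{2N})=x_i$, and $x_i=-1$ by Lemma~\ref{lem:ClassicalOffDiagonal} applied at a vertex (where $u=2$, so the identity reads $1+x_i=0$). Since $N$ is odd, $(\omega^4)^N=\omega^{4N}=1\neq-1$ while $(-\omega^4)^N=-1$, forcing $\zeta=-\omega^4$, and therefore
$$
\mu_\lambda(Q_{v_1}) = 1+\omega^{-4}\zeta = 1-1 = 0 .
$$

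The same computation applies at $v_2$ and $v_3$, so $F_\lambda=\bigcap_{k}\ker\mu_\lambda(Q_{v_k})=E_\lambda$, which has dimension $1$; transporting back through Theorem~\ref{thm:RepIndependentChoices} gives $\dim F_\lambda=1$ for every triangulation and every enhancement. This is the easiest of the three cases and presents no serious obstacle; the only delicate point is that one must combine all three conclusions of Proposition~\ref{prop:ConstructRepCheFock} with the classical off-diagonal identity of Lemma~\ref{lem:ClassicalOffDiagonal} to pin $\mu_\lambda(Z_i^2)$ down to \emph{exactly} $-\omega^4$, rather than merely to some $N$--th root of $-1$ — and that is precisely what forces each $Q_{v_k}$ to act by zero.
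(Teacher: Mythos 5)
Your proof is correct and takes essentially the same route as the paper: reduce via Theorem~\ref{thm:RepIndependentChoices} to the minimal triangulation with three vertices, three edges and two faces, observe that the generators commute and that each off-diagonal term is annihilated by $\mu_\lambda$, so that $F_\lambda=E_\lambda$ has dimension $N^{3g+p_\lambda-3}=1$. The only (cosmetic) difference is how the scalar $\mu_\lambda(Z_i^2)=-\omega^4$ is pinned down: the paper gets it in one step from the identity $Z_i^2=H_{i+1}H_{i+2}H_i^{-1}$ together with Conclusion~(3) of Proposition~\ref{prop:ConstructRepCheFock}, whereas you first get $\mu_\lambda(Z_i^2)=\pm\omega^4$ from the central elements and then fix the sign using Conclusion~(2), Lemma~\ref{lem:ClassicalOffDiagonal} with $u=2$, and the oddness of $N$.
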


\begin{proof} By Theorem~\ref{thm:RepIndependentChoices}, it suffices to check this for any triangulation $\lambda$ for which the trivial homomorphism admits a $\lambda$--enhancement; in this case, this just means that every edge of $\lambda$ has distinct endpoints. 
We use the smallest such triangulation $\lambda$, with exactly three vertices and two faces glued along their boundary.  

For this triangulation, the generators $Z_1$, $Z_2$, $Z_3$ of the (unbalanced) Chekhov-Fock algebra $\TT(\lambda)$ commute, and the balanced Chekhov-Fock algebra $\ZZ(\lambda)$ is isomorphic to the Laurent polynomial algebra $\C[H_1^{\pm1}, H_2^{\pm1}, H_3^{\pm1}]$, where the $H_i = Z_{i+1}Z_{i+2}$ are the central elements associated to the three vertices of $\lambda$ (counting indices modulo 3). In particular, the representation $\mu_\lambda $ provided by Proposition~\ref{prop:ConstructRepCheFock} is 1--dimensional, and is the unique representation sending each $H_i$ to $-\omega^4 \Id_{E_\lambda}$. 

Each off-diagonal term is of the form $Q_i = 1 + \omega^{-4}Z_i^2 = 1 + \omega^{-4} H_{i+1}H_{i+2}H_i^{-1}$. The off-diagonal kernel of each vertex is therefore $\ker \mu_\lambda(Q_i) = \ker 0 = E_\lambda$, and the total off-diagonal kernel $F_\lambda$ has dimension
\begin{equation*}
\dim F_\lambda = \dim \bigcap_{i=1}^3 \ker \mu_\lambda(Q_i) = \dim E_\lambda =1 \qedhere
\end{equation*}
\end{proof}

The combination of Propositions~\ref{prop:DimensionOffDiagKernelHighGenusGenericCase}, \ref{prop:DimensionOffDiagKernelHighGenusGeneralCase}, \ref{prop:DimensionOffDiagKernelTorusGenericCase}, \ref{prop:DimensionOffDiagKernelTorusGeneralCase} and \ref{prop:DimensionOffDiagKernelSphere} completes the proof of Theorem~\ref{thm:DimOffDiagKernel}. 

\subsection{Proof of the Realization Theorem~\ref{thm:RealizeInvariantIntro}} We are now ready to complete the proof of  the Realization Theorem~\ref{thm:RealizeInvariantIntro}. 

Given a group homomorphism $r\colon \pi_1(S) \to \SL(\C)$ and a combinatorial triangulation $\lambda$ of the surface $S$, Proposition~\ref{prop:ClassicalShadowOKCombinatorial} provided a representation $\check \rho _\lambda \colon \SSS(S) \to \End(F_\lambda)$ whose classical shadow is equal to the character $r\in \RR(S)$. Theorem~\ref{thm:DimOffDiagKernel} shows that $F_\lambda$ is different from 0, so that this representation is non-trivial. The representation $\check\rho_\lambda$ may or may not be irreducible, but it admits at least one irreducible component $\rho_r \colon \SSS(S) \to \End(E)$ with $E\subset F_\lambda$. This irreducible representation satisfies the conclusions of  Theorem~\ref{thm:RealizeInvariantIntro}. 

\newcommand{\bibsub}[1]{\kern-0pt${}_{#1}$\kern -0.5pt}

\end{document}